\numberwithin{equation}{section}
\newtheorem{thm}{Theorem}[section]
\newtheorem{prop}[thm]{Proposition}
\newtheorem{cor}[thm]{Corollary}
\newtheorem{lem}[thm]{Lemma}
\newtheorem{obs}[thm]{Observation} 
\theoremstyle{definition}
\newtheorem{defn}{Definition}[section]
\newtheorem{prob}[thm]{Problem}
\newcommand{\ssm}{\smallsetminus}
\newcommand{\tr}{{\negthickspace \top \negthickspace}}
\newcommand{\spn}{\mathrm{span}}
\newcommand{\type}{\mathrm{type}}
\newcommand{\defeq}{:=} 
\newcommand{\per}{\mathrm{per}}
\newcommand{\qdet}{\mathrm{det}_q}
\newcommand{\qper}{\mathrm{per}_q}
\newcommand{\pavoiding}{$3412$-avoiding, $4231$-avoiding }
\newcommand{\avoidsp}{avoids the patterns $3412$ and $4231${}}
\newcommand{\avoidp}{avoid the patterns $3412$ and $4231${}}
\newcommand{\avoidingp}{avoiding the patterns $3412$ and $4231${}}
\newcommand{\sn}{\mathfrak{S}_n}
\newcommand{\slambda}{\mathfrak{S}_\lambda}
\newcommand{\slambdamin}{\mathfrak{S}_\lambda^{-}}
\newcommand{\mfs}[1]{\mathfrak{S}_{#1}}
\newcommand{\hnq}{H_n(q)}
\newcommand{\zx}{\mathbb{Z}[x]}
\newcommand{\zqq}{\mathbb{Z}[\qp12, \qm12]}
\newcommand{\zsn}{\mathbb{Z}\sn}
\newcommand{\anq}{\mathcal{A}_n(q)}
\newcommand{\quv}{q_{u,v}}
\newcommand{\qvw}{q_{v,w}}
\newcommand{\qev}{q_{e,v}}
\newcommand{\qew}{q_{e,w}}
\newcommand{\qiuv}{q_{u,v}^{-1}}
\newcommand{\qiev}{q_{e,v}^{-1}}
\newcommand{\qiew}{q_{e,w}^{-1}}
\newcommand{\qp}[2]{q^{\frac{#1}{#2}}}
\newcommand{\qm}[2]{q^{\negthinspace\Bar\,\frac{#1}{#2}}}
\newcommand{\qdiff}{\qp12 - \qm12}
\newcommand{\imm}[1]{\mathrm{Imm}_{#1}}
\newcommand{\permmon}[2]{#1_{1,#2_1} \cdots {#1}_{n,#2_n}}
\newcommand{\doublepermmon}[3]{{#1}_{#2_1,#3_1} \cdots {#1}_{#2_n,#3_n}}
\newcommand{\sh}{\mathrm{sh}}
\newcommand{\ctype}[1]{\mathrm{cyc}(#1)}
\newcommand{\mathscr}{\EuScript} 
\newcommand{\inv}{\textsc{inv}}
\newcommand{\rinv}{\textsc{rinv}}
\newcommand{\STAT}{\textsc{stat}}
\newcommand{\asc}{\text{asc}}
\newcommand{\bp}{\begin{prob}}
\newcommand{\ep}{\end{prob}}
\newcommand{\precdot}{\prec \negthickspace \negthinspace \cdot\ }
\newcommand{\ntnsp}{\negthinspace}
\newcommand{\ntksp}{\negthickspace}
\newcommand{\nTksp}{\negthickspace\negthickspace}
\newcommand{\nTtksp}{\negthickspace\negthickspace\negthickspace}
\newcommand{\ssec}[1]{\subsection{#1}{$\negthinspace$}}
\newcommand{\sumsb}[1]{\sum_{\substack{#1}}}  
\newcommand{\prodsb}[1]{\prod_{\substack{#1}}}  
\newcounter{countcases}
\renewcommand{\baselinestretch}{1.0}
\newcommand{\PBS}[1]{\let\temp=\\#1\let\\=\temp}  
\newcommand{\zxn}{\mathbb{Z}[x_{1,1},\dotsc,x_{n,n}]}
\long\def\symbolfootnote[#1]#2{\begingroup%
\def\thefootnote{\fnsymbol{footnote}}\footnote[#1]{#2}\endgroup}
\def\hhhsp{\def\baselinestretch{0.125}\large\normalsize}
\def\hhhpsp{\def\baselinestretch{0.13}\large\normalsize}
\def\ssp{\def\baselinestretch{1.0}\large\normalsize}
\numberwithin{figure}{section}
\def\employer{\@employer}
\def\@employer#1{\noindent \par{\sc #1}}
\begin{document}

\title{Evaluations of Hecke algebra traces at Kazhdan-Lusztig basis elements}  
\date{\today}
\author{Samuel Clearman, Matthew Hyatt, Brittany Shelton, Mark Skandera}



\begin{abstract}
\vspace{.15 in}
For irreducible characters 
$\{ \chi_q^\lambda \,|\, \lambda \vdash n \}$,
induced sign characters
$\{ \epsilon_q^\lambda \,|\, \lambda \vdash n \}$,
and induced trivial characters
$\{ \eta_q^\lambda \,|\, \lambda \vdash n \}$
of the Hecke algebra $\hnq$, 
and Kazhdan-Lusztig basis elements
$C'_w(q)$ with $w$ \avoidingp,
we combinatorially interpret the polynomials
$\smash{\chi_q^\lambda(\qp{\ell(w)}2 C'_w(q))}$,
$\smash{\epsilon_q^\lambda(\qp{\ell(w)}2 C'_w(q))}$,
and
$\smash{\eta_q^\lambda(\qp{\ell(w)}2 C'_w(q))}$.
This leads to a new algebraic interpretation of 
chromatic quasisymmetric functions of Shareshian and Wachs,
and a new combinatorial interpretation of special cases of results of 
Haiman.
We prove similar results 
for other $\hnq$-traces,
and confirm
a formula conjectured by Haiman.
\end{abstract}

\maketitle

\section{Introduction}\label{s:intro}

The {\em symmetric group algebra} $\zsn$
and 
the 
{\em (Iwahori-) Hecke algebra} $\hnq$
have similar presentations as algebras over $\mathbb Z$ and $\zqq$
respectively, with 
multiplicative identity elements $e$ and $T_e$,
generators 
$s_1,\dotsc, s_{n-1}$ and 
$T_{s_1},\dotsc, T_{s_{n-1}}$, 
and relations
\begin{equation*}
\begin{alignedat}{3}
s_i^2 &= e &\qquad
T_{s_i}^2 &= (q-1) T_{s_i} + qT_e &\qquad
&\text{for $i = 1, \dotsc, n-1$},\\
s_is_js_i &= s_js_is_j &\qquad
T_{s_i}T_{s_j}T_{s_i} &= T_{s_j}T_{s_i}T_{s_j} &\qquad
&\text{for $|i - j| = 1$},\\
s_is_j &= s_js_i &\qquad
T_{s_i}T_{s_j} &= T_{s_j}T_{s_i} &\qquad
&\text{for $|i - j| \geq 2$}.
\end{alignedat}
\end{equation*}
Analogous to the {\em natural basis} $\sn$ of $\zsn$ 
is the {\em natural basis} $\{ T_w \,|\, w \in \sn \}$
of $\hnq$,
where we define
$T_w = T_{s_{i_1}} \cdots T_{s_{i_\ell}}$ whenever $s_{i_1} \cdots s_{i_\ell}$
is a reduced (short as possible) expression for $w$ in $\sn$.  
We call $\ell$ the {\em length}
of $w$ and write $\ell = \ell(w)$.  It is known that $\ell(w)$ is equal
to $\inv(w)$, the number of {\em inversions} 
in the one-line notation $w_1 \cdots w_n$ of $w$, i.e., the number of pairs
$(i,j)$ with $i < j$ and $w_i > w_j$.
The specialization of $\hnq$ at $\qp12 = 1$ is isomorphic to $\zsn$.

In addition to the natural bases of $\zsn$ and $\hnq$, we have 
the (signless) {\em Kazhdan-Lusztig bases}~\cite{KLRepCH} 
$\{ C'_w(1) \,|\, w \in \sn \}$,
$\{ C'_w(q) \,|\, w \in \sn \}$, defined in terms of certain 
{\em Kazhdan-Lusztig polynomials} $\{ P_{v,w}(q) \,|\, v,w \in \sn \}$ 
in $\mathbb N[q]$ by
\begin{equation}\label{eq:KLbasis}
C'_w(1) = \sum_{v \leq w} P_{v,w}(1) v, 
\qquad
C'_w(q) = \qiew \sum_{v \leq w} P_{v,w}(q) T_v,
\end{equation}
where $\leq$ denotes the Bruhat order on $\sn$
and we define 
$\qvw = \qp{\ell(w) - \ell(v)}2$.
We have the identity $P_{v,w}(q) = 1$ when $w$ \avoidsp,
i.e., when no subword $w_{i_1} w_{i_2} w_{i_3} w_{i_4}$ of $w_1 \cdots w_n$
consists of letters which appear in the same relative order as 
$3412$ or $4231$~\cite{LakSan}.
These particular permutations are of interest to algebraic geometers
because they correspond to smooth Schubert varieties.
(See \cite[Ch.\,4, Ch.\,13]{BilleyLak}.)


Representations of $\zsn$ and $\hnq$ are often studied in terms of 
linear maps called 
{\em characters}.
(See \cite[Ch.\,1]{Sag} for definitions.)
The 
span
of the $\sn$-characters is called the space of
{\em $\sn$-class functions}, and has
dimension equal to the number of integer partitions of $n$.  
Three well-studied bases 
are the irreducible characters $\{ \chi^\lambda \,|\, \lambda \vdash n \}$,
induced sign characters $\{ \epsilon^\lambda \,|\, \lambda \vdash n \}$, and
induced trivial characters $\{ \eta^\lambda \,|\, \lambda \vdash n \}$,
where $\lambda \vdash n$ denotes that $\lambda$ is a partition of $n$.
Letting $\slambda$ denote the Young subgroup of $\sn$ of type $\lambda$,
we have
\begin{equation*}
\epsilon^\lambda \defeq \mathrm{sgn}\ntnsp\uparrow_{\slambda}^{\sn}, \qquad
\eta^\lambda \defeq \mathrm{triv}\ntnsp\uparrow_{\slambda}^{\sn}\ntnsp.
\end{equation*}
The 
span 
of the $\hnq$-characters, called the space of
{\em $\hnq$-traces}, has the same dimension and analogous character bases
$\{ \chi_q^\lambda \,|\, \lambda \vdash n \}$,
$\{ \epsilon_q^\lambda \,|\, \lambda \vdash n \}$, 
$\{ \eta_q^\lambda \,|\, \lambda \vdash n \}$,
specializing at $\smash{\qp12} = 1$ to the $\sn$-character bases.
Each of the two spaces has a fourth basis consisting of 
{\em monomial} class functions $\{ \phi^\lambda \,|\, \lambda \vdash n \}$ 
or traces $\{ \phi_q^\lambda \,|\, \lambda \vdash n \}$,
and a fifth basis consisting of
{\em power sum} class functions $\{ \psi^\lambda \,|\, \lambda \vdash n \}$
or traces $\{ \psi_q^\lambda \,|\, \lambda \vdash n \}$.
These are defined via the inverse Kostka numbers 
$\{ K_{\lambda,\mu}^{-1} \,|\, \lambda, \mu \vdash n\}$ 
and the numbers
$\{ L_{\lambda,\mu} \,|\, \lambda, \mu \vdash n\}$ 
of row-constant Young tableaux of shape $\lambda$ and content $\mu$ by
\begin{equation}\label{eq:moncfdef}
\phi^\lambda \defeq \sum_\mu K_{\lambda,\mu}^{-1} \chi^\mu,
\quad \phi_q^\lambda \defeq \sum_\mu K_{\lambda,\mu}^{-1} \chi_q^\mu,
\qquad \psi^\lambda \defeq \sum_\mu L_{\lambda,\mu} \phi^\mu,
\quad \psi_q^\lambda \defeq \sum_\mu L_{\lambda,\mu} \phi_q^\mu.
\end{equation}
Each of these functions is not a character, but is a 
difference of two characters.
In each space, 
the five bases are related to one another by the same transition matrices
which relate the
Schur $\{ s_\lambda \,|\, \lambda \vdash n \}$, 
elementary $\{ e_\lambda \,|\, \lambda \vdash n \}$,  
complete homogeneous $\{ h_\lambda \,|\, \lambda \vdash n \}$, 
monomial $\{ m_\lambda \,|\, \lambda \vdash n \}$, 
and power sum $\{ p_\lambda \,|\, \lambda \vdash n \}$ 
bases of the space $\Lambda_n$
of homogeneous degree $n$ symmetric functions.
It follows from the theory of symmetric functions
that basis elements in each space are integer linear combinations
of irreducible characters in that space. 
(For more information on 
the transition matrices and symmetric functions, 
see \cite[Sec.\,2]{RemTrans}, \cite[Ch.\,7]{StanEC2}, respectively.)
A correspondence between these class functions 
and symmetric functions
is given by the Frobenius 
{\em characteristic map} 
\begin{equation*}
\mathrm{ch}(\theta) \defeq \frac1{n!} \sum_{w \in \sn} \theta(w) p_{\mathrm{ctype}(w)}, 
\end{equation*}
where $\mathrm{ctype}(w)$ is the cycle type of $w$.
In particular, we have 
$\mathrm{ch} (\chi^\lambda) = s_\lambda$,
$\mathrm{ch} (\epsilon^\lambda) = e_\lambda$,
$\mathrm{ch} (\eta^\lambda) = h_\lambda$,
$\mathrm{ch} (\phi^\lambda) = m_\lambda$,
$\mathrm{ch} (\psi^\lambda) = p_\lambda$,
which explains our names for the fourth and fifth bases.
We naturally use
the Greek ancestors $\epsilon$, $\eta$ of $e$, $h$
in our class function notation, but we prefer not to use
the ancestors
$\sigma$, $\mu$, $\pi$ of $s$, $m$, $p$.
Instead we follow the standard practices of using $\chi$ for 
irreducible characters, while reserving $\mu$ for integer
partitions, and $\pi$ for path families in planar networks.
We follow \cite{HaimanHecke}, \cite{StemConj} in using
$\phi$,
and 
we use $\psi$ 
because {\em psi} begins with $p$.

 
For any $\sn$-class function $\theta$ belonging to the bases above, 
and for any element $z$ of the natural or Kazhdan-Lusztig basis of $\zsn$,
we have $\theta(z) \in \mathbb Z$.
This follows from the linearity of $\theta$ and the fact that 
$\chi^\lambda(w)$ can be expressed as the trace of an integer matrix.
(See, e.g., \cite[Sec.\,2.3]{Sag}.)
On the other hand, we do not in general have an elementary formula
for the integer $\theta(z)$.
This incomplete understanding of the $\sn$-class functions is unfortunate,
since the functions encode much important information about $\sn$.
For some class functions and basis elements,
we may associate sets $R$, $S$ to the pair $(\theta, z)$
to 
combinatorially interpret
the integer 
$\theta(z)$ as
$(-1)^{|S|}|R|$,
or simply as $|R|$ if $\theta(z) \in \mathbb N$.
We summarize 
results and open problems in the following table.

\hhhsp
\begin{center}
\newcolumntype{R}{>{$}c<{$}}
\begin{tabularx}{162.0mm}{|R|R|R|R|R|}%
\hline
& & & &\\
& & & &\\
\theta
& \theta(w) \in \mathbb N \mathrm{?} 
& \begin{matrix} \mbox{interpretation of} \\ 
\mbox{$\theta(w)^{\phantom{\hat T}}\nTksp$ as $(-1)^{|S|}|R|$?} \end{matrix}
& \theta(C'_w(1)) \in \mathbb N \mathrm{?}  
& \begin{matrix} \mbox{interpretation of} \\ 
\mbox{$\theta(C'_w(1))$ as $|R|^{\phantom{\hat T}}\nTksp$} \\ 
\mbox{for $w^{\phantom{\hat T}}\nTksp$ avoiding} \\
\mbox{$3412^{\phantom{\hat T}}\nTksp$ and $4231$?}\end{matrix} \\
& & & &\\
& & & &\\
\hline 
& & & &\\
& & & &\\
\eta^\lambda & \mbox{yes} & \mbox{yes} & \mbox{yes} & \mbox{yes} \\
& & & &\\
& & & &\\
\epsilon^\lambda & \mbox{no} & \mbox{yes} & \mbox{yes} & \mbox{yes} \\
& & & &\\
& & & &\\
\chi^\lambda & \mbox{no} & \mbox{open} & \mbox{yes} & \mbox{yes}\\
& & & &\\
& & & &\\
\psi^\lambda & \mbox{yes}  & \mbox{yes} & \mbox{yes} & \mbox{yes} \\
& & & &\\
& & & &\\
\phi^\lambda & \mbox{no} & \mbox{yes} & \mbox{conj.\ by Stembridge, Haiman} & \mbox{open} \\
& & & &\\
& & & &\\
\hline
\end{tabularx}
\end{center}
\ssp


For the above 
combinatorial interpretations of $\theta(w)$, 
see 
\cite{RemTrans}.
The number $\chi^\lambda(w)$ may be computed by the well-known algorithm
of Murnaghan and Nakayama (See, e.g., \cite[Ch.\,7]{StanEC2}.)  
but has no conjectured expression 
of the type stated above. 
Interpretations of $\theta(C'_w(1))$ are not known for general $w \in \sn$,
but nonnegativity follows from work of 
Haiman~\cite{HaimanHecke} and Stembridge~\cite{StemImm}.
Interpretations of $\eta^\lambda(C'_w(1))$, 
$\epsilon^\lambda(C'_w(1))$, 
$\chi^\lambda(C'_w(1))$,
$\psi^\lambda(C'_w(1))$,
for $w$ avoiding the 
patterns $3412$, $4231$
follow via straightforward arguments from 
results of the fourth author 
{\cite[Thms.\ 4.3, 5.4]{SkanNNDCB}} and 
others,
notably 
Gasharov~\cite{GashInc},
Karlin-MacGregor~\cite{KMG}, 
Lindstr\"om~\cite{LinVRep}, 
Littlewood~\cite{LittlewoodTGC}, 
Merris-Watkins~\cite{MerWatIneq},
Stanley-Stembridge~\cite{StanStemIJT}, \cite{StemImm}.
These will be discussed in Section~\ref{s:pathposet}.
There is no conjectured combinatorial interpretation of 
$\phi^\lambda(C'_w(1))$, even for $w$ 
\avoidingp,
but
interpretations have been given 
for particular partitions $\lambda$
by Stembridge~\cite{StemConj},
several of the authors~\cite{CSSkanPathTab},
and Wolfgang~\cite{WolfgangThesis}.
The problem of 
interpreting $\theta(C'_w(1))$
when $w$ does not \avoidp\ is open.

Our understanding of $\hnq$-traces is even less complete. 
We know that irreducible $\hnq$-characters 
$\{ \chi_q^\lambda \,|\, \lambda \vdash n \}$
satisfy $\chi_q^\lambda(T_w) \in \mathbb Z[q]$ for all $w \in \sn$, since
$\chi^\lambda_q(T_w)$ can be expressed as the trace of a $\mathbb Z[q]$ 
matrix~\cite{KLRepCH}.
Thus for any element $\theta_q$ of the mentioned $\hnq$-trace bases 
and any element $z \in \spn_{\mathbb Z[q]} \{ T_w \,|\, w \in \sn \}$,
we have $\theta_q(z) \in \mathbb Z[q]$ as well.
For instance, elements of a modified Kazhdan-Lusztig basis 
$\{ \qew C'_w(q) \,|\, w \in \sn \}$ belong to this span.
On the other hand, 
we do not have a general elementary formula
for the polynomial $\theta_q(z)$.
This is unfortunate, since $\hnq$-characters are important in the study of 
$\hnq$ and quantum groups. 
In some cases, we may associate sequences 
$(S_k)_{k \geq 0}$, $(R_k)_{\geq 0}$ of sets to the pair $(\theta_q, z)$
to combinatorially interpret
$\theta_q(z)$ as
$\sum_k (-1)^{|S_k|}|R_k| q^k$,
or simply as $\sum_k |R_k| q^k$ if $\theta_q(z) \in \mathbb N[q]$.
We summarize 
results and open problems in the following table.

\hhhpsp
\begin{center}
\newcolumntype{R}{>{$}c<{$}}
\begin{tabularx}{154.7mm}{|R|R|R|R|R|}%
\hline
& & & &\\
& & & &\\
\theta_q \ntnsp
& \theta_q(T_w) \in \mathbb N[q] \mathrm{?}\ntksp 
& \begin{matrix}\mbox{interpretation of}\\ 
\mbox{$\theta_q(T_w)^{\phantom{\hat T}}\nTksp$ as}\\ 
\mbox{$\sum_k^{\phantom{\hat T}} (-1)^{|S_k|}|R_k|q^k$?}\end{matrix}\ntksp
& \theta_q(\qew C'_w(q)) \in \mathbb N[q] \mathrm{?}  
& \begin{matrix} \mbox{interpretation of}\\ 
\mbox{$\theta_q(\qew C'_w(q))^{\phantom{\hat T}}\nTksp$ as}\\ 
\mbox{$\sum_k^{\phantom{\hat T}}\ntksp|R_k|q^k$ for}\\ 
\mbox{$w^{\phantom{\hat T}}\nTksp$ avoiding}\\ 
\mbox{$3412^{\phantom{\hat T}}\nTksp$ and $4231$?}\end{matrix} \\
& & & &\\
& & & &\\
\hline 
& & & &\\
& & & &\\
\eta_q^\lambda & \mbox{no} & \mbox{open} & \mbox{yes} & \mbox{stated in Section 5}\\
& & & &\\
& & & &\\
\epsilon_q^\lambda & \mbox{no} & \mbox{open} & \mbox{yes} & \mbox{stated in Section 6}\\
& & & &\\
& & & &\\
\chi_q^\lambda \ntksp & \mbox{no} & \mbox{open} & \mbox{yes} & \mbox{stated in Section 8}\\
& & & &\\
& & & &\\
\psi_q^\lambda \ntksp & \mbox{no} & \mbox{open} & \mbox{conj.\ by Haiman} & \mbox{stated in Section 9}\\
& & & &\\
& & & &\\
\phi_q^\lambda \ntksp & \mbox{no} & \mbox{open} & \mbox{conj.\ by Haiman} & \mbox{open} \\
& & & &\\
& & & &\\
\hline
\end{tabularx}
\end{center}


The polynomial $\chi_q^\lambda(T_w)$, 
and therefore all polynomials $\theta_q(T_w)$ above,
may be computed via a $q$-extension of the Murnaghan-Nakayama algorithm,
developed 
in~\cite{KerVerCFR}, \cite{KingWRepTr}, \cite{RamFrob}, \cite{VanderAlg}.
However,
none of these has a conjectured expression of the type stated above.
Interpretations of $\theta_q(\qew C'_w(q))$ 
are not known for general $w \in \sn$,
but results concerning containment in $\mathbb N[q]$ 
follow principally from work of Haiman~\cite{HaimanHecke}.
In all cases above, coefficients of $\theta_q(\qew C'_w(q))$ are symmetric
about $\qew$.
They are also unimodal 
for $\theta_q \in \{ \eta_q^\lambda, \epsilon_q^\lambda, \chi_q^\lambda \}$
and conjectured to be so for $\theta_q \in \{ \psi_q^\lambda, \phi_q^\lambda \}$
\cite[Lem.\,1.1, Conj.\,2.1]{HaimanHecke}.
For $w$ \avoidingp,
formulas for 
$\eta_q^\lambda(\qew C'_w(q))$, 
$\epsilon_q^\lambda(\qew C'_w(q))$, 
$\chi_q^\lambda(\qew C'_w(q))$, and 
$\psi_q^\lambda(\qew C'_w(q))$ 
follow from work of 
Athanasiadis~\cite{AthanPSE},
Gasharov~\cite{GashInc},
Shareshian-Wachs~\cite{SWachsChromQF}, and 
the authors.
There is no conjectured combinatorial interpretation of 
$\phi_q^\lambda(\qew C'_w(q))$, even for $w$ avoiding the 
patterns $3412$, $4231$,
although for particular partitions $\lambda$
interpretations are given by 
the authors in Section~\ref{s:hnqinterpm}.
The problem of combinatorially interpreting $\theta_q(\qew C'_w(q))$
when $w$ does not \avoidp\ is open.


Another way to understand the evaluations $\theta(w)$ 
is to define 
a generating function $\imm{\theta}(x)$ 
for $\{ \theta(w) \,|\, w \in \sn \}$
in the polynomial ring $\zxn$. 
Similarly, we may define
a generating function $\imm{\theta_q}(x)$ 
for $\{ \theta_q(T_w) \,|\, w \in \sn \}$
in a certain noncommutative ring $\anq$. 
In some cases these generating functions have simple forms.
We summarize known results in the following tables.

\hhhsp
\begin{center}
\newcolumntype{R}{>{$}c<{$}}
\begin{tabularx}{68.8mm}{|R|R|}%
\hline
&\\
&\\
&\\
\theta
& \mbox{simple expression for $\imm{\theta}(x)$?} \\
&\\
&\\
\hline 
&\\
&\\
&\\
\eta^\lambda & \mbox{yes} \\
&\\
&\\
&\\
\epsilon^\lambda & \mbox{yes} \\
&\\
&\\
&\\
\chi^\lambda & \mbox{open} \\
&\\
&\\
&\\
\psi^\lambda & \mbox{yes} \\
&\\
&\\
&\\
\phi^\lambda & \mbox{open} \\
&\\
&\\
&\\
\hline
\end{tabularx}
\qquad \qquad
\begin{tabularx}{70.185mm}{|R|R|}%
\hline
&\\
&\\
\theta_q
& \mbox{simple expression for $\imm{\theta_q}(x)$?} \\
&\\
&\\
\hline 
&\\
&\\
\eta_q^\lambda & \mbox{yes} \\
&\\
&\\
\epsilon_q^\lambda & \mbox{yes} \\
&\\
&\\
\chi_q^\lambda & \mbox{open} \\
&\\
&\\
\psi_q^\lambda & \mbox{open} \\
&\\
&\\
\phi_q^\lambda & \mbox{open} \\
&\\
&\\
\hline
\end{tabularx}
\end{center}
\vspace{3mm}
\ssp

Simple expressions for 
$\imm{\eta^\lambda}(x)$
and $\imm{\epsilon^\lambda}(x)$
are due to Littlewood~\cite{LittlewoodTGC} and Merris-Watkins~\cite{MerWatIneq},
and a simple expression for $\imm{\psi^\lambda}(x)$ follows immediately from a
standard definition of $\psi$.  
An expression for $\imm{\chi^\lambda}(x)$ as a coefficient of a 
generating function in two sets of variables was given by 
Goulden-Jackson~\cite{GJMaster}.
There is no conjectured simple formula for 
$\imm{\phi^\lambda}(x)$,
although 
a simple formula 
for particular partitions $\lambda$
was stated by Stembridge~\cite{StemConj}. 
Simple expressions for 
$\imm{\eta_q^\lambda}(x)$
and $\imm{\epsilon_q^\lambda}(x)$
are due to the fourth author and Konvalinka~\cite{KSkanQGJ},
as is a (less simple) 
expression for $\imm{\chi_q^\lambda}(x)$ as a coefficient in a 
generating function in two sets of variables.

In Section~\ref{s:imm} 
we discuss 
generating functions $\imm{\theta}(x)$ 
for $\sn$-class functions $\theta$ 
and 
generating functions $\imm{\theta_q}(x)$ 
for $\hnq$-traces $\theta_q$.
These generating functions belong to the ring
$\zxn$ and to a certain $q$-analog $\anq$ 
of $\zxn$ known as the {\em quantum matrix bialgebra}.
In Section~\ref{s:dsn}
we relate these generating functions to structures 
called {\em zig-zag networks}~\cite{SkanNNDCB},
which serve as combinatorial interpretations for
Kazhdan-Lusztig basis elements indexed by permutations \avoidingp.
In Section~\ref{s:pathposet} we introduce a partial order on paths in
zig-zag networks, and show how this poset and results in the literature lead 
to combinatorial interpretations for the evaluations
of $\sn$-class functions at the above Kazhdan-Lusztig basis elements
of $\zsn$.
For the remainder of the article,
we concentrate on combinatorial interpretations of trace evaulations
of the form $\theta_q(\qew C'_w(q))$ where $w$ \avoidsp.
In Sections~\ref{s:hnqinterph} -- \ref{s:hnqinterpe} 
we 
interpret
$\eta_q^\lambda(\qew C'_w(q))$
and $\epsilon_q^\lambda(\qew C'_w(q))$. 
In Section~\ref{s:chromsf} we recall the relationship between
$\sn$-class functions and Stanley's chromatic symmetric functions,
and prove that $\hnq$-traces are similarly related to the
Shareshian-Wachs chromatic quasisymmetric functions.
In Sections~\ref{s:hnqinterps} -- \ref{s:hnqinterpp} 
we use results of Shareshian-Wachs and Athanasiadis to
interpret 
$\chi_q^\lambda(\qew C'_w(q))$
and $\psi_q^\lambda(\qew C'_w(q))$. 
One of our interpretations proves a formula conjectured by 
Haiman~\cite[Conj.\,4.1]{HaimanHecke}.
Finally, in Section~\ref{s:hnqinterpm} we state several results 
concerning 
$\phi_q^\lambda(\qew C'_w(q))$.

\section{Generating functions for $\theta(w)$ and $\theta_q(T_w)$ when 
$\theta$ ($\theta_q$) is fixed}\label{s:imm}




For a fixed $\sn$-class function $\theta$, we
create a generating function for $\{ \theta(w) \,|\, w \in \sn \}$
by writing a matrix of variables $x = (x_{i,j})_{i,j \in [n]}$
and defining 
\begin{equation*}
\imm \theta(x) \defeq \sum_{w \in \sn} \theta(w) \permmon xw \ \in \ 
\zx \defeq \zxn,
\end{equation*}
where $w_1 \cdots w_n$ is the one-line notation of $w$. 
We call this polynomial the 
{\em $\theta$-immanant}.  The sign character 
($w \mapsto (-1)^{\ell(w)}$)
immanant and trivial character 
($w \mapsto 1$)
immanant are the determinant and permanent,
\begin{equation*}
\det(x) = \sum_{w \in \sn} (-1)^{\ell(w)} \permmon xw,
\qquad
\per(x) = \sum_{w \in \sn} \permmon xw.
\end{equation*}
Simple formulas for the $\epsilon^\lambda$-immanants 
and $\eta^\lambda$-immanants 
employ
determinants and permanents of square submatrices $x_{I,J}$ of $x$,
\begin{equation*}
x_{I,J} \defeq (x_{i,j})_{i \in I, j \in J},
\qquad 
I,J \subset [n] \defeq \{1,\dotsc, n\},
\qquad 
|I| = |J|.
\end{equation*}
In particular, 
we have the 
Littlewood-Merris-Watkins identities~\cite{LittlewoodTGC}, \cite{MerWatIneq} 
\begin{equation}\label{eq:inducedimmssubmat}
\imm{\epsilon^\lambda}(x) = 
\nTtksp
\sum_{(I_1, \dotsc, I_r)} 
\nTksp
\det(x_{I_1, I_1}) \cdots \det(x_{I_r, I_r}), \quad
\imm{\eta^\lambda}(x) = 
\nTtksp
\sum_{(I_1, \dotsc, I_r)} 
\nTksp
\per(x_{I_1, I_1}) \cdots \per(x_{I_r, I_r}),
\end{equation}
where $\lambda = (\lambda_1, \dotsc, \lambda_r) \vdash n$ and
the sums are over 
all sequences 
of pairwise disjoint subsets of $[n]$ satisfying $|I_j| = \lambda_j$.
We will call such a sequence an {\em ordered set partition} of $[n]$ 
{\em of type $\lambda$,} and will sometimes write $I \vdash [n]$
and $\ell(\lambda) = r$.

A simple
formula for the $\psi^\lambda$-immanant 
involves
a sum over all permutations of cycle type $\lambda$.  
Specifically, we have
\begin{equation}\label{eq:pimm}
\imm{\psi^\lambda}(x) = z_\lambda 
\nTksp \sumsb{w\\ \ctype{w} = \lambda} \nTksp
\permmon xw,
\end{equation}
where $z_\lambda$ is the product
$1^{\alpha_1} 2^{\alpha_2} \cdots n^{\alpha_n} \alpha_1! \cdots \alpha_n!$,
and $\lambda$ has $\alpha_i$ components (or {\em parts}) 
equal to $i$ for $i = 1, \dotsc, n$.  
No such simple formulas are known for the 
$\chi^\lambda$-immanants or $\phi^\lambda$-immanants in general,
although Stembridge gave a formula~\cite[Thm.\,2.8]{StemConj}
for $\imm{\phi^\lambda}(x)$ when $\lambda$ is the rectangular partition
$(k)^r = (k,\dotsc,k)$.  In this case we have
\begin{equation}\label{eq:stemmonrect}
\imm{\phi^{(k)^r}}(x) = \sum_{(I_1,\dotsc, I_k)} 
\det(x_{I_1,I_2}) \det(x_{I_2,I_3}) \cdots
\det(x_{I_k,I_1}),
\end{equation}
where the sum is over all ordered set partitions 
of $[n] = [kr]$ 
of type $(r)^k$.
(See \cite{GJMaster} for an expression for $\imm{\chi^\lambda}(x)$ 
as a coefficient of a generating function in two sets of variables.)



For a fixed $\hnq$-trace $\theta_q$, we
create a generating function for $\{ \theta_q(T_w) \,|\, w \in \sn \}$
as before,
except that we interpret
polynomials in 
$x = (x_{i,j})$ 
as elements of
the {\em quantum matrix bialgebra} $\anq$,
the noncommutative $\zqq$-algebra
generated by $n^2$ variables 
$x_{1,1}, \dotsc, x_{n,n}$,
subject to the relations 
\begin{equation}\label{eq:qringrelations}
\begin{alignedat}{2}
x_{i,\ell}x_{i,k} &= \smash{\qp12}x_{i,k} x_{i,\ell}, &\qquad
x_{j,k} x_{i,\ell} &= x_{i,\ell}x_{j,k}\\
x_{j,k}x_{i,k} &= \smash{\qp12}x_{i,k} x_{j,k} &\qquad
x_{j,\ell} x_{i,k} &= x_{i,k} x_{j,\ell} + \smash{(\qdiff)} x_{i,\ell}x_{j,k},
\end{alignedat}
\end{equation}
for all indices $1 \leq i < j \leq n$ and $1 \leq k < \ell \leq n$.
As a $\smash{\zqq}$-module, $\anq$ has a natural basis of monomials 
$x_{\ell_1,m_1} \cdots x_{\ell_r,m_r}$ in which index pairs
appear in lexicographic order.  
The relations (\ref{eq:qringrelations}) 
allow one
to express other monomials in terms of this natural basis.

As a generating function for $\{ \theta_q(T_w) \,|\, w \in \sn \}$,
we define
\begin{equation*}
\imm{\theta_q}(x) \defeq \sum_{w \in \sn} \theta_q(T_w) \qiew \permmon xw
\end{equation*}
in $\anq$, and call this 
the 
{\em $\theta_q$-immanant}.  The $\hnq$ sign character 
($T_w \mapsto (-1)^{\ell(w)}$)
immanant and 
trivial character 
($T_w \mapsto q^{\ell(w)}$)
immanant are called the {\em quantum determinant} and {\em quantum permanent},
\begin{equation*}
\qdet(x) = \sum_{w \in \sn} (-\qm12)^{\ell(w)} \permmon xw,
\qquad
\qper(x) = \sum_{w \in \sn} (\qp12)^{\ell(w)} \permmon xw.
\end{equation*}
Specializing $\anq$ at $\smash{\qp12}=1$ gives
the commutative polynomial ring $\zx$, 
with elements $\qdet(x)$ and $\qper(x)$ 
specializing to
the classical determinant $\det(x)$ and permanent $\per(x)$.
Simple formulas for the $\epsilon_q^\lambda$-immanants 
and $\eta_q^\lambda$-immanants 
employ
quantum 
determinants and 
quantum 
permanents of submatrices
of $x$.  
In particular, the fourth author and Konvalinka~\cite[Thm.~5.4]{KSkanQGJ}
proved quantum analogs of the Littlewood-Merris-Watkins identities 
(\ref{eq:inducedimmssubmat}),
\begin{equation}\label{eq:qinducedimmssubmat}
\ntksp
\imm{\epsilon_q^\lambda}(x) 
= 
\nTtksp
\sum_{(I_1,\dotsc,I_r)} 
\nTtksp
\qdet(x_{I_1,I_1}) \cdots \qdet(x_{I_r,I_r}),
\quad
\imm{\eta_q^\lambda}(x) 
= 
\nTtksp
\sum_{(I_1,\dotsc,I_r)} 
\nTtksp
\qper(x_{I_1,I_1}) \cdots \qper(x_{I_r,I_r}),
\ntksp 
\end{equation}
where the sums are as in (\ref{eq:inducedimmssubmat}).
See \cite{KSkanQGJ} for an expression for the
$\chi_q^\lambda$-immanant as a coefficient of a generating function 
in two sets of variables.
No formulas are known for the 
$\psi_q^\lambda$- 
or $\phi_q^\lambda$-
immanants.
It would be interesting to state a $q$-analog of (\ref{eq:stemmonrect}) 
for rectangular partitions $\lambda = (k)^r$.

\section{Planar 
networks
and path matrices}\label{s:dsn}

Call a directed planar graph $G$ a
{\em planar network of order $n$} if it
is acyclic and 
may be embedded in a disc with $2n$ boundary vertices labeled
clockwise as 
{\em source $1, \dotsc,$ source $n$} 
(with indegrees of $0$) and 
{\em sink $n, \dotsc,$ sink $1$}
(with outdegrees of $0$).
In figures, we will draw 
sources on the left and sinks on the right,
implicitly labeled $1, \dotsc, n$ 
from bottom to top.
Edges will be implicitly oriented from left to right.
Given a planar network $G$, 
define the {\em path matrix} $B = B(G) = (b_{i,j})$ of $G$ by
\begin{equation}\label{eq:pathmatrix}
b_{i,j} = \text{ number of paths in $G$ from source $i$ to sink $j$}.
\end{equation}
The 
path matrix of any planar network is totally nonnegative,
i.e., 
each square submatrix has nonnegative determinant.
Specifically, for sets
$I = \{ i_1, \dotsc, i_k \}$, $J = \{ j_1, \dotsc, j_k \} \subset [n]$,
and the corresponding submatrix
$B_{I,J} \defeq (b_{i,j})_{i \in I, j \in J}$, 
we have that $\det(B_{I,J})$ is equal to the number of families 
$(\pi_1, \dotsc, \pi_k)$ of mutually nonintersecting paths from
sources $i_1, \dotsc, i_k$ (respectively) 
to sinks $j_1, \dotsc, j_k$ (respectively).
This fact is known as {\em Lindstr\"om's Lemma}~\cite{LinVRep}.
(See also \cite{KMG}.)
We will call two planar networks $G_1$, $G_2$ {\em isomorphic} 
and will write $G_1 \cong G_2$ 
if $B(G_1) = B(G_2)$.

For example, consider two isomorphic planar networks,
their common path matrix $B$ and its submatrix $B_{23,13}$:
\begin{equation*}
\raisebox{-8mm}{
\includegraphics[height=18mm]{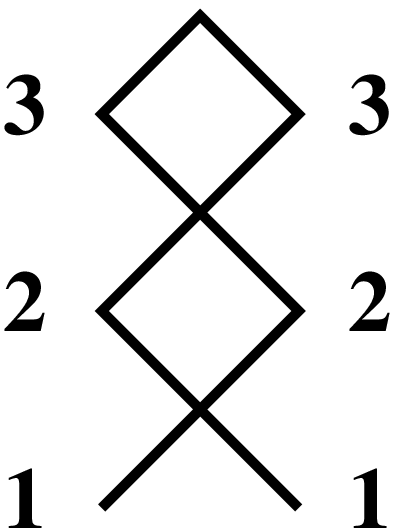}}\ ,
\qquad
\raisebox{-8mm}{
\includegraphics[height=18mm]{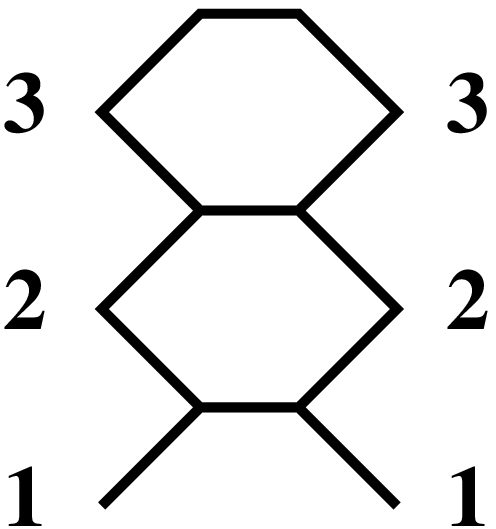}}\ ,
\qquad
B = \begin{bmatrix}
1 & 1 & 0\\
1 & 2 & 1\\
0 & 1 & 2
\end{bmatrix},
\qquad
B_{23,13} = \begin{bmatrix}
1 & 1\\
0 & 2
\end{bmatrix}.
\end{equation*}
We can interpret $\det(B_{23,13}) = 2$ as counting the two path families 
\begin{equation*}
\raisebox{-8mm}{
\includegraphics[height=18mm]{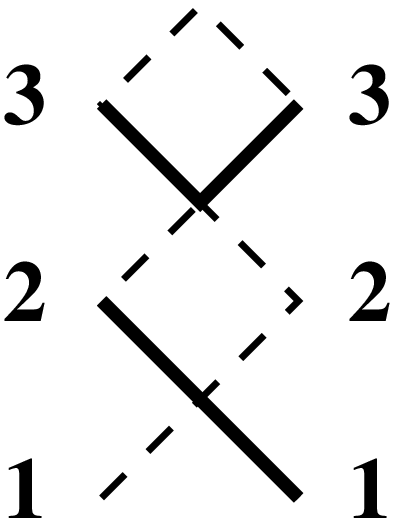}}\ ,
\qquad
\raisebox{-8mm}{
\includegraphics[height=18mm]{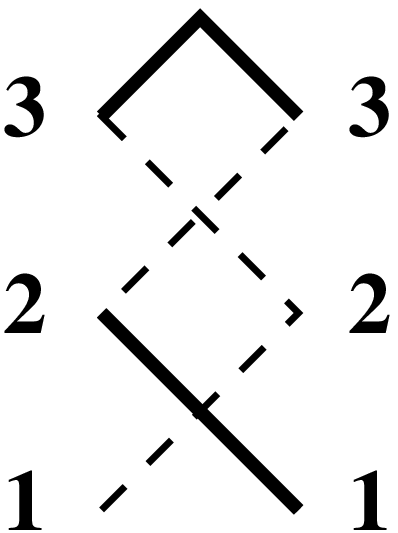}}\ ,
\end{equation*}
from sources $\{2, 3\}$ to sinks $\{1, 3\}$ in the first network.

An easy fact about planar networks is the following.
\begin{obs}\label{o:pnetintersect}
Let $G$ be a planar network of order $n$ 
and assume that for some indices $i, i', j, j' \in [n]$,
$G$ contains a path $\pi_i$ from source $i$ to sink $j$ and 
a path $\pi_{i'}$ from source $i'$ to sink $j'$.  
If these two paths intersect,
then $G$ also contains a path from source $i$ to sink $j'$ and
a path from source $i'$ to sink $j$.  
If $i' < i$ and $j < j'$ then the paths $\pi_i$ and $\pi_{i'}$ cross.
\end{obs}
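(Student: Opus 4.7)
The plan is to prove the two assertions separately, using a graph-theoretic swap for the first and a topological separation argument for the second.

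For the first assertion, I would choose any common vertex $v$ of $\pi_i$ and $\pi_{i'}$. Let $\alpha$ be the initial subpath of $\pi_i$ from source $i$ to $v$ and let $\beta$ be its terminal subpath from $v$ to sink $j$; define $\alpha'$ and $\beta'$ analogously from $\pi_{i'}$. The concatenations $\alpha\beta'$ and $\alpha'\beta$ are directed walks in $G$ from source $i$ to sink $j'$ and from source $i'$ to sink $j$, respectively, and any directed walk contains a directed path with the same endpoints (obtained by deleting repeated vertex occurrences). This yields the two required paths.

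For the second assertion, I would fix an embedding of $G$ in the closed disc guaranteed by the definition of planar network. The path $\pi_{i'}$ is then a simple arc from source $i'$ to sink $j'$ with both endpoints on the boundary circle; together with either of the two boundary arcs joining these endpoints, $\pi_{i'}$ forms a Jordan curve, and by the Jordan curve theorem the complement of $\pi_{i'}$ in the open disc has exactly two connected components, separated by $\pi_{i'}$. Under the clockwise labeling source $1, \dotsc,$ source $n,$ sink $n, \dotsc,$ sink $1$ of the boundary vertices, the hypothesis $i' < i$ and $j < j'$ places source $i$ on the boundary arc going clockwise from source $i'$ to sink $j'$, while sink $j$ lies on the other boundary arc. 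Hence source $i$ and sink $j$ lie in different components of the disc cut by $\pi_{i'}$, so $\pi_i$ must meet $\pi_{i'}$; since the embedding has no edge crossings, this meeting occurs at a shared vertex.

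The main obstacle will be phrasing the topological step concisely: one needs to invoke the Jordan curve theorem and verify that the boundary points interleave correctly in the stated clockwise labeling, without lengthy disc-embedding bookkeeping. The first assertion, by contrast, is essentially mechanical.
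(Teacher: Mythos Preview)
The paper states this result as an observation without proof, introducing it only as ``an easy fact about planar networks.'' Your argument is correct and supplies the standard justification: the tail-swap at a common vertex for the first assertion, and the Jordan-curve separation for the second. One minor simplification is available: since $G$ is acyclic by definition, the concatenations $\alpha\beta'$ and $\alpha'\beta$ cannot repeat a vertex, so they are already simple paths and the walk-to-path extraction step is unnecessary.
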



For $[a,b]$ a subinterval of $[n]$,
let $G_{[a,b]}$
be the planar network
consisting of $a-1$ horizontal edges
from sources $1, \dotsc, a-1$ to corresponding sinks,
a ``star'' of $b-a+1$ 
edges from sources $a, \dotsc, b$ to an intermediate vertex
and $b-a+1$ more edges from this vertex to sinks $a,\dotsc,b$,
and $n-b$ more horizontal edges
from sources $b+1,\dotsc,n$ to corresponding sinks. 
For $n = 4$, there are seven such networks:
$G_{[1,4]}$, 
$G_{[2,4]}$, 
$G_{[1,3]}$, 
$G_{[3,4]}$, 
$G_{[2,3]}$, 
$G_{[1,2]}$, 
$G_{[1,1]} = \cdots = G_{[4,4]}$, respectively,
\begin{equation*}
\raisebox{-6mm}{
\includegraphics[height=12mm]{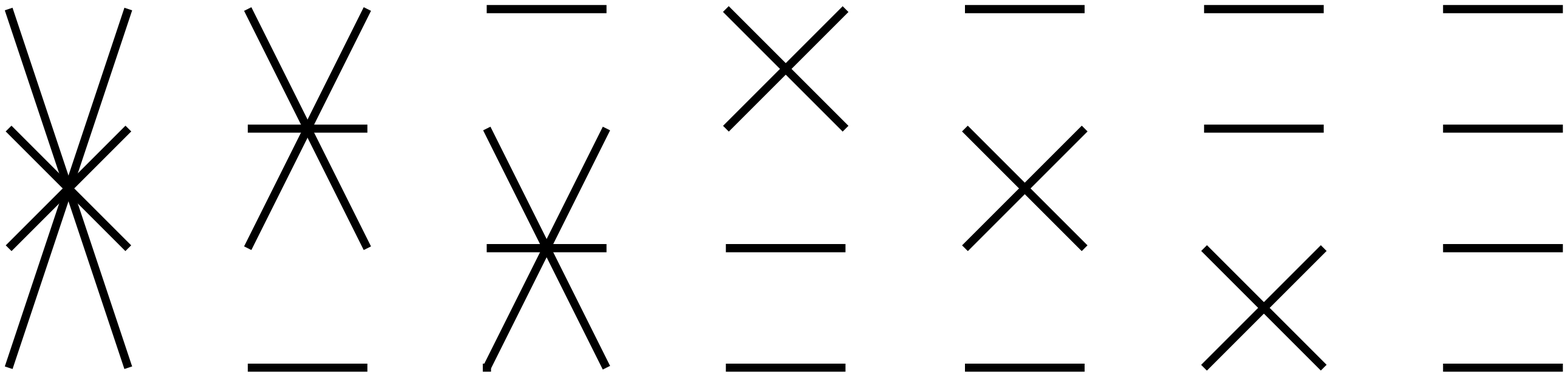}}.
\end{equation*}

Given planar networks $G$, $H$ of order $n$, in which all sources have
outdegree $1$ and all sinks have indegree $1$, define
$G \circ H$ to be the concatenation of $G$ and $H$, 
where for $i = 1,\dotsc, n$, sink $i$ of $G$ is dropped,
source $i$ of $H$ is dropped, and 
the unique edge in $G$ from vertex $x$ to sink $i$ 
and the unique edge in $H$ from source $i$ to vertex $y$ 
are merged to form a single edge from $x$ to $y$ in $G \circ H$.
Note that for star networks $G_{[c_1,d_1]}$, $G_{[c_2,d_2]}$ 
indexed by nonintersecting intervals, the two concatenations
$G_{[c_1,d_1]} \circ G_{[c_2,d_2]}$ and 
$G_{[c_2,d_2]} \circ G_{[c_1,d_1]}$ are isomorphic.
 
We will be interested in concatenations
\begin{equation}\label{eq:concat}
G = G_{[c_1,d_1]} \circ \cdots \circ G_{[c_t,d_t]}
\end{equation}
such that
\begin{enumerate}
\item the sequence $([c_1,d_1], \dotsc, [c_t,d_t])$
consists of $t$ distinct, pairwise nonnesting intervals, 
\item for $i < j < k$, 
if $[c_i,d_i] \cap [c_j,d_j] \neq \emptyset$
and $[c_j,d_j] \cap [c_k,d_k] \neq \emptyset$, then we have
$c_i < c_j < c_k$ 
(and $d_i < d_j < d_k$) 
or $c_i > c_j > c_k$
(and $d_i > d_j > d_k$).
\end{enumerate}
We define a relation $\precdot$
on the set of intervals 
appearing 
in the concatenation (\ref{eq:concat})
by declaring $[c_i,d_i] \precdot [c_k,d_k]$ 
if 
\begin{enumerate}
\item $i < k$, 
\item $[c_i,d_i] \cap [c_k,d_k] \neq \emptyset$, 
\item $[c_i,d_i] \cap [c_j,d_j] \cap [c_k,d_k] = \emptyset$ 
for $j = i+1, \dotsc, k-1$.
\end{enumerate}
This is the covering relation of a partial order $\preceq$.

For each planar network $G$ 
of the form
(\ref{eq:concat})
and satisfying the conditions following (\ref{eq:concat}), we 
define a related planar network $F$ by 
considering each covering 
pair $[c_i,d_i] \precdot [c_j,d_j]$,
with $| [c_i,d_i] \cap [c_j,d_j] | = k$, 
and deleting all but one of the $k$ paths 
from the central vertex of $G_{[c_i,d_i]}$ to the central vertex of $G_{[c_j,d_j]}$.
Following \cite{SkanNNDCB}, we call the resulting network $F$ 
a {\em zig-zag network}.
In the special case that we have
$c_1 > \cdots > c_t$,
we call $F$ a {\em descending star network}.
The descending star networks (up to isomorphism) of order $4$ are
\begin{equation}\label{eq:xfigures2}
\raisebox{-6mm}{
\includegraphics[height=12mm]{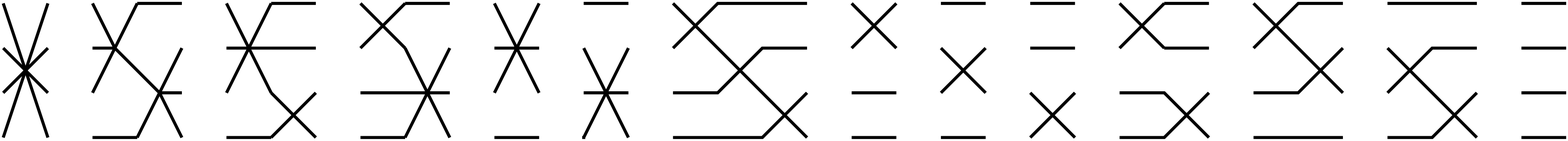}}\ .
\end{equation}
The zig-zag networks 
of order $4$ 
which are not descending star networks are
\begin{equation}\label{eq:xfigureszz}
\raisebox{-6mm}{
\includegraphics[height=12mm]{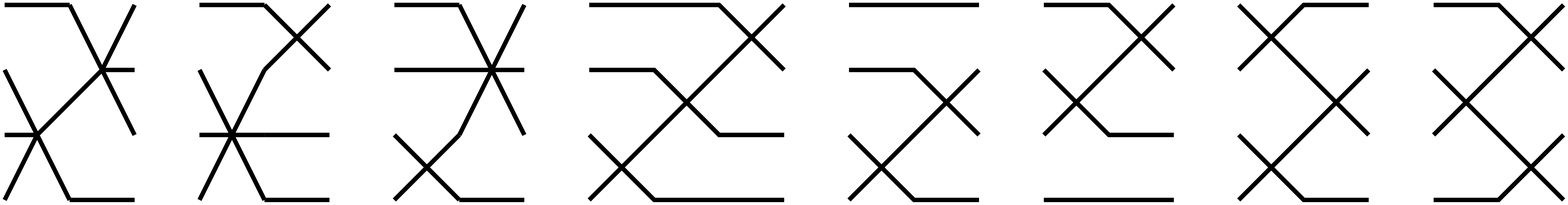}}\ .
\end{equation}

It is easy to see that if $F$ is a zig-zag network of order $n$,
then there is at most one interval in the concatenation 
(\ref{eq:concat}) containing $n$, and this interval must be
maximal or minimal (or both) in the partial order $\preceq$.

It was shown in \cite[Thm.\ 3.5, Lem.\ 5.3]{SkanNNDCB} 
that zig-zag networks of order $n$ correspond bijectively to 
\pavoiding permutations in $\sn$.  To summarize this bijection,
we let $F$ be the zig-zag 
network obtained from 
the concatenation $G$ in (\ref{eq:concat}), and construct
another concatenation of star networks as follows.
For $i = 1, \dotsc, t-1$, if the interval $[c_i,d_i]$ is covered by $[c_j,d_j]$
in the order $\preceq$ and if $|[c_i,d_i] \cap [c_j,d_j]| > 1$, then insert 
$G_{[c_i,d_i] \cap [c_j,d_j]}$ 
immediately after $G_{[c_i,d_i]}$ 
in the current concatenation.  
(If $[c_i,d_i]$ is also covered by a second interval $[c_k,d_k]$, 
then $G_{[c_i,d_i] \cap [c_k,d_k]}$ may be inserted before 
or after $G_{[c_i,d_i] \cap [c_j,d_j]}$.)
Call the resulting augmented network $G'$.
Now visually follow paths from sources to sinks in $G'$, 
passing ``straight'' through each star,
to create a \pavoiding permutation $w \in \sn$. 
See \cite[Sec.\,3]{SkanNNDCB} for a description of the inverse
of this bijection, which maps $w$ to $G'$.
We will let $F_w$ and $G'_w$ denote the zig-zag network 
and augmented star network corresponding to a fixed
\pavoiding permutation $w$, and we will let $w(F)$ denote the
\pavoiding permutation corresponding to a fixed zig-zag network $F$.

For example, 
suppose that 
$F$ is the zig-zag network obtained from the concatenation
$G = G_{[3,7]} \circ G_{[5,8]} \circ G_{[8,9]} \circ G_{[1,2]} \circ G_{[2,4]}$
of star networks of order $9$.
Drawing the poset $\preceq$ on these intervals from left to right, we have
\begin{equation*}
\raisebox{-12mm}{
\includegraphics[height=40mm]{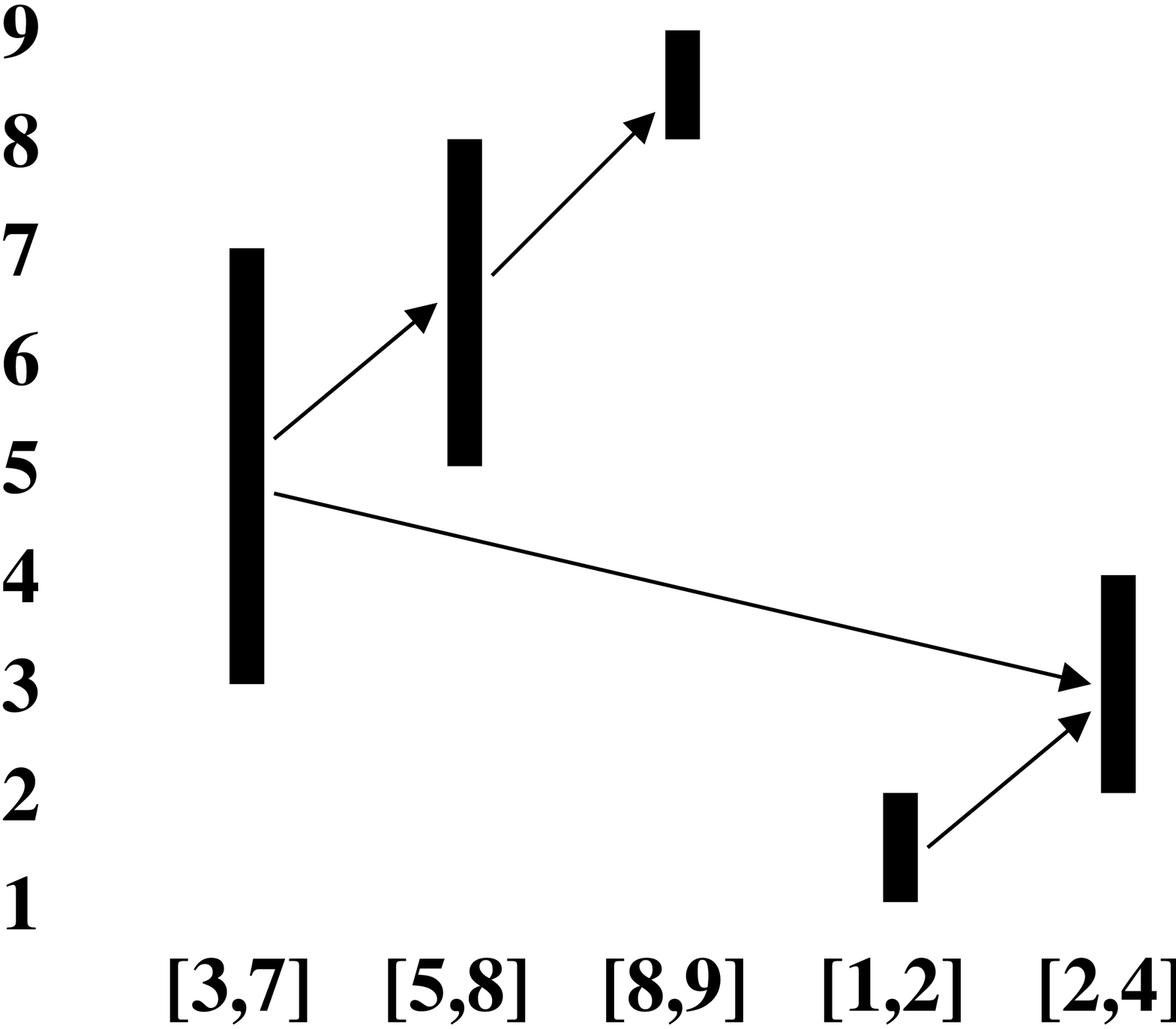}}\ .
\end{equation*}
Since the only covering pairs which intersect at more than an endpoint are
$[3,7] \precdot [5,8]$ and $[3,7] \precdot [2,4]$, we construct
$G'$ by inserting
$G_{[3,7] \cap [5,8]} = G_{[5,7]}$ and $G_{[3,7] \cap [2,4]} = G_{[3,4]}$ after $G_{[3,7]}$.
Thus we have
$G' = G_{[3,7]} \circ G_{[5,7]} \circ G_{[3,4]} \circ G_{[5,8]} \circ G_{[8,9]} \circ G_{[1,2]} \circ G_{[2,4]}$,
and we obtain the \pavoiding permutation $w = w(F) = 419763258$:
\begin{equation*}
F = \raisebox{-17mm}{
\includegraphics[height=34mm]{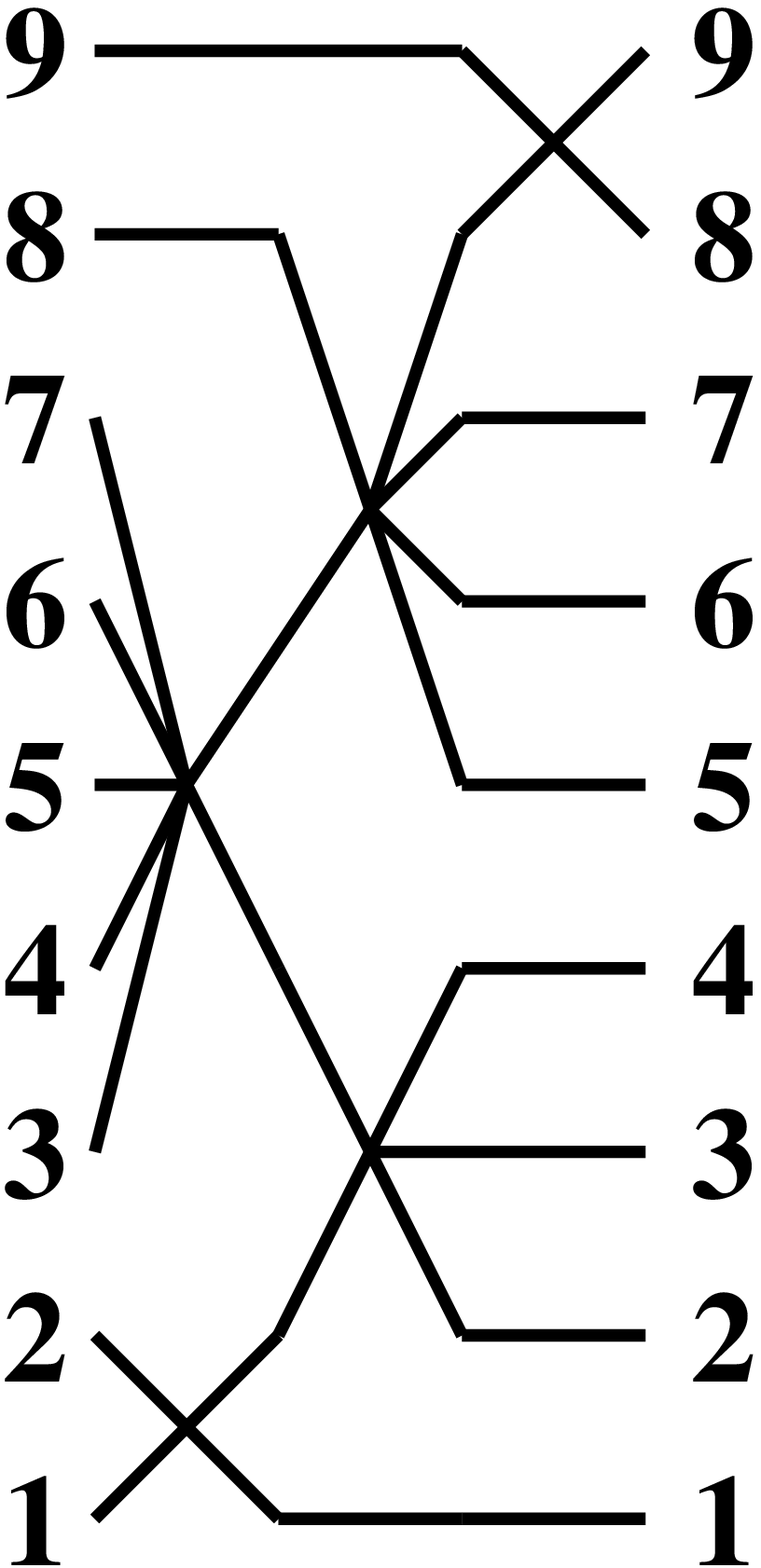}}\ , \quad
G = \raisebox{-17mm}{
\includegraphics[height=34mm]{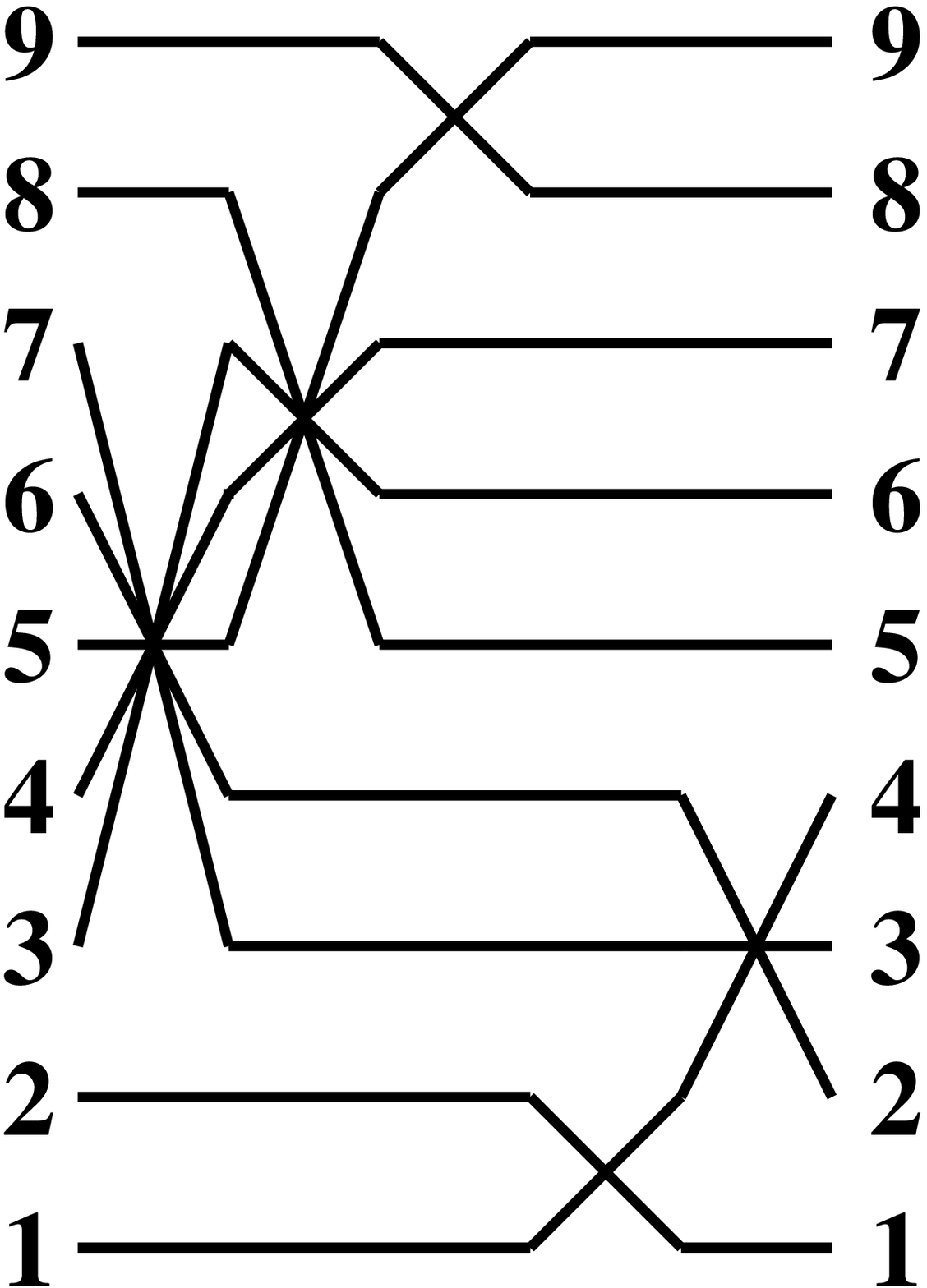}}\ , \quad
G' = \raisebox{-17mm}{
\includegraphics[height=34mm]{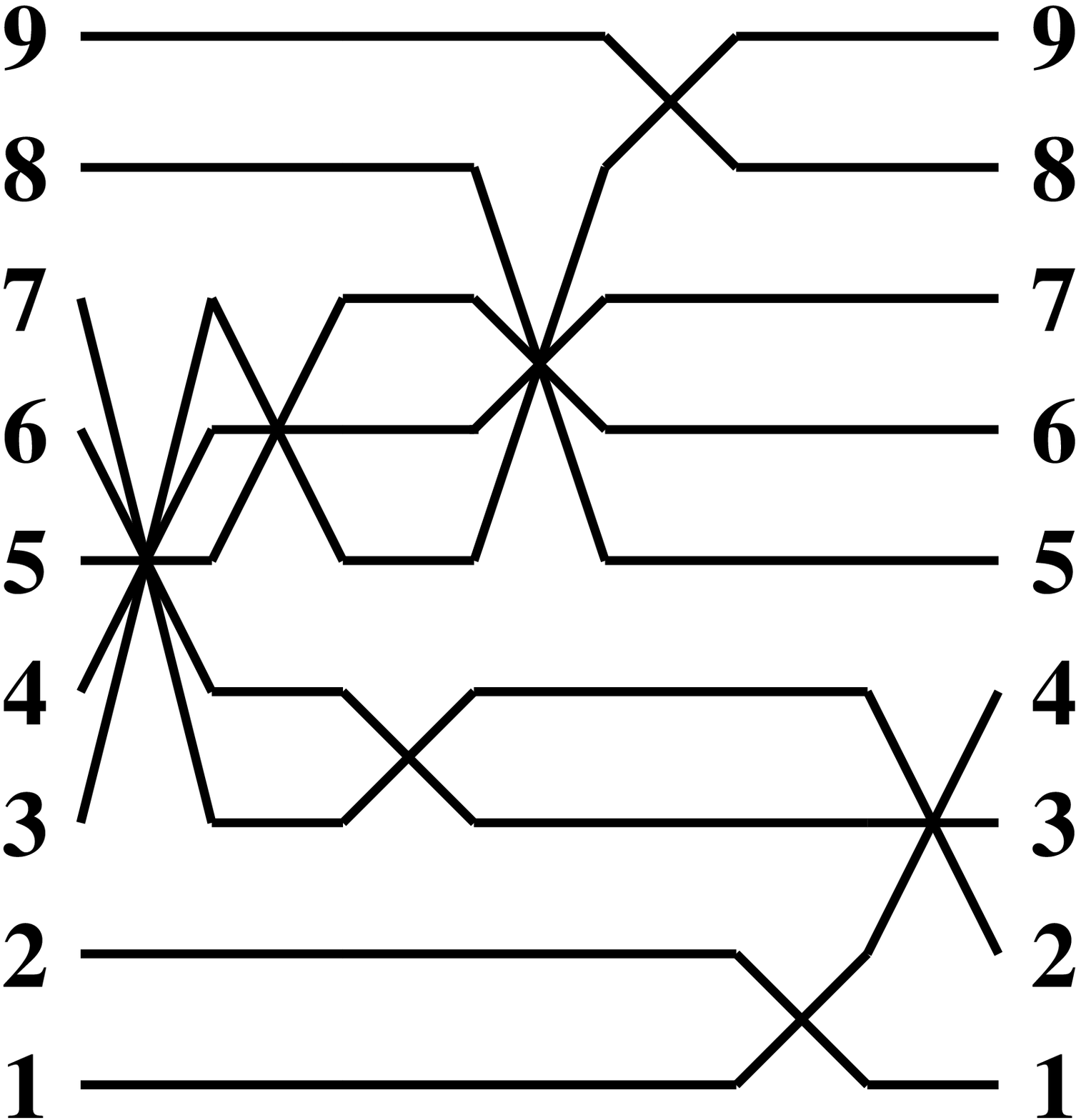}}\ , \quad
w = \binom{123456789}{419763258}.
\end{equation*}

Note that we have $G_{[3,4]} \circ G_{[5,7]} \cong G_{[5,7]} \circ G_{[3,4]}$,
since the intervals $[3,4]$, $[5,7]$ do not overlap.
In general, we have the following.
\begin{obs}\label{o:firstorlast}
If $[c_{i_1}, d_{i_1}], \dotsc, [c_{i_t},d_{i_t}]$ is a linear extension
of the poset $\preceq$ defined in terms of the concatenation 
(\ref{eq:concat}), then we have
\begin{equation*}
G_{[c_1,d_1]} \circ \cdots \circ G_{[c_t,d_t]} 
\cong
G_{[c_{i_1},d_{i_1}]} \circ \cdots \circ G_{[c_{i_t},d_{i_t}]},
\end{equation*}
and the corresponding zig-zag networks are isomorphic as well.
\end{obs}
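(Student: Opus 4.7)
The plan is to reduce the claim to a single adjacent transposition and invoke the already-noted fact that $G_{[a,b]} \circ G_{[c,d]} \cong G_{[c,d]} \circ G_{[a,b]}$ when the two intervals are disjoint. By the classical combinatorial fact that any two linear extensions of a finite poset are connected by a sequence of transpositions of adjacent incomparable elements, with every intermediate sequence itself a linear extension, we reduce the observation to showing that whenever $[c_i,d_i]$ and $[c_{i+1},d_{i+1}]$ are consecutive in the concatenation and incomparable in $\preceq$, swapping them yields a valid concatenation which represents an isomorphic planar network and (after augmentation) an isomorphic zig-zag network.

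The crux is that two consecutive intervals which are incomparable in $\preceq$ must be disjoint. Indeed, with $k = i+1$ the intermediate-index condition in the definition of $\precdot$ is vacuous, so $[c_i,d_i] \cap [c_{i+1},d_{i+1}] \neq \emptyset$ would immediately give $[c_i,d_i] \precdot [c_{i+1},d_{i+1}]$, contradicting incomparability. Once disjointness is established, the star-network commutation cited above yields the planar-network isomorphism, and one checks directly that the new ordering still satisfies conditions~(1) and~(2): condition~(1) depends only on the underlying set, while condition~(2) is vacuous for any triple involving the disjoint swapped pair, and other triples are unaffected.

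For the zig-zag network claim, I would argue that the augmentation producing $G'$ adds the same collection of intersection stars $G_{[c_i,d_i] \cap [c_j,d_j]}$ (one per covering pair of $\preceq$) regardless of the chosen linear extension. Each inserted intersection star sits adjacent to $G_{[c_i,d_i]}$ in the augmentation, and is disjoint from every star factor that the swap would require it to cross (by the nonnesting hypothesis combined with condition~(2) applied to the intervals involved). Hence the same disjointness-commutation argument lifts the planar-network isomorphism to the zig-zag level.

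The main potential obstacle is confirming that $\preceq$ together with its covering pairs is intrinsic to the set of intervals — i.e., does not depend on the chosen valid ordering. This is handled by the auxiliary observation that any two intersecting intervals are necessarily comparable in $\preceq$: if $[c_a,d_a]$ and $[c_b,d_b]$ intersect but are not in the covering relation, then by definition there is an intermediate interval intersecting both, and induction on $b-a$ produces a $\precdot$-chain connecting them. Everything else is a routine application of the disjoint-star commutation.
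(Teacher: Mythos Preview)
The paper states this as an observation without proof, leaning on the earlier remark that star networks indexed by disjoint intervals commute up to isomorphism. Your argument is correct and supplies the details the paper omits: the reduction to adjacent transpositions via linear-extension connectivity is the natural approach, and the key fact that $\preceq$-incomparable intervals are disjoint (equivalently, intersecting intervals are $\preceq$-comparable, by your induction on index distance) is exactly what is needed. Your verification that conditions~(1) and~(2) persist under such swaps is also correct.

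Two small remarks. First, your second-paragraph argument (``with $k=i+1$ the intermediate-index condition is vacuous'') literally applies only to adjacent positions in the \emph{original} ordering; at intermediate stages of the swap sequence you need the general fact from your final paragraph. This is purely a matter of presentation order. Second, for the zig-zag claim you argue via the augmented network $G'$, but the zig-zag network $F$ is defined by \emph{deleting} paths from $G$, not by the augmentation that produces $G'$. The direct route is cleaner: a swap of adjacent disjoint stars gives a genuine graph isomorphism of the concatenations that matches central vertices of corresponding stars; since you have shown the covering pairs of $\preceq$ are the same for both orderings, the same multi-edges are collapsed on each side, and the resulting $F$'s are isomorphic.
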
 

Call a sequence $\pi = (\pi_1, \dotsc, \pi_n)$ of source-to-sink paths
in a planar network $G$ of order $n$ a {\em path family}.
We will always assume that path $\pi_i$ begins at source $i$.
If for some $w \in \sn$ with one-line notation $w_1 \cdots w_n$, 
each component path $\pi_i$ 
terminates at sink $w_i$, 
we will say that $\pi$ has {\em type $w$} 
and we will write $\mathrm{type}(\pi) = w$.  
If the union of the paths of $\pi$ is equal to $G$,
we will say that $\pi$ {\em covers} $G$.
For example, the planar network $G_{[2,4]} \circ G_{[1,3]}$ 
can be covered by many different path families, 
including two of type $e$, and two of type $2341$:
\begin{equation}\label{eq:cover}
\raisebox{-10mm}{
\includegraphics[height=20mm]{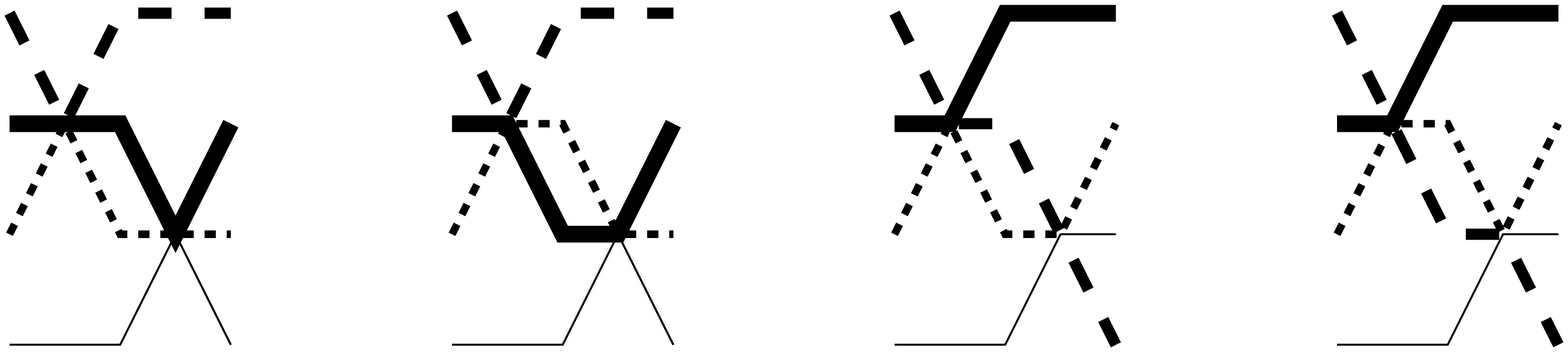}}.
\end{equation}

The result \cite[Lem.\,5.3]{SkanNNDCB} states that 
the path families covering a zig-zag network $F_w$ 
correspond bijectively to elements of
a principal order ideal in the Bruhat order:
\begin{thm}\label{t:lem53nndcb}
Fix $v, w \in \sn$ with $w$ \avoidingp.
There is a unique path family of type $v$ covering $F_w$ if and only if
we have $v \leq w$.  Otherwise, there is no such path family.
\end{thm}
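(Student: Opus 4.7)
The plan is to induct on the number $t$ of star networks $G_{[c_i,d_i]}$ appearing in the concatenation defining $F_w$. For the base case $t=1$, $F_w = G_{[c,d]}$ is a single star, and the bijection $w \leftrightarrow F_w$ identifies $w$ with the longest element $w_0$ of the Young subgroup $\mathfrak{S}_{[c,d]}$ embedded in $\sn$. Covering path families on $G_{[c,d]}$ biject naturally with $\mathfrak{S}_{[c,d]} = [e,w_0]$ by routing source $i$ through the central vertex to sink $v_i$, giving existence and uniqueness precisely when $v \leq w$.

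For the inductive step, I use Observation~\ref{o:firstorlast} to choose a linear extension of $\preceq$ whose last interval $[c_t,d_t]$ is $\preceq$-maximal. Then $F_w$ is obtained from the zig-zag network $F_{w'}$ formed by the first $t-1$ stars (corresponding to some \pavoiding $w'$) by concatenating $G_{[c_t,d_t]}$ and deleting all but one of the $k$ connecting paths between the central vertex of the unique predecessor $[c_i,d_i] \precdot [c_t,d_t]$ and that of $G_{[c_t,d_t]}$, where $k = |[c_i,d_i] \cap [c_t,d_t]|$ (if no such predecessor exists, no deletion occurs and $[c_t,d_t]$ is disjoint from all earlier intervals). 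A covering path family $\pi$ of $F_w$ of type $v$ restricts to a covering path family $\pi'$ of $F_{w'}$ of some type $v'$, and its portion inside $G_{[c_t,d_t]}$ executes a permutation $\sigma$ of $[c_t,d_t]$, so $v = \sigma \cdot v'$. Conversely, compatible pairs $(\pi',\sigma)$ assemble into a unique $\pi$, with Observation~\ref{o:pnetintersect} precluding illegal intersections.

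By induction, $\pi'$ exists and is unique iff $v' \leq w'$. The key remaining task is to show that the pairs $(v',\sigma)$ admissible under the single-connector constraint biject with $[e,w]$. When $[c_t,d_t]$ is disjoint from all earlier intervals, $\sigma$ is unconstrained and $v \leq w$ iff $v' \leq w'$. When $[c_t,d_t]$ has a predecessor with overlap of size $k \geq 1$, the surviving connector rigidly determines which source of $G_{[c_t,d_t]}$ receives a path from $F_{w'}$; together with the requirement that $\pi$ cover all edges of $F_w$, this forces a specific compatibility between $\sigma$ and $v'$. Translating this through the straight-through parsing that reads $w$ off $G'_w$ and invoking the subword criterion for Bruhat order on the canonical reduced expression for $w$ produced by that parsing should yield $v \leq w$ precisely when such a pair $(\pi',\sigma)$ exists, with uniqueness following from the rigidity of the interface.

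The principal obstacle is the subword translation in the overlap case: linking the planar constraint (one surviving connector, every edge covered) to the algebraic subword criterion requires identifying exactly which simple reflections contributed by $G_{[c_t,d_t]}$'s ``missed'' sources appear in the canonical reduced expression for $w$, and then showing that admissible choices of $\sigma$ correspond exactly to the subword embeddings of $v$ in that expression, forcing both existence for $v \leq w$ and uniqueness.
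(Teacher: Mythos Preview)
The paper does not prove this theorem at all: it is quoted verbatim from \cite[Lem.\,5.3]{SkanNNDCB} and used as a black box.  So there is no ``paper's own proof'' to compare against; your task is to assess whether your sketch could stand on its own.

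As a strategy, induction on the number $t$ of stars is natural and the base case is fine.  But the inductive step, as you yourself flag, is not a proof yet.  Two concrete issues:

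\emph{(1) Uniqueness of the predecessor is unjustified.}  You assert that a $\preceq$-maximal interval $[c_t,d_t]$ has a ``unique predecessor $[c_i,d_i]\precdot[c_t,d_t]$.''  Nothing in the zig-zag axioms after~(\ref{eq:concat}) forces this: a maximal interval can overlap two earlier, mutually disjoint intervals (one on each side), and then it covers both.  In that situation your interface analysis must handle two connectors and two compatibility constraints simultaneously, which you have not set up.  (The paper's own example exhibits an interval covered by two others; by reversing the concatenation one sees that covering two others is equally possible.)  A cleaner choice is to peel off an interval that is \emph{extremal} in the obvious linear sense---for instance the unique interval containing $n$, which the paper observes is always $\preceq$-maximal or $\preceq$-minimal and overlaps at most one other interval.

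\emph{(2) The ``subword translation'' is the entire content of the theorem and is left open.}  Saying that the compatibility between $\sigma$ and $v'$ ``should yield $v\le w$'' via the subword criterion is not an argument; it is a restatement of the goal.  Concretely, you need to show: (a) every $v\le w$ factors as $v=\sigma v'$ with $v'\le w'$ and $\sigma$ respecting the single-connector constraint, and such a factorization is unique; (b) conversely every admissible $(\sigma,v')$ gives $v\le w$.  Neither direction is routine.  For (a), Bruhat-interval elements do not in general factor uniquely through a parabolic quotient in the required way, and the planar ``one connector'' condition is what rescues uniqueness---you must actually exhibit the mechanism.  For (b), you need to control how inversions created by $\sigma$ inside $[c_t,d_t]$ interact with those of $v'$ across the overlap.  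Until these are written down, what you have is a plausible outline, not a proof.
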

Thus $w$ is the unique Bruhat-maximal permutation for which some path
family of type  $w$ covers $F_w$.
It follows also that there is exactly one path in $F_w$ from any source $i$ 
to the corresponding sink $i$, and 
at most one path 
from source $i$ to sink $j \neq i$. 
Whether or not such a path exists 
may be determined by the intervals in
the corresponding concatenation of star networks 
and the partial order $\preceq$.

\begin{obs}\label{o:sourcetosink}
Let $F$ be a zig-zag network which corresponds to the concatenation
(\ref{eq:concat}).  There exists a path in $F$ from source $i$ to sink $j$
if and only if we have one of the following.
\begin{enumerate}
\item $i, j \in [c_k,d_k]$ for some $k$.
\item $i \in [c_k, d_k]$, $j \in [c_\ell, d_\ell]$ and 
$[c_k,d_k] \prec [c_\ell, d_\ell]$ for some $k, \ell$.
\end{enumerate}
\end{obs}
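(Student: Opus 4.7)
The plan is to prove both implications by tracing paths through the concatenation $G_{[c_1,d_1]} \circ \cdots \circ G_{[c_t,d_t]}$ underlying $F$. The key structural fact is that in any star $G_{[c,d]}$, a source whose label lies in $[c,d]$ has no horizontal edge and so must enter the center, and a sink in $[c,d]$ is reached only from the center.

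For the ``only if'' direction, I will take a path $P$ from source $i$ to sink $j$ in $F$ and list the indices $k_1 < \cdots < k_r$ of the stars whose central vertex lies on $P$. The structural fact above will force $i \in [c_{k_1},d_{k_1}]$, $j \in [c_{k_r},d_{k_r}]$, and, between visited centers at $k_s$ and $k_{s+1}$, a single threading position $p_s \in [c_{k_s},d_{k_s}]\cap[c_{k_{s+1}},d_{k_{s+1}}]$ with $p_s \notin [c_m,d_m]$ for $k_s < m < k_{s+1}$. The case $r = 1$ immediately gives (1). For $r \geq 2$ I will argue that each consecutive pair $[c_{k_s},d_{k_s}],[c_{k_{s+1}},d_{k_{s+1}}]$ is a covering pair in $\preceq$, so that the visited intervals form a saturated chain $[c_{k_1},d_{k_1}] \precdot \cdots \precdot [c_{k_r},d_{k_r}]$, giving (2). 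Otherwise some intermediate $[c_m,d_m]$ with $k_s < m < k_{s+1}$ has nonempty triple intersection with its two neighbors; the monotonicity axiom on concatenations (the second condition after (\ref{eq:concat})) then forces $[c_m,d_m] \supseteq [c_{k_s},d_{k_s}]\cap[c_{k_{s+1}},d_{k_{s+1}}]$, contradicting $p_s \notin [c_m,d_m]$.

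For the ``if'' direction I will build a path explicitly. In case (1), I take source $i \to$ center $\to$ sink $j$ inside $G_{[c_k,d_k]}$ and extend it through the other stars, using horizontal edges at stars whose interval avoids the current position and source-center-sink triangles at stars whose interval contains it; the triangle edges used here are internal to individual star networks and so survive the zig-zag reduction, which only deletes redundant center-to-center paths between covering pairs. In case (2), I fix a saturated chain $[c_k,d_k] = I_0 \precdot \cdots \precdot I_r = [c_\ell,d_\ell]$, use the fact that the zig-zag reduction leaves exactly one center-to-center path for each step $I_s \precdot I_{s+1}$, concatenate these $r$ pieces, and prepend source $i \to$ center of $I_0$ and append center of $I_r \to$ sink $j$, handling off-chain stars as in case (1).

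The main obstacle will be the interval-geometry step in the necessity argument, showing that the intervals of two consecutive visited centers on any path in $F$ must form a covering pair in $\preceq$. The zig-zag deletion itself is not the crux here: even in $G$ there is no direct center-to-center path for a non-covering but intersecting pair. The real content lies in exploiting the monotonicity condition on concatenations to upgrade the local absence $p_s \notin [c_m,d_m]$ into the global statement that no intermediate interval witnesses a failure of covering.
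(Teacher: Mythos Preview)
The paper states this result as an Observation and gives no proof, so there is no argument of the paper's to compare against; your proposal is the only proof on offer.

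Your necessity direction is sound. The key step---showing that two consecutively visited centers $k_s < k_{s+1}$ on a path must form a covering pair in $\preceq$---is handled correctly: from a nonempty triple intersection with an intermediate $[c_m,d_m]$ you invoke the monotonicity condition~(2) after~(\ref{eq:concat}) to force $[c_m,d_m] \supseteq [c_{k_s},d_{k_s}]\cap[c_{k_{s+1}},d_{k_{s+1}}] \ni p_s$, contradicting $p_s \notin [c_m,d_m]$.

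Your sufficiency direction, however, has a genuine gap in case~(1), and the gap propagates to case~(2) since you handle the ``prepend source $i \to$ center of $I_0$'' and ``append center of $I_r \to$ sink $j$'' segments ``as in case~(1).'' You assert that the triangle edges (source $\to$ center and center $\to$ sink within a single star) ``are internal to individual star networks and so survive the zig-zag reduction.'' This is false. Under the paper's concatenation, the edge from center$_m$ to sink $p$ of star $m$ is merged with subsequent edges until it becomes a single edge center$_m \to$ center$_{m'}$, where $m'$ is the next index with $p \in [c_{m'},d_{m'}]$; that merged edge is precisely one of the center-to-center paths subject to deletion when $[c_m,d_m] \precdot [c_{m'},d_{m'}]$. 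So the path you build at position $i$ (before star $k$) and position $j$ (after star $k$) may well use deleted edges.

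The repair is immediate, and you already have the ingredients. Let $m_1 < \cdots < m_r$ be all indices with $i \in [c_{m_a},d_{m_a}]$. Your own necessity argument (applied with $p_s = i$) shows that each consecutive pair satisfies $[c_{m_a},d_{m_a}] \precdot [c_{m_{a+1}},d_{m_{a+1}}]$, so the zig-zag reduction leaves one edge between each pair of these centers. Now source $i$ connects by a single edge to center$_{m_1}$ (the first interval containing $i$), and you route from there to center$_k$ along the surviving center-to-center edges of this chain. Handle the segment from center$_k$ to sink $j$ symmetrically. Equivalently, in case~(2) simply take $I_0$ to be the first interval containing $i$ and $I_r$ to be the last interval containing $j$; then the prepend/append segments are single edges and no appeal to case~(1) is needed. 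With this correction your argument is complete.
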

It is easy to see that if $F$ is a descending star 
network with sources $i$ and $j$ belonging to the same connected component, 
then the second condition in Observation~\ref{o:sourcetosink} 
is equivalent to the inequality $i > j$.  
This fact allows us to refine Observation~\ref{o:pnetintersect} slightly.
\begin{lem}\label{l:dsnintersect}
Let $\pi_{\ell_1}$, $\pi_{\ell_2}$ be paths in a descending star network $F$
from sources $\ell_1 < \ell_2$ to sinks $m_1, m_2$, respectively.
Then the paths $\pi_{\ell_1}$ and $\pi_{\ell_2}$ intersect if and only if
there exists a path in $F$ from source $\ell_1$ to sink $m_2$.
\end{lem}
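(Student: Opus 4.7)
The plan is to prove the two implications separately, with the forward implication being immediate and the reverse implication requiring the descending-star structure.

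For the forward direction, if $\pi_{\ell_1}$ and $\pi_{\ell_2}$ share a vertex of $F$, then Observation~\ref{o:pnetintersect} applied to these two paths immediately guarantees a path in $F$ from source $\ell_1$ to sink $m_2$ (and also from source $\ell_2$ to sink $m_1$), which is exactly what we need.

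For the reverse direction, assume that a path $\pi'$ from source $\ell_1$ to sink $m_2$ exists in $F$, and suppose for contradiction that $\pi_{\ell_1}$ and $\pi_{\ell_2}$ are vertex-disjoint. Using the uniqueness of source-to-sink paths in a zig-zag network (the remark following Theorem~\ref{t:lem53nndcb}), I observe that $\pi'$ and $\pi_{\ell_1}$ must coincide on their initial common segment: both emanate from source $\ell_1$ and enter the center of the first star in the concatenation (\ref{eq:concat}) that contains $\ell_1$. Likewise, $\pi'$ and $\pi_{\ell_2}$ both terminate at sink $m_2$; invoking the refinement of Observation~\ref{o:sourcetosink} stated just before the lemma (the fact that condition~(2) reduces, in the descending case, to the inequality $i>j$), I would show that in a descending star network every path to a fixed sink $m_2$ must exit through a canonically determined ``last'' star containing $m_2$, so $\pi'$ and $\pi_{\ell_2}$ in fact share the center of this exit star.

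Consequently $\pi'$ shares a star center with $\pi_{\ell_1}$ near its source and a star center with $\pi_{\ell_2}$ near its sink; the chain description of paths provided by Observation~\ref{o:sourcetosink} then exhibits a $\preceq$-chain of intervals joining these two star centers. Using the descending-star hypothesis together with $\ell_1 < \ell_2$ (which forces specific ordering relations between the first star containing $\ell_1$ and the first star containing $\ell_2$, and between the exit stars for $m_1$ and $m_2$), I plan to conclude that the $\preceq$-chains traversed by $\pi_{\ell_1}$ and $\pi_{\ell_2}$ themselves must overlap at some common star, contradicting vertex-disjointness. The main obstacle will be rigorously establishing the ``canonical exit star'' statement in the descending case and then running the case analysis — split according to whether the first star containing $\ell_1$ agrees with the first star containing $\ell_2$, and whether the exit star for $m_2$ agrees with the exit star for $m_1$ — that localizes where the chains of $\pi_{\ell_1}$ and $\pi_{\ell_2}$ are forced to meet along $\pi'$.
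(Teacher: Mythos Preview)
Your forward direction is fine and matches the paper.

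Your reverse direction is workable in principle but is much more circuitous than needed, and the case analysis you propose is unnecessary. The paper argues directly rather than by contradiction, and it never uses the auxiliary path $\pi'$ at all. Here is the comparison.

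The paper introduces, for each index $j\in[n]$, the integers $f(j)$ and $g(j)$ giving the first and last intervals in the concatenation $G_{[c_1,d_1]}\circ\cdots\circ G_{[c_t,d_t]}$ that contain $j$. In a descending star network the intervals form a single $\preceq$-chain, so the unique path from source $\ell$ to sink $m$ passes through precisely the central vertices $x_{f(\ell)},x_{f(\ell)+1},\dotsc,x_{g(m)}$. Two such paths intersect if and only if these index ranges overlap. Now two one-line inequalities finish the argument: (a) the existence of a path from source $\ell_1$ to sink $m_2$ forces $f(\ell_1)\le g(m_2)$; (b) because the $c_i$ are strictly decreasing, $\ell_1<\ell_2$ gives $f(\ell_2)\le f(\ell_1)$. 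Hence $f(\ell_2)\le f(\ell_1)\le g(m_2)$, so $x_{f(\ell_1)}$ lies on both $\pi_{\ell_1}$ and $\pi_{\ell_2}$.

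Your plan arrives at essentially the same ingredients by a longer route: the ``canonical exit star'' you want is just $x_{g(m_2)}$, and the ``ordering relation between the first star containing $\ell_1$ and the first star containing $\ell_2$'' is exactly inequality (b). Once you have those, you do not need a contradiction hypothesis, the auxiliary path $\pi'$, or any case split on whether entry/exit stars coincide; the two inequalities already pin down a specific shared vertex. I would recommend rewriting the reverse direction in the paper's style: define $f$ and $g$, note that each path traverses a contiguous block of star centers, and conclude directly.
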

\begin{proof}
Assume that $F$ has order $n$ and 
corresponds to the concatenation (\ref{eq:concat}) of star networks.  
Let $x_1, \dotsc, x_t$ be the vertices in $F$
corresponding to the central vertices of the $t$ star networks in 
(\ref{eq:concat}), and 
for any index $j \in [n]$, 
let $f(j)$ and $g(j)$ denote the indices of the 
first and last intervals, respectively,
in $[c_1, d_1], \dotsc, [c_t, d_t]$ to contain $j$.
Then for any indices $\ell, m \in [n]$, 
the unique path in $F$ from source $\ell$ to sink $m$
contains the vertices $x_{f(\ell)}, \dotsc, x_{g(m)}$.

It is easy to see that if the intersection 
of $\pi_{\ell_1}$ and $\pi_{\ell_2}$ is nonempty, then there is a path
in $F$ from source $\ell_1$ to sink $m_2$.
Now suppose that there is a path in $F$ 
from source $\ell_1$ to sink $m_2$.
By Observation~\ref{o:sourcetosink},
we have $m_2 < \ell_1$ or we have an interval $[c_k, d_k]$ containing
both $\ell_1$ and $m_2$.
Either case implies that we have $f(\ell_2) \leq f(\ell_1) \leq g(m_2)$,
and the paths $\pi_{\ell_1}$ and $\pi_{\ell_2}$ share the vertex $x_{f(\ell_1)}$.
\end{proof}

The subset of zig-zag networks which are descending star networks
can be characterized using pattern avoidance.
\begin{thm}\label{t:dsn312}
Let $v \in \sn$ \avoidp.  Then $v$ avoids the pattern $312$ if and only if
$F_v$ is a descending star network.
\end{thm}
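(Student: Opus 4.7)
The plan is to prove both implications by contradiction, using Observation~\ref{o:pnetintersect}, Observation~\ref{o:sourcetosink}, Lemma~\ref{l:dsnintersect}, and the poset structure on intervals, together with the bijection from Theorem~\ref{t:lem53nndcb}.

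For the backward direction ($F_v$ descending $\Rightarrow$ $v$ avoids $312$), suppose $v$ contains a $312$ pattern at positions $p_1 < p_2 < p_3$ with $v_{p_1} = c$, $v_{p_2} = a$, $v_{p_3} = b$ and $a < b < c$. The $v$-type path family covering $F_v$ gives paths $\pi_{p_i}$ from source $p_i$ to sink $v_{p_i}$. Since $p_2 < p_3$ and $a < b$, the pair $(\pi_{p_2}, \pi_{p_3})$ does not cross, so Lemma~\ref{l:dsnintersect} yields no path from source $p_2$ to sink $b$ in $F_v$. Since $p_1 < p_3$ and $c > b$, the pair $(\pi_{p_1}, \pi_{p_3})$ crosses, so Lemma~\ref{l:dsnintersect} yields a path from $p_1$ to $b$. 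Combined with the existing path $p_3 \to b$, we have that sink $b$ is reachable from $p_1$ and $p_3$ but not from $p_2$, with $p_1 < p_2 < p_3$. Next I apply Observation~\ref{o:sourcetosink} to analyze the cases: either some interval $I_k$ contains $\{p_3, b\}$, or there are intervals with $p_3 \in I_l \prec I_m \ni b$; similarly for $p_1 \to b$. In each of the four sub-case combinations, I use the non-path $p_2 \not\to b$ to force $p_2 \notin $ certain intervals, and then use pairwise nonnesting of intervals together with the descending property ($I_\alpha \prec I_\beta \Rightarrow c_\alpha > c_\beta$ and $d_\alpha > d_\beta$) to derive the contradiction that $b$ is simultaneously less than $p_2$ (from $p_2$ lying past the right endpoint of an interval containing $b$) and greater than $p_2$ (from $p_2$ lying before the left endpoint of an interval containing $b$), or that $p_2$ in fact lies in one of the forbidden intervals.

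For the forward direction I prove the contrapositive. Assume $F_v$ is not a descending star network, i.e., no linear extension of the poset $\preceq$ on the intervals of the canonical concatenation yields a descending sequence of starting values $c_i$. By the standard argument for linear extensions, this forces the existence of a cover $I_a \precdot I_b$ in $\preceq$ with $c_a < c_b$; by nonnesting, $c_a < c_b \leq d_a < d_b$. In $G'_v$ the star $G_{[c_b, d_a]}$ is inserted immediately after $G_{[c_a, d_a]}$, and by the cover condition no original interval $I_j$ with $a < j < b$ meets the strip $[c_b, d_a]$. Tracing wires through $G'_v$ (using the flexibility in the placement of other inserted cover stars granted by Observation~\ref{o:firstorlast} to keep those that touch $[c_b, d_a]$ either before $G_{[c_a, d_a]}$ or after $G_{[c_b, d_b]}$), the wire from source $c_a$ travels $c_a \to d_a \to c_b$ through the first two stars and remains at position $c_b$ until it enters $G_{[c_b, d_b]}$ and exits at $d_b$, giving $v(c_a) = d_b$; the wire from source $d_a$ leaves $G_{[c_a, d_a]}$ at position $c_a$, which is below $c_b$ and is therefore not touched by any subsequent star intersecting $[c_b, d_a]$, so $v(d_a) = c_a$; finally the wire that exits $G'_v$ at sink $c_b$ must enter $G_{[c_b, d_b]}$ at position $d_b$, and tracing backwards it originates at some source $p_3 > d_a$. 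The positions $c_a < d_a < p_3$ then carry the values $d_b, c_a, c_b$ satisfying $c_a < c_b < d_b$, which is a $312$ pattern in $v$.

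The main obstacle is the bookkeeping in the forward direction, where the augmented concatenation $G'_v$ may contain many inserted cover stars from other covers involving intermediate intervals, and one must confirm that the specific wires at positions $c_a$, $d_a$, $c_b$, $d_b$ are undisturbed by these stars long enough for the trace above to hold; the cover hypothesis on $I_a \precdot I_b$ together with a careful choice of linear extension (via Observation~\ref{o:firstorlast}) is what makes this work. The backward direction is conceptually simpler but requires patience in the case analysis with four sub-cases from Observation~\ref{o:sourcetosink}.
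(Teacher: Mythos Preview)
Your forward direction is close in spirit to the paper's argument, but you need minimality of $I_a$ in $\preceq$ (which the paper assumes explicitly). Without it, your wire trace for source $c_a$ is wrong: if some $I_{a'} \precdot I_a$ with $a' < a$, then condition~(2) after~(\ref{eq:concat}) forces $c_{a'} < c_a \leq d_{a'}$, so the wire from source $c_a$ first passes through $G_{[c_{a'},d_{a'}]}$ and does not enter $G_{[c_a,d_a]}$ at position $c_a$. Observation~\ref{o:firstorlast} cannot move $G_{[c_{a'},d_{a'}]}$ past $G_{[c_a,d_a]}$, since $I_{a'} \prec I_a$. Also, your exact equality $v(c_a) = d_b$ should be the inequality $v(c_a) \geq d_b$, since further stars after $G_{[c_b,d_b]}$ may carry the wire higher (e.g.\ for the concatenation $G_{[1,3]} \circ G_{[2,5]} \circ G_{[4,6]}$ one computes $v(1)=6$, not $5$). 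Both issues are easily repaired by taking $I_a$ minimal and weakening to inequalities, exactly as the paper does.

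Your backward direction, however, has an irreparable gap. You argue that since $p_2 < p_3$ and $a < b$, the paths $\pi_{p_2}$ and $\pi_{p_3}$ do not cross, and then invoke Lemma~\ref{l:dsnintersect} to conclude there is no path from $p_2$ to sink $b$. But Lemma~\ref{l:dsnintersect} speaks of \emph{intersection}, not crossing; non-crossing does not imply non-intersection. Worse, the conclusion is actually false: once you have a path from $p_1$ to $b$ (which you correctly establish), the comment after Observation~\ref{o:sourcetosink} gives either $p_1 > b$ or $p_1, b \in [c_k,d_k]$ for some $k$. In the first case $p_2 > p_1 > b$ yields a path from $p_2$ to $b$; in the second, $p_2 > p_1 \geq c_k$ together with either $p_2 \leq d_k$ (so $p_2, b \in [c_k,d_k]$) or $p_2 > d_k \geq b$ again yields a path from $p_2$ to $b$. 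So the very claim on which your four-case analysis rests is the negation of what is true, and the strategy collapses. The paper instead proves this direction by induction on $n$, peeling off the first star $G_{[c_1,d_1]}$ and showing that the property ``$v$ avoids $312$ and $v_{c_1} > \cdots > v_{d_1}$'' is preserved.
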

\begin{proof}
Let $G$ be the concatenation (\ref{eq:concat}) of star networks which
leads to $F_v$, and define $G'$ as in the example above.

Suppose first that $F_v$ is not a descending star network.
Then in the concatenation (\ref{eq:concat}) 
there exists an index
$i$ such that the interval $[c_i,d_i]$ is minimal in the poset $\preceq$,
and an index $j > i$ such that 
$c_i < c_j$ and $[c_i,d_i] \precdot [c_j,d_j]$. 
Considering the relationship between $G'$ and $v$, one sees 
that we have $v_{d_i} < c_j$ and that for some index
$\ell \geq d_j$ we have $v_\ell = c_j$.
On the other hand, since $G'$ is constructed by inserting $G_{[c_j,d_i]}$ 
between $G_{[c_i,d_i]}$ and $G_{[c_j,d_j]}$,
we have that $v_{c_i} \geq d_j$.
Thus the indices $c_i < d_i < \ell$ satisfy 
$v_{d_i} < v_{\ell} < v_{c_i}$
and $v$ does not avoid the pattern $312$.


Now suppose that $F_v$ is a descending star network.
We claim that if $F_v$ arises from the concatenation (\ref{eq:concat}),
then $v$ avoids the pattern $312$ and satisfies $v_{c_1} > \cdots > v_{d_1}$.
By inspection of (\ref{eq:xfigures2}), 
this is true for descending star networks of orders $1$ -- $4$.
Now assume this to be true for
each descending star network $F_w$ of order $1, \dotsc, n-1$, and 
let $F_v$ be a descending star network of order $n$. 
If in the first interval of (\ref{eq:concat})
we have $d_1 < n$, then $v_n = n$ and $F_{v_1 \cdots v_{n-1}}$ 
is a descending star network of order $n-1$.
Thus $F_v$ has the claimed properties.
If in the first interval of (\ref{eq:concat}) we have $d_1 = n$, then
consider the descending star network $F_w$ arising from the concatenation
$G_{[c_2,d_2]} \circ \cdots \circ G_{[c_t,d_t]}$.
Then
$v = s_{[c_1,d_1]} s_{[c_1,d_2]} w$, where $s_{[a,b]}$ is the unique Bruhat-maximal
permutation for which a path family of this type 
covers the star network $G_{[a,b]}$.
By 
the above argument, 
$w$ avoids the pattern $312$, and 
satisfies $w_{c_2} > \cdots > w_{d_2}$, 
and $w_i = i$ for $i = d_2+1,\dotsc, n$.
It follows that the letters in positions $c_1, \dotsc, n$ of $s_{[c_1,d_2]} w$
form an increasing sequence, and that $v$ satisfies 
$v_{c_1} > \cdots > v_n$.
Thus the subword $v_1 \cdots v_{c_1-1} = w_1 \cdots w_{c_1-1}$ 
avoids the pattern $312$, as does the subword $v_{c_1} \cdots v_n$.
If $v$ contains any subword $z_3z_1z_2$ that matches the pattern $312$,
then $z_3z_1$ must be a subword of 
$v_1 \cdots v_{c_1-1}$ while $z_2$ is a letter of the decreasing word
$v_{c_1} \cdots v_n$.
But in this case, $z_3z_1$ is a subword of $w_1 \cdots w_{c_1-1}$ while
$z_2$ is a letter of $w_{c_1} \cdots w_n$, which contradicts our assumption
that $w$ avoids the pattern $312$.
\end{proof}

It follows that there are 
$\tfrac 1{n+1}\tbinom{2n}n$ descending star networks of order $n$,
since there are this many $312$-avoiding permutations in $\sn$,
all of which \avoidp.
Also related to pattern avoidance
are the sizes of the stars in the concatenation (\ref{eq:concat}).

\begin{thm}\label{t:kstar}
Let $w \in \sn$ \avoidp.
Then $w_1 \cdots w_n$ contains a decreasing subsequence of size $k$
if and only if some interval in the concatenation (\ref{eq:concat})
corresponding to $F_w$ has cardinality at least $k$.
\end{thm}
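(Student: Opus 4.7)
The plan is to prove both implications via the structure of the augmented star network $G' = G'_w$, analyzing how source-to-sink paths interact at the central vertices of stars.

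For the forward direction, suppose the interval $[c_i, d_i]$ in (\ref{eq:concat}) has cardinality $k$. Let $p_1 < p_2 < \cdots < p_k$ denote the source indices of those $k$ paths in $G'$ passing through the central vertex of $G_{[c_i, d_i]}$. I will show that $w_{p_1} > w_{p_2} > \cdots > w_{p_k}$, giving a decreasing subsequence of length $k$. The crux is a parity observation: straight-through routing at any star central vertex reverses the relative vertical positions of the paths passing through it, so two paths $\pi_p, \pi_q$ (with $p < q$) end up crossed (i.e.\ $w_p > w_q$) if and only if they share an odd number of stars in $G'$. I would then argue that any two paths sharing a star in $G'$ share an odd number of stars; this follows from analyzing the insertion of intersection stars, specifically the fact that if two paths share a cover pair $X \precdot Y$ of core stars from the concatenation (\ref{eq:concat}), then they must also pass through the intersection star $G_{X \cap Y}$ inserted between them in $G'$ (this uses that intermediate intervals $Z$ satisfy $Z \cap X \cap Y = \emptyset$, so no intermediate star can move a position out of $[c_Y,d_Y]$ into $[c_Y,d_Y]$). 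In particular, the $k$ paths $\pi_{p_1}, \ldots, \pi_{p_k}$ pairwise share at least the star $G_{[c_i,d_i]}$, and therefore pairwise cross, yielding the desired decreasing subsequence.

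For the reverse direction, suppose $w_{j_1} > w_{j_2} > \cdots > w_{j_k}$ is a decreasing subsequence with $j_1 < j_2 < \cdots < j_k$. By Observation~\ref{o:pnetintersect}, the paths $\pi_{j_1}, \ldots, \pi_{j_k}$ in $G'$ pairwise cross, and since paths in the planar network $G'$ cross only at star central vertices, each pair shares such a vertex. I would then establish a Helly-type property: in the augmented network $G'_w$ for $w$ \avoidingp, any family of pairwise-crossing paths shares a common star central vertex. The interval associated to this common star has cardinality at least $k$, completing the proof. I would prove the Helly property by induction on $k$; the base case $k = 2$ is immediate, and for $k \geq 3$ the inductive step would argue by contradiction: failure of three pairwise-crossing paths to share a common star forces the existence of a subword of $w$ forming the pattern $3412$ or $4231$, contradicting the hypothesis on $w$.

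The main obstacle is the Helly-type property in the reverse direction, as this is where the absence of $3412$ and $4231$ as patterns in $w$ is used essentially. The forward direction's parity analysis, while requiring careful bookkeeping about how intersection stars interact with core stars, is structural in nature and does not appeal to pattern avoidance beyond what is already built into the conditions following (\ref{eq:concat}).
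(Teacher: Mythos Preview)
Your route differs from the paper's in both directions, and in each case the paper's argument is shorter.

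For the implication ``interval of size $\geq k$ $\Rightarrow$ decreasing subsequence of length $k$,'' the paper does not touch $G'$ at all. It inducts on the number $t$ of intervals in (\ref{eq:concat}): if the large interval is among the first $t-1$, drop $G_{[c_t,d_t]}$ to obtain a smaller zig-zag network $F_v$; by \cite[Cor.\,3.7]{SkanNNDCB} there is a reduced expression for $w$ beginning with one for $v$, and since appending generators to a reduced word only adds inversions, a length-$k$ decreasing subsequence in $v$ survives in $w$. If the large interval is the last one, pass to $w^{-1}$ (which reverses the concatenation) and use that $w$ and $w^{-1}$ have the same longest decreasing subsequence length. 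Your parity approach may well work, but the central lemma---that any two paths in $G'$ sharing at least one star centre share an odd number of them---is asserted rather than proved. Your sketch treats a single cover pair $X \precdot Y$ and its inserted intersection star, but two paths can share a longer chain of core stars in $G'$, and you have not explained why the interleaving of inserted intersection stars along such a chain always forces the total count to be odd rather than merely positive.

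For ``decreasing subsequence $\Rightarrow$ large interval,'' the paper works directly in $F_w$ and dispatches the Helly step in one line: the $k$ source-to-sink paths pairwise cross, and since $F_w$ is acyclic they must all meet at a single vertex, necessarily a star centre whose interval then has size $\geq k$. No explicit appeal to the patterns $3412$, $4231$ is made at this step. Your plan to extract a forbidden pattern from a hypothetical Helly failure is more circuitous, and the sketch gives no indication of how three pairwise-crossing paths lacking a common vertex would actually produce four positions of $w$ in the order $3412$ or $4231$; this is where your outline is thinnest.
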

\begin{proof}
Let $w_{i_1}, \dotsc, w_{i_k}$ be a decreasing subsequence of $w$.
Then there is a path in $F_w$ from source $i_j$ to sink $w_{i_j}$
for $j = 1,\dotsc,k$.  These paths pairwise intersect, since
$i_r < i_s$ if and only if $w_{i_r} > w_{i_s}$.
Since $F_w$ is acyclic, these paths must all intersect at a single vertex.
Such a vertex must correspond to the central vertex of 
a star network indexed by an interval of cardinality at least $k$
in the concatenation (\ref{eq:concat}).

The converse is clearly true if we have $t = 1$ in (\ref{eq:concat}).
Suppose that the converse holds for each zig-zag network corresponding
to a concatenation of $t-1$ star networks.
Let $F_w$ correspond to a concatenation (\ref{eq:concat}) of
$t$ star networks, and let $F_v$ correspond to the concatenation
$G_{[c_1, d_1]} \circ \cdots \circ G_{[c_{t-1}, d_{t-1}]}$.
Suppose that some interval $[c_i, d_i]$, $1 \leq i \leq t$, 
has cardinality at least $k$.
If $i \leq t-1$, then $v$ contains a decreasing subsequence
of size $k$.  
By \cite[Cor.\,3.7]{SkanNNDCB}, there is a reduced expression for $w$ 
which consists of a reduced expression for $v$, followed by some reduced
expression $s_{i_1} \cdots s_{i_k}$ for the permutation $v^{-1}w$.
It is well known that each permutation in the sequence 
$(v, vs_{i_1}, vs_{i_1}s_{i_2}, \dotsc, w)$ preserves all inversions of the previous
permutation and introduces one more.  It follows that $w$ also has a
decreasing subsequence of length $k$.
If $i = t$, then apply the above argument to $w^{-1}$, which corresponds
to the concatenation $G_{[c_t, d_t]} \circ \cdots \circ G_{[c_1, d_1]}$.
It is well known that $w$ has a decreasing subsequence of length $k$
if and only if $w^{-1}$ does.
\end{proof}

We may use path matrices of zig-zag networks to evaluate
$\sn$-class functions at Kazhdan-Lusztig basis elements 
$\{ C'_w(1) \,|\, w \text{ \avoidsp} \}$.
Specifically, if $B$ is the path matrix of $F_w$, then 
by \cite[Sec.\,4, Thm.\,5.4]{SkanNNDCB} we have
\begin{equation}\label{eq:thetaCimmB}
\theta(C'_w(1)) = \imm \theta (B).
\end{equation}
This fact is a crucial ingredient in the proofs of 
Theorems~\ref{t:thetavw}, \ref{t:sncfinterp},
which interpret the evaluations of certain $\sn$-class functions
at Kazhdan-Lusztig basis elements of $\zsn$.

In order to interpret the evaluations of $\hnq$-traces 
at Kazhdan-Lusztig basis elements of $\hnq$,
we will prove a $q$-extension of Equation (\ref{eq:thetaCimmB}) 
in Proposition~\ref{p:qimmchar}.
Namely, we will show that path matrices can also be used to  
evaluate $\hnq$-traces
at the (modified) Kazhdan-Lusztig basis elements 
$\{ \qew C'_w(q) \,|\, w \text{ \avoidsp} \}$.
A bit of care is required though: 
the evaluation $\imm{\theta_q}(B)$ does not make sense because
the substitution $x_{i,j} \mapsto b_{i,j}$
does not respect the relations (\ref{eq:qringrelations})
and therefore does 
not give a well-defined map from $\anq$ to $\zqq$.
Thus we define 
a $\zqq$-linear map for each $n \times n$ integer matrix $B$ by
\begin{equation*}
\begin{aligned}
\sigma_B: \anq &\rightarrow \zqq\\
\permmon xv &\mapsto \qev \permmon bv.
\end{aligned}
\end{equation*}
\begin{prop}\label{p:qimmchar}
Let $\theta_q$ be an $\hnq$-trace
and let $w \in \sn$ \avoidp.
Then the path matrix $B$ of $F_w$ 
satisfies
\begin{equation*}
\theta_q(\qew C'_w(q)) = \sigma_B(\imm{\theta_q}(x)).
\end{equation*}
\end{prop}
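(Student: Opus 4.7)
The plan is to expand both sides of the identity in the natural basis of $\anq$ and reduce the claim to an indicator identity on entries of the path matrix $B$; the argument is a $q$-analog of (\ref{eq:thetaCimmB}).

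Since $w$ \avoidsp, we have $P_{v,w}(q)=1$ for all $v\le w$, so (\ref{eq:KLbasis}) gives $\qew C'_w(q)=\sum_{v\le w}T_v$, and by linearity
\[
\theta_q(\qew C'_w(q))=\sum_{v\le w}\theta_q(T_v).
\]
On the other hand, each monomial $\permmon xv = x_{1,v_1}\cdots x_{n,v_n}$ is already a natural basis element of $\anq$ (its row indices appear in lexicographic order), so $\sigma_B(\permmon xv) = \qev\permmon bv$, and $\zqq$-linearity of $\sigma_B$ yields
\[
\sigma_B(\imm{\theta_q}(x))
= \sum_{v\in\sn}\theta_q(T_v)\,\qiev\,\sigma_B(\permmon xv)
= \sum_{v\in\sn}\theta_q(T_v)\,\permmon bv.
\]

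Comparing the two expressions, the proposition reduces to showing that $\permmon bv = [v\le w]$ for every $v\in\sn$. By the uniqueness remark following Theorem~\ref{t:lem53nndcb}, each entry $b_{i,j}$ of the path matrix of $F_w$ lies in $\{0,1\}$, so $\permmon bv\in\{0,1\}$ and equals $1$ precisely when a path from source $i$ to sink $v_i$ exists in $F_w$ for every $i$, i.e., when a (necessarily unique) path family of type $v$ exists in $F_w$. By Theorem~\ref{t:lem53nndcb} this is equivalent to $v\le w$, provided we verify that any such path family automatically covers $F_w$.

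The main technical point is this covering step: one must observe that any path family of type $v$ in $F_w$ uses every edge of $F_w$, because each edge was retained in the construction of Section~\ref{s:dsn} precisely in order to participate in the unique covering family of type $w$, and the at-most-one property then forces a type-$v$ family (when it exists) to pick up all of these edges. Once this is granted, the indicator identity $\permmon bv = [v\le w]$ follows and the proof is complete by direct comparison of the two displayed sums.
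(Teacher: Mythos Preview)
Your argument is correct and essentially identical to the paper's: both expand $\sigma_B(\imm{\theta_q}(x))$ linearly to $\sum_{v\in\sn}\theta_q(T_v)\,\permmon bv$, use $P_{v,w}(q)=1$ to write $\qew C'_w(q)=\sum_{v\le w}T_v$, and reduce the claim to the indicator identity $\permmon bv=[v\le w]$. The only difference is that the paper invokes Theorem~\ref{t:lem53nndcb} (together with the at-most-one-path remark following it) directly for this last identity, without isolating a separate ``covering'' step; your heuristic that each retained edge ``must'' be used by any type-$v$ family is not a proof as written, but the indicator identity you need is precisely what the cited result \cite[Lem.~5.3]{SkanNNDCB} supplies, so you can simply cite it as the paper does.
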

\begin{proof}
The right-hand side is equal to
\begin{equation*}
\sigma_B \Big( \sum_{v \in \sn} \theta_q(T_v) \qiev \permmon xv \Big) 
= \sum_{v \in \sn} \theta_q(T_v) \permmon bv. 
\end{equation*}
By Theorem~\ref{t:lem53nndcb} (\cite[Lem.~5.3]{SkanNNDCB}), 
the product $\permmon bv$ is $1$ 
when $v \leq w$ and is $0$ otherwise.  Thus the above 
expression is equal to
\begin{equation*}
\sum_{v \leq w}\theta_q(T_v) 
= \theta_q \Big( \sum_{v \leq w} T_v \Big).
\end{equation*}
Since $w$ \avoidsp, \cite{LakSan} implies that the Kazhdan-Lusztig polynomials 
$\{ P_{v,w}(q) \,|\, v \leq w \}$ are identically 1.
(See 
also 
\cite[Ch.~6]{BilleyLak}.)
Comparing to (\ref{eq:KLbasis}), we see that the parenthesized sum is equal to 
$\qew C'_w(q)$. 
\end{proof}
Stated from another point of view, Proposition~\ref{p:qimmchar} asserts
that for $w$ \avoidingp, the zig-zag network $F_w$ combinatorially encodes
the modified Kazhdan-Lusztig basis element $\qew C'_w(q)$ in the sense that
\begin{equation*}
\qew C'_w(q) = \sum_{\pi} T_{\mathrm{type}(\pi)},
\end{equation*}
where the sum is over all path families which cover $F_w$.

\section{Path posets, planar network tableaux and 
interpretation of $\sn$-class function evaluations}
\label{s:pathposet}

\ssec{Path posets}
In a planar network $G$ of order $n$, the source-to-sink paths
have a natural partial order $Q = Q(G)$.  Given paths $\pi_i$, $\rho_j$,
originating at sources $i$, $j$, respectively,
we define $\pi_i <_Q \rho_j$ if 
$i < j$ and $\pi_i$ and $\rho_j$ do not intersect.
Let $P(G)$ be the subposet of $Q(G)$ induced by paths
whose source and sink indices are equal.
For each zig-zag
network $F_w$, the poset $P(F_w)$ has exactly
$n$ elements:  
there is exactly one path from source $i$ to sink $i$, for $i = 1,\dotsc,n$.

The posets $P(G)$ corresponding to the descending star networks $G$ in 
(\ref{eq:xfigures2}) are
\begin{equation}\label{eq:uio}
\raisebox{-3mm}{
\includegraphics[width=130mm]{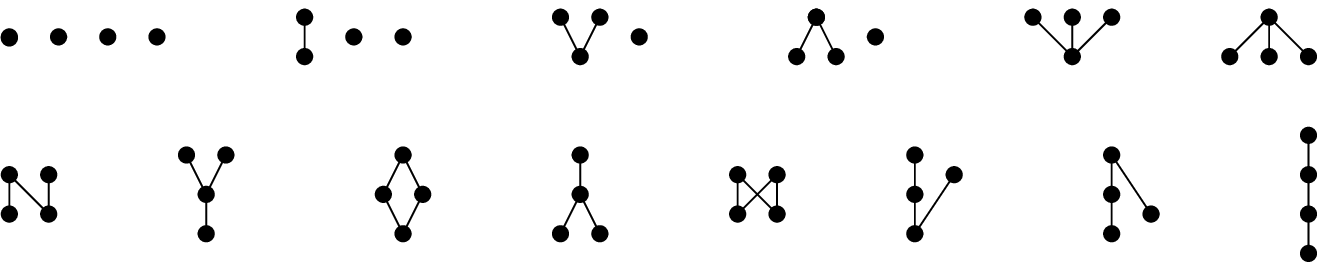}}\ .
\end{equation}
These are precisely the 
$(\mathbf{3}+\mathbf{1})$-free,
$(\mathbf{2}+\mathbf{2})$-free posets on four elements,
where
{\em $(\mathbf{a}+\mathbf{b})$-free} means that
no induced subposet is a  
disjoint union of an $a$-element chain and a $b$-element chain.
Such posets are also called {\em unit interval orders}.
It is known that unit interval orders may be naturally labeled by the
numbers $[n]$ so that $i <_P j$ implies $i < j$ as integers
and so that the conditions $i < j < k$ and $i$ incomparable to $k$ in $P$
imply that $\{ i, j, k \}$ is an antichain in $P$~\cite[Sec.\,3]{DK}.
(See \cite[Prop.\,2.4]{SkanReed} for the algorithm.)
It is known that there are $\tfrac 1{n+1}\tbinom{2n}n$ unit interval 
orders on $n$ elements~\cite[Sec.\,4]{DK}.



\begin{thm}\label{t:dsnuio}
The posets 
$\{ P(F_v) \,|\, v \in \sn \text{ avoids } 312 \}$
are precisely the 
unit interval orders on $n$ elements.
\end{thm}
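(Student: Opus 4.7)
The plan is to use Theorem~\ref{t:dsn312} to translate the statement into showing that the map $F \mapsto P(F)$ restricted to descending star networks $F$ of order $n$ is a bijection onto the set of unit interval orders on $[n]$. Since the remark following Theorem~\ref{t:dsn312} and the cited result \cite[Sec.\,4]{DK} both give cardinality $\tfrac{1}{n+1}\tbinom{2n}{n}$, it will suffice to prove that the map is well-defined (every $P(F)$ is a unit interval order) and injective.

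For well-definedness, I would first give an explicit description of the incomparability relation in $P(F)$. Fix a descending star network $F$ from the concatenation $G_{[c_1,d_1]} \circ \cdots \circ G_{[c_t,d_t]}$ with $c_1 > \cdots > c_t$, and let $\pi_1, \dotsc, \pi_n$ be the identity paths. For $i<j$, Lemma~\ref{l:dsnintersect} reduces the intersection of $\pi_i$ and $\pi_j$ to the existence of a path in $F$ from source $i$ to sink $j$; Observation~\ref{o:sourcetosink} then supplies two alternatives for such a path, and I claim the second collapses into the first when $i<j$. Indeed, along a cover chain realizing $[c_k,d_k] \prec [c_\ell,d_\ell]$, the descending property combined with nonnesting propagates $c_{k_0} > c_{k_1} > \cdots > c_{k_r}$ to $d_{k_0} > d_{k_1} > \cdots > d_{k_r}$, so $c_k \leq i < j \leq d_\ell < d_k$ forces $i,j \in [c_k,d_k]$. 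Hence in $P(F)$, one has $i <_P j$ iff $i<j$ and no star contains both $i$ and $j$. The unit interval characterization recalled from \cite[Sec.\,3]{DK} is then immediate: if $i<j<k$ with $i$ and $k$ incomparable, the common star $[c_m,d_m]$ satisfies $c_m \leq i < j < k \leq d_m$, so $j \in [c_m,d_m]$ as well, making $\{i,j,k\}$ an antichain.

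For injectivity, the key step is to recover the star intervals of $F$ from $P(F)$ as its maximal antichains. Each star $[c_k,d_k]$ is an antichain by the description just given; it is moreover \emph{maximal}, since any vertex $v$ outside $[c_k,d_k]$ incomparable to every element of $[c_k,d_k]$ would have to share a star with whichever endpoint of $[c_k,d_k]$ lies farther from $v$, and that second star would then strictly contain $[c_k,d_k]$, violating the nonnesting hypothesis. Conversely, any maximal antichain of $P(F)$ is an interval $[a,b]$ by the \cite{DK} characterization, and the common star of its endpoints must equal $[a,b]$ by maximality. Nonnesting forces $d_k$ to be strictly increasing with $c_k$ across the stars, so sorting the recovered stars in decreasing order of $c_k$ yields a unique descending concatenation producing $F$, with condition~(2) following~(\ref{eq:concat}) automatic. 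The main obstacle is precisely this identification of star intervals with maximal antichains; the nonnesting hypothesis is the essential input preventing the incomparability graph from absorbing extraneous vertices into a clique.
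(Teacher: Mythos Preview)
Your approach mirrors the paper's but runs in the opposite direction: the paper constructs a map $\zeta:\mathcal U\to\mathcal P$ (canonically label $P$, read off its maximal antichains as intervals, concatenate the corresponding stars) and verifies the isomorphism $P\cong\zeta(P)$; you instead analyze $F\mapsto P(F)$ directly. Your explicit description of $P(F)$ via Lemma~\ref{l:dsnintersect} and Observation~\ref{o:sourcetosink}, and the identification of star intervals with maximal antichains, are both correct --- modulo a harmless imprecision: when $F$ has an isolated path $\pi_k$ (i.e., $k$ lies in no star), the singleton $\{k\}$ is a maximal antichain of $P(F)$ but not a star interval in your concatenation, so your ``conversely'' clause fails as stated when $a=b$. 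This does not affect the recovery of $F$, since the nonsingleton stars already determine it.

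The substantive gap is in the counting step. Your injectivity recovers $F$ from the \emph{labeled} poset $P(F)$, but the Catalan enumeration from \cite{DK} counts \emph{isomorphism classes} of unit interval orders. Well-definedness plus labeled-poset injectivity does not by itself force the $P(F_v)$ to be pairwise nonisomorphic, so the cardinality match does not close the argument. The fix is already implicit in what you wrote: apply your description ``$i<_{P(F)}j$ iff $i<j$ and no star contains both'' to the network $F$ built from the maximal antichains of an \emph{arbitrary} canonically-labeled unit interval order $Q$; since ``no maximal antichain of $Q$ contains both $i$ and $j$'' is exactly ``$i$ and $j$ are comparable in $Q$'', you recover $P(F)=Q$. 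That is surjectivity, and since every isomorphism class admits at least one canonical labeling, every class is represented. This is precisely the paper's $P\cong\zeta(P)$.
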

\begin{proof}
Let $\mathcal P$ and $\mathcal U$ be the two sets of posets in the
theorem.
We define a map $\zeta: \mathcal U \rightarrow \mathcal P$
as follows.
By the above fact on incomparability, 
we may naturally label $P \in \mathcal U$ by
$1, \dotsc, n$ so that its maximal antichains are
$[a_1, b_1], \dotsc, [a_t,b_t]$, 
with $a_1 > \cdots > a_t$.
Now define $F$ to be the descending star network
corresponding to the concatenation 
$G_{[a_1,b_1]} \circ \cdots \circ G_{[a_t, b_t]}$,
and let $(\pi_1, \dotsc, \pi_n)$ be the unique path family of type $e$ that
covers $F$.  Let $\zeta(P)$ be the poset $P(F)$.

We claim that for each poset $P \in \mathcal U$, 
the map $i \mapsto \pi_i$ is an isomorphism of 
$P$ and $\zeta(P)$.
To see this, note that $i <_P j$ if and only if $i < j$ as integers
and $i, j$ belong to no common 
antichain 
in $P$,  
i.e., if and only if $i < j$ and $i,j$ belong to no common interval
$[a_k,b_k]$ defining the concatenation 
$G_{[a_1,b_1]} \circ \cdots \circ G_{[a_t,b_t]}$ of star networks.
But this is true if and only if we have $\pi_i <_{\zeta(P)} \pi_j$.
It follows that the 
$\tfrac 1{n+1}\tbinom{2n}n$ images $\{ \zeta(P) \,|\, P \in \mathcal U \}$
are pairwise nonisomorphic.
On the other hand, by Theorem~\ref{t:dsn312}, we have
$|\mathcal P| \leq \tfrac 1{n+1}\tbinom{2n}n$. 
The claim follows.
%
%
\end{proof}

While a star network is covered by a unique path family of type $e$,
a concatenation of star networks need not be.
Nevertheless, such a concatenation is covered by a unique noncrossing path
family of type $e$: the concatenation of the unique path families of type
$e$ that cover the component star networks.  
Given a concatenation $G$ of star networks, 
let $\hat P(G)$ be the $n$-element subposet of $P(G)$
induced by the paths in the unique noncrossing path family of type $e$
which covers $G$.
For example, the first two figures in (\ref{eq:cover}) show 
the two path families of type $e$ which cover $G_{[2,4]} \circ G_{[1,3]}$.  
Call these 
$(\pi_1, \pi_2, \pi_3, \pi_4)$ 
and 
$(\pi_1, \pi'_2, \pi'_3, \pi_4)$, respectively.
Ordering all six of these source-$i$-to-sink-$i$ paths, 
or only the mutually noncrossing paths $\{ \pi_1, \pi_2, \pi_3, \pi_4 \}$, 
we form the posets
\begin{equation*}
P(G_{[2,4]} \circ G_{[1,3]}) = 
\raisebox{-9mm}{
\includegraphics[height=18mm]{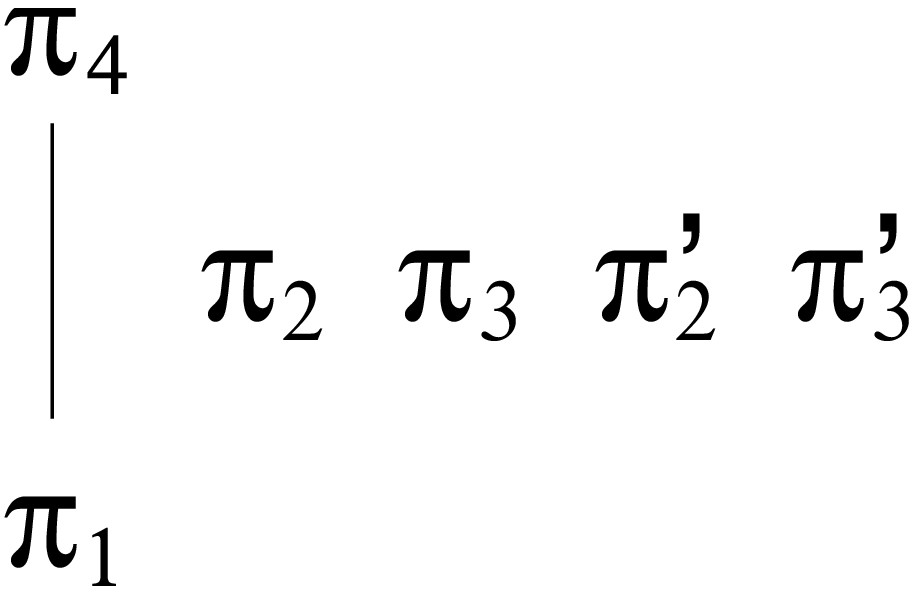}}, \qquad
\hat P(G_{[2,4]} \circ G_{[1,3]}) = 
\raisebox{-9mm}{
\includegraphics[height=18mm]{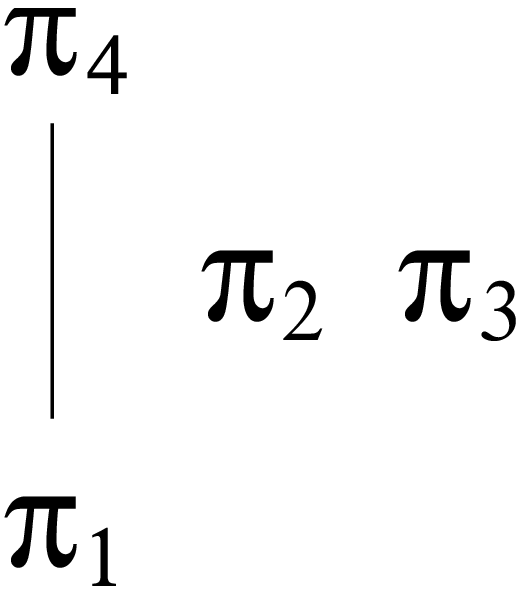}}, 
\end{equation*}
respectively.

It is easy to show that 
for a concatenation $G$ of star networks,
the poset $\hat P(G)$ does not
depend upon the ordering of the factors of $G$.
\begin{prop}\label{p:permisom}
Let $[a_1,b_1],\dotsc, [a_t,b_t]$ be 
subintervals of $[n]$.
Then for any permutation $u \in \mfs t$, we have
$\hat P(G_{[a_1,b_1]} \circ \cdots \circ G_{[a_t,b_t]}) \cong
\hat P(G_{[a_{u_1},b_{u_1}]} \circ \cdots \circ G_{[a_{u_t},b_{u_t}]})$.
\end{prop}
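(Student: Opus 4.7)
The plan is to give an explicit combinatorial description of $\hat P(G)$ for $G = G_{[a_1,b_1]} \circ \cdots \circ G_{[a_t,b_t]}$ which depends only on the multiset of intervals $\{[a_1,b_1],\dotsc,[a_t,b_t]\}$; the proposition then follows at once, since permuting factors by $u \in \mfs t$ leaves this multiset unchanged.

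First I would identify the unique noncrossing path family of type $e$ covering $G$. Within a single star $G_{[a_k,b_k]}$, the only source-$i$-to-sink-$i$ route (for $i \in [a_k,b_k]$) passes through the central vertex $x_k$, so after concatenation the path $\pi_i$ must visit exactly those central vertices $x_k$ with $i \in [a_k,b_k]$, in the order dictated by the concatenation, proceeding along horizontal edges at level $i$ in between. This produces a unique covering family $(\pi_1,\dotsc,\pi_n)$. Next I would prove the key lemma: for $i < j$, the paths $\pi_i$ and $\pi_j$ intersect if and only if $\{i,j\} \subseteq [a_k,b_k]$ for some $k$. The ``if'' direction is immediate, as both paths then pass through $x_k$. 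For the ``only if'' direction, the only vertices of $G$ that can lie on two distinct $\pi_i$ and $\pi_j$ are the central star vertices: each source and each sink belongs by construction to a single $\pi_i$, and each intermediate junction vertex at level $i$ (created when sink $i$ of one star is merged with source $i$ of the next) belongs only to $\pi_i$.

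From this description I would conclude that
\begin{equation*}
\pi_i <_{\hat P(G)} \pi_j \iff i < j \text{ and no interval } [a_k,b_k] \text{ contains both } i \text{ and } j,
\end{equation*}
a criterion which depends only on the multiset of intervals appearing in the concatenation. Hence the identification $\pi_i \leftrightarrow \pi'_i$ furnishes a poset isomorphism between $\hat P(G_{[a_1,b_1]} \circ \cdots \circ G_{[a_t,b_t]})$ and $\hat P(G_{[a_{u_1},b_{u_1}]} \circ \cdots \circ G_{[a_{u_t},b_{u_t}]})$. The only real obstacle is the key lemma above, and even that is essentially a careful reading of the definitions of star network and of $\circ$: once one sees that distinct type-$e$ paths can only meet at the central vertex of a star whose interval contains both of their source indices, the proof is complete.
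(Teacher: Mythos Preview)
Your proposal is correct and follows essentially the same approach as the paper. Both arguments rest on the observation that the noncrossing type-$e$ family in $G$ is the concatenation of the noncrossing type-$e$ families in the individual stars, so that $\pi_i$ and $\pi_j$ intersect in $G$ if and only if they intersect in some factor $G_{[a_k,b_k]}$, i.e., if and only if some interval $[a_k,b_k]$ contains both $i$ and $j$. The paper phrases this as ``$\rho_j \leq_{\hat P(G)} \rho_k$ if and only if $\pi_j^{(i)} \leq_{\hat P(G_{[a_i,b_i]})} \pi_k^{(i)}$ for $i = 1,\dotsc,t$,'' while you unpack it into the explicit criterion ``$i<j$ and no interval contains both''; these are the same statement.

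One small technical slip: under the paper's definition of $\circ$, sinks and sources are \emph{dropped} and their incident edges are \emph{merged}, so there are no ``intermediate junction vertices at level $i$'' --- the only internal vertices of the concatenation are the central vertices $x_1,\dotsc,x_t$. This actually makes your ``only if'' direction cleaner, not harder, so the argument stands.
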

\begin{proof}
Define $G = G_{[a_1,b_1]} \circ \cdots \circ G_{[a_t,b_t]}$,
$H = G_{[a_{u_1},b_{u_1}]} \circ \cdots \circ G_{[a_{u_t},b_{u_t}]}$,
and let $(\rho_1, \dotsc, \rho_n)$, $(\tau_1, \dotsc, \tau_n)$
be the unique noncrossing path families of type $e$ 
covering $G$, $H$, respectively.
For $i = 1, \dotsc, t$, let $(\pi_1^{(i)}, \dotsc, \pi_n^{(i)})$ 
be the unique noncrossing path family of type $e$ covering 
$G_{[a_i,b_i]}$.
For $j, k \in [n]$, the definition of $\hat P(G)$ implies we have 
$\rho_j \leq_{\hat P(G)} \rho_k$ if and only if
$\pi_j^{(i)} \leq_{\hat P(G_{[a_i,b_i]})} \pi_k^{(i)}$ 
for $i = 1, \dotsc, t$.
But this condition is also equivalent to 
$\smash{\tau_j \leq_{\hat P(H)} \tau_k}$.
\end{proof}


It is also easy to show that the path poset of a zig-zag network $F$
may be obtained directly from the concatenation of star networks 
which lead to $F$ as in Section~\ref{s:dsn}.
\begin{prop}\label{p:hatisom}
Let $F$ be a zig-zag network constructed from a concatenation $G$ of
star networks as after (\ref{eq:concat}).
Then $P(F)$ is isomorphic to $\hat P(G)$.
\end{prop}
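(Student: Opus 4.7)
The plan is to identify each element of $P(F)$ with the corresponding element of $\hat P(G)$ via a label-preserving bijection, and then to argue that the two partial orders coincide. For each $i \in [n]$, let $\pi_i^F$ denote the unique path in $F$ from source $i$ to sink $i$ (which exists by Theorem~\ref{t:lem53nndcb} applied to $v = e$), and let $\pi_i^G$ be the corresponding path in the unique noncrossing type-$e$ family that covers $G$. Setting $\phi(\pi_i^F) = \pi_i^G$ yields a bijection $P(F) \to \hat P(G)$. Both orders take the form $\pi_i < \pi_j$ iff $i < j$ and the two paths are vertex-disjoint, so it will suffice to prove that $\pi_i^F \cap \pi_j^F \neq \emptyset$ is equivalent to $\pi_i^G \cap \pi_j^G \neq \emptyset$ for all $i \neq j$. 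The key claim is that both of these conditions are in turn equivalent to the existence of an index $k$ with $\{i,j\} \subseteq [c_k,d_k]$.

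For $\hat P(G)$ this equivalence is immediate from the structure of a star network: in the noncrossing type-$e$ family, $\pi_i^G$ passes through the central vertex of $G_{[c_k,d_k]}$ exactly when $i \in [c_k,d_k]$, and $\pi_i^G$ uses only its own row $i$ at each interface between consecutive stars, so two such paths share a vertex only when they share a central vertex. For $P(F)$ I would reduce to two subsidiary claims: (i) the path $\pi_i^F$ passes through the central vertex $x_k$ of $G_{[c_k,d_k]}$ iff $i \in [c_k,d_k]$, and (ii) if $\pi_i^F$ and $\pi_j^F$ share any vertex then they also share a central vertex. Claim (ii) is the easier half: every non-central vertex $y$ of $G$ has in-degree and out-degree exactly one, and the zig-zag construction only removes edges, so in $F$ each non-central vertex has in-degree and out-degree at most one; any two paths through $y$ therefore agree on its incident edges, and tracing these edges backward and forward to adjacent central vertices exhibits a shared $x_k$. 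Combining (i) and (ii) then gives that $\pi_i^F \cap \pi_j^F \neq \emptyset$ iff some $[c_k,d_k]$ contains both $i$ and $j$, matching the condition established for $\hat P(G)$.

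The main obstacle will be the forward direction of claim (i), namely that $i \in [c_k,d_k]$ forces $\pi_i^F$ through $x_k$. At the leftmost and rightmost intervals of the concatenation containing $i$ this is immediate: the star in-edge (resp.\ out-edge) incident to row $i$ cannot have been deleted by the zig-zag construction, since no covering pair $[c_{k'},d_{k'}] \precdot [c_k,d_k]$ can contain $i$ in both members on that side. For intermediate intervals, however, the corresponding row-$i$ star edges at $x_k$ may themselves have been deleted, so the local forcedness argument breaks down. To handle this case I would use Observation~\ref{o:firstorlast} to reorder the factors into a linear extension of $\preceq$ and then induct along the chain of intervals containing $i$: once $\pi_i^F$ has entered the central vertex of the predecessor interval in that chain, the only route in $F$ to the sink side of $G_{[c_k,d_k]}$ passes through $x_k$, because by the covering condition on $\preceq$ every deleted connecting path between two central vertices runs through slices whose star intervals do not contain $i$ and so cannot offer a row-$i$ bypass around $x_k$.
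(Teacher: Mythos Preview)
Your approach is essentially the paper's: both set up the bijection $\pi_i^F \leftrightarrow \pi_i^G$ and reduce the comparison of the two orders to checking that $\pi_i$ and $\pi_j$ share a central vertex in $F$ exactly when they do in $G$, which in turn is equivalent to some interval $[c_k,d_k]$ containing both indices. One simplification worth noting is that after the edge-merging in the definition of $\circ$, the \emph{only} internal vertices of $G$ (hence of $F$) are the central vertices $x_1,\dots,x_t$ themselves, so your claim (ii) is immediate with no degree argument needed, and both directions of claim (i) follow at once by observing that source $i \to x_{k_1}\to\cdots\to x_{k_s}\to$ sink $i$ (where $\{k_1<\cdots<k_s\}=\{k:i\in[c_k,d_k]\}$, consecutive members of which form covering pairs in $\preceq$ so that an edge survives in $F$) is a valid path and then invoking uniqueness, which sidesteps your inductive bypass argument.
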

\begin{proof}
Let $(\rho_1, \cdots, \rho_n)$ and $(\tau_1,\dotsc,\tau_n)$ be the unique
noncrossing path families of type $e$ in $G$ and $F$, respectively,
and let $x_1,\dotsc,x_t$ be the vertices of $F$ which correspond to the
central vertices of the $t$ star networks in (\ref{eq:concat}).
For $i < j$, we have $\rho_i <_{\hat P(G)} \rho_j$ if and only if $\rho_i$, 
$\rho_j$ share none of the central vertices of the $t$ star networks.
But this condition holds if and only if $\tau_i$, $\tau_j$ share none of
the vertices $x_1, \dotsc, x_t$.
\end{proof}

By Propositions~\ref{p:permisom} and \ref{p:hatisom},
it is possible to have an isomorphism of path posets for nonisomorphic 
zig-zag networks.  Define an equivalence relation $\sim$ on \pavoiding
permutations by 
\begin{equation}\label{eq:posetequiv}
v \sim w \quad \text{ if and only if } \quad P(F_v) \cong P(F_w).
\end{equation}
For example, it is easy to see the equivalence of the four permutations 
corresponding to the 
seventh descending star network in (\ref{eq:xfigures2}) 
and the fourth, seventh, and eighth zig-zag networks in (\ref{eq:xfigureszz}):
in each case the path poset is isomorphic to the seventh unit interval order in
(\ref{eq:uio}).
It is also easy to see that we have $w \sim w^{-1}$ for $w$ \avoidingp:
the networks $F_w$ and $F_{w^{-1}}$ differ only by reflection in a vertical line. 
 
\begin{thm}\label{t:pfvpfu}
Each equivalence class of the relation $\sim$ (\ref{eq:posetequiv})
contains exactly one representative which avoids the pattern $312$.
\end{thm}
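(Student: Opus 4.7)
The plan is to prove existence and uniqueness of the $312$-avoiding representative separately, using Propositions~\ref{p:permisom} and~\ref{p:hatisom} together with Theorem~\ref{t:dsn312} for the former and the counting built into the proof of Theorem~\ref{t:dsnuio} for the latter.

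For existence, I would start with a \pavoiding permutation $w \in \sn$ and the concatenation $G = G_{[c_1,d_1]} \circ \cdots \circ G_{[c_t,d_t]}$ from which $F_w$ arises. The distinctness and pairwise nonnesting of the intervals force the starting points $c_1, \dotsc, c_t$ to be pairwise distinct, since two distinct intervals sharing a starting point would necessarily nest. Reorder the intervals in strictly decreasing order of $c_i$ to obtain a new concatenation $G'$. Condition~(1) following (\ref{eq:concat}) carries over since the underlying set of intervals is unchanged, while condition~(2) is automatic in the new ordering because the alternative $c_i > c_j > c_k$ is now forced. Thus $G'$ yields a zig-zag network $F'$ which is by definition a descending star network. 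Chaining the isomorphisms
\begin{equation*}
P(F_w) \cong \hat P(G) \cong \hat P(G') \cong P(F')
\end{equation*}
via Propositions~\ref{p:hatisom}, \ref{p:permisom}, and \ref{p:hatisom} respectively, the permutation $v = w(F')$ avoids~$312$ by Theorem~\ref{t:dsn312} and satisfies $v \sim w$.

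For uniqueness, I would appeal to the counting made explicit in the proof of Theorem~\ref{t:dsnuio}. That argument exhibits an injection $\zeta$ from the $\tfrac 1{n+1}\tbinom{2n}n$ isomorphism classes of unit interval orders into $\mathcal{P}$, while Theorem~\ref{t:dsn312} bounds $|\mathcal{P}| \leq \tfrac 1{n+1}\tbinom{2n}n$, forcing equality. Since there are exactly $\tfrac 1{n+1}\tbinom{2n}n$ permutations in $\sn$ avoiding~$312$, each yielding a distinct descending star network, the map $v \mapsto P(F_v)$ from $312$-avoiding permutations to isomorphism classes in $\mathcal{P}$ is a bijection; in particular two $312$-avoiding permutations with isomorphic path posets must coincide.

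The main obstacle, a mild one, is verifying that the descending rearrangement $G'$ still fulfills the zig-zag network conditions. The crucial observation is that distinctness plus nonnesting forces the $c_i$'s to be distinct, permitting a strict decreasing reordering; once that is in place, condition~(2) holds trivially in the new ordering and the three applications of Propositions~\ref{p:hatisom} and~\ref{p:permisom} deliver the required poset isomorphism.
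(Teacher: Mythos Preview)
Your proof is correct and follows essentially the same approach as the paper: reorder the intervals by decreasing $c_i$ to obtain a descending star network, invoke Propositions~\ref{p:permisom} and~\ref{p:hatisom} for the poset isomorphism, and appeal to Theorem~\ref{t:dsnuio} for uniqueness. You supply more detail than the paper does in verifying that the reordered concatenation still satisfies the zig-zag conditions, which is a welcome addition.
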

\begin{proof}
Fix $w \in \sn$ \avoidingp, let 
$G = G_{[c_1,d_1]} \circ \cdots \cdots \circ G_{[c_t,d_t]}$ 
be the concatenation of star networks which leads to the zig-zag network 
$F_w$ as after (\ref{eq:xfigureszz}),
and let $u \in \mfs t$ be the unique permutation
satisfying $c_{u_1} > \cdots > c_{u_t}$.
Then the concatenation 
$G_{[c_{u_1},d_{u_1}]} \circ \cdots \cdots \circ G_{[c_{u_t},d_{u_t}]}$ leads to
a descending star network $F_v$.  
By Theorem~\ref{t:dsn312}, $v$ avoids the pattern $312$,
and by Propositions~\ref{p:permisom} and \ref{p:hatisom} we have
$P(F_v) \cong P(F_w)$.
By Theorem~\ref{t:dsnuio}, $v$ is the only $312$-avoiding permutation in
its equivalence class.
\end{proof}

\ssec{Planar network tableaux}
To combinatorially interpret evaluations of 
$\sn$-class functions and $\hnq$-traces,
we will repeatedly fill a (French) Young diagram 
with a path family
$\pi = (\pi_1,\dotsc,\pi_n)$ 
covering a zig-zag network
$F_w$,
and will call the resulting structures {\em $F_w$-tableaux},
or more specifically {\em $\pi$-tableaux}.
(See, e.g., \cite[Sec.\,2]{RemTrans} for French notation.)
If 
$\pi$ 
has type $v$,
then we will also say that each $\pi$-tableau has {\em type $v$.}
Since $\pi$ 
induces a subposet $Q_\pi$ of the poset $Q(F_w)$,
$\pi$-tableaux form a special case of Gessel and Viennot's 
{\em $P$-tableaux}~\cite{GV}: they are
$Q_\pi$-tableaux.

Several properties which $\pi$-tableaux may posess can be defined 
for $P$-tableaux where $P$ is 
an arbitrary poset.
We say that a $P$-tableau $U$ has {\em shape $\lambda$} 
for some partition $\lambda = (\lambda_1,\dotsc,\lambda_r)$ if it
has $\lambda_i$ cells in row $i$ for all $i$.
If $U$ has $\lambda_i$ cells in {\em column $i$} for all $i$, 
we say that $U$ has shape $\lambda^\tr$, 
where 
we define $\lambda^\tr$ 
to be the partition whose $i$th part is equal to
the number of cells in row $i$ of $U$.
Call an element $x \in P$
a {\em nontrivial record} in a row of $U$
if it is greater in $P$ than all entries
appearing to its left in the same row,
and if it is not the leftmost entry of its row.
\begin{itemize}
\item Call $U$
{\em column-strict} ({\em row-strict}\,)
if whenever elements $x$, $y$ appear consecutively from
bottom to top in a column (left to right in a row), then we have 
$x <_P y$.
\item Call $U$
{\em row-semistrict}
if whenever elements $x$, $y$
appear consecutively from left to right in a row,
then we have $x <_P y$ or $x$ incomparable to $y$ in $P$.
\item Call $U$
{\em cyclically row-semistrict}
if it is row-semistrict and the above condition 
also holds
when $x$, $y$
are the rightmost and leftmost (respectively) 
entries in a row.
\item Call $U$
{\em standard}
if it is both column-strict and row-semistrict.
\item Call $U$
{\em record-free} 
if no row contains a nontrivial record.
\end{itemize}
Another property of $\pi$-tableaux depends upon each path being labeled by
its source vertex.
Call a row of $U$
{\em left anchored} ({\em right anchored}\,)
if its leftmost (rightmost) entry has the least 
source vertex
of all paths in the row.
\begin{itemize}
\item Call $U$
{\em left-anchored} ({\em right-anchored}\,) 
if each row is left-anchored (right-anchored).
\end{itemize}
More
properties of $\pi$-tableaux depend upon the fact that
each element of the poset $Q_\pi$ has a source vertex and a 
(potentially different) sink vertex.
Given a $\pi$-tableau $U$,  
let $L(U)$ and $R(U)$ denote the Young tableaux of integers
obtained from $U$ by replacing paths $\pi_1,\dotsc,\pi_n$
with their corresponding source and sink indices, respectively.
\begin{itemize}
\item 
Call $U$ {\em row-closed} 
if each row of $R(U)$ is a permutation of the corresponding row of $L(U)$.
\item 
Call $U$ {\em left row-strict} 
if $L(U)$ is row-strict as a $\mathbb Z$-tableau. 
\item Call $U$ {\em cylindrical} if
for each row of $L(U)$ containing indices
$i_1, \dotsc, i_k$ from left to right, 
the corresponding row of $R(U)$ contains
$i_2, \dotsc, i_k, i_1$ from left to right.
\end{itemize}

It will be convenient to let 
$\mathcal T(F_w,\lambda)$ 
be the set of $F_w$-tableaux of shape $\lambda$.

\begin{lem}\label{l:dropparens}
Let $v \in \sn$ avoid the pattern $312$, and fix $\lambda \vdash n$.
Within the set $\mathcal T(F_v,\lambda)$, tableaux 
which are row-closed and left row-strict
correspond bijectively to 
tableaux which are row-semistrict of type $e$.
\end{lem}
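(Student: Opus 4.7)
The plan is to construct an explicit bijection $\phi$ between the two sets of tableaux, operating row by row. Given a row of a row-closed, left-row-strict tableau $U$ with sources $i_1 < \cdots < i_k$ (sorted by left-row-strictness) and sinks $(a_1, \ldots, a_k)$ forming a permutation $\sigma$ of $\{i_1, \ldots, i_k\}$ (by the row-closed condition), I would analyze the cycle decomposition of $\sigma$ together with the restricted unit interval order $P(F_v)|_{\{i_1, \ldots, i_k\}}$ (recall that $P(F_v)$ is a unit interval order by Theorem~\ref{t:dsnuio}). The naive map that replaces each path $\pi_{i_m}\colon i_m \to a_m$ by the type-$e$ path $\hat\pi_{a_m}$ in the same cell gives a type-$e$ tableau, but need not be row-semistrict: consecutive sinks $a_m > a_{m+1}$ with $\hat\pi_{a_{m+1}} <_P \hat\pi_{a_m}$ violate the condition. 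The bijection $\phi$ remedies this by canonically reordering entries within each cycle of $\sigma$: for a cycle $C$ with sorted elements $c_1 < \cdots < c_\ell$, the positions occupied by the sources $c_1, \ldots, c_\ell$ in $\phi(U)$ are filled in a canonical row-semistrict order determined by the poset structure of $P(F_v)|_C$.

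The technical heart is the following path-existence claim: for any realizable row-closed cycle $\sigma$ on $\{c_1 < \cdots < c_\ell\}$, a path from source $c_j$ to sink $c_{j+1}$ exists in $F_v$ for each consecutive pair. By Lemma~\ref{l:dsnintersect}, this is equivalent to $\hat\pi_{c_j}$ and $\hat\pi_{c_{j+1}}$ being incomparable in $P(F_v)$, so each such pair may safely appear adjacently in a row-semistrict arrangement. I would establish the claim by iteratively applying Observation~\ref{o:pnetintersect} to the paths $\pi_{c_r}\colon c_r \to \sigma(c_r)$, which all exist in $F_v$ and pairwise cross (since the cycle is non-trivial); each such crossing produces the intermediate paths required to deduce path existence between consecutive sorted cycle elements. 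Row-semistrictness across cycle boundaries in a single row follows from the left-row-strict source ordering combined with the unit interval structure of $P(F_v)$.

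The inverse map $\psi$ parses each row's arrangement into maximal blocks corresponding to the cycles of $\sigma$, using the poset structure and the left-row-strict requirement to identify the block boundaries canonically. Once blocks are recovered, each determines a cycle of $\sigma$, and reading the cycles together reconstructs the sink sequence of $U$; the realizability of this $\sigma$ in $F_v$ follows from the path-existence analysis performed in the forward direction, run in reverse. The principal obstacles are (a) proving the path-existence claim within cycles via careful combinatorial analysis with Observations~\ref{o:pnetintersect} and~\ref{o:sourcetosink}, and (b) verifying that the block parsing of a row-semistrict arrangement uniquely recovers $\sigma$ and that the recovered permutation is indeed realizable as a row-closed permutation of the path family covering $F_v$.
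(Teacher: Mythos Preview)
Your row-by-row reduction and use of cycle decomposition match the paper's framework, but the bijection you describe is not injective. You say that for a cycle $C$ with sorted elements $c_1 < \cdots < c_\ell$, the positions occupied by those sources are filled ``in a canonical row-semistrict order determined by the poset structure of $P(F_v)|_C$.'' This order depends only on the \emph{set} $C$, not on which cyclic permutation of $C$ actually occurred. Concretely, take $v = 321$ so that $F_v = G_{[1,3]}$ and $P(F_v)$ is a three-element antichain. Both $3$-cycles $(1\,2\,3)$ and $(1\,3\,2)$ are realizable as row-closed permutations (all source-to-sink paths exist), yet your map sends both to the same ``canonical'' arrangement of $\{\hat\pi_1,\hat\pi_2,\hat\pi_3\}$. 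The two row-closed tableaux are distinct, so $\phi$ collapses them; the map cannot be a bijection.

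The paper resolves this by using the standard Foata-type correspondence: given the sink word $w$ of a left-row-strict row, write $w^{-1}$ in cycle notation with each cycle led by its greatest element and cycles ordered by increasing greatest element, then erase the parentheses to obtain a word $x_1\cdots x_r$; the image row is $\rho_{x_1},\dotsc,\rho_{x_r}$. The inverse reads off cycles from the left-to-right maxima of the word. This bijection genuinely uses the cycle structure of $w^{-1}$, not just the supports of its cycles, and the paper's technical work is exactly checking that the image is row-semistrict and that the recovered $w$ is realizable in $F_v$. Your sketch also asserts that the paths $\pi_{c_r}\colon c_r \to \sigma(c_r)$ within a cycle ``pairwise cross''; this is false already for the cycle $(1\,2\,3)$ in $G_{[1,3]}$, where the paths $1\to 2$ and $2\to 3$ do not cross. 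The path-existence arguments you need are more delicate and, in the paper, rely on the specific cycle-leader convention together with Observation~\ref{o:pnetintersect} and Observation~\ref{o:sourcetosink}.
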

\begin{proof}
Observe that for a row-closed, left row-strict tableau
$V \in \mathcal T(F_v,\lambda)$, 
the tableau $V_k$ is itself a row-closed, left row-strict tableau
in $\mathcal T(F_v|_{I_k}, \lambda_k)$, where $I_k$ is the set of indices
appearing in $L(V_k)$.
Similarly, for a row-semistrict tableau $V' \in \mathcal T(F_v,\lambda)$
of type $e$, the tableau $V'_k$ is itself a row-semistrict tableau of type $e$ 
in $\mathcal T(F_v|_{I_k}, \lambda_k)$.
Letting $r = \lambda_k$, we have that
$F_v|_{I_k}$ is a descending star network of the form $F_u$
for some $u \in \mfs r$ avoiding the pattern $312$.
We therefore construct a bijection which preserves the 
set of path indices appearing in each row of a tableau,
and we state it as a product of bijections on one-rowed tableaux.

Map each  
left row-strict tableau $U \in \mathcal T(F_u,(r))$ 
to a row-semistrict tableau $U'$ of type $e$ 
in $\mathcal T(F_u,(r))$ as follows.
Let $\rho = (\rho_1, \dotsc, \rho_r)$ be the unique path family of 
type $e$ covering $F_u$.
Let 
$U$ contain
the path family $\pi = (\pi_1, \dotsc, \pi_r)$ from left to right, 
and define $w \in \mfs r$ to be the word of right indices of these paths.
Write $w^{-1}$ in cycle notation, with each cycle starting with its 
greatest element, and cycles ordered by increasing greatest elements.
(See Cycle Structure subsection in \cite[Sec.\,1.3]{StanEC1}).  
Drop the parentheses, and interpret the resulting
string $x = x_1 \cdots x_r$ of letters as the one-line notation of an
element of $\mfs r$.  Then write the paths $\rho_{x_1}, \dotsc, \rho_{x_r}$
from left to right in $U'$.

To see that $U'$ is row-semistrict, assume that we have 
$\rho_{x_i} >_{P(F_u)} \rho_{x_{i+1}}$ for some $i$.  Then there is no
path from source $x_{i+1}$ to sink $x_i$ in $F_u$.
If $x_i$ and $x_{i+1}$ belong to the same cycle of $w^{-1}$,
then $w_{x_{i+1}} = x_i$ and there is a path from $x_{i+1}$ to $x_i$ in $F_u$,
a contradiction. 
If $x_i$ and $x_{i+1}$ do not belong to the same cycle of $w^{-1}$,
then $x_{i+1} > x_i$ as integers, contradicting the assumed 
inequality in $P(F_u)$.

To see that the map $U \mapsto U'$ is a bijection, we construct its inverse.
Let $V$ be a row-semistrict tableau of type $e$ in $\mathcal T(F_u,(r))$, 
containing paths $\rho_{x_1}, \dotsc, \rho_{x_r}$ from left to right. 
Define $w \in \sn$ to be the permutation whose cycle notation is given by
\begin{equation*}
(x_1, \dotsc, x_{i_1 - 1})(x_{i_1},\dotsc,x_{i_2 - 1}) \cdots (x_{i_k},\dotsc,x_r),
\end{equation*}
where $x_1, x_{i_1}, x_{i_2}, \dotsc, x_{i_k}$ are 
the 
records of the word
$x_1 \cdots x_r$, i.e., $x_{i_j} = \max \{ x_1,\dotsc,x_{i_j} \}$.
Then write $w^{-1} = w^{-1}_1 \cdots w^{-1}_r$ in one-line notation and
define $V'$ to be the tableau in $\mathcal T(F_u,(r))$ 
whose $i$th 
entry is the unique path in $F_u$ from source $i$ to sink $w^{-1}_i$.  
It is clear that this map, if well defined, 
is inverse to the map $U \mapsto U'$, 
and therefore that the two are bijections. 
(See \cite[Sec.\,1.3]{StanEC1}).  

To see that the necessary paths exist in $F_u$,
consider a cycle $(x_j, \dotsc, x_{j+\ell})$ of $w$ and the pairs 
$(i,w^{-1}_i) \in 
\{(x_{j+a+1}, x_{j+a})\,|\, 1 < a \leq \ell \} \cup \{(x_{j+\ell}, x_j)\}$.
Since $V$ is row-semistrict, i.e.,
$\rho_{x_{j+a}} \not >_{P(F_u)} \rho_{x_{j+a+1}}$,
any integer inequality $x_{j+a} > x_{j+a+1}$ 
implies that there are paths in $F_u$ from sources $x_{j+a}$ and $x_{j+a+1}$ 
to (both) sinks $x_{j+a}$ and $x_{j+a+1}$.
In particular, there are paths in $F_u$ from sources $x_j$ and $x_{j+1}$
to sinks $x_j$ and $x_{j+1}$.  
Now assume that there are paths from 
source $x_{j+s}$ to sink $x_{j+s-1}$ and from source $x_j$ to sink $x_{j+s}$,
and consider 
the pair $(x_{j+s+1}, x_{j+s})$.  
If $x_{j+s+1} < x_{j+s}$, then by the above argument there is a path
from source $x_{j+s+1}$ to sink $x_{j+s}$.  
Since there are paths from sources $x_j$ and $x_{j+s+1}$ to sink $x_{j+s}$,
the two sources must belong to the same connected component of $F_u$.
By the comment following Observation~\ref{o:sourcetosink}, we also
have a path from source $x_j$ to sink $x_{j+s+1}$.  
If on the other hand $x_{j+s+1} < x_{j+s}$, 
then since $x_j$ is the maximum index in its cycle
we have $x_j > x_{j+s+1} > x_{j+s}$.  Since there are paths in $F_u$ from
source $x_j$ to sink $x_{j+s}$ and from source $x_{j+s+1}$ to sink $x_{j+s+1}$,
Observation~\ref{o:pnetintersect}
implies that there are also a paths from source
$x_j$ to sink $x_{j+s+1}$ and from source $x_{j+s+1}$ to $x_{j+s}$.
By induction, we have that for $a = 1, \dotsc, \ell$,
there is a path from source $x_{j+a}$ to sink $x_{j+a-1}$,
and that there is also a path from source $x_j$ to sink $x_{j+\ell}$.
\end{proof}

\ssec{Interpretation of $\sn$-class function evaluations}
The equivalence relation in (\ref{eq:posetequiv}) has applications in
the enumeration of certain $F$-tableaux and in the evaluation of $\sn$-class
functions.

\begin{thm}\label{t:thetavw}
Let $v, w \in \sn $ \avoidp\ and satisfy $v \sim w$,
and let $X$ be a property of $F$-tableaux 
which depends only upon the poset $P(F)$ (rather than on $Q(F)$).
Then $F_v$-tableaux and $F_w$-tableaux having property $X$ are in
bijective correspondence.  Moreover, for any $\sn$-class function $\theta$
we have $\theta(C'_v(1)) = \theta(C'_w(1))$.
\end{thm}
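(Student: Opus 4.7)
I would prove the two assertions in succession. For the first (bijection of tableaux), the hypothesis $v \sim w$ yields, by the definition (\ref{eq:posetequiv}), an isomorphism $\varphi : P(F_v) \to P(F_w)$ of path posets. Since the property $X$ depends only on $P(F)$ and not on the ambient poset $Q(F)$, the isomorphism $\varphi$ carries $F_v$-tableaux with property $X$ bijectively onto $F_w$-tableaux with property $X$.

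For the second assertion I would first reduce to a purely enumerative statement. Combining (\ref{eq:thetaCimmB}) with Theorem \ref{t:lem53nndcb} gives
\begin{equation*}
\theta(C'_u(1)) \ =\ \sum_{y \leq u} \theta(y) \ =\ \sum_{\lambda \vdash n} n_\lambda(u)\, \theta^\lambda,
\end{equation*}
where $n_\lambda(u) := \#\{y \leq u : \ctype{y} = \lambda\}$ and $\theta^\lambda$ denotes the common value of the class function $\theta$ on permutations of cycle type $\lambda$. Equivalently, applying (\ref{eq:pimm}) to the power-sum class function yields $\psi^\lambda(C'_u(1)) = z_\lambda\, n_\lambda(u)$, and since the family $\{\psi^\lambda\}_{\lambda \vdash n}$ spans the space of $\sn$-class functions, it suffices to show that $n_\lambda(v) = n_\lambda(w)$ for every $\lambda \vdash n$.

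To establish this, I would interpret $n_\lambda(u)$ as a count of $F_u$-tableaux of shape $\lambda$ whose defining properties depend only on $P(F_u)$, and then invoke the first part. Concretely, row-closed, left row-strict $F_u$-tableaux of shape $\lambda$ encode path families of the prescribed cycle type via canonical cycle notation; after reducing to the unique $312$-avoiding representative of the $\sim$-class (whose existence is guaranteed by Theorem \ref{t:pfvpfu}), Lemma \ref{l:dropparens} rewrites this count as the count of row-semistrict type-$e$ tableaux of shape $\lambda$. The latter quantity depends only on $P(F_u)$, so the bijection from the first part applies and concludes the argument.

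The main obstacle will be the final identification of $n_\lambda(u)$ as a $P(F_u)$-only tableau count: path families are naturally governed by the ambient poset $Q(F_u)$, so pruning the enumeration down to quantities intrinsic to $P(F_u)$ demands the delicate cycle-notation bookkeeping of Lemma \ref{l:dropparens}, whose hypothesis in turn forces the reduction to a $312$-avoiding representative via Theorem \ref{t:pfvpfu}.
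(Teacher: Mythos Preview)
Your first assertion is fine and coincides with the paper. The second has a genuine gap.

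The claim that ``row-closed, left row-strict $F_u$-tableaux of shape $\lambda$ encode path families of the prescribed cycle type'' is false. Such tableaux count $\eta^\lambda(C'_u(1))$, not $z_\lambda n_\lambda(u)=\psi^\lambda(C'_u(1))$: row-closedness only forces each row to carry some permutation of its index set, not a single cycle, so the associated path family need not have cycle type $\lambda$ at all. (Already for $u=s_1\in\mfs2$ and $\lambda=(2)$ there are two such tableaux, whereas $n_{(2)}(u)=1$.) The tableaux that do have cycle type $\lambda$ are the \emph{cylindrical} ones, but ``cylindrical'' is not a $P(F)$-only property, so the first part of the theorem does not apply to them. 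Lemma~\ref{l:dropparens} therefore rewrites the $\eta^\lambda$-count, not the $n_\lambda$-count, and the chain of equalities you need never forms.

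Even if you switch the argument to $\eta^\lambda$, the step ``reduce to the $312$-avoiding representative'' is circular. Lemma~\ref{l:dropparens} furnishes the $P(F)$-only rewriting only for the $312$-avoiding $u$; to pass from a general $v\sim u$ back to $u$ you would already need $\eta^\lambda(C'_v(1))=\eta^\lambda(C'_u(1))$, which is precisely the $\sim$-invariance you are trying to establish. The paper avoids both problems by working with $\epsilon^\lambda$: Lindstr\"om's Lemma applied to the first identity in (\ref{eq:inducedimmssubmat}) shows directly, for \emph{every} \pavoiding $w$, that $\epsilon^\lambda(C'_w(1))$ counts column-strict type-$e$ $F_w$-tableaux, a manifestly $P(F)$-only quantity requiring neither Lemma~\ref{l:dropparens} nor Theorem~\ref{t:pfvpfu}. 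Since $\{\epsilon^\lambda\,|\,\lambda\vdash n\}$ is a basis of class functions, the conclusion follows for all $\theta$.
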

\begin{proof}
Since the property $X$ depends only upon the poset $P(F_v) \cong P(F_w)$,
we have a bijection between the sets of 
$F_v$-tableaux 
and $F_w$-tableaux having property $X$.

Now apply Lindstr\"om's Lemma and (\ref{eq:thetaCimmB}) to
the first Littlewood-Merris-Watkins identity in (\ref{eq:inducedimmssubmat})
to see that
for all $\lambda \vdash n$, the evaluations
$\epsilon^\lambda(C'_v(1))$
and $\epsilon^\lambda(C'_w(1))$
are equal to the numbers of 
column-strict $F_v$-tableaux and $F_w$-tableaux, respectively,
of type $e$ and shape $\lambda$.
Since column-strictness of these tableaux depends only upon 
$P(F_v) \cong P(F_w)$,
the above bijection implies that we have 
$\epsilon^\lambda(C'_v(1)) = \epsilon^\lambda(C'_w(1))$ for all $\lambda \vdash n$.
Since $\{ \epsilon^\lambda \,|\, \lambda \vdash n \}$ is a basis
for the space of $\sn$-class functions, each class function
$\theta$ 
satisfies
$\theta(C'_v(1)) = \theta(C'_w(1))$.
\end{proof}

For some $\sn$-class functions $\theta$, and all 
\pavoiding permutations $w$, we may combinatorially
interpret $\theta(C'_w(1))$
in terms of a zig-zag network $F_w$ as follows.
(See \cite[p.\,288]{StanEC2} 
for information on the majorization order, used in (v-a) below.)


\begin{thm}\label{t:sncfinterp}
Let $w \in \sn $ \avoidp, and
fix $\lambda = (\lambda_1, \dotsc, \lambda_r) \vdash n$.
Then we have the following.
\begin{enumerate}
\item[$(i)$] $\epsilon^\lambda(C'_w(1)) = 
\# \{ U \in \mathcal T(F_w,\lambda^\tr\,) \,|\, 
U \text{ column-strict of type } e \}$.
\item[$(ii$-$a)$] $\eta^\lambda(C'_w(1)) = 
\# \{ U \in \mathcal T(F_w,\lambda) \,|\, 
U \text{ row-closed, left row-strict}\, \}$.
\item[$(ii$-$b)$] $\eta^\lambda(C'_w(1)) = 
\# \{ U \in \mathcal T(F_w,\lambda) \,|\, 
U \text{ row-semistrict of type } e \}$.
\item[$(iii)$] $\chi^\lambda(C'_w(1)) = 
\# \{ U \in \mathcal T(F_w,\lambda) \,|\, 
U \text{ standard of type } e \}$.
\item[$(iv$-$a)$] $\psi^\lambda(C'_w(1)) = 
\# \{ U \in \mathcal T(F_w,\lambda) \,|\, 
U \text{ cylindrical}\, \}$.
\item[$(iv$-$b)$] $\psi^\lambda(C'_w(1)) = 
\# \{ U \in \mathcal T(F_w,\lambda) \,|\, 
U \text{ cyclically row-semistrict of type } e \}$.
\item[$(iv$-$c)$] $\psi^\lambda(C'_w(1)) = 
\# \{ U \in \mathcal T(F_w,\lambda) \,|\, 
U \text{ record-free, row-semistrict of type } e \}$.
\item[$(iv$-$d)$] $\psi^\lambda(C'_w(1)) = 
\lambda_1 \cdots \lambda_r \cdot
\# \{ U \in \mathcal T(F_w,\lambda) \,|\, 
U \text{ right-anchored, row-semistrict of type } e \}$.
\item[$(v$-$a)$] Suppose $\lambda_1 \leq 2$. We have
$\phi^\lambda(C'_w(1)) = 
\# \{ U \in \mathcal T(F_w,\lambda) \,|\, 
U \text{ column-strict of type } e \}$
if for all $\mu$ majorized by $\lambda$ we have
$\mathcal T(F_w,\mu) = \emptyset$;
otherwise we have $\phi^\lambda(C'_w(1)) = 0$.
\item[$(v$-$b)$] For $\lambda = (k)^r$, we have 
$\phi^\lambda(C'_w(1)) = 
\# \{ U \in \mathcal T(F_w,\lambda) \,|\, 
U \text{ column-strict, cylindrical}\, \}$.
\end{enumerate}
\end{thm}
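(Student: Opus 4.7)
The overall plan is to apply Equation~(\ref{eq:thetaCimmB}) to turn each evaluation $\theta(C'_w(1))$ into $\imm{\theta}(B)$, where $B$ is the path matrix of $F_w$, and then combinatorially interpret the immanant using Lindstr\"om's Lemma together with the immanant formulas recorded in Section~\ref{s:imm}. Because every tableau property appearing in the statement depends only on $P(F_w)$, Theorems~\ref{t:thetavw} and~\ref{t:pfvpfu} let us replace $w$ by the unique $312$-avoiding representative in its $\sim$-class; we may therefore assume $F_w$ is a descending star network and control path intersections using Lemma~\ref{l:dsnintersect}.

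For $(i)$, the first identity in~(\ref{eq:inducedimmssubmat}) writes $\imm{\epsilon^\lambda}(B)$ as a sum over ordered set partitions $(I_1,\dotsc,I_r)$ of type $\lambda$ of products $\prod_j \det(B_{I_j,I_j})$; by Lindstr\"om's Lemma each factor counts nonintersecting path families of type $e$ on $I_j$, and placing the $j$-th family in column $j$ of a diagram of shape $\lambda^\tr$ bijects these data with column-strict $F_w$-tableaux of type $e$. For $(ii$-$a)$, the second identity in~(\ref{eq:inducedimmssubmat}) gives $\prod_j \per(B_{I_j,I_j})$; since the entries of $B$ lie in $\{0,1\}$ by Theorem~\ref{t:lem53nndcb}, this permanent counts bijections $I_j \to I_j$ realizable by single paths in $F_w$, which are precisely the row data of a row-closed, left row-strict tableau of shape $\lambda$. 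Part $(ii$-$b)$ is then immediate from Lemma~\ref{l:dropparens}.

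For $(iv$-$a)$, apply~(\ref{eq:pimm}) to obtain $\imm{\psi^\lambda}(B) = z_\lambda \cdot \#\{v \in \sn : \ctype(v)=\lambda,\ v \leq w\}$; the factor $z_\lambda = \prod_i i^{\alpha_i}\alpha_i!$ exactly enumerates the $\alpha_i!$ orderings of equal-length cycles into rows together with the $i$ cyclic rotations of each $i$-cycle, so each such $v$ contributes $z_\lambda$ cylindrical tableaux. The equivalences with $(iv$-$b)$--$(iv$-$d)$ are routine: rotating each cycle to begin at its maximum entry yields the cyclic row-semistrict normal form; cycles in that form possess no nontrivial row records, giving $(iv$-$c)$; and keeping only one rotation per cycle while multiplying by $\lambda_1\cdots\lambda_r$ yields $(iv$-$d)$. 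Part $(iii)$ admits no direct immanant identity, so we appeal to Gasharov's theorem~\cite{GashInc} that for any $(3+1)$-free poset $P$ the chromatic symmetric function $X_P$ expands Schur-positively with $[s_\lambda]X_P$ counting standard $P$-tableaux of shape $\lambda$; combining this with the characteristic-map identification of $\chi^\lambda(C'_v(1))$ as $[s_\lambda]X_{P(F_v)}$ for the unit interval order $P(F_v)$ yields the claimed interpretation.

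For $(v$-$b)$, apply Stembridge's rectangular formula~(\ref{eq:stemmonrect}) to $B$ and invoke Lindstr\"om on each $\det(B_{I_j,I_{j+1}})$, with indices taken modulo $k$; assembling the resulting nonintersecting families as columns $I_1,\dotsc,I_k$ produces tableaux that are column-strict by construction and cylindrical because column $k$ returns to $I_1$. For $(v$-$a)$, combine the expansion $\phi^\lambda = \sum_\mu K^{-1}_{\lambda,\mu} \chi^\mu$ from~(\ref{eq:moncfdef}) with part~$(iii)$; when $\lambda_1 \leq 2$ the supporting shapes $\mu$ lie in a narrow dominance interval in which a sign-reversing involution on standard $F_w$-tableaux collapses the signed sum, the vanishing hypothesis being detected by realizability of a strictly majorized shape as a tableau shape in $\mathcal T(F_w,\mu)$. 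The main obstacle is $(iii)$, where the absence of an immanant identity forces us to import Gasharov's chromatic symmetric function machinery through the characteristic map; a secondary difficulty is verifying that the involution underlying $(v$-$a)$ has precisely the claimed fixed-point set.
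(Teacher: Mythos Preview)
Your treatment of $(i)$, $(ii$-$a)$, $(ii$-$b)$, $(iii)$, $(iv$-$a)$, and $(v$-$b)$ matches the paper's proof essentially line for line: Littlewood--Merris--Watkins plus Lindstr\"om for $(i)$ and $(ii$-$a)$, Lemma~\ref{l:dropparens} for $(ii$-$b)$, Gasharov via the chromatic symmetric function for $(iii)$ (note the paper records this as $[s_{\lambda^\tr}]X_P = \chi^\lambda(C'_w(1))$, so your identification needs a transpose), the identity~(\ref{eq:pimm}) together with the $z_\lambda$ bookkeeping for $(iv$-$a)$, and Stembridge's~(\ref{eq:stemmonrect}) plus Lindstr\"om for $(v$-$b)$.

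Your handling of $(iv$-$b)$, however, has a genuine gap. A cylindrical $F_w$-tableau is built from paths of \emph{arbitrary} type, whereas a cyclically row-semistrict tableau is required to have type~$e$; ``rotating each cycle to begin at its maximum entry'' does nothing to change the type and, worse, collapses $\lambda_1\cdots\lambda_r$ cylindrical tableaux onto each normalized row pattern, so it cannot be a bijection. The paper's map is different: for a cylindrical row with $L(U_k) = (i_1,\dots,i_{\lambda_k})$ it builds the type-$e$ row $(\rho_{i_{\lambda_k}},\dots,\rho_{i_1})$, i.e.\ it \emph{reverses} the index sequence and replaces each entry by the source-$i$-to-sink-$i$ path. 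The reversal is essential. In a descending star network a path from source $i_j$ to sink $i_{j+1}$ exists iff $i_j > i_{j+1}$ or $\rho_{i_j}$ intersects $\rho_{i_{j+1}}$; this is exactly the condition $\rho_{i_{j+1}} \not>_{P} \rho_{i_j}$, which is the row-semistrict condition for the \emph{reversed} pair but not for the original pair. Without the reversal the map does not land in cyclically row-semistrict tableaux. Your reductions to $(iv$-$c)$ and $(iv$-$d)$ inherit this problem; the paper instead derives $(iv$-$d)$ from the corrected $(iv$-$b)$ by a $\lambda_1\cdots\lambda_r$-to-$1$ rotation argument, and for $(iv$-$c)$ cites the Shareshian--Wachs reformulation of Stanley's~\cite[Thm.\,2.6]{StanSymm} rather than attempting a direct tableau bijection.

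For $(v$-$a)$ you propose expanding $\phi^\lambda = \sum_\mu K^{-1}_{\lambda,\mu}\chi^\mu$ and cancelling via a sign-reversing involution on standard tableaux, which you flag as unverified. The paper does not take this route: it defers to Theorem~\ref{t:qphipartial}, whose $q=1$ specialization proceeds by induction on~$k$ for $\lambda = 2^k1^{n-2k}$, expanding $\epsilon^{(n-k,k)}(C'_w(1))$ in the monomial basis and using the explicit structure of $P(F_w)$ when $w$ avoids $321$ (each connected component is a fence $P(H_k)$, and the parity of the components forces the vanishing). Your involution, if it exists, would need to recover exactly this parity obstruction; absent a construction, this part remains a gap.
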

\begin{proof} 
%
%
(i) See the proof of Theorem~\ref{t:thetavw}.


(ii-a) 
Apply the definition (\ref{eq:pathmatrix}) of path matrix
and (\ref{eq:thetaCimmB}) to 
the second Littlewood-Merris-Watkins identity in (\ref{eq:inducedimmssubmat}).

(ii-b) Since row-semistrictness in $F_w$-tableaux of type $e$
is a property of the poset 
$P(F_w)$,
we may apply 
Theorem~\ref{t:thetavw} and
Lemma~\ref{l:dropparens} to
the interpretation in (ii-a).

(iii) 
Applying (i) and Theorem~\ref{t:dsnuio} 
to Gasharov's~\cite[Thm.\,2]{GashInc},
we obtain the claimed interpretation for $312$-avoiding permutations.
(See also Section~\ref{s:chromsf}.)
Since standardness of $F_w$-tableaux depends only upon
$P(F_w)$, we may apply Theorem~\ref{t:thetavw}
to extend the result to \pavoiding permutations as well. 

(iv-a) 
Apply the definition (\ref{eq:pathmatrix}) of path matrix to 
the identity (\ref{eq:pimm}).

(iv-b) 
Let $v \sim w$ avoid the pattern $312$.
We define a map from
cylindrical tableaux in $\mathcal T(F_v,\lambda)$ 
to cyclically row-semistrict tableaux in $\mathcal T(F_v,\lambda)$ having
type $e$ as follows. 
For $U \in \mathcal T(F_v, \lambda)$ 
cylindrical with rows $k$ of $L(U)$ and $R(U)$ containing indices
$i_1, \dotsc, i_{\lambda_k}$ and
$i_2, \dotsc, i_{\lambda_k}, i_1$ (respectively) from left to right,
and $\rho = (\rho_1,\dotsc, \rho_n)$ the unique path family of type $e$
covering $F_v$,
create a cyclically row-semistrict tableau $U' \in \mathcal T(F_v, \lambda)$ 
by inserting $\rho_{i_1}, \dotsc, \rho_{i_{\lambda_k}}$ into row $k$,
from right to left.  
This map is bijective since 
in the descending star network $F_v$, 
there exists a (unique) path 
from source $i_j$ to sink $i_m$ if and only if $i_j > i_m$
or $\rho_{i_j}$ and $\rho_{i_m}$ intersect.
Thus the claimed interpretation holds for $312$-avoiding permutations.
Since cyclical row-semistrictness in $F_w$-tableaux of type $e$
depends only upon $P(F_w)$,
we may apply Theorem~\ref{t:thetavw}
to extend the result to \pavoiding permutations as well.


(iv-c) Shareshian and Wachs~\cite[Sec.\,4]{SWachsChromQ}
have shown that for $312$-avoiding permutations,
this formula is equivalent to Stanley's \cite[Thm.\,2.6]{StanSymm}.
Since the claimed property of $F_w$-tableaux depends only upon
$P(F_w)$, we may apply Theorem~\ref{t:thetavw}
to extend the result to \pavoiding permutations as well. 
(See also, \cite[Lem.\,6]{AthanPSE}.)

(iv-d) The number of tableaux in (iv-b) is equal to the cardinality
of the subset that are right-anchored, times $\lambda_1 \cdots \lambda_r$.
This subset is precisely
the right-anchored row-semistrict $F_w$-tableaux
of type $e$ and shape $\lambda$.
Alternatively, we may use the Shareshian-Wachs argument of (iv-c).

(v-a) This was first stated in \cite{CSSkanPathTab}, 
and will be proved in
Theorem~\ref{t:qphipartial}.
A different interpretation 
was given in 
\cite[Thm.\,2.5.1]{WolfgangThesis}.
 
(v-b) 
Apply Lindstr\"om's Lemma and 
(\ref{eq:thetaCimmB}) 
to Stembridge's
identity (\ref{eq:stemmonrect}).
\end{proof}

Conspicuously absent from Theorem~\ref{t:sncfinterp} is an
interpretation of monomial class function evaluations 
of the form $\phi^\lambda(C'_w(1))$ which holds for all
$\lambda \vdash n$.
As we have mentioned in the first table of Section~\ref{s:intro},
these integers are conjectured to be nonnegative.
The problem of interpreting them has been posed 
from 
different points of view 
by Haiman, Stanley and Stembridge 
\cite[Conj.\,2.1]{HaimanHecke},
\cite[Conj.\,5.1]{StanSymm},
\cite[Conj.\,5.5]{StanStemIJT},
\cite[Conj.\,2.1]{StemConj}.
Any extension of the statements
in Theorem~\ref{t:sncfinterp} (v-a), (v-b) would be interesting.
\begin{prob}\label{p:monevalinterp}
For (special cases of) $w \in \sn$ and $\lambda \vdash n$, 
find a combinatorial proof that $\phi^\lambda(C'_w(1))$ is nonnegative.
\end{prob}


\ssec{Inversions in path tableaux}
For $\lambda \vdash n$ and
$w \in \sn$ \avoidingp,
Theorem~\ref{t:sncfinterp} (i) -- (iv-d) interprets 
$\epsilon^\lambda(C'_w(1))$,
$\eta^\lambda(C'_w(1))$,
$\chi^\lambda(C'_w(1))$, and
$\psi^\lambda(C'_w(1))$
as cardinalities of certain sets of $F_w$-tableaux.
Using these same sets of $F_w$-tableaux and 
variations of the permutation statistic $\inv$,
we show 
in Sections~\ref{s:hnqinterph}, 
\ref{s:hnqinterpe}, 
\ref{s:hnqinterps}, 
\ref{s:hnqinterpp}
that 
$\epsilon_q^\lambda(\qew C'_w(q))$, 
$\eta_q^\lambda(\qew C'_w(q))$,
$\chi_q^\lambda(\qew C'_w(q))$, and
$\psi_q^\lambda(\qew C'_w(q))$
are generating functions for
tableaux on which the statistics take the values $k = 0,1,\dotsc$.

Specifically, we adapt the permutation statistic 
$\inv$ for use on path tableaux as follows.
Let $\pi = (\pi_1, \dotsc, \pi_n)$ be a path family of type $v$ in some
zig-zag network $F$,
and let $U$ be a $\pi$-tableau.
Let $(\pi_i, \pi_j)$ be a pair of intersecting paths in $F$ such that 
$\pi_i$ appears in a column of $U$ to the left of the column containing $\pi_j$.
Call $(\pi_i, \pi_j)$ a {\em (left) inversion} in $U$ if 
we have $i > j$
and a {\em right inversion} in $U$ if
we have $v_i > v_j$.
Let $\inv(U)$ denote the number of 
inversions in $U$,
and let $\rinv(U)$ denote the number of right inversions in $U$.

Sometimes we will compute inversions in a one-rowed tableau formed by
concatenating all of the rows of a path tableau $U$.
Let $U_i$ be the $i$th row of $U$, and
let $U_1 \circ \cdots \circ U_r$ and $U_r \circ \cdots \circ U_1$
be the $F$-tableaux of shape $n$ consisting of the rows of $U$ concatenated
in increasing and decreasing order, respectively.
We will also compute inversions in the {\em transpose} $U^\tr$ of a 
path tableau $U$, whose rows are the columns of $U$.
It is easy to see that inversions in these one-rowed and transposed tableaux
are related by the identities
\begin{equation}\label{eq:invdecomp}
\begin{gathered}
\inv(U_1 \circ \cdots \circ U_r) 
= \inv(U_1) + \cdots + \inv(U_r) + \inv(U^\tr),\\
\rinv(U_1 \circ \cdots \circ U_r) 
= \rinv(U_1) + \cdots + \rinv(U_r) + \rinv(U^\tr).
\end{gathered}
\end{equation}

\section{
Interpretation of $\eta_q^\lambda(\qew C'_w(q))$}
\label{s:hnqinterph}


Let $w \in \sn$ \avoidp, and
let $B$ be the path matrix of $F_w$.
Using (\ref{eq:qinducedimmssubmat}) and Proposition~\ref{p:qimmchar}, 
we have
\begin{equation}\label{eq:hqlambeta}
\eta_q^\lambda(\qew C'_w(q)) 
= \sigma_B(\imm{\eta_q^\lambda}(x)) 
= \sum_{(I_1, \dotsc, I_r)} \sigma_B (\qper(x_{I_1,I_1}) \cdots \qper(x_{I_r,I_r}) ),
\end{equation}
where the sum is over all ordered set partitions $(I_1, \dotsc, I_r)$ of
$[n]$ of type $\lambda = (\lambda_1, \dotsc, \lambda_r)$.
Let $\slambda$ denote the Young subgroup of $\sn$ generated by
\begin{equation*}
\{ s_1, \dotsc, s_{n-1} \} \ssm 
\{ s_{\lambda_1}, s_{\lambda_1 + \lambda_2}, 
s_{\lambda_1 + \lambda_2 + \lambda_3}, 
\dotsc, 
s_{n-\lambda_r} \},
\end{equation*}
and let $\slambdamin$ be the set of Bruhat-minimal representatives of cosets
of the form $\slambda u$, i.e., the elements $u \in \sn$ for which each
of the subwords
\begin{equation}\label{eq:subwordsofu}
u_1 \cdots u_{\lambda_1}, \qquad 
u_{\lambda_1 +1} \cdots u_{\lambda_1 + \lambda_2}, \quad
\dotsc, \quad
u_{n-\lambda_r + 1} \cdots u_n
\end{equation}
is strictly increasing.
It is clear that such elements correspond bijectively to the 
ordered set partitions $(I_1, \dotsc, I_r)$ in (\ref{eq:hqlambeta}).
Expanding the product of permanents, we obtain monomials of the form
$\quv$ times
\begin{equation*}
x^{u,v} \defeq \doublepermmon xuv,
\end{equation*}
where $v$ is the concatenation, in order, of 
rearrangements of the $r$ words (\ref{eq:subwordsofu}).  
Thus $v$ may be written as $yu$ with $y \in \slambda$,
or as $uy$ with $y \in u^{-1}\slambda u$.
Now the sum in 
(\ref{eq:hqlambeta}) becomes
\begin{equation}\label{eq:hqlambetaalt}
\sum_{u \in \slambdamin} \sum_{y \in u^{-1}\slambda u} \sigma_B (q_{u,uy} x^{u,uy}).
\end{equation}
Let us therefore consider evaluations of the form 
$\sigma_B (\quv x^{u,v})$.

To combinatorially interpret these evaluations, 
let $\pi = (\pi_1, \dotsc, \pi_n)$ be a path family 
(of arbitrary type)
which covers a zig-zag
network $F$, and
define $U(u,\pi)$ to be the $\pi$-tableau of shape $(n)$
containing $\pi$ in the order $\pi_{u_1}, \dotsc, \pi_{u_n}$.
Clearly the left tableau of $U(u,\pi)$ is $u_1 \cdots u_n$.
If the right tableau is $v_1 \cdots v_n$ then $\pi$ has type $u^{-1}v$.
If $s_i$ is a left descent for $u$, then right inversions in $U(u,\pi)$
and $U(s_iu,\pi)$ are related as follows.

\begin{prop}\label{p:usupi}
Fix $u, v \in \sn$, 
let $F$ be a zig-zag
network, and let $\pi = (\pi_1, \dotsc, \pi_n)$ be a path family
of type $u^{-1}v$ which covers $F$. 
If $s_i u < u$ then we have
\begin{equation*}
\rinv(U(u,\pi)) = 
\begin{cases}
\rinv(U(s_iu,\pi)) - 1 &\text{if $s_i v > v$,} \\
\rinv(U(s_iu,\pi)) 
&\parbox[t]{.5 \textwidth}
{if $s_iv < v$ and no path family
of type $u^{-1}s_iv$ covers $F$,}\\
\rinv(U(s_iu,\pi)) + 1 
&\parbox[t]{.5 \textwidth}
{if $s_iv < v$ and some path family
of type $u^{-1}s_iv$ covers $F$.}
\end{cases}
\end{equation*}
\end{prop}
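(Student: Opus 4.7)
The plan is to observe that $U(u,\pi)$ and $U(s_iu,\pi)$ differ only by swapping the paths in columns $i$ and $i{+}1$, so the only pair whose right-inversion status can change is this transposed pair. I would first verify this by noting that for any other column $k \ne i, i{+}1$, the path $\pi_{u_k}$ lies in the same position relative to both $\pi_{u_i}$ and $\pi_{u_{i+1}}$ in the two tableaux, so the contribution to $\rinv$ from pairs involving column $k$ is the same in both. The pair $(\pi_{u_i}, \pi_{u_{i+1}})$ itself is a right inversion in $U(u,\pi)$ if and only if the two paths intersect and $v_i > v_{i+1}$, while in $U(s_iu,\pi)$ the paths are swapped, so it is a right inversion there if and only if they intersect and $v_{i+1} > v_i$. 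Note that $s_iu < u$ forces the sources to satisfy $u_{i+1} < u_i$.

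For the case $s_iv > v$ (that is, $v_i < v_{i+1}$), I would invoke Observation~\ref{o:pnetintersect}: the sources are $u_{i+1} < u_i$ and the sinks are $v_i < v_{i+1}$, and the second clause of that observation forces $\pi_{u_i}$ and $\pi_{u_{i+1}}$ to cross, hence to share a vertex. The transposed pair is therefore a right inversion in $U(s_iu,\pi)$ but not in $U(u,\pi)$, giving $\rinv(U(u,\pi)) = \rinv(U(s_iu,\pi)) - 1$.

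For the case $s_iv < v$ (that is, $v_i > v_{i+1}$), I would prove the key claim: $\pi_{u_i}$ and $\pi_{u_{i+1}}$ intersect if and only if some path family of type $u^{-1}s_iv$ covers $F$. Given this, the remaining two cases of the proposition follow immediately, since the transposed pair contributes $+1$ to $\rinv(U(u,\pi)) - \rinv(U(s_iu,\pi))$ precisely when the two paths intersect. For the forward direction I would pick a common vertex $y \in \pi_{u_i} \cap \pi_{u_{i+1}}$ and splice the tails of the two paths at $y$, producing a walk $\pi''_{u_i}$ from $u_i$ to $v_{i+1}$ and a walk $\pi''_{u_{i+1}}$ from $u_{i+1}$ to $v_i$. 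Each walk lies in the acyclic network $F$ and therefore cannot revisit a vertex, so each is a simple path; replacing $\pi_{u_i}, \pi_{u_{i+1}}$ by $\pi''_{u_i}, \pi''_{u_{i+1}}$ in $\pi$ yields a path family $\pi''$ of type $u^{-1}s_iv$ whose union of edges is identical to that of $\pi$, so $\pi''$ covers $F$.

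For the reverse direction I would take a path family $\pi'$ of type $u^{-1}s_iv$ covering $F$; its components $\pi'_{u_i}$ and $\pi'_{u_{i+1}}$ run from sources $u_{i+1} < u_i$ to sinks $v_{i+1} < v_i$, and Observation~\ref{o:pnetintersect} again forces them to cross at some vertex $y$. Splicing $\pi'$ at $y$ in the same way produces a path family of type $u^{-1}v$ covering $F$, which by the uniqueness clause of Theorem~\ref{t:lem53nndcb} must equal $\pi$; hence $\pi_{u_i}$ and $\pi_{u_{i+1}}$ both pass through $y$, so they intersect. The most delicate step, and where I expect the main obstacle, is verifying that the spliced walks are genuinely simple paths rather than walks that revisit vertices; the acyclicity of $F$ is exactly what rules this out and allows the identification of the spliced family with $\pi$ via Theorem~\ref{t:lem53nndcb}.
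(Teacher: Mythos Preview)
Your proof is correct and follows essentially the same approach as the paper: both isolate the single pair $(\pi_{u_i},\pi_{u_{i+1}})$ as the only one whose right-inversion status changes, then analyze that pair case by case using Observation~\ref{o:pnetintersect}. Your splicing argument for the biconditional in the $s_iv<v$ case is a more explicit version of what the paper does tersely; the paper instead appeals to uniqueness of source-to-sink paths in a zig-zag network (the remark after Theorem~\ref{t:lem53nndcb}) rather than to Theorem~\ref{t:lem53nndcb} itself, but the content is the same.
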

\begin{proof}
The tableaux $U(u,\pi)$ and $U(s_iu, \pi)$ are identical except 
that $\pi_{u_i}$ appears before $\pi_{u_{i+1}}$ in $U(u,\pi)$.
Thus we have
\begin{equation*}
\rinv(U(u,\pi)) = \begin{cases}
\rinv(U(s_iu,\pi)) - 1 
&\parbox[t]{.5 \textwidth}
{if $(\pi_{u_i}, \pi_{u_{i+1}})$ is a right inversion 
in $U(s_iu,\pi)$ but not in $U(u,\pi)$,}\\
\rinv(U(s_iu,\pi)) + 1 
&\parbox[t]{.5 \textwidth}
{if $(\pi_{u_i}, \pi_{u_{i+1}})$ is a right inversion 
in $U(u,\pi)$ but not in $U(s_iu,\pi)$,}\\
\rinv(U(s_iu,\pi)) 
&\parbox[t]{.5 \textwidth}
{if $(\pi_{u_i}, \pi_{u_{i+1}})$ is not a right inversion 
in $U(s_iu,\pi)$ or $U(u,\pi)$.}
\end{cases}
\end{equation*}
Since $s_i u < u$, we have $u_i > u_{i+1}$.  

If $s_iv > v$, then we have $v_i < v_{i+1}$, and 
Observation~\ref{o:pnetintersect} implies that the paths
$\pi_{u_i}$ and $\pi_{u_{i+1}}$ intersect, forming
a right inversion in $U(s_iu,\pi)$ and not in $U(u,\pi)$.

If $s_iv < v$, then we have $v_i > v_{i+1}$, and the paths
$\pi_{u_i}$ and $\pi_{u_{i+1}}$ 
do not form a right inversion in $U(s_iu,\pi)$.
Suppose that some path family $\pi' = (\pi'_1, \dotsc, \pi'_n)$
of type $u^{-1}s_iv$ covers $F$.  
Then the tableau
$U(u,\pi')$ satisfies 
\begin{equation*}
L(U(u,\pi')) = (u_1, \dotsc, u_n), 
\qquad
R(U(u,\pi')) = (v_1, \dotsc, v_{i-1}, v_{i+1}, v_i, v_{i+2}, \dotsc, v_n ).
\end{equation*}
By the uniqueness of source-to-sink paths in zig-zag
networks,
this tableau is identical to the tableau $U(u,\pi)$ except for the paths
$\pi'_{u_i}$ and $\pi'_{u_{i+1}}$ in positions $i$ and $i+1$, which terminate
at sinks $v_{i+1} < v_i$, respectively.
By Observation~\ref{o:pnetintersect}, the paths $\pi'_{u_i}, \pi'_{u_{i+1}}$
cross and the paths $\pi_{u_i}$, $\pi_{u_{i+1}}$ intersect,
forming a right inversion in the tableau $U(u,\pi)$.
On the other hand, suppose that no path family of type $u^{-1}s_iv$ 
covers $F$.
Since $\pi$ has type $u^{-1}v$,  
we can deduce that either there is no path in $F$ 
from source $u_i$ to sink $v_{i+1}$
or there is no path from source $u_{i+1}$ to sink $v_i$.
By Observation~\ref{o:pnetintersect}, the paths $\pi_{u_i}$, $\pi_{u_{i+1}}$ 
do not intersect and therefore do not form a right inversion in $U(u,\pi)$. 
\end{proof}
 
Now we evaluate $\sigma_B(\quv x^{u,v})$, first in the case that $u = e$.
\begin{prop}\label{p:sigaqevxev}
Let $w$ in $\sn$ \avoidp,
let $B$ be the path matrix of $F_w$,
and fix $v$ in $\sn$.  Then we have
\begin{equation*}
\sigma_B (\qev x^{e,v}) = 
\begin{cases}
q^{\rinv(U(e,\pi))} &\text{if there exists a path family $\pi$ of type $v$ which covers $F_w$},\\
0 &\text{otherwise}.
\end{cases}
\end{equation*}
\end{prop}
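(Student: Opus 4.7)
The plan is to unwind the definition of $\sigma_B$ applied to $\qev x^{e,v}$ and then combinatorially interpret the result. Since $x^{e,v} = x_{1,v_1} x_{2,v_2} \cdots x_{n,v_n}$ already has its row indices $1, 2, \dotsc, n$ in increasing order, it is a natural-basis monomial in $\anq$, so no application of the relations~(\ref{eq:qringrelations}) is required to rewrite it. Using the definition of $\sigma_B$ together with its $\zqq$-linearity then gives $\sigma_B(\qev x^{e,v}) = q^{\ell(v)} \prod_{i=1}^n b_{i,v_i}$ at once.

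Next I will argue that the product $\prod_i b_{i,v_i}$ is either $0$ or $1$, and records whether a path family of type $v$ exists in $F_w$. This is because, as noted after Theorem~\ref{t:lem53nndcb}, in any zig-zag network there is at most one path from source $i$ to any sink $j$, so each $b_{i,j}$ lies in $\{0,1\}$. If for every $i$ there is a path $\pi_i$ from source $i$ to sink $v_i$, these unique paths assemble into the unique possible path family $\pi$ of type $v$; by uniqueness and Theorem~\ref{t:lem53nndcb}, $\pi$ must coincide with the covering family of type $v$, and in particular $v \leq w$. Otherwise some factor in the product vanishes, in which case no path family of type $v$ (covering or not) exists in $F_w$. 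This settles the $0$ case of the proposition and reduces the remaining task to verifying $\ell(v) = \rinv(U(e,\pi))$ whenever $\pi$ exists.

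For this final identification, I will invoke planarity. By definition, a right inversion in $U(e,\pi)$ is a pair $i<j$ with $v_i > v_j$ and $\pi_i \cap \pi_j \neq \emptyset$. The last sentence of Observation~\ref{o:pnetintersect}, applied to the paths $\pi_j$ (from source $j$ to sink $v_j$) and $\pi_i$ (from source $i$ to sink $v_i$) with $i<j$ and $v_j<v_i$, implies that any such pair forces $\pi_i$ and $\pi_j$ to cross, and therefore to intersect. Hence every inversion of $v$ contributes a right inversion of $U(e,\pi)$ and conversely, giving $\rinv(U(e,\pi)) = \inv(v) = \ell(v)$. The only step that calls for any delicacy is the equivalence between mere existence of a path family of type $v$ and existence of a \emph{covering} one; this is routine given the uniqueness of source-to-sink paths in zig-zag networks combined with Theorem~\ref{t:lem53nndcb}, and I do not foresee any obstacle elsewhere.
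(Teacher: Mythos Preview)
Your proof is correct and follows essentially the same approach as the paper: compute $\sigma_B(\qev x^{e,v}) = q^{\ell(v)} b^{e,v}$ directly from the definition, use Theorem~\ref{t:lem53nndcb} (together with the uniqueness of source-to-sink paths in zig-zag networks) to conclude $b^{e,v}\in\{0,1\}$ records the existence of a covering path family of type $v$, and then identify $\ell(v)$ with $\rinv(U(e,\pi))$ via Observation~\ref{o:pnetintersect}. Your treatment is slightly more explicit than the paper's about why $x^{e,v}$ needs no rewriting and about the equivalence between existence of a path family of type $v$ and existence of a covering one, but the argument is the same.
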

\begin{proof}
By definition we have
\begin{equation}\label{eq:qlvaev}
\sigma_B (\qev x^{e,v}) = \qev \qev b^{e,v} = q^{\ell(v)}b^{e,v}.
\end{equation}
First assume that there exists a (unique) path family $\pi$
of type $v$ that covers $F_w$.
Then we have $b^{e,v} = 1$.  In the tableau $U(e,\pi)$, 
paths appear in the order $(\pi_1, \dotsc, \pi_n)$.
Now observe that for each inversion in $v$, i.e., 
each pair $(v_i, v_j)$ with $i < j$ and $v_i > v_j$,
the paths $\pi_i$ (from source $i$ to sink $v_i$)
and $\pi_j$ (from source $j$ to sink $v_j$) cross in $F_w$
and therefore form a right inversion in $U(e,\pi)$. 
Conversely, for each noninversion $(v_i, v_j)$ in $v$, the paths
$\pi_i$ and $\pi_j$ do not form a right inversion in $U(e,\pi)$.
Thus we have $\ell(v) = \rinv(U(e,\pi))$, and
the expression in (\ref{eq:qlvaev}) is equal to 
$q^{\rinv(U(e,\pi))}$.

Now assume that there is no path family of type $v$ which covers $F_w$.
Then we have $b^{e,v} = 0$ and
the expressions in (\ref{eq:qlvaev}) are equal to $0$.
\end{proof}

More generally, we evaluate $\sigma_B(\quv x^{u,v})$ as follows.
\begin{prop}\label{p:sigaquvxuv}
Let $w$ in $\sn$ \avoidp,
let $B$ be the path matrix of $F_w$,
and fix $u$, $v$ in $\sn$.  Then we have
\begin{equation*}
\sigma_B (\quv x^{u,v}) = 
\begin{cases}
q^{\rinv(U(u,\pi))} &\text{if there exists a path family $\pi$ of type $u^{-1}v$ 
covering $F_w$},\\
0 &\text{otherwise}.
\end{cases}
\end{equation*}
\end{prop}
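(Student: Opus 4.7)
The plan is to induct on $\ell(u)$, with the base case $u=e$ supplied by Proposition~\ref{p:sigaqevxev}. For the inductive step, since $u \neq e$ there is some position $i$ with $u_i > u_{i+1}$, so $s_iu < u$; I would then apply one of the quantum matrix relations (\ref{eq:qringrelations}) to the adjacent factors $x_{u_i,v_i}x_{u_{i+1},v_{i+1}}$ of $x^{u,v}$, splitting according to the order of $v_i$ and $v_{i+1}$.

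If $v_i < v_{i+1}$, the pure commutation relation gives $x^{u,v} = x^{s_iu,s_iv}$; a length computation yields $\quv = q^{-1}q_{s_iu,s_iv}$, and since $(s_iu)^{-1}(s_iv) = u^{-1}v$, the inductive hypothesis combined with the first case of Proposition~\ref{p:usupi} (which gives $\rinv(U(s_iu,\pi)) = \rinv(U(u,\pi))+1$) produces the desired $q^{\rinv(U(u,\pi))}$.

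If $v_i > v_{i+1}$, the non-commuting relation yields $x^{u,v} = x^{s_iu,s_iv} + (\qdiff)x^{s_iu,v}$, and after scaling by $\quv$ and comparing lengths,
\begin{equation*}
\sigma_B(\quv x^{u,v}) = \sigma_B(q_{s_iu,s_iv}x^{s_iu,s_iv}) + (1-q^{-1})\sigma_B(q_{s_iu,v}x^{s_iu,v}).
\end{equation*}
The inductive hypothesis rewrites each summand in terms of path families $\pi$ of type $u^{-1}v$ and $\pi'$ of type $u^{-1}s_iv$ that might (or might not) cover $F_w$. The subcase in which only $\pi'$ covers $F_w$ does not occur: $u^{-1}s_iv$ is obtained from $u^{-1}v$ by swapping the values $v_i$ and $v_{i+1}$ at positions $u_{i+1} < u_i$, turning an ascending pair into a descending pair, so $u^{-1}v < u^{-1}s_iv$ in Bruhat order, and $u^{-1}s_iv \le w$ forces $u^{-1}v \le w$ by transitivity together with Theorem~\ref{t:lem53nndcb}. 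The subcases where neither $\pi$ nor $\pi'$ covers $F_w$, and where only $\pi$ does, follow immediately from the inductive hypothesis and the middle case of Proposition~\ref{p:usupi}.

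The main obstacle is the subcase in which both $\pi$ and $\pi'$ cover $F_w$. Here Proposition~\ref{p:usupi}'s last case gives $\rinv(U(s_iu,\pi)) = \rinv(U(u,\pi))-1$, reducing the identity to $\rinv(U(s_iu,\pi')) = \rinv(U(s_iu,\pi))+1$. The two tableaux share left indices and agree at all positions $j\neq i,i+1$, so the comparison reduces to pairs $(j,k)$ with at least one of $j,k$ in $\{i,i+1\}$. The pair $(i,i+1)$ itself contributes exactly $+1$: the X-shaped paths $u_{i+1}\to v_i$ and $u_i\to v_{i+1}$ of $\pi'$ must cross by Observation~\ref{o:pnetintersect}, and the sinks $v_i > v_{i+1}$ at those positions produce a right inversion, whereas the corresponding pair in $U(s_iu,\pi)$ has sink order reversed and yields no inversion. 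Contributions from pairs involving a third position $j$ must cancel; to see this I would record, for each such $j$, indicators $a_j,b_j,c_j,d_j\in\{0,1\}$ registering whether the path at position $j$ shares a star central vertex in $F_w$ with each of the four paths $u_i\to v_i$, $u_{i+1}\to v_{i+1}$, $u_{i+1}\to v_i$, $u_i\to v_{i+1}$ (all of which exist by hypothesis). A planar bookkeeping argument, based on the description of paths in a zig-zag network through central vertices of its star factors, yields the identity $a_j+b_j=c_j+d_j$ for every $j$, together with the finer equalities $a_j=c_j$ (for $j>i+1$) and $b_j=d_j$ (for $j<i$) in the regime $v_{i+1}<v_j<v_i$. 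Weighting by the appropriate indicators $[v_j>v_i]$ and $[v_j>v_{i+1}]$ in each of the two tableaux, these identities make the sum of per-$j$ differences vanish, leaving only the $+1$ from the pair $(i,i+1)$. This closes the induction.
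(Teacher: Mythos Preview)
Your proof follows the paper's exactly: induct on $\ell(u)$ from Proposition~\ref{p:sigaqevxev}, apply the relations~(\ref{eq:qringrelations}) at a left descent of $u$, and use Proposition~\ref{p:usupi} to track $\rinv$ through each case. The only difference is in the subcase where both $\pi$ and $\pi'$ cover $F_w$: the paper simply observes that $U(s_iu,\pi)$ and $U(s_iu,\pi')$ agree outside positions $i$ and $i+1$ and reads off $\rinv(U(s_iu,\pi'))=\rinv(U(s_iu,\pi))+1$ from the pair in those positions, without the per-position $a_j,b_j,c_j,d_j$ bookkeeping you introduce (and it rules out the ``only $\pi'$'' subcase via Observation~\ref{o:pnetintersect} rather than your Bruhat-order argument, though both are valid).
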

\begin{proof}
We use induction on the length of $u$.
By Proposition~\ref{p:sigaqevxev}, the claimed formula 
holds when $u$ has length $0$.
Now assume that the formula holds when $u$ has length $1, \dotsc, k-1$,
and consider $u$ of length $k$.
Choosing a left descent $s_i$ of $u$, we may write
\begin{equation*}
\begin{aligned}
\sigma_B (\quv x^{u,v}) &= 
\begin{cases}
\sigma_B (\quv x^{s_iu, s_iv}) &\text{if $s_i v > v$,} \\
\sigma_B (\quv x^{s_iu, s_iv} + (\qdiff) \quv x^{s_iu,v} ) &\text{if $s_i v < v$,}
\end{cases}\\
&= \begin{cases}
q^{-1} \sigma_B (q_{s_i u, s_i v} x^{s_iu, s_iv}) &\text{if $s_i v > v$,} \\
\sigma_B (q_{s_i u, s_i v} x^{s_iu, s_iv}) + (1 - q^{-1}) \sigma_B (q_{s_iu, v} x^{s_iu,v}) 
&\text{if $s_i v < v$.}
\end{cases}
\end{aligned}
\end{equation*}

Suppose first that we have $s_i v > v$.
Then 
by induction we have 
\begin{equation*}
\sigma_B (q_{s_i u, s_i v} x^{s_iu, s_iv}) = 
\begin{cases}
q^{\rinv(U(s_i u,\pi))} 
&\parbox[t]{.5 \textwidth}{if there exists a path family $\pi$ of 
type $u^{-1}v$ covering $F_w$,}\\
0 &\text{otherwise,}
\end{cases}
\end{equation*}
and Proposition~\ref{p:usupi} implies that the claim is true in this case.

Now suppose that we have $s_i v < v$
and consider path families of types $u^{-1}v$ and $u^{-1}s_iv$ 
which cover $F_w$.
If there are no path families of types $u^{-1}v$ and $u^{-1}s_iv$
which cover $F_w$, then
by induction we have 
\begin{equation}\label{eq:sigaaeqx1}
\sigma_B (q_{s_i u, s_i v} x^{s_iu, s_iv}) = \sigma_B (q_{s_iu, v} x^{s_iu,v}) = 0.
\end{equation}
If there exists a path family $\pi = (\pi_1,\dotsc,\pi_n)$ of type
$u^{-1}v$ which covers $F_w$, but no path family of type $u^{-1}s_iv$
which covers $F_w$, then by induction and Proposition~\ref{p:usupi}
we have
\begin{equation}\label{eq:sigaaeqx2}
\begin{gathered}
\sigma_B (q_{s_i u, s_i v} x^{s_iu, s_iv}) = q^{\rinv(U(s_iu,\pi))} = q^{\rinv(U(u,\pi))},\\
\sigma_B (q_{s_iu, v} x^{s_iu,v}) = 0.
\end{gathered}
\end{equation}
If there exists a path family
$\pi' = (\pi'_1,\dotsc, \pi'_n)$ 
of type $u^{-1}s_iv$ which covers $F_w$,
then the paths $\pi'_{u_i}$, $\pi'_{u_{i+1}}$ 
from sources $u_i > u_{i+1}$ to sinks $v_{i+1} < v_i$ (respectively)
cross.
It follows that there exists a path family 
$\pi = (\pi_1, \dotsc, \pi_n)$ of type $u^{-1}v$
which covers $F_w$, and which agrees with $\pi'$ except that
the paths $\pi_{u_i}$, $\pi_{u_{i+1}}$ from sources $u_i > u_{i+1}$
to sinks $v_i > v_{i+1}$ (respectively)
intersect but do not cross.
By induction and the existence of $\pi'$ and $\pi$ we have
\begin{equation*}
\begin{gathered}
\sigma_B (q_{s_i u, s_i v} x^{s_iu, s_iv}) = q^{\rinv(U(s_i u,\pi))},\\ 
\sigma_B (q_{s_iu, v} x^{s_iu,v}) = 
q^{\rinv(U(s_iu, \pi'))}. 
\end{gathered}
\end{equation*}
The tableaux 
$U(s_i u, \pi)$ and $U(s_i u, \pi')$ agree except in positions $i$ and $i+1$,
where paths $\pi'_{u_{i+1}}$ and $\pi'_{u_i}$ form a right inversion,
but $\pi_{u_{i+1}}$ and $\pi_{u_i}$ do not.
This fact and Proposition~\ref{p:usupi} imply that we have
\begin{equation*}
\rinv(U(s_iu, \pi')) = 
\rinv(U(s_iu, \pi)) + 1 = 
\rinv(U(u,\pi)),
\end{equation*}
and
\begin{equation}\label{eq:sigaaeqx4}
\begin{gathered}
\sigma_B (q_{s_i u, s_i v} x^{s_iu, s_iv}) = q^{\rinv(U(u,\pi))-1},\\ 
(1 - q^{-1})\sigma_B (q_{s_iu, v} x^{s_iu,v}) = 
q^{\rinv(U(u, \pi))} - q^{\rinv(U(u, \pi))-1}. 
\end{gathered}
\end{equation} 
Thus by Equations~(\ref{eq:sigaaeqx1}), 
(\ref{eq:sigaaeqx2}), and (\ref{eq:sigaaeqx4})
the claim is true when $s_iv < v$.
\end{proof}

Now we have the following $q$-analog of Theorem~\ref{t:sncfinterp} (ii-a).

\begin{thm}\label{t:qeta}
Let $w \in \sn$ \avoidp.
Then for $\lambda = (\lambda_1, \dotsc, \lambda_r) \vdash n$ we have
\begin{equation*}
\eta_q^\lambda(\qew C'_w(q)) = \sum_U q^{\rinv(U_1 \circ \cdots \circ U_r)},
\end{equation*}
where the sum is over all row-closed, left row-strict 
$F_w$-tableaux 
of shape $\lambda$.
\end{thm}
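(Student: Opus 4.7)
The plan is to apply Proposition~\ref{p:sigaquvxuv} termwise to the expansion (\ref{eq:hqlambetaalt}) and then repackage the resulting sum as one indexed by row-closed, left row-strict $F_w$-tableaux of shape $\lambda$. The weighted enumeration is already handled by the proposition; what remains is essentially a bookkeeping bijection.

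First I would fix $u \in \slambdamin$ and $y \in u^{-1}\slambda u$, set $v = uy$ so that $u^{-1}v = y$, and invoke Proposition~\ref{p:sigaquvxuv} to get
\begin{equation*}
\sigma_B(q_{u,uy} x^{u,uy}) = \begin{cases} q^{\rinv(U(u,\pi))} & \text{if a path family $\pi$ of type $y$ covers $F_w$,} \\ 0 & \text{otherwise.} \end{cases}
\end{equation*}
By Theorem~\ref{t:lem53nndcb}, such a $\pi$ is unique when it exists (namely when $y \leq w$ in Bruhat order).

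Next I would set up a bijection between triples $(u, y, \pi)$ with $u \in \slambdamin$, $y \in u^{-1}\slambda u$, and $\pi$ a path family of type $y$ covering $F_w$, on the one hand, and row-closed, left row-strict $F_w$-tableaux $U$ of shape $\lambda$, on the other. Given such a triple, split $(u_1,\dotsc,u_n)$ into blocks of sizes $\lambda_1,\dotsc,\lambda_r$ and place the paths $\pi_{u_1},\dotsc,\pi_{u_n}$ into the cells of a shape-$\lambda$ diagram, reading rows left to right. The condition $u \in \slambdamin$ says each of the subwords (\ref{eq:subwordsofu}) is increasing, i.e., $L(U)$ is row-strict. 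The condition $uy \in \slambda u$ says $v$ permutes $u$ within each block, i.e., each row of $R(U)$ is a permutation of the corresponding row of $L(U)$, which is row-closedness. The inverse map reads off $u$ and $v$ (hence $y = u^{-1}v$ and the path family $\pi$) directly from the rows of a row-closed, left row-strict tableau.

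By construction the one-rowed tableau $U(u,\pi)$ of Proposition~\ref{p:sigaquvxuv} is precisely the concatenation $U_1 \circ \cdots \circ U_r$, so $\rinv(U(u,\pi)) = \rinv(U_1 \circ \cdots \circ U_r)$. Substituting into (\ref{eq:hqlambetaalt}) and summing over the bijection yields the claimed formula. The only substantive step is verifying that the parametrization $(u,y) \in \slambdamin \times u^{-1}\slambda u$ lines up with the pair of tableau conditions; once that is checked, the $q$-weight drops directly out of Proposition~\ref{p:sigaquvxuv}.
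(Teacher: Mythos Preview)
Your proposal is correct and follows essentially the same approach as the paper: both apply Proposition~\ref{p:sigaquvxuv} termwise to the expansion (\ref{eq:hqlambetaalt}), then set up the same bijection between pairs $(u,y)$ (with $u\in\slambdamin$, $y\in u^{-1}\slambda u$, and a covering path family of type $y$) and row-closed, left row-strict $F_w$-tableaux of shape $\lambda$, identifying $U(u,\pi)$ with $U_1\circ\cdots\circ U_r$. The paper organizes the sum by ordered set partition $(I_1,\dotsc,I_r)$ first before passing to tableaux, whereas you go straight from triples $(u,y,\pi)$ to tableaux, but this is purely a difference in bookkeeping.
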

\begin{proof}
Let $B$ be the path matrix of $F_w$ and let $(I_1,\dotsc,I_r)$ be a set
partition of $[n]$ of type $\lambda$.
By 
(\ref{eq:subwordsofu}) -- (\ref{eq:hqlambetaalt}),
there is a permutation $u \in \slambdamin$ 
corresponding to $(I_1,\dotsc,I_r)$ such that 
we have
\begin{equation*}
\sigma_B(\qper(x_{I_1,I_1}) \cdots \qper(x_{I_r,I_r})) = 
\sum_{y \in u^{-1}\slambda u} \sigma_B ( q_{u,uy} x^{u, uy} ).
\end{equation*}
By Proposition~\ref{p:sigaquvxuv}, this is equal to
\begin{equation}\label{eq:ypirinvuupi}
\sum_{(y,\pi)} q^{\rinv(U(u,\pi))},
\end{equation}
where the 
sum is over pairs $(y,\pi)$ such that 
$y \in u^{-1}\slambda u$ and $\pi$ is a path family of type $y$ 
which covers $F$.
If such a path family $\pi$ exists for a given permutation $y$, 
it is necessarily unique.  
Thus as $y$ varies over $u^{-1}\slambda u$
we have that
$U(u, \pi)$
varies over all bijective path tableaux
of shape $(n)$ which satisfy
\begin{enumerate}
\item For $j=1,\dotsc,r$, 
the paths in positions 
$\lambda_1 + \cdots + \lambda_{j-1}+1, \dotsc,$ 
$\lambda_1 + \cdots + \lambda_j$ are indexed by $I_j$, in increasing order.
\item The sequence of sink indices of these same paths 
are a rearrangement of $I_j$.
\end{enumerate}
Thus the expression
in (\ref {eq:ypirinvuupi})
may be rewritten as
\begin{equation}\label{eq:rinvudotsu}
\sum_U q^{\rinv(U_1 \circ \cdots \circ U_r)},
\end{equation}
where this last sum is over all 
row-closed, left row-strict 
$F_w$-tableaux $U$ of shape $\lambda$
for which path indices of $U_j$ are $I_j$ for $j=1,\dotsc,r$.
Summing over ordered set partitions and using (\ref{eq:hqlambeta}),
we have the desired result.
\end{proof}

For example, consider the network
\begin{equation}\label{eq:F3421}
F_{3421} = \raisebox{-10mm}{
\includegraphics[height=20mm]{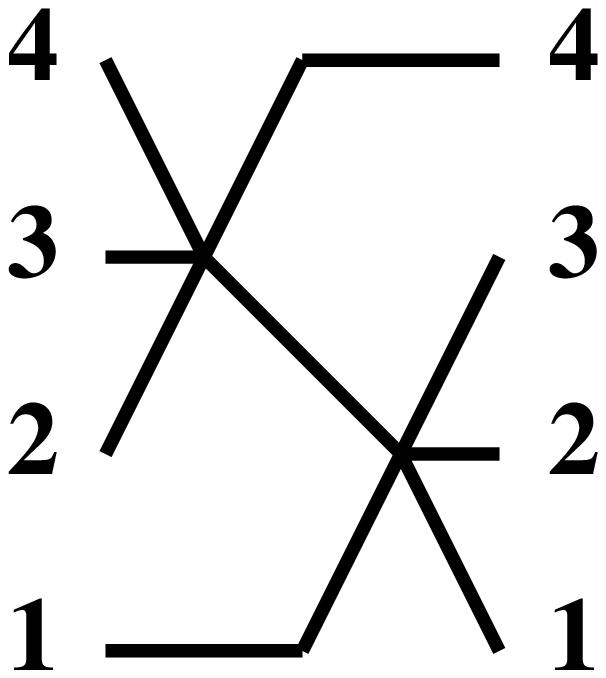}}\ .
\end{equation}
It is easy to verify that 
there are twenty row-closed, left row-strict
$F_{3421}$-tableaux of shape $31$.
Four of these are
\begin{equation*}
\raisebox{-6mm}{\includegraphics[height=12mm]{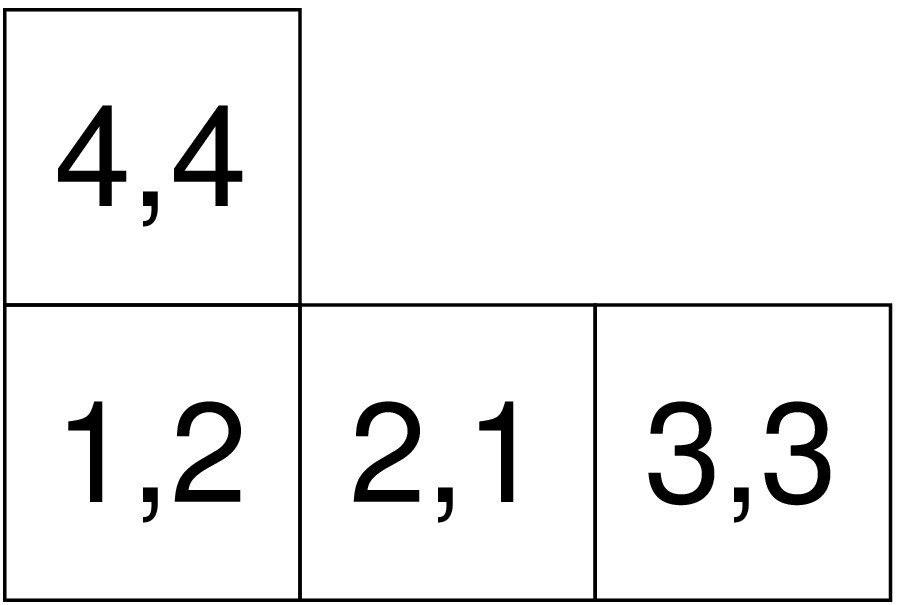}}\ ,\quad
\raisebox{-6mm}{\includegraphics[height=12mm]{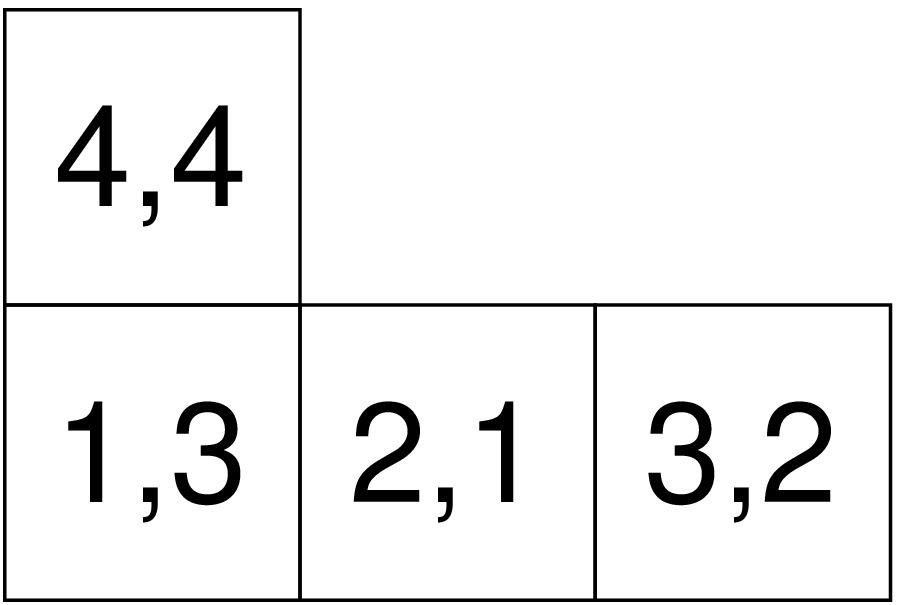}}\ ,\quad
\raisebox{-6mm}{\includegraphics[height=12mm]{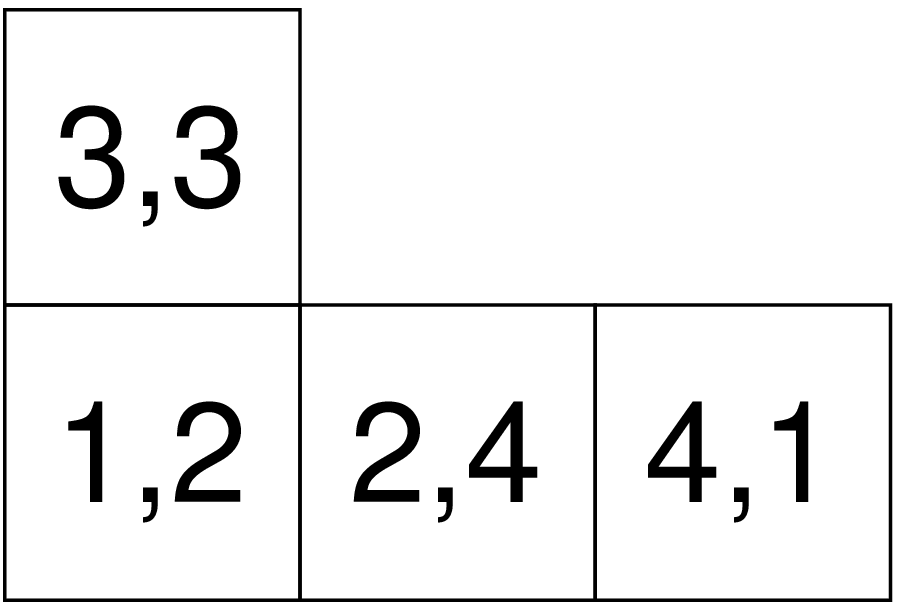}}\ ,\quad
\raisebox{-6mm}{\includegraphics[height=12mm]{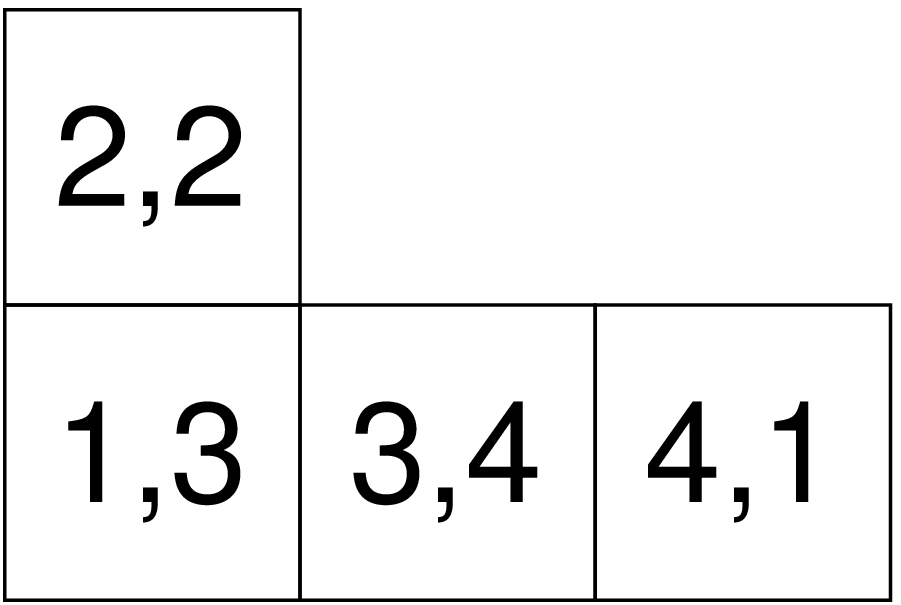}}\ ,
\end{equation*}
where $i,j$ represents the unique path from source $i$ to sink $j$.
These tableaux $U$ of shape $31$
yield tableaux $U_1 \circ U_2$ of shape $4$,
\begin{equation*}
\raisebox{-3mm}{\includegraphics[height=6mm]{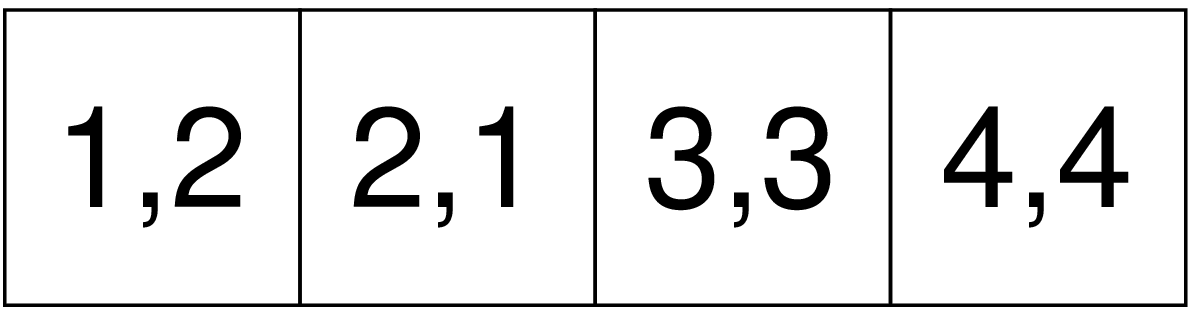}}\ ,\quad
\raisebox{-3mm}{\includegraphics[height=6mm]{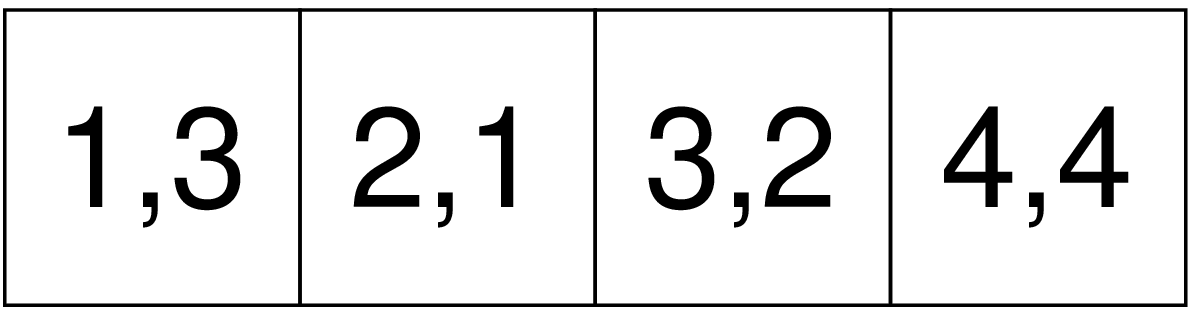}}\ ,\quad
\raisebox{-3mm}{\includegraphics[height=6mm]{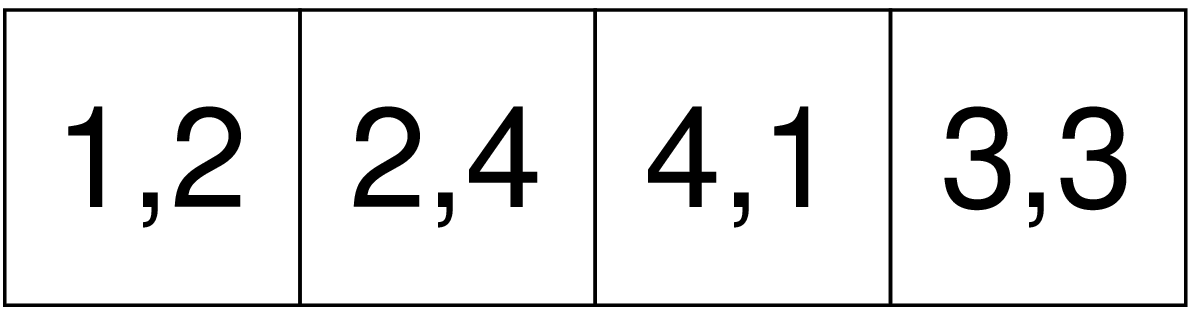}}\ ,\quad
\raisebox{-3mm}{\includegraphics[height=6mm]{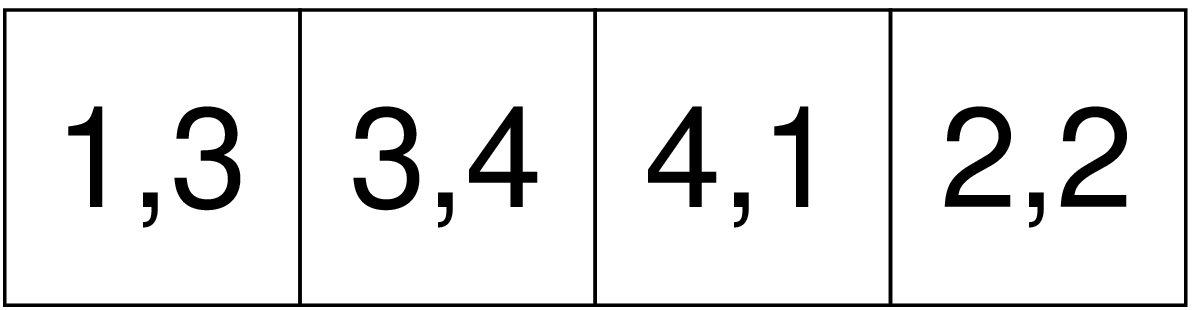}}\ ,
\end{equation*}
which have $1$, $2$, $3$, and $4$ right inversions, respectively.
Together, 
the tableaux 
contribute $q + q^2 + q^3 + q^4$
to $\eta_q^{31}(q_{e,3421} C'_{3421}(q)) = 1 + 3q + 6q^2 + 6q^3 + 3q^4 + q^5$.

Expanding $\eta_q^\lambda$ in terms of irreducible characters
and Kostka numbers, $\eta_q^\lambda = \sum K_{\mu,\lambda} \chi_q^\mu$,
and using Haiman's result~\cite[Lem\,1.1]{HaimanHecke},
we have that the coefficients of $\eta_q^\lambda( \qew C'_w(q))$ 
are symmetric and unimodal
about $\qew$ for all $w \in \sn$.
In the case that $w$ \avoidsp, it would be interesting to explain
this phenomenon combinatorially in terms of Theorem~\ref{t:qeta}.

It would also be interesting to extend 
Theorem~\ref{t:qeta} to include 
a $q$-analog of Theorem~\ref{t:sncfinterp} (ii-b).
In particular, the identity
\begin{equation*}
\eta_q^{(n)} = \sum_{\lambda \vdash n} \phi_q^\lambda
\end{equation*}
suggests that an answer to Problem~\ref{p:monevalinterp} and its $q$-analog
are related to a set partition of tableaux counted by $\eta^{(n)}$.
It is not clear whether such a partition is more easily expressed
in terms of left row-strict tableaux of shape $(n)$, 
or row-semistrict tableaux of type $e$ and shape $(n)$.
\begin{prob}
Find a statistic $\STAT$ on $F$-tableaux such that we have
\begin{equation*}
\eta_q^\lambda(\qew C'_w(q)) = \sum_U q^{\STAT(U)},
\end{equation*}
where the sum is over all row-semistrict $F_w$-tableaux 
of type $e$ and shape $\lambda$.
\end{prob}

As a consequence of Theorem~\ref{t:qeta}, we obtain the following
$q$-analog of Theorem~\ref{t:thetavw}.

\begin{thm}\label{t:qthetavw}
Let $v, w \in \sn$ \avoidp{} and satisfy $v \sim w$, 
and let $(X, \STAT)$
be a property of $F$-tableaux and a statistic on $F$-tableaux 
which depend only upon the poset $P(F)$.
Then $F_v$-tableaux 
and $F_w$-tableaux 
having property $X$ and 
satisfying $\STAT(U) = k$ are in bijective correspondence.  Moreover,
for each $\hnq$-trace $\theta_q$ we have 
\begin{equation*}
\theta_q(\qev C'_v(q)) = \theta_q(\qew C'_w(q)).
\end{equation*}
\end{thm}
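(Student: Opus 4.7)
The plan is to mirror the two-step structure of Theorem~\ref{t:thetavw}, substituting Theorem~\ref{t:qeta} for the Littlewood--Merris--Watkins identity used in that proof.

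First, for the bijection of tableaux: the hypothesis $v \sim w$ provides a poset isomorphism $\varphi: P(F_v) \to P(F_w)$, which I will use to transport $F_v$-tableaux to $F_w$-tableaux element-wise on path families. Since both the property $X$ and the statistic $\STAT$ depend only on the poset, the transport will preserve each, and restricting to tableaux with $\STAT(U) = k$ will yield the claimed bijection.

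Second, for the trace equality, it suffices to verify $\eta_q^\lambda(\qev C'_v(q)) = \eta_q^\lambda(\qew C'_w(q))$ for every $\lambda \vdash n$, since $\{\eta_q^\lambda \mid \lambda \vdash n\}$ is a basis for the space of $\hnq$-traces. By Theorem~\ref{t:qeta}, each side is the generating function $\sum_U q^{\rinv(U_1 \circ \cdots \circ U_r)}$ over row-closed, left row-strict $F$-tableaux of shape $\lambda$. I will show this generating function depends only on $P(F)$, whereupon the first part of the theorem (applied with $X = $ ``row-closed, left row-strict'' and $\STAT = \rinv(U_1 \circ \cdots \circ U_r)$) will deliver the desired equality, and hence the trace equality for every $\hnq$-trace $\theta_q$.

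The main obstacle will be verifying that ``row-closed, left row-strict'' and $\rinv(U_1 \circ \cdots \circ U_r)$ are $P(F)$-invariants rather than depending on the finer poset $Q(F)$. My plan is to invoke Lemma~\ref{l:dropparens} to biject row-closed, left row-strict tableaux with row-semistrict tableaux of type $e$ (first using Theorem~\ref{t:pfvpfu} to reduce to a $312$-avoiding representative in the $\sim$-class, where the lemma applies). A type $e$ tableau amounts to a filling of the Young diagram by elements of $P(F)$, so row-semistrictness is a transparently poset-theoretic property; and for type $e$ tableaux, two paths intersect precisely when their source indices are incomparable in $P(F)$, by Lemma~\ref{l:dsnintersect} (extended to zig-zag networks through Proposition~\ref{p:hatisom}), so $\rinv$ reduces to a poset-theoretic count. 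The remaining technical point will be to check that the cycle-notation bijection of Lemma~\ref{l:dropparens} translates $\rinv(U_1 \circ \cdots \circ U_r)$ into the corresponding poset statistic on row-semistrict type $e$ tableaux; this is where the bulk of the remaining bookkeeping will lie.
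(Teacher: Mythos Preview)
Your overall architecture matches the paper's: prove the tableau bijection from the poset isomorphism, then deduce the trace identity by checking it on the basis $\{\eta_q^\lambda\}$ via Theorem~\ref{t:qeta}. The paper's proof is extremely terse at this second step and simply asserts the $\eta_q^\lambda$ equality. You correctly sense that this needs justification, but the route you propose has a genuine gap.

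The problem is your final ``bookkeeping'' step. You want the cycle-notation bijection of Lemma~\ref{l:dropparens} to carry $\rinv(U_1\circ\cdots\circ U_r)$ on row-closed, left row-strict tableaux to a statistic on row-semistrict type-$e$ tableaux depending only on $P(F)$. This is not bookkeeping: it is exactly the open problem stated immediately after Theorem~\ref{t:qeta} (find a statistic $\STAT$ on row-semistrict type-$e$ tableaux with $\eta_q^\lambda(\qew C'_w(q))=\sum_U q^{\STAT(U)}$). There is no reason to expect Stanley's drop-the-parentheses map to respect $\rinv$ in this way, and the paper explicitly does not claim it does. Moreover, your reduction via Theorem~\ref{t:pfvpfu} is circular: Lemma~\ref{l:dropparens} applied to the $312$-avoiding representative $v'$ rewrites the $F_{v'}$ generating function, but you still have no link between the row-closed, left row-strict generating functions for $F_v$ or $F_w$ and anything poset-theoretic, since Lemma~\ref{l:dropparens} is only stated for $F_{v'}$. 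Note also that ``row-closed, left row-strict'' and $\rinv$ are \emph{not} $P(F)$-only notions (they involve non-type-$e$ paths, whose very existence depends on the order $\preceq$ of intervals, not just the multiset), so you cannot plug them directly into the first part of the theorem as your phrasing suggests.

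A clean way to close the gap is to swap bases: the proof of Theorem~\ref{t:qepsilon} does not rely on Theorem~\ref{t:qthetavw}, and its interpretation of $\epsilon_q^\lambda(\qew C'_w(q))$ (column-strict $F_w$-tableaux of type $e$ with the statistic $\inv$) is manifestly determined by $P(F_w)$, since for type-$e$ paths intersection is exactly incomparability in $P(F_w)$. Reordering so that Theorem~\ref{t:qepsilon} is available, the first part of the present theorem applied with $(X,\STAT)=(\text{column-strict of type }e,\ \inv)$ gives $\epsilon_q^\lambda(\qev C'_v(q))=\epsilon_q^\lambda(\qew C'_w(q))$ for all $\lambda$, and the basis argument finishes the proof.
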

\begin{proof}
Since the pair $(X, \STAT)$ depends 
only upon the poset $P(F_v) \cong P(F_w)$,
we have a bijection between the sets of 
$F_v$-tableaux 
and $F_w$-tableaux 
having property $X$ and satisfying $\STAT(U) = k$.
In particular, we have 
$\eta_q^\lambda(\qev C'_v(q)) = \eta_q^\lambda(\qew C'_w(q))$, 
and since $\{ \eta_q^\lambda \,|\, \lambda \vdash n \}$ is a basis of the
space of $\hnq$-traces, we have
$\theta_q(\qev C'_v(q)) = \theta_q(\qew C'_w(q))$ for all $\hnq$-traces 
$\theta_q$ as well.
\end{proof}
Let $\theta_q$ be an $\hnq$-trace.
If the posets $P(F_v)$, $P(F_w)$ of two zig-zag networks are dual,
rather than isomorphic, we still have
$\theta_q(\qev C'_v(q)) = \theta_q(\qew C'_w(q))$.  
In this case $v$ and $w$ satisfy $v \sim w_0ww_0$ and $\ell(v) = \ell(w)$, 
where $w_0$ is the longest element of $\sn$. 
Since natural basis elements of $\hnq$ satisfy 
$T_{w_0}T_w T_{w_0}^{-1} = T_{w_0ww_0}$, and since any $\hnq$-trace $\theta_q$
satisfies $\theta_q(gh) = \theta_q(hg)$ for all $h, g \in \hnq$,
we have that the equations
\begin{equation*}
\theta_q(q_{e,w_0ww_0} C'_{w_0ww_0}(q)) = 
\theta_q(\qew T_{w_0} C'_w(q) T_{w_0}^{-1}) = 
\theta_q(\qew C'_w(q))
\end{equation*}
hold for all $w \in \sn$.

\section{Interpretation of $\epsilon_q^\lambda(\qew C'_w(q))$}\label{s:hnqinterpe}


Let $w \in \sn$ \avoidp\ and
let $B$ be the path matrix of $F_w$.  Following the computations
of Equations (\ref{eq:hqlambeta}) -- (\ref{eq:hqlambetaalt}),
we have
\begin{equation}\label{eq:eqlambepsilon}
\begin{aligned}
\epsilon_q^\lambda(\qew C'_w(q)) 
= \sigma_B(\imm{\epsilon_q^\lambda}(x)) 
&= \sum_{(I_1,\dotsc,I_r)} \sigma_B(\qdet(x_{I_1,I_1}) \cdots \qdet(x_{I_r,I_r})) \\
&= \sum_{u \in \slambdamin} \sum_{y \in u^{-1}\slambda u} 
(-1)^{\ell(uy)-\ell(u)}\sigma_B (q_{u,uy}^{-1} x^{u,uy}),
\end{aligned}
\end{equation}
where the first sum is over all ordered set partitions $(I_1,\dotsc, I_r)$
of $[n]$ of type $\lambda$.
Let us therefore consider evaluations of the form 
$\sigma_B (\qiuv x^{u,v})$.

To combinatorially interpret these evaluations, 
let $\pi = (\pi_1, \dotsc, \pi_n)$ be a path family 
(of arbitrary type)
which covers a zig-zag
network $F$ and
define $U(u,\pi,\lambda)$ to be the $\pi$-tableau of shape $\lambda$
containing $\pi$ in the order $\pi_{u_1}, \dotsc, \pi_{u_n}$.
That is, 
$U(u, \pi, \lambda)_j$
contains the $\lambda_j$ paths whose indices are
\begin{equation*}
u_{\lambda_1 + \cdots + \lambda_{j-1}+1}, \dotsc, u_{\lambda_1 + \cdots + \lambda_j}.
\end{equation*}
Clearly 
$L(U(u,\pi,\lambda))$ 
contains the numbers 
$u_1, \dotsc, u_{\lambda_1}$ in row 1, 
$u_{\lambda_1 + 1}, \dotsc, u_{\lambda_1+\lambda_2}$ in row 2, etc. 
If $R(U(u,\pi,\lambda))$ 
contains the numbers
$v_1, \dotsc, v_{\lambda_1}$ in row 1, 
$v_{\lambda_1 + 1}, \dotsc, v_{\lambda_1+\lambda_2}$ in row 2, etc.,
then $\pi$ has type $u^{-1}v$.
In terms of this notation, our earlier tableau $U(u,\pi)$ 
defined after (\ref{eq:hqlambetaalt}) is equal to 
$U(u,\pi,(n))$.
If $u \in \slambdamin$ corresponds to $(I_1, \dotsc, I_r)$ as in
(\ref{eq:subwordsofu}) -- (\ref{eq:hqlambetaalt}),
then the path indices in row $j$ of $U(u,\pi,\lambda)$
are simply the elements of $I_j$, in increasing order.

\begin{prop}\label{p:sigaqiuvxuv}
Let $w \in \sn$ \avoidp, 
and let $B$ be the path matrix of $F_w$. 
Fix $\lambda \vdash n$, $u \in \slambdamin$, and $y \in u^{-1}\slambda u$.
Then we have
\begin{equation*}
\sigma_B (q_{u,uy}^{-1} x^{u,uy}) = 
\begin{cases}
q^{\rinv(U(u,\pi,\lambda)^\tr)} 
&\text{if there exists a path family $\pi$ of type $y$ 
covering $F_w$},\\
0 &\text{otherwise}.
\end{cases}
\end{equation*}
\end{prop}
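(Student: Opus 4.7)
The plan is to reduce Proposition~\ref{p:sigaqiuvxuv} to the already-proved Proposition~\ref{p:sigaquvxuv} (the permanent version) by pulling out $\zqq$-scalars, and then to verify a single combinatorial identity relating $\rinv$ values on tableaux of shape $\lambda$ and shape $(n)$. Since $\sigma_B$ is $\zqq$-linear and $q_{u,uy}^{-1} = q^{-(\ell(uy)-\ell(u))} q_{u,uy}$, I would first write
\begin{equation*}
\sigma_B(q_{u,uy}^{-1} x^{u,uy}) = q^{-(\ell(uy) - \ell(u))} \sigma_B(q_{u,uy}\, x^{u,uy}).
\end{equation*}
By Proposition~\ref{p:sigaquvxuv} the right-hand side vanishes unless some path family $\pi$ of type $u^{-1}(uy) = y$ covers $F_w$, in which case it equals $q^{\rinv(U(u,\pi,(n))) - (\ell(uy)-\ell(u))}$. (Note that $U(u,\pi)$ in Proposition~\ref{p:sigaquvxuv} is precisely $U(u,\pi,(n))$ in the current notation.) Thus the proposition is reduced to the identity
\begin{equation*}
\rinv(U(u,\pi,\lambda)^{\tr}) = \rinv(U(u,\pi,(n))) - (\ell(uy) - \ell(u)).
\end{equation*}

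To establish this identity I would apply the decomposition~(\ref{eq:invdecomp}). The shape-$(n)$ tableau $U(u,\pi,(n))$ is the concatenation $U_1 \circ \cdots \circ U_r$ of the rows of $U = U(u,\pi,\lambda)$, so
\begin{equation*}
\rinv(U(u,\pi,(n))) = \sum_{j=1}^{r} \rinv(U_j) + \rinv(U^{\tr}).
\end{equation*}
It therefore suffices to show that $\sum_j \rinv(U_j) = \ell(uy) - \ell(u)$. Writing $y' \defeq uyu^{-1}$, which lies in $\slambda$ because $y \in u^{-1}\slambda u$, the defining property of $u \in \slambdamin$ gives $\ell(uy) = \ell(y'u) = \ell(y') + \ell(u)$, and since $y' \in \slambda = \mfs{\lambda_1}\times\cdots\times\mfs{\lambda_r}$ the length decomposes as $\ell(y') = \sum_j \ell(y'_j)$, where $y'_j$ is the restriction of $y'$ to the $j$-th position block. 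So the identity reduces further to the row-by-row statement $\rinv(U_j) = \ell(y'_j)$.

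To verify this row-by-row identity, I would argue as follows. Row $j$ of $U$ contains, from left to right, the paths $\pi_{a_1}, \dotsc, \pi_{a_{\lambda_j}}$ where $a_1 < \cdots < a_{\lambda_j}$ are the elements of $I_j$ listed in increasing order (using $u \in \slambdamin$), and each $\pi_{a_k}$ terminates at sink $y_{a_k} \in I_j$ since $y$ permutes within $I_j$. A pair $(\pi_{a_k}, \pi_{a_\ell})$ with $k < \ell$ is a right inversion in $U_j$ iff $y_{a_k} > y_{a_\ell}$ and the paths intersect. This is where the main obstacle lies, and it is resolved by Observation~\ref{o:pnetintersect}: from $a_k < a_\ell$ and $y_{a_k} > y_{a_\ell}$ one concludes that $\pi_{a_k}$ and $\pi_{a_\ell}$ cross and hence intersect, so the geometric intersection hypothesis is automatic. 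Consequently $\rinv(U_j)$ equals the number of inversions in the sink word $(y_{a_1}, \dotsc, y_{a_{\lambda_j}})$, which, under the increasing bijection between $[\lambda_j]$ and $I_j$ induced by $u$, coincides with the inversion count of $y'_j$. Combining all of the above with Proposition~\ref{p:sigaquvxuv} yields the desired evaluation.
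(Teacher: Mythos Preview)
Your proof is correct and follows essentially the same route as the paper's: both reduce to Proposition~\ref{p:sigaquvxuv}, apply the decomposition~(\ref{eq:invdecomp}) to $U(u,\pi,(n)) = U_1 \circ \cdots \circ U_r$, and then verify $\sum_j \rinv(U_j) = \ell(uy)-\ell(u)$ using Observation~\ref{o:pnetintersect} to see that the intersection condition in the definition of $\rinv$ is automatic within each row. The only stylistic difference is that you invoke the standard coset length identity $\ell(y'u)=\ell(y')+\ell(u)$ for $y'\in\slambda$, $u\in\slambdamin$ directly, whereas the paper re-derives the equivalent statement $\ell(uy)-\ell(u)=\sum_j \inv(R(V_j))$ by explicitly counting cross-row versus within-row inversions in the one-line notations of $u$ and $uy$; the two computations are the same identity viewed from slightly different angles.
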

\begin{proof}
By Proposition~\ref{p:sigaquvxuv} we have
\begin{equation*}
\begin{aligned}
\sigma_B (q_{u,uy}^{-1} x^{u,uy}) &= q_{u,uy}^{-2} \sigma_B (q_{u,uy} x^{u,uy}) \\
&= \begin{cases} 
q^{\ell(u)-\ell(uy)} q^{\rinv(U(u,\pi,(n)))} 
&\text{if some path family of type $y$ covers $\hat F$,}\\
0 &\text{otherwise.}
\end{cases}
\end{aligned}
\end{equation*}

Letting $V = U(u,\pi,\lambda)$ 
and using (\ref{eq:rinvudotsu}),
we may rewrite the above exponent of $q$ as
$\rinv(V_1 \circ \cdots \circ V_r) + \ell(u) - \ell(uy)$.
By (\ref{eq:invdecomp}), this is
\begin{equation*}
\rinv(V_1) + \cdots + \rinv(V_r) + \rinv(V^\tr) - (\ell(uy) - \ell(u)).
\end{equation*}
We claim that this expression reduces further to $\rinv(V^\tr)$.
To see this, 
recall that the tableaux $L(V_1) \circ \cdots \circ L(V_r)$ 
and $R(V_1) \circ \cdots \circ R(V_r)$ contain
the one-line notations of $u$ and $uy$, respectively.
Since $u$ belongs to $\slambdamin$ and each tableau $R(V_j)$ is a permutation
of the corresponding tableau $L(V_j)$,
we have 
\begin{equation*}
\begin{gathered}
\begin{aligned}
\ell(u) 
&= \# \{ (k,k') \,|\, k > k', 
\text{ $k$ in an earlier row of $L(V)$ than $k'$} \}\\
&= \# \{ (k,k') \,|\, k > k', 
\text{ $k$ in an earlier row of $R(V)$ than $k'$} \}.
\end{aligned}\\
\begin{aligned}
\ell(uy) &= \sum_{j=1}^r \inv(R(V_j)) + 
\# \{ (k,k') \,|\, k > k', 
\text{ $k$ in an earlier row of $R(V)$ than $k'$} \}\\
&= \sum_{j=1}^r \inv(R(V_j)) + \ell(u).
\end{aligned}
\end{gathered}
\end{equation*}


Now fix a row $V_j$ of $V$ and consider right inversions in $V_j$.
Let $\pi_i$, $\pi_{i'}$ be two paths in this row, with $\pi_i$ appearing first.
Since $u$ belongs to $\slambdamin$, we have $i < i'$.
Let $k$ and $k'$ be the corresponding sink indices.
If we have $k > k'$, then the paths cross and form a right inversion in $V_j$.
On the other hand, if we have $k < k'$, 
then the paths do not form a right inversion in $V_j$, even if they intersect.
Thus we have $\rinv(V_j) = \inv(R(V_j))$ for all $j$ and
\begin{equation}\label{eq:luylu}
\rinv(V_1) + \cdots + \rinv(V_r) = \ell(uy) - \ell(u),
\end{equation}
as desired.
\end{proof}

While the final sum in (\ref{eq:eqlambepsilon}) has signs,
we will use a sign-reversing involution to cancel some of the
the terms there, and to obtain a signless sum more amenable to 
combinatorial interpretation.
Fix a partition $\lambda = (\lambda_1, \dotsc, \lambda_r) \vdash n$,
an ordered set partition $I$ of $[n]$ of type $\lambda$, 
and a zig-zag
network $F$,
and let $\mathcal T_I = \mathcal T_I(F)$ be 
the set of all row-closed, left row-strict $F$-tableaux $U$ of shape $\lambda$
such that $L(U)_j = I_j$ (as sets) for $j = 1,\dotsc,r$.
Observe that all row-strict $F$-tableaux of type $e$ 
satisfying $L(U)_j = I_j$ (as sets) for $j = 1,\dotsc,r$
belong to $\mathcal T_I$.
Let us define an involution
\begin{equation*}
\zeta: \mathcal T_I \rightarrow \mathcal T_I
\end{equation*}
as follows.  
\begin{enumerate}
\item If $U$ is a row-strict tableau of type $e$, then define $\zeta(U) = U$.
\item Otherwise, 
  \begin{enumerate}
  \item Let $i$ be the least
index such that $U_i$ is not row-strict.
  \item 
Let $(j,j')$ be the 
lexicographically least
pair of indices in $L(U)_i$ such that 
$\pi_j$ and $\pi_{j'}$ 
intersect.
  \item Let $(k,k')$ be the sink indices of paths $\pi_j$ and $\pi_{j'}$, respectively.
  \item Define $\zeta(U)$ to be the tableau obtained from $U$ by replacing
$\pi_j$ and $\pi_{j'}$ by the unique paths in $F$ from
source $j$ to sink $k'$ and source $j'$ to sink $k$.
  \end{enumerate}
\end{enumerate}

\begin{prop}\label{p:zeta}
The involution $\zeta$ satisfies 
$\rinv(\zeta(U)^\tr) = \rinv(U^\tr)$.
\end{prop}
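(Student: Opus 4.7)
The plan is to analyze the change in right inversions of $U^\tr$ pair-by-pair. Since $\zeta$ modifies only the paths $\pi_j, \pi_{j'}$, and these occupy the same row $i$ of $U$ at the same column positions before and after (the column positions are determined by the sources $j$ and $j'$ together with left row-strictness of $L(U)$), the right inversions of $U^\tr$ that might differ are precisely those involving one of $\pi_j, \pi_{j'}$ paired with a path $\pi_a$ sitting in some row $r_a \neq i$. The intra-row-$i$ pair $(\pi_j, \pi_{j'})$ contributes nothing to $\rinv(U^\tr)$, so it can be ignored. Thus it suffices, for each such $\pi_a$, to show that the contribution of $\{(\pi_a,\pi_j),(\pi_a,\pi_{j'})\}$ to $\rinv(U^\tr)$ equals that of $\{(\pi_a,\pi'_j),(\pi_a,\pi'_{j'})\}$.

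The key structural input is that in the zig-zag network $F$, because $\pi_j$ and $\pi_{j'}$ intersect, they share a contiguous segment of central vertices; fixing a common central vertex $x_m$, I would decompose $\pi_j = \gamma_1 \cup \gamma_2$ (source $j$ to $x_m$, then $x_m$ to sink $k$) and $\pi_{j'} = \delta_1 \cup \delta_2$. Uniqueness of source-to-sink paths in a zig-zag network (together with Observation \ref{o:pnetintersect}) forces $\pi'_j = \gamma_1 \cup \delta_2$ and $\pi'_{j'} = \delta_1 \cup \gamma_2$, so $\pi_j \cup \pi_{j'}$ and $\pi'_j \cup \pi'_{j'}$ coincide as subgraphs of $F$. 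In particular, every intersection $\pi_a \cap \pi_\star$ (for $\pi_\star \in \{\pi_j,\pi_{j'},\pi'_j,\pi'_{j'}\}$) is determined by which of the four pieces $\gamma_1,\gamma_2,\delta_1,\delta_2$ meet $\pi_a$.

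Using this, I would carry out a case analysis according to (i) which subset of $\{\gamma_1,\gamma_2,\delta_1,\delta_2\}$ is met by $\pi_a$ and (ii) the relative order of the sink indices $k, k', k_a$. In each case I would verify directly that swapping the sinks $k \leftrightarrow k'$ on the row-$i$ paths and reshuffling the tails interchanges the two pairs of contributions, using that the four head/tail indicators enter symmetrically in the pair $\{\pi'_j,\pi'_{j'}\}$ as compared to $\{\pi_j,\pi_{j'}\}$ and that $\{k,k'\}$ is preserved as a multiset.

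The main obstacle will be the ``crossed'' cases in which $\pi_a$ meets $\pi_j$ only in its head $\gamma_1$ while meeting $\pi_{j'}$ only in its tail $\delta_2$ (or the symmetric configuration), since the raw intersection indicators do not then pair off transparently across the swap. To handle these I would invoke the lex-least choice of $(j,j')$ in $L(U)_i$, the row-strictness of $L(U)$, and Observation \ref{o:sourcetosink} to constrain which head/tail combinations can coexist in a zig-zag network; the remaining apparent discrepancies should then be cancelled by the corresponding sink-comparison constraints $k_a \gtrless k$ and $k_a \gtrless k'$, giving $\rinv(\zeta(U)^\tr) = \rinv(U^\tr)$.
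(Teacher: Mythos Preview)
Your opening reduction matches the paper exactly: only pairs involving a path $\pi_h$ in some row other than $i$ together with one of the modified paths can change, and the intra-row pair $(\pi_j,\pi_{j'})$ contributes nothing to $\rinv(U^\tr)$.  Where you diverge from the paper is in how to finish.

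The key simplification you are missing is to pair by \emph{sink} rather than by \emph{source}: $\pi_j$ and $\pi'_{j'}$ share sink $k$, while $\pi_{j'}$ and $\pi'_j$ share sink $k'$.  Since a right inversion is determined by intersection together with a comparison of sink indices, and since this pairing preserves the sink comparison with $\pi_h$ automatically, one only needs to check that $\pi_h$ intersects $\pi_j$ exactly when it intersects $\pi'_{j'}$ (and similarly for the other pair).  The paper establishes this via a short four-case enumeration of the possible intersection patterns of $\pi_h$ with $\{\pi_j,\pi_{j'}\}$, using Lemma~\ref{l:dsnintersect}, and never invokes the lex-least choice of $(j,j')$ at all.

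Your head/tail decomposition is a reasonable alternative picture, but your proposed handling of the ``crossed'' configurations is off-target.  All four half-paths $\gamma_1,\gamma_2,\delta_1,\delta_2$ pass through $x_m$, so a path $\pi_a$ that meets both a head piece and a tail piece must (by the zig-zag structure, in which each source-to-sink path traverses a $\preceq$-chain of central vertices) itself pass through $x_m$ and hence meet all four.  Thus the crossed case simply does not arise, and the lex-least choice of $(j,j')$, the row-strictness of $L(U)$, and Observation~\ref{o:sourcetosink} are irrelevant to ruling it out.  If you retain your decomposition, the correct move is to prove directly that only same-side intersection patterns can occur; your case analysis then collapses to the paper's.
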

\begin{proof}
For each $F$-tableau $U \in \mathcal T_I$ 
satisfying $\zeta(U) = U$, the 
claimed equality is obvious.
Now let $U$ be an $F$-tableau not fixed by $\zeta$. Define the indices
$i$, $j$, $j'$, $k$, $k'$ 
and paths 
$\pi_j$, $\pi_{j'}$ as in the definition of $\zeta$,
and let $\pi'_j$, $\pi'_{j'}$ be the two new paths created in the final step
of the definition of $\zeta$.
Since the tableaux $U^\tr$ and $\zeta(U)^\tr$ agree everywhere except in the
two cells in column $i$ containing the paths 
\begin{equation}\label{eq:4paths}
\pi_j, \quad \pi_{j'}, \quad \pi'_j, \quad \pi'_{j'},
\end{equation}
it is clear 
that the right inversions of these tableaux are equal except possibly for 
inversions involving a path $\pi_h$ in a column other than $i$ 
and one of the paths (\ref{eq:4paths}).
We claim that these remaining right inversions in $U^\tr$ and $\zeta(U)^\tr$
correspond bijectively.  In particular, we have
\begin{enumerate}[(a)]
\item $\pi_h$ forms a right inversion with $\pi_j$ in $U^\tr$
if and only if
it forms a right inversion with $\pi'_{j'}$ in $\zeta(U)^\tr$.
\item $\pi_h$ forms a right inversion with $\pi_{j'}$ in $U^\tr$
if and only if
it forms a right inversion with $\pi'_{j}$ in $\zeta(U)^\tr$.
\end{enumerate}


To see this, observe that $j < j'$, 
and consider the intersection of $\pi_h$ with $\pi_j$ and $\pi_{j'}.$
By Lemma~\ref{l:dsnintersect} 
we have four cases: 
\begin{enumerate}
\item $\pi_h$ intersects both $\pi_j$ and $\pi_{j'}$.
\item $\pi_h$ intersects only $\pi_{j'}$.
\item $\pi_h$ intersects only the path whose sink is $\min(k,k')$.
\item $\pi_h$ intersects neither $\pi_j$ nor $\pi_{j'}$.
\end{enumerate}
Now considering the intersections of $\pi_h$ with $\pi'_j$ and $\pi'_{j'}$,
we see that the above cases
imply respectively that $\pi_h$ intersects
both $\pi'_j$ and $\pi'_{j'}$,
only $\pi'_{j'}$ (not $\pi'_j$),
only the path in $\{\pi'_j, \pi'_{j'} \}$ whose
sink index is $\min(k,k')$, 
and neither $\pi'_j$ nor $\pi'_{j'}$.
In all cases, the equivalences (a) and (b) are true.
\end{proof}

Since the tableaux $U$ and $\zeta(U)$ agree except in one row,
we have the following.

\begin{prop}\label{p:obv}
Let $U \in \mathcal T_I$ be a tableau 
not fixed by $\zeta$ and let $i$ be the index
satisfying $U_i \neq \zeta(U)_i$.  Then we have
\begin{equation*}
|\rinv(\zeta(U)_j) - \rinv(U_j)| = 
\begin{cases}
1 &\text{if $j = i$},\\
0 &\text{otherwise}.
\end{cases}
\end{equation*}
\end{prop}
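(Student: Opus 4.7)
The case $j \neq i$ is immediate: $\zeta$ by construction modifies only row $i$, so $U_j = \zeta(U)_j$ and their right-inversion counts coincide. For $j = i$, the two rows agree in every position except $a$ and $b$, where the paths $\pi_j, \pi_{j'}$ (with sinks $k, k'$) are replaced by $\pi'_j, \pi'_{j'}$ (with sinks $k', k$). I decompose $\rinv(\zeta(U)_i) - \rinv(U_i)$ into three parts: (I) contributions from pairs of positions disjoint from $\{a, b\}$, which are unchanged since the paths are the same; (II) the contribution from the pair $(a, b)$ itself; and (III) for each position $c \notin \{a, b\}$, the contributions from the two pairs involving $c$ together with one of $a, b$. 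Part (II) contributes $\pm 1$, since both $(\pi_j, \pi_{j'})$ and $(\pi'_j, \pi'_{j'})$ intersect (the latter by Observation~\ref{o:pnetintersect} applied to the former) and their sinks are reversed, so exactly one of these two pairs is a right inversion.

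For part (III) I reduce to the case that $F$ is a descending star network via Theorem~\ref{t:pfvpfu}, so that Lemma~\ref{l:dsnintersect} applies. The left-row-strict hypothesis makes the positional order in row $i$ match the source order, so for a path $\pi_h$ (source $h$, sink $m$) at position $c$, exactly one of $h < j < j'$, $j < h < j'$, or $j < j' < h$ holds. The key ingredients are two monotonicity facts about descending star networks derived from Observation~\ref{o:sourcetosink} and its corollary following Lemma~\ref{l:dsnintersect}: (A) if $j < h$ and a path from source $j$ to sink $t$ exists, then so does a path from source $h$ to sink $t$; (B) if a path from source $j$ to sink $k$ exists but no path from source $j$ to sink $m$, then $m > k$.

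The configuration $h < j < j'$ is easy: lex-leastness of both $(h, j)$ and $(h, j')$ in $U$ kills the intersection indicators for $\pi_h$ with $\pi_j$ and $\pi_{j'}$; fact (A) then kills the analogous indicators for $\pi_h$ with $\pi'_j$ and $\pi'_{j'}$, so the $\pi_h$ contributions vanish on both sides. The main technical obstacle is the middle configuration $j < h < j'$, where the intersection condition for $\pi_h$ with $\pi_{j'}$ (a path from $h$ to $k'$) differs genuinely from that with $\pi'_{j'}$ (a path from $h$ to $k$), and the sink comparisons $[m > k]$ and $[m > k']$ enter asymmetrically. The resolution combines both monotonicity facts: lex-leastness applied to $(j, h)$ gives no path from $j$ to $m$, and together with the existence of $\pi_j$ and $\pi'_j$, fact (B) forces $m > \max(k, k')$ so both sink comparisons equal $1$; fact (A) then forces paths from $h$ to both $k$ and $k'$ to exist, so both intersection indicators equal $1$, and the $\pi_h$ contributions each evaluate to $1$ and agree. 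A parallel argument using fact (A) monotonicity from $j$ to $j'$ together with fact (B) handles $j < j' < h$, completing the verification that part (III) contributes $0$.
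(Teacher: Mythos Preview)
The paper's proof is the single word ``Obvious.'' The cleanest reading is that only the \emph{parity} of $\rinv(\zeta(U)_i)-\rinv(U_i)$ is needed for the sign-reversing involution in Theorem~\ref{t:qepsilon}, and parity is immediate once one recalls from the proof of Proposition~\ref{p:sigaqiuvxuv} (specifically \eqref{eq:luylu}) that $\rinv(V_j)=\inv(R(V_j))$ for each row of a left-row-strict tableau: then $R(\zeta(U)_i)$ differs from $R(U_i)$ by a single transposition, which always flips the sign of a permutation. Your route is genuinely different and more ambitious: you work straight from the intersection-based definition of $\rinv$, decompose pair by pair, and use DSN-specific monotonicity facts to pin down each contribution and obtain the literal value $1$. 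The payoff of your approach is the exact value; the payoff of the paper's implicit approach is that it is a one-liner and avoids any network-specific combinatorics.

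Your argument has a real gap, though: the reduction to descending star networks via Theorem~\ref{t:pfvpfu} is not justified. That theorem only supplies a $312$-avoiding $v$ with $P(F_v)\cong P(F_w)$, an isomorphism of the posets on \emph{type-$e$} paths; it does not transport the path matrix, the set $\mathcal T_I$, the involution $\zeta$, or intersections among arbitrary-type paths. For example $312\sim 231$, yet $B(F_{312})\neq B(F_{231})$ (only $F_{312}$ has a path from source~$1$ to sink~$3$), so there is no canonical way to carry $U$ and $\zeta$ over to the DSN. (The paper's own proof of Proposition~\ref{p:zeta} invokes the DSN-only Lemma~\ref{l:dsnintersect} without any reduction, so this is arguably a shared lacuna.) A smaller point: in the $h<j$ case you cite fact~(A), but the actual mechanism is Lemma~\ref{l:dsnintersect} applied to $(\pi_h,\pi_j)$ and $(\pi_h,\pi_{j'})$, giving no path $h\to k$ or $h\to k'$, hence no intersection of $\pi_h$ with $\pi'_{j'}$ or $\pi'_j$. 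If you want a network-independent proof of the literal $=1$, it is cleaner to use $\rinv(U_i)=\inv(R(U_i))$: the swap changes $\inv$ by $\pm(1+2N)$ where $N$ counts positions strictly between $a$ and $b$ whose sink value lies strictly between $k$ and $k'$; then lex-leastness plus planarity gives $m>k$ for each such position, and when $k<k'$ a pinching argument (the region between $\pi_j$ and $\pi_{j'}$ collapses at their meeting point, and path intersections in zig-zag networks are connected) forces $m>k'$ as well, so $N=0$.
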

\begin{proof}
Obvious.
\end{proof}


Now we have the following $q$-analog of Theorem~\ref{t:sncfinterp} (i).

\begin{thm}\label{t:qepsilon}
Let $w \in \sn$ \avoidp.
Then for 
$\lambda \vdash n$ 
we have
\begin{equation*}
\epsilon_q^\lambda(\qew C'_w(q)) = \sum_U q^{\inv(U)},
\end{equation*}
where the sum is over all column-strict $F_w$-tableaux 
of 
type $e$
and 
shape $\lambda^\tr$.
\end{thm}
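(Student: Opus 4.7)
The plan is to unpack formula~(\ref{eq:eqlambepsilon}) term by term using Proposition~\ref{p:sigaqiuvxuv}, rewrite the sign $(-1)^{\ell(uy)-\ell(u)}$ via identity~(\ref{eq:luylu}), and then use the involution $\zeta$ together with Propositions~\ref{p:zeta} and \ref{p:obv} to collapse the resulting signed sum onto its fixed points. Those fixed points will turn out to correspond, via transposition, to the column-strict tableaux of shape $\lambda^\tr$ and type $e$ appearing in the claim.

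First I would apply Proposition~\ref{p:sigaqiuvxuv} to each summand $\sigma_B(q_{u,uy}^{-1} x^{u,uy})$ of~(\ref{eq:eqlambepsilon}). Terms with no path family of type $y$ covering $F_w$ vanish; for the rest, letting $\pi$ be the unique such path family and $V \defeq U(u,\pi,\lambda)$, the summand equals $q^{\rinv(V^\tr)}$, and by~(\ref{eq:luylu}) the sign becomes $(-1)^{\rinv(V_1)+\cdots+\rinv(V_r)}$. Because $u \in \slambdamin$ encodes an ordered set partition $I = (I_1,\dotsc,I_r)$ and $y \in u^{-1}\slambda u$ encodes the permitted sink rearrangements row by row, the resulting $V$ ranges bijectively over $\bigsqcup_I \mathcal T_I$, the set of all row-closed, left row-strict $F_w$-tableaux of shape $\lambda$. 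Thus
$$\epsilon_q^\lambda(\qew C'_w(q)) = \sum_I \sum_{V \in \mathcal T_I} (-1)^{\rinv(V_1)+\cdots+\rinv(V_r)}\, q^{\rinv(V^\tr)}.$$

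Next I would apply $\zeta$ inside each $\mathcal T_I$. Proposition~\ref{p:zeta} gives $\rinv(\zeta(V)^\tr)=\rinv(V^\tr)$, and Proposition~\ref{p:obv} shows $\sum_j \rinv(V_j)$ and $\sum_j \rinv(\zeta(V)_j)$ differ by exactly one when $V$ is not fixed; so non-fixed pairs cancel. I would identify the fixed points of $\zeta$ as the row-strict $F_w$-tableaux of type $e$ and shape $\lambda$, using the following key observation: if $V \in \mathcal T_I$ contains no intersecting pair in any row, then within each row the strictly increasing source indices together with Observation~\ref{o:pnetintersect} force the sink indices also to be strictly increasing, and since $V$ is row-closed this forces the row to have type $e$. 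For each such fixed point every $\rinv(V_j)$ vanishes, making the sign $+1$, while the type-$e$ condition gives $\rinv(V^\tr)=\inv(V^\tr)$ immediately from the definitions (right inversions and left inversions agree when source equals sink on every path).

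Finally I would finish by transposing: the map $V \mapsto V^\tr$ carries row-strict $F_w$-tableaux of type $e$ and shape $\lambda$ bijectively onto column-strict $F_w$-tableaux of type $e$ and shape $\lambda^\tr$, since the condition $V_{i,1}<_P\cdots<_P V_{i,\lambda_i}$ in each row of $V$ translates into strict increase in each column of $V^\tr$. Substituting yields exactly $\sum_{V'} q^{\inv(V')}$ over the stated set, as desired. The main obstacle is the sign-cancellation step: one must check that $\zeta$ genuinely lands in $\mathcal T_I$ and is an involution (using Observation~\ref{o:pnetintersect} to guarantee the existence of the swapped paths and the uniqueness of source-to-sink paths in zig-zag networks), and then carefully combine Propositions~\ref{p:zeta} and \ref{p:obv} so that the $q$-exponent is preserved while the sign flips. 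A subtler secondary point is the characterization of fixed points as row-strict type-$e$ tableaux, which depends precisely on the planarity constraint of Observation~\ref{o:pnetintersect}.
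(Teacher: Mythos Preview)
Your proposal is correct and follows essentially the same route as the paper: apply Proposition~\ref{p:sigaqiuvxuv} to the expansion~(\ref{eq:eqlambepsilon}), use~(\ref{eq:luylu}) to convert the sign $(-1)^{\ell(uy)-\ell(u)}$ into $(-1)^{\sum_j \rinv(V_j)}$, cancel via the involution $\zeta$ using Propositions~\ref{p:zeta} and~\ref{p:obv}, identify the fixed points as the row-strict type-$e$ tableaux, and transpose to column-strict tableaux of shape $\lambda^\tr$. Your added justification that row-strictness together with row-closedness forces type $e$ (via Observation~\ref{o:pnetintersect}) is exactly what makes step~(2a) of the definition of $\zeta$ well-defined, a point the paper leaves implicit.
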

\begin{proof}
Let $B$ be the path matrix of $F_w$ and let $(I_1,\dotsc,I_r)$ be a set
partition of $[n]$ of type $\lambda$.
By 
(\ref{eq:subwordsofu}) -- (\ref{eq:hqlambetaalt}),
there is a permutation $u \in \slambdamin$ 
corresponding to $(I_1,\dotsc,I_r)$ such that 
we have
\begin{equation*}
\sigma_B(\qdet(x_{I_1,I_1}) \cdots \qdet(x_{I_r,I_r})) = 
\sum_{y \in u^{-1}\slambda u} (-1)^{\ell(uy)-\ell(u)} \sigma_B ( q_{u,uy}^{-1} x^{u, uy} ).
\end{equation*}
By Proposition~\ref{p:sigaqiuvxuv}
this is equal to
\begin{equation}\label{eq:ypiinvuupi}
\sum_{(y,\pi)} (-1)^{\ell(uy)-\ell(u)} q^{\rinv(U(u,\pi,\lambda)^\tr)},
\end{equation}
where the 
sum is over pairs $(y,\pi)$ such that 
$y \in u^{-1}\slambda u$ and 
$\pi$ is a path family of type $y$ which covers $F_w$.
If such a path family exists, it is necessarily unique.  
Thus as $y$ varies over $u^{-1}\slambda u$
we have that
$U(u, \pi,\lambda)$
varies over all 
tableaux in $\mathcal T_I$. 
Thus by (\ref{eq:luylu}) this sum is equal to
\begin{equation}\label{eq:Vprecancel}
\sum_{V \in \mathcal T_I} (-1)^{\rinv(V_1) + \cdots + \rinv(V_r)} q^{\rinv(V^\tr)}.
\end{equation}

Now consider a tableau $V \in \mathcal T_I$ which satisfies 
$\zeta(V) \neq V$. By Propositions~\ref{p:zeta} -- \ref{p:obv},
the term of the above sum corresponding to the tableau $\zeta(V)$ is
\begin{equation*}
(-1)^{\rinv(\zeta(V)_1) + \cdots + \rinv(\zeta(V)_r)} q^{\rinv(\zeta(V)^\tr)}
= -(-1)^{\rinv(V_1) + \cdots + \rinv(V_r)} q^{\rinv(V^\tr)}.
\end{equation*}
Thus all terms corresponding to tableaux $V$ and $\zeta(V) \neq V$
cancel one another in the sum (\ref{eq:Vprecancel}), leaving terms
only for the tableaux $V \in \mathcal T_I$ 
which are fixed by $\zeta$, i.e., the row-strict tableaux.  
For these tableaux we have
\begin{equation*}
\rinv(V_1) = \cdots = \rinv(V_r) = 0, \qquad
\rinv(V^\tr) = \inv(V^\tr).
\end{equation*}
Furthermore, $V$ is row-strict of shape $\lambda$ if and only if
$V^\tr$ is column-strict of shape $\lambda^\tr$.  Thus we may again rewrite
(\ref{eq:ypiinvuupi}) as
\begin{equation*}
\sum_U q^{\inv(U)},
\end{equation*}
where the sum is over all column-strict $F_w$-tableaux of shape $\lambda^\tr$
satisfying $U_j = I_j$ (as sets).
Summing over ordered set partitions and using (\ref{eq:eqlambepsilon}),
we have the desired result.
\end{proof}


For example, consider the descending star network $F_{3421}$ in
(\ref{eq:F3421}).
It is easy to verify that 
there are exactly two column-strict
$F_{3421}$-tableaux of type $e$ and shape $31$:
\begin{equation}\label{eq:epsilontabs31}
\raisebox{-6mm}{\includegraphics[height=12mm]{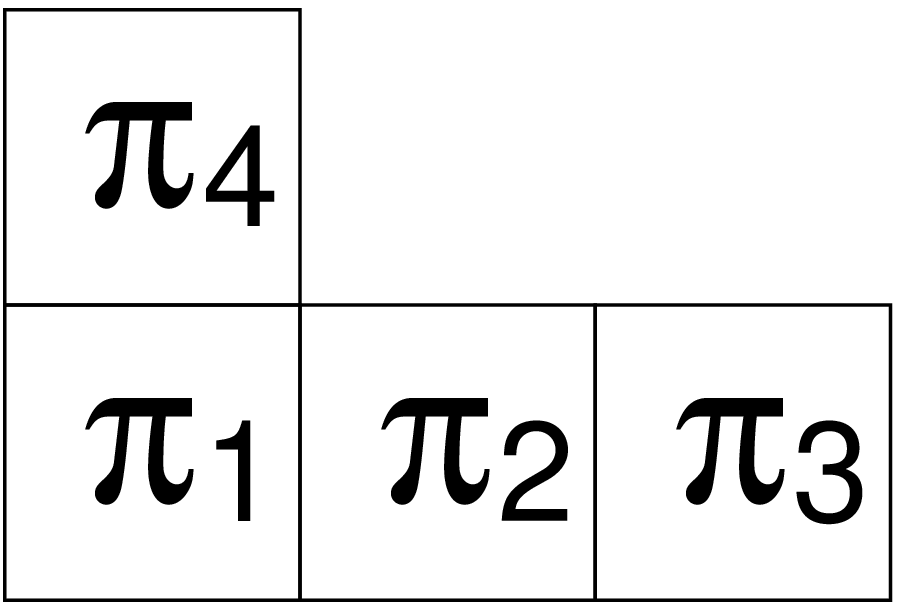}}\ ,\quad
\raisebox{-6mm}{\includegraphics[height=12mm]{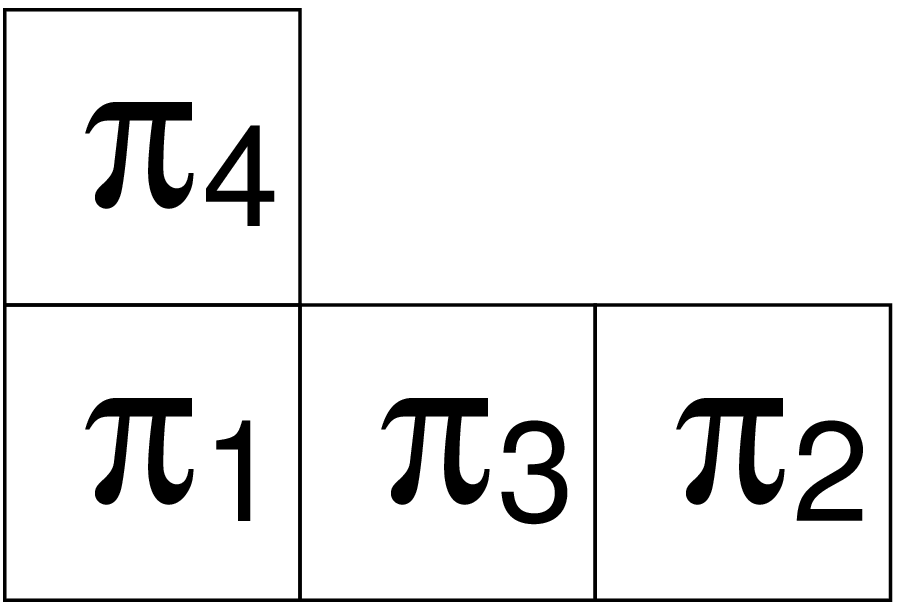}}\ ,
\end{equation}
where $\pi_i$ is the unique path from source $i$ to sink $i$.
These tableaux 
have $2$ and $3$ inversions, respectively.
Since $31^\tr = 211$,  
the tableaux together give 
$\epsilon_q^{211}(q_{e,3421} C'_{3421}(q)) = q^2 + q^3$.
It is clear that all twenty-four $F_{3421}$-tableaux of type $e$ and shape $4$ 
are column-strict.  Counting inversions in these tableaux gives
$\epsilon_q^{1111}(q_{e,3421} C'_{3421}(q)) = 1 + 3q + 8q^2 + 8q^3 + 3q^4 + q^5$.
It is also clear that there are no $F_{3421}$-tableaux of type $e$ and 
shapes $22$, $211$, or $1111$. Thus we have
$\epsilon_q^{22}(q_{e,3421} C'_{3421}(q)) = 
\epsilon_q^{31}(q_{e,3421} C'_{3421}(q)) = 
\epsilon_q^{4}(q_{e,3421} C'_{3421}(q)) = 0$.


Expanding $\epsilon_q^\lambda$ in terms of irreducible characters
and Kostka numbers, $\epsilon_q^\lambda = \sum K_{\mu^\tr,\lambda} \chi_q^\mu$,
and using Haiman's result~\cite[Lem\,1.1]{HaimanHecke},
we have that $\epsilon_q^\lambda( \qew C'_w(q))$ is symmetric and unimodal
about $\qew$ for all $w \in \sn$.
In the case that $w$ \avoidsp, it would be interesting to explain
this phenomenon combinatorially in terms of Theorem~\ref{t:qepsilon}.

\begin{cor}\label{c:edeck}
Fix 
$\lambda = (\lambda_1, \dotsc, \lambda_r) \vdash n$
and $w \in \sn$ \avoidingp.  If $w_1 \cdots w_n$ has 
a decreasing subsequence of length greater than 
$r$,
then 
$\epsilon_q^\lambda(\qew C'_w(q)) = 0$.
\end{cor}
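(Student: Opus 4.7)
The plan is to combine Theorem~\ref{t:qepsilon} with Theorem~\ref{t:kstar} via a simple pigeonhole/Dilworth-style argument on the path poset $P(F_w)$. By Theorem~\ref{t:qepsilon}, $\epsilon_q^\lambda(\qew C'_w(q))$ equals the generating polynomial $\sum_U q^{\inv(U)}$ taken over column-strict $F_w$-tableaux of type $e$ and shape $\lambda^\tr$. Since the shape $\lambda^\tr$ has exactly $r$ columns (of lengths $\lambda_1, \ldots, \lambda_r$), it suffices to prove that no such tableau exists.

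First I would invoke Theorem~\ref{t:kstar}: the hypothesis that $w_1\cdots w_n$ contains a decreasing subsequence of length $k > r$ produces an interval $[c,d]$ in the concatenation (\ref{eq:concat}) defining $F_w$ with $d - c + 1 \geq k$. Let $(\rho_1, \ldots, \rho_n)$ be the unique type-$e$ path family covering $F_w$. For each $i \in [c,d]$, the path $\rho_i$ must pass through the central vertex of the star network $G_{[c,d]}$, so the paths $\rho_c, \ldots, \rho_d$ pairwise intersect. By the definition of $P(F_w)$, any two intersecting paths are incomparable, so $\{\rho_c, \rho_{c+1}, \ldots, \rho_d\}$ is an antichain of size at least $k$ in $P(F_w)$.

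On the other hand, a column-strict $F_w$-tableau $U$ of type $e$ and shape $\lambda^\tr$ would distribute the $n$ paths $\rho_1, \ldots, \rho_n$ among $r$ columns so that each column is a strictly increasing sequence in $P(F_w)$, i.e.\ a chain. Thus the paths in the antichain $\{\rho_c, \ldots, \rho_d\}$ of size $k > r$ would have to be spread across only $r$ chains, forcing two of them to land in the same column. But two comparable-in-a-column elements must be comparable in $P(F_w)$, contradicting the antichain property.

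Hence no column-strict $F_w$-tableau of type $e$ and shape $\lambda^\tr$ exists, and Theorem~\ref{t:qepsilon} yields $\epsilon_q^\lambda(\qew C'_w(q)) = 0$. There is no real obstacle here; the only point requiring care is matching the orientation of the shape $\lambda^\tr$ correctly so that the column count (rather than the row count) is $r$, and recognizing that the star-network structure translates a decreasing pattern in $w$ into an honest antichain in $P(F_w)$ rather than merely a set of pairwise intersecting paths.
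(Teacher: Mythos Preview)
Your proposal is correct and follows essentially the same approach as the paper's proof: both invoke Theorem~\ref{t:kstar} to obtain an interval of size exceeding $r$, observe that the corresponding type-$e$ paths pairwise intersect (hence form an antichain in $P(F_w)$), and then use pigeonhole on the $r$ columns of a putative column-strict tableau of shape $\lambda^\tr$ to conclude via Theorem~\ref{t:qepsilon}. The paper's write-up is terser, but the logical content is identical.
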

\begin{proof}
Let $(\pi_1,\dotsc,\pi_n)$ be the unique path family of type $e$ which covers
$F_w$.  By Theorem~\ref{t:kstar} 
there exist $r+1$ paths $\pi_i, \dotsc, \pi_{i+r}$
which share a vertex.  No two of these can appear together in a column
of a column-strict $F_w$-tableau. Thus no such tableau has
shape $\lambda^\tr$.
\end{proof}

More generally, 
it is known that we have $\epsilon_q^\lambda(\qew C'_w(q)) = 0$
unless $\lambda \leq \sh(w)$ in the majorization order, where
$\sh(w)$ is the partition associated to $w$ 
by the Robinson-Schensted row insertion algorithm. 
(See, e.g., \cite[Prop.\,4.1\,(3)]{HaimanHecke}.)
This implies Corollary~\ref{c:edeck}, since a decreasing subsequence
of length greater than $r$
in $w_1 \dotsc w_n$ implies that
we have $\lambda \nleq \sh(w)$.

\section{Connections to chromatic symmetric and 
quasisymmetric functions}\label{s:chromsf}


The evaluations of $\sn$-class functions and $\hnq$ traces
at Kazhdan-Lusztig basis elements
are closely related to certain symmetric and quasisymmetric functions
defined by 
Stanley~\cite{StanSymm}
and Shareshian and Wachs~\cite{SWachsChromQ}.

Let 
$\Lambda_n$ 
be the $\mathbb Z$-module of homogeneous 
degree $n$ symmetric functions.
In \cite{StanSymm} Stanley
defined certain {\em chromatic symmetric functions}
$\{ X_G \,|\, G \text{ a simple graph on $n$ vertices}\,\}$ in $\Lambda_n$
and studied expansions of these in various bases of $\Lambda_n$.
Given $G = (V,E)$ and defining $\mathbb P$ to be the set of 
positive integers, we call a function $\kappa: V \rightarrow \mathbb P$
a {\em proper coloring} of $G$ if $\kappa(u) \neq \kappa(v)$ whenever 
$(u,v) \in E$.
Then we have the definition
\begin{equation*}
X_G = \sum_\kappa x_{\kappa(1)} \cdots x_{\kappa(n)},
\end{equation*}
where the sum is over all proper colorings of $G$.
When $G$ is the incomparability graph of a poset $P$, we will
write $X_P = X_{\text{inc}(P)}$.
Stanley showed~\cite[Prop.\,2.4]{StanSymm} that in this case, 
we have the equivalent definition
\begin{equation}\label{eq:cPdef}
X_P = \sum_{\lambda \vdash n} c_{P,\lambda} m_\lambda,
\end{equation}
where $c_{P,\lambda}$ is the number of ordered set partitions of $P$ 
whose blocks are 
chains of cardinalities $\lambda_1, \lambda_2, \dotsc$.
These symmetric functions are related to $\sn$-class function 
evaluations as follows.
\begin{thm}\label{t:chromeq}
Let $P$ be an $n$-element unit interval order, let $v \in \sn$ 
be the 
$312$-avoiding permutation satisfying $\zeta(F_v) = P$
as in the proof of Theorem~\ref{t:dsnuio},
and let $w \in \sn$ be any \pavoiding permutation satisfying $w \sim v$ 
as in (\ref{eq:posetequiv}).
Then we have 
\begin{equation}\label{eq:chromeq}
X_P = \sum_{\lambda \vdash n} \epsilon^\lambda(C'_w(1))m_\lambda.
\end{equation}
\end{thm}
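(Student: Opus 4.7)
The plan is to chain four identifications together: (i) Stanley's $m_\lambda$-expansion (\ref{eq:cPdef}) of $X_P$, (ii) a bijection between ordered chain-partitions of $P$ and column-strict $F_v$-tableaux of type $e$, (iii) the combinatorial interpretation in Theorem~\ref{t:sncfinterp}(i), and (iv) the poset-invariance given by Theorem~\ref{t:thetavw}.

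First I would invoke the proof of Theorem~\ref{t:dsnuio} to identify $P$ with $P(F_v)$ via the map $i \mapsto \pi_i$ that sends an element of $P$ to the unique source-$i$-to-sink-$i$ path of $F_v$. Under this poset isomorphism a subset of $P$ is a chain precisely when the corresponding paths of type $e$ form a chain in $P(F_v)$ --- which is exactly the condition required for these paths to occupy a single column of a column-strict $F_v$-tableau of type $e$.

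Next I would show $c_{P,\lambda} = \epsilon^\lambda(C'_v(1))$. Given an ordered chain-partition $(B_1, \dotsc, B_r)$ of $P$ with $|B_j| = \lambda_j$, fill column $j$ of the Young diagram of shape $\lambda^{\tr}$ (whose column heights, read left to right, are $\lambda_1, \dotsc, \lambda_r$) with the paths corresponding to the elements of $B_j$, listed in increasing $P(F_v)$-order from bottom to top. This produces a column-strict $F_v$-tableau of type $e$ and shape $\lambda^{\tr}$, and the map has an obvious inverse obtained by reading off columns as chains. Theorem~\ref{t:sncfinterp}(i) then identifies the number of such tableaux with $\epsilon^\lambda(C'_v(1))$.

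Finally, the hypothesis $v \sim w$ and Theorem~\ref{t:thetavw} yield $\epsilon^\lambda(C'_v(1)) = \epsilon^\lambda(C'_w(1))$ for every $\lambda \vdash n$. Substituting into (\ref{eq:cPdef}) gives (\ref{eq:chromeq}). The proof is essentially an assembly of results already established in the paper; the only place that demands real attention is the bookkeeping needed to match ``chain of cardinality $\lambda_j$'' in Stanley's definition of $c_{P,\lambda}$ with the column heights $\lambda_j$ of the transposed shape $\lambda^{\tr}$. Once these conventions are fixed, no genuine technical obstacle remains.
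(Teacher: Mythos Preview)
Your argument is correct and follows essentially the same route as the paper: both identify $c_{P,\lambda}$ with the number of column-strict tableaux of shape $\lambda^\tr$ via the poset isomorphism of Theorem~\ref{t:dsnuio}, invoke Theorem~\ref{t:sncfinterp}(i), and use Theorem~\ref{t:thetavw} to pass from $v$ to $w$. The only cosmetic difference is that you route through $\epsilon^\lambda(C'_v(1))$ explicitly before applying Theorem~\ref{t:thetavw}, whereas the paper compresses the passage to $F_w$-tableaux into a single sentence.
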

\begin{proof}
It is easy to see that $c_{P,\lambda}$ is equal to the number of column-strict
$P$-tableaux of shape $\lambda^\tr$.  
By Theorems~\ref{t:dsnuio},
\ref{t:thetavw}, this is
the number of column-strict $F_w$-tableaux of shape $\lambda^\tr$.
Thus by Theorem~\ref{t:sncfinterp} (i) we have
$c_{P,\lambda} = \epsilon^\lambda(C'_w(1))$.
\end{proof}
Expanding $X_P$ in other bases of $\mathbb Q \otimes \Lambda_n$, 
including the forgotten basis $\{ f_\lambda \,|\, \lambda \vdash n \}$, 
we see that other class function evaluations appear as coefficients.
\begin{cor}\label{c:chromeq}
Let $P$, $v$, $w$ be as in Theorem~\ref{t:chromeq}.
Then we have
\begin{equation*}
X_P = 
\sum_{\lambda \vdash n} 
\eta^{\lambda}(C'_w(1)) f_\lambda
= \sum_{\lambda \vdash n}  
\chi^{\lambda^\tr} (C'_w(1)) s_\lambda
= \sum_{\lambda \vdash n}  
\frac{(-1)^{n - \ell(\lambda)}}{z_\lambda} \psi^{\lambda}(C'_w(1)) p_\lambda
= \sum_{\lambda \vdash n}  
\phi^{\lambda}(C'_w(1)) e_\lambda.
\end{equation*}
\end{cor}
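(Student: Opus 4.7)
The starting point is Theorem~\ref{t:chromeq}, which gives $X_P = \sum_\lambda \epsilon^\lambda(C'_w(1)) m_\lambda$. The plan is to produce all four remaining expansions simultaneously by identifying $X_P$ with the $\omega$-image of a single symmetric function $F_z$ attached to $z := C'_w(1)$, and then reading off expansions of $\omega(F_z)$ in each target basis.

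Set $F_z := \sum_\lambda \chi^\lambda(z) s_\lambda \in \Lambda_n$. Because the Frobenius characteristic $\mathrm{ch}$ is an isometry from the space of $\sn$-class functions (with standard inner product) to $\Lambda_n$ (with the Hall inner product), and $\mathrm{ch}(\chi^\lambda) = s_\lambda$ with $\{s_\lambda\}$ orthonormal, one has $\theta(z) = \langle \mathrm{ch}(\theta), F_z \rangle$ for every class function $\theta$. Specializing $\theta$ to each of the four remaining bases and invoking the Hall dualities $\langle h_\lambda, m_\mu \rangle = \langle e_\lambda, f_\mu \rangle = \delta_{\lambda\mu}$ and $\langle p_\lambda, p_\mu \rangle = z_\lambda \delta_{\lambda\mu}$ produces
\begin{equation*}
\begin{aligned}
F_z &= \sum_\lambda \chi^\lambda(z) s_\lambda = \sum_\lambda \eta^\lambda(z) m_\lambda = \sum_\lambda \epsilon^\lambda(z) f_\lambda \\
&= \sum_\lambda \phi^\lambda(z) h_\lambda = \sum_\lambda \tfrac{\psi^\lambda(z)}{z_\lambda} p_\lambda.
\end{aligned}
\end{equation*}

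Next, apply the standard involution $\omega$ on $\Lambda_n$, using $\omega(s_\lambda) = s_{\lambda^\tr}$, $\omega(m_\lambda) = f_\lambda$, $\omega(h_\lambda) = e_\lambda$, and $\omega(p_\lambda) = (-1)^{n-\ell(\lambda)} p_\lambda$; re-indexing $\lambda \leftrightarrow \lambda^\tr$ in the Schur summand yields
\begin{equation*}
\begin{aligned}
\omega(F_z) &= \sum_\lambda \chi^{\lambda^\tr}(z) s_\lambda = \sum_\lambda \eta^\lambda(z) f_\lambda = \sum_\lambda \epsilon^\lambda(z) m_\lambda \\
&= \sum_\lambda \phi^\lambda(z) e_\lambda = \sum_\lambda \tfrac{(-1)^{n-\ell(\lambda)} \psi^\lambda(z)}{z_\lambda} p_\lambda.
\end{aligned}
\end{equation*}

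Setting $z = C'_w(1)$, the third expansion above matches the $m$-basis expansion of $X_P$ supplied by Theorem~\ref{t:chromeq}, and uniqueness of basis expansions forces $X_P = \omega(F_{C'_w(1)})$. The remaining four expansions of $\omega(F_z)$ are then precisely the four identities claimed in the corollary. There is no substantive obstacle: the argument is a routine application of the Frobenius isometry and the involution $\omega$, with the only care needed being to track the sign $(-1)^{n-\ell(\lambda)}$ arising when $\omega$ is applied to the power-sum basis.
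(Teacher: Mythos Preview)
Your proof is correct and follows essentially the same approach as the paper. The paper's one-sentence argument observes that the transition matrices relating the class-function bases $\{\epsilon^\lambda\}, \{\eta^\lambda\}, \{\chi^{\lambda^\tr}\}, \{(-1)^{n-\ell(\lambda)}z_\lambda^{-1}\psi^\lambda\}, \{\phi^\lambda\}$ are inverse to those relating the symmetric-function bases $\{m_\lambda\}, \{f_\lambda\}, \{s_\lambda\}, \{p_\lambda\}, \{e_\lambda\}$; your argument unpacks exactly this duality via the Frobenius isometry and the involution $\omega$, which is a more explicit but equivalent packaging of the same fact.
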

\begin{proof}
The transition matrices relating the class function bases
$\{\epsilon^\lambda \,|\, \lambda \vdash n \}$,
$\{\eta^\lambda \,|\, \lambda \vdash n \}$, 
$\{\chi^{\lambda^\tr} \,|\, \lambda \vdash n \}$, 
$\{(-1)^{n-\ell(\lambda)}z_\lambda^{-1} \psi^\lambda \,|\, \lambda \vdash n \}$, 
$\{\phi^\lambda \,|\, \lambda \vdash n \}$, 
respectively,
are inverse to those relating the symmetric function bases
$\{m_\lambda \,|\, \lambda \vdash n \}$,
$\{f_\lambda \,|\, \lambda \vdash n \}$,
$\{s_\lambda \,|\, \lambda \vdash n \}$,
$\{p_\lambda \,|\, \lambda \vdash n \}$,
$\{e_\lambda \,|\, \lambda \vdash n \}$,
respectively.
\end{proof}

We remark that Theorem~\ref{t:chromeq} and Corollary~\ref{c:chromeq}
do not hold for arbitrary $w$ and $P$.  
Not all chromatic symmetric functions $X_P$ can
be expressed 
as $\sum_{\lambda \vdash n} \epsilon^\lambda(C'_w(1)) m_\lambda$
for appropriate $w \in \sn$,
nor can all 
symmetric functions of this form
be expressed as $X_P$
for an appropriate labeled poset $P$.


Stanley and Stembridge~\cite[Conj.\,5.1]{StanSymm}, 
\cite[Conj.\,5.5]{StanStemIJT}
conjectured that $X_P$ is elementary nonnegative 
when $P$ is
$(\mathbf{3}+\mathbf{1})$-free, and
Gasharov~\cite[Thm.\,2]{GashInc} proved 
the weaker statement that $X_P$ is Schur nonnegative in this case.
Guay-Paquet~\cite[Thm.\,5.1]{GPModRel} showed that the above conjecture
and result are equivalent to the analogous statements in which $P$ is assumed
to be a unit interval order.
Thus by Theorem~\ref{t:thetavw} and Corollary~\ref{c:chromeq}
the nonnegativity statements
are consequences
of Haiman's conjecture and result that 
for all $w \in \sn$, $\lambda \vdash n$
we have
$\phi_q^\lambda(\qew C'_w(q)) \in \mathbb N[q]$~\cite[Conj.\,2.1]{HaimanHecke} 
and
$\chi_q^\lambda(\qew C'_w(q)) \in \mathbb N[q]$~\cite[Lem.\,1.1]{HaimanHecke}.
Specifically, the nonnegativity statements are obtained from Haiman's by 
restricting to the case that
$w$ 
avoids the pattern $312$
and by specializing at $q=1$.

Perhaps it is simpler to see relationships between the above statements 
once they are reformulated in terms of a common framework of 
$\sn$-class functions and symmetric functions in $\Lambda_n$.
For $g \in \zsn$, 
define $X_g \defeq \sum_{\lambda \vdash n} \epsilon^\lambda(g) m_\lambda$.
Haiman proved that $X_{C'_w(1)}$ is Schur nonnegative for all $w$ 
and conjectured that $X_{C'_w(1)}$ is elementary nonnegative for all $w$.
Gasharov proved Schur nonnegativity for $w$ avoiding the pattern $312$ 
while Stanley and Stembridge conjectured elementary nonnegativity in this case.

Let $\mathrm{QSym}_n$ be the $\mathbb Z$-module 
of homogeneous degree $n$ {\em quasisymmetric functions}
in the commuting indeterminates $x_1, x_2, \dotsc$,
i.e., the generalization of homogeneous degree $n$ symmetric functions 
$\Lambda_n$ in which monomials of the forms
$x_1^{\alpha_1} \cdots x_k^{\alpha_k}$ and
$x_{i_1}^{\alpha_1} \cdots x_{i_k}^{\alpha_k}$ 
are required to 
have the same
coefficient only when $i_1 < \cdots < i_k$.
In \cite[Sec.\,4]{SWachsChromQ}, 
Shareshian and Wachs defined a $q$-analog
$X_{G,q}$ 
of the chromatic symmetric function $X_G = X_{G,1}$, with
$\{ X_{G,q} \,|\, G \text{ a simple labeled graph on $n$ vertices}\,\}$ 
belonging to $\mathbb Z[q] \otimes \mathrm{QSym}_n$,
and studied expansions of these functions in various bases.
Assume
that $V$ is labeled by $[n]$ and let
$\asc(\kappa) = \# \{(u,v) \in E \,|\, u<v \text{ and } \kappa(u)<\kappa(v) \}$.
Then we have the definition
\begin{equation*}
X_{G,q} = \sum_\kappa q^{\asc(\kappa)} x_{\kappa(1)} \cdots x_{\kappa(n)},
\end{equation*}
where the sum is over all proper colorings of $G$.
In \cite[Thm.\,4.5]{SWachsChromQF} 
Shareshian and Wachs showed 
that this function 
is in fact 
symmetric 
with coefficients in $\mathbb Z[q]$
when
$G$ is the incomparability graph of an appropriately labeled
$n$-element unit interval order
$P$.  
Specifically, we require
for each pair $x,y \in P$ satisfying
\begin{equation}\label{eq:altrespect}
\# \{ z \in P \,|\, z < x \} - 
\# \{ z \in P \,|\, z > x \}
<
\# \{ z \in P \,|\, z < y \} - 
\# \{ z \in P \,|\, z > y \},
\end{equation}
that the label of $x$ be less than that of $y$.
(The equivalence of this requirement to that stated in
\cite[Thm.\,4.5]{SWachsChromQF} 
follows from comparison of 
\cite[Props.\,4.1 -- 4.2]{SWachsChromQF}
and the definition of 
{\em natural unit interval order}~\cite[Sec.\,4]{SWachsChromQF} to 
results in \cite[p.\,33]{Fish} and 
\cite[Obs.\,2.1 -- 2.3, Prop.\,2.4]{SkanReed}.)
When $G$ is the incomparability graph of a labeled poset $P$, 
we will write $X_{P,q} = X_{\text{inc}(P),q}$, and 
we may give an alternate definition of
$\{ X_{P,q} \,|\, P \text{ an {\em n}-element poset}\,\}$
which is analogous to (\ref{eq:cPdef}).
(See also \cite[Eq.\,(6.2)]{SWachsChromQF}.)
\begin{prop}\label{p:ascinv}
Let $P$ be a unit interval order, labeled as in (\ref{eq:altrespect}).
Then we have
\begin{equation*}
X_{P,q} = \sum_{\lambda \vdash n} c_{P,\lambda}(q) m_\lambda,
\end{equation*}
where 
\begin{equation*}
c_{P,\lambda}(q) = \sum_U q^{\inv(U)}
\end{equation*}
and the sum is over column-strict $P$-tableaux of shape $\lambda^\tr$.
\end{prop}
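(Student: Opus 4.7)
The plan is to compute the coefficient of $m_\lambda$ in $X_{P,q}$ directly from the definition and identify it with $c_{P,\lambda}(q)$ via an explicit bijection. By definition, the coefficient of the monomial $x_1^{\alpha_1}\cdots x_r^{\alpha_r}$ in $X_{P,q}$ equals $\sum_\kappa q^{\asc(\kappa)}$ over proper colorings $\kappa:V(P)\to[r]$ satisfying $|\kappa^{-1}(i)|=\alpha_i$. Under the labeling hypothesis (\ref{eq:altrespect}), $X_{P,q}$ is symmetric by \cite[Thm.\,4.5]{SWachsChromQF}, so this coefficient depends only on the partition obtained by sorting $\alpha$, and equals $[m_\lambda]X_{P,q}$. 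I will evaluate it using the \emph{reversed} rearrangement $\alpha=(\lambda_r,\lambda_{r-1},\ldots,\lambda_1)$, obtaining a sum over proper colorings with $|\kappa^{-1}(i)|=\lambda_{r+1-i}$.

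Each such coloring $\kappa$ determines an ordered partition of $V(P)$ into chains $(C_1,\ldots,C_r)$ with $|C_i|=\lambda_{r+1-i}$. Set $U=U(\kappa)$ to be the tableau whose $i$-th column (left to right) is $C_{r+1-i}$, arranged strictly increasingly from bottom to top in the partial order on $P$. Then column $i$ of $U$ has $\lambda_i$ entries, so $U$ has shape $\lambda^\tr$, and $U$ is column-strict by construction. Conversely, every column-strict $P$-tableau of shape $\lambda^\tr$ arises this way (read off its columns in reverse to recover the color classes), so the map $\kappa\mapsto U$ is a bijection between proper colorings of the specified type and column-strict $P$-tableaux of shape $\lambda^\tr$.

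It remains to show $\asc(\kappa)=\inv(U)$ term-by-term. Use the identification $P\cong P(F_v)$ from Theorem~\ref{t:dsnuio}; under this identification (and the matching of labelings observed in the remark following (\ref{eq:altrespect})), incomparable pairs in $P$ correspond to intersecting paths in $F_v$, and the labels on $P$ agree with the source indices of paths in $F_v$. Consider any incomparable pair $\{a,b\}$ of $P$ with $a<b$ as labels; since same-column entries of $U$ form a chain (hence are comparable), $a$ and $b$ must lie in distinct columns of $U$. The pair contributes to $\asc(\kappa)$ iff $\kappa(a)<\kappa(b)$, which by our indexing reversal $i\leftrightarrow r+1-i$ is equivalent to $a$ lying in a \emph{later} column of $U$ than $b$, equivalently $b$ (the larger label) lying in an \emph{earlier} column than $a$. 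This is precisely the condition for $(\pi_b,\pi_a)$ to be an inversion in $U$. Summing over incomparable pairs gives $\asc(\kappa)=\inv(U)$, and hence $[m_\lambda]X_{P,q}=c_{P,\lambda}(q)$.

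The main obstacle is the bookkeeping needed to align three labelings on $P$ --- the one in (\ref{eq:altrespect}), the DK-style natural labeling used in Theorem~\ref{t:dsnuio}, and the source-vertex labeling of paths in $F_v$ --- together with the insight that the bijection from colorings to tableaux must \emph{reverse} the column indexing. Without the reversal, $\asc(\kappa)$ and $\inv(U)$ are merely complementary (summing to the total number of incomparable pairs in $P$) rather than equal; the reversal is what causes them to coincide pair-by-pair and not merely in total distribution.
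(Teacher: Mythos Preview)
Your proof is correct and follows essentially the same approach as the paper's: both use symmetry of $X_{P,q}$ to pick the reversed composition $(\lambda_r,\ldots,\lambda_1)$, set up the bijection sending the color-$i$ chain to column $r+1-i$ of a column-strict $P$-tableau of shape $\lambda^\tr$, and verify pair-by-pair that an ascent $(a,b)$ in $\kappa$ corresponds to an inversion in $U$. The only cosmetic difference is that you explicitly route the $\inv$ statistic through the identification $P\cong P(F_v)$ of Theorem~\ref{t:dsnuio}, whereas the paper treats $\inv$ as defined directly on labeled-$P$-tableaux; both are fine.
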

\begin{proof}
Each proper coloring $\kappa$ of $\mathrm{inc}(P)$
may be viewed as an assignment of colors to elements of $P$ so that
each subset of elements having a given color forms a chain.
By \cite[Thm.\,4.5]{SWachsChromQF}, $X_{P,q}$ is symmetric.
Thus for $\lambda \vdash n$, the coefficient in $X_{P,q}$ of $m_\lambda$ 
is well-defined: it is the coefficient of 
$x_1^{\lambda_r} x_2^{\lambda_{r-1}} \cdots x_r^{\lambda_1}$,
i.e., the sum of $q^{\asc(\kappa)}$, over all colorings $\kappa$ 
that assign 
color $1$ to a $\lambda_r$-element chain,
color $2$ to a $\lambda_{r-1}$-element chain$,\dotsc,$
color $r$ to a $\lambda_1$-element chain.
Each such coloring corresponds to 
a column-strict 
$P$-tableau $U$ of shape $\lambda$.  Specifically, 
each $\lambda_{r+1-i}$-element chain of color $i$ 
corresponds to column $r+1-i$ of $U$, for $i = 1, \dotsc, r$.
Now observe that a pair $(u,v)$ in $P$, with $u < v$ as integers,
forms an ascent of $\kappa$ if and only if it forms an inversion in $U$.  
Specifically, $(u,v)$ is an edge in $\mathrm{inc}(P)$ 
if and only if $u$, $v$ are incomparable in $P$, and we have
$\kappa(u) < \kappa(v)$ if and only if $u$ appears in a column of $U$
to the right of the column containing $v$.
\end{proof}

Just as
Stanley's 
chromatic symmetric functions $X_P$ are related 
to $\sn$-class function evaluations in Theorem~\ref{t:chromeq},
the Shareshian-Wachs 
chromatic quasisymmeric functions $X_{P,q}$
are related to $\hnq$-trace evaluations.
\begin{thm}\label{t:qchromeq}
Let $P$ be an $n$-element unit interval order labeled as in 
(\ref{eq:altrespect}),
let $v \in \sn$ 
be the corresponding 
$312$-avoiding permutation as in Theorem~\ref{t:dsnuio},
and let $w \in \sn$ be any \pavoiding permutation satisfying $w \sim v$ 
as in (\ref{eq:posetequiv}).
Then we have 
\begin{equation*}
X_{P,q} = \sum_{\lambda \vdash n} \epsilon_q^\lambda(\qew C'_w(q))m_\lambda.
\end{equation*}
\end{thm}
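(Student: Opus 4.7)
The plan is to match the coefficient of $m_\lambda$ on each side for each $\lambda \vdash n$ and reduce the equality to inv-preserving bijections already implicit in earlier results. By Proposition~\ref{p:ascinv}, the coefficient of $m_\lambda$ in $X_{P,q}$ is $\sum_U q^{\inv(U)}$, summed over column-strict $P$-tableaux $U$ of shape $\lambda^\tr$. By Theorem~\ref{t:qepsilon}, the coefficient of $m_\lambda$ on the right-hand side is $\sum_U q^{\inv(U)}$, summed over column-strict $F_w$-tableaux of type $e$ and shape $\lambda^\tr$. Thus it suffices to exhibit, for each $\lambda$, an inv-preserving bijection between these two collections.

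First I would reduce to the case $w = v$ via Theorem~\ref{t:qthetavw}. For an $F$-tableau of type $e$, each path is labeled by its source (equivalently its sink), and both column-strictness and the inv statistic are expressible purely in terms of the integer-labeled poset $P(F)$: column-strictness demands that entries going up a column strictly increase in $P(F)$, and $(\pi_i,\pi_j)$ is a left inversion precisely when $\pi_i$ sits to the left of $\pi_j$ in the tableau, $i > j$ as integers, and $\pi_i,\pi_j$ intersect in $F$ -- which for source-to-same-sink paths is equivalent to $\pi_i,\pi_j$ being incomparable in $P(F)$. Hence the pair (column-strict of type $e$ and shape $\lambda^\tr$, $\inv$) depends only on $P(F)$, and Theorem~\ref{t:qthetavw} supplies an inv-preserving bijection between column-strict type-$e$ $F_v$-tableaux and column-strict type-$e$ $F_w$-tableaux of shape $\lambda^\tr$.

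It then remains to match column-strict type-$e$ $F_v$-tableaux of shape $\lambda^\tr$ with column-strict $P$-tableaux of shape $\lambda^\tr$. By Theorem~\ref{t:dsnuio}, the descending star network $F_v$ obtained from $P$ has path poset $P(F_v)$ canonically isomorphic to $P$ via $i \mapsto \pi_i$, so column-strictness transfers immediately. The hard part is verifying that the integer labeling inherited from the sources of $F_v$ agrees with the labeling of $P$ prescribed by condition~(\ref{eq:altrespect}); once this is in place, the inv statistics also agree under the bijection, since both compare elements by their integer labels together with incomparability in the respective posets. I expect this label-matching step to be the main obstacle. To handle it, I would directly compare the labeling used in the construction $\zeta$ in the proof of Theorem~\ref{t:dsnuio} (maximal antichains $[a_1,b_1],\dotsc,[a_t,b_t]$ with $a_1 > \cdots > a_t$) with the labeling (\ref{eq:altrespect}), invoking the same characterization of the natural unit interval order labeling from \cite[p.\,33]{Fish} and \cite[Obs.\,2.1--2.3, Prop.\,2.4]{SkanReed} that the paper already used to justify Shareshian and Wachs's setup. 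Once the labelings are identified, the bijection $i \mapsto \pi_i$ preserves column-strictness and $\inv$, which combined with the earlier reduction from $w$ to $v$ matches the coefficients on both sides and completes the proof.
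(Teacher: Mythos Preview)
Your proposal is correct and follows essentially the same route as the paper's own proof: Proposition~\ref{p:ascinv} for the left side, Theorem~\ref{t:qepsilon} for the right side, and Theorems~\ref{t:dsnuio} and~\ref{t:qthetavw} to pass from $P$ to $F_v$ to $F_w$. The paper's proof is terser and does not spell out the label-matching step you flag, but you are right that it is needed and right about how to handle it---the equivalence of the labeling in~(\ref{eq:altrespect}) with the natural labeling used in Theorem~\ref{t:dsnuio} is exactly what the parenthetical remark after~(\ref{eq:altrespect}) (citing \cite{Fish} and \cite{SkanReed}) is there for.
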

\begin{proof}
By Proposition~\ref{p:ascinv}, $c_{P,\lambda}(q)$ 
is equal to the sum of $q^{\inv(U)}$ over column-strict
$P$-tableaux of shape $\lambda^\tr$.  
By Theorems~\ref{t:dsnuio}, \ref{t:qthetavw}, 
we may 
sum over column-strict 
$F_w$-tableaux.
Now Theorem~\ref{t:qepsilon} gives
$c_{P,\lambda}(q) = \epsilon^\lambda(\qew C'_w(q))$.
\end{proof}

Expanding $X_P$ in other bases of $\mathbb Q[q] \otimes \Lambda_n$
and following the proof of Corollary~\ref{c:chromeq}, 
we see that other trace evaluations appear as coefficients.
\begin{cor}\label{c:qchromeq}
Let $P$, $v$, $w$ be as in Theorem~\ref{t:qchromeq}.
Then we have
\begin{equation*}
\begin{aligned}
X_{P,q} &= 
\sum_{\lambda \vdash n} 
\eta_q^{\lambda}(\qew C'_w(q)) f_\lambda
= \sum_{\lambda \vdash n}  
\chi_q^{\lambda^\tr} (\qew C'_w(q)) s_\lambda\\
&= \sum_{\lambda \vdash n}  
\frac{(-1)^{n - \ell(\lambda)}}{z_\lambda} \psi_q^{\lambda}(\qew C'_w(q)) p_\lambda
= \sum_{\lambda \vdash n}  
\phi_q^{\lambda}(\qew C'_w(q)) e_\lambda .
\end{aligned}
\end{equation*}
\end{cor}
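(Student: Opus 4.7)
The plan is to mirror the proof of Corollary~\ref{c:chromeq}, starting from Theorem~\ref{t:qchromeq} as the $q$-analog of Theorem~\ref{t:chromeq}. The key structural observation is that the transition matrices relating the five $\hnq$-trace bases $\{\epsilon_q^\lambda\}$, $\{\eta_q^\lambda\}$, $\{\chi_q^\lambda\}$, $\{\phi_q^\lambda\}$, $\{\psi_q^\lambda\}$ coincide with those relating the corresponding $\sn$-class function bases. This follows at once from the $q$-analogs of the defining equations~(\ref{eq:moncfdef}), since the coefficients $K_{\lambda,\mu}^{-1}$ and $L_{\lambda,\mu}$ do not involve $q$, and since the $q$-analog of the transition matrix between $\{\chi_q^\lambda\}$ and $\{\epsilon_q^\lambda\}$ or $\{\eta_q^\lambda\}$ is given by (signed) Kostka numbers. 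By the assertion recalled in the proof of Corollary~\ref{c:chromeq}, these matrices are still inverse to the transition matrices relating the symmetric function bases $\{m_\lambda\}$, $\{f_\lambda\}$, $\{s_\lambda\}$, $\{p_\lambda\}$, $\{e_\lambda\}$ in $\Lambda_n$.

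Starting from $X_{P,q} = \sum_\lambda \epsilon_q^\lambda(\qew C'_w(q))\, m_\lambda$ as given by Theorem~\ref{t:qchromeq}, the plan is to reexpress each $m_\lambda$ in turn in terms of $f_\mu$, $s_\mu$, $p_\mu$, and $e_\mu$. In each case the coefficient of the new basis element becomes a linear combination of the numbers $\epsilon_q^\lambda(\qew C'_w(q))$ with weights given by the relevant entries of the transition matrix; the inverse-matrix relationship above collapses this linear combination to the single trace evaluation asserted in the corollary, with the appropriate signs and factors $z_\lambda^{-1}$ appearing in the $p$-case (from the fact that the basis dually matched to $\{p_\lambda\}$ is $\{(-1)^{n-\ell(\lambda)} z_\lambda^{-1} \psi_q^\lambda\}$).

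The argument is essentially linear-algebraic bookkeeping and presents no substantive obstacle, since all the nontrivial combinatorial content—the identification of $X_{P,q}$ with $\sum_\lambda \epsilon_q^\lambda(\qew C'_w(q))\, m_\lambda$—is already contained in Theorem~\ref{t:qchromeq}. The only points requiring a little care are verifying that the $q$-trace basis transitions really do agree with the classical ones (immediate from (\ref{eq:moncfdef}) and the fact that $\chi_q^\mu \mapsto \chi^\mu$ under $\qp12 = 1$), and ensuring that transposes, signs, and $z_\lambda$-factors land in the right places in the $\chi_q^{\lambda^\tr}$ and $\psi_q^\lambda$ expansions. Both match the classical computation of Corollary~\ref{c:chromeq} verbatim.
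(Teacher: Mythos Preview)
Your proposal is correct and follows essentially the same approach as the paper: the paper simply says to expand $X_{P,q}$ in other bases of $\mathbb Q[q]\otimes\Lambda_n$ and follow the proof of Corollary~\ref{c:chromeq}, which is exactly the inverse-transition-matrix argument you describe.
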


As before,
Theorem~\ref{t:qchromeq} and Corollary~\ref{c:qchromeq}
do not hold for arbitrary $w$ and $P$.  
Not all chromatic symmetric functions $X_{P,q}$ can
be expressed 
as $\sum_{\lambda \vdash n} \epsilon_q^\lambda(\qew C'_w(q)) m_\lambda$
for appropriate $w \in \sn$,
nor can all 
symmetric functions of this form
be expressed as $X_{P,q}$
for an appropriate labeled poset $P$.

Shareshian and Wachs conjectured~\cite[Conj.\,4.9]{SWachsChromQ}
that 
$X_{P,q}$ belongs to 
$\spn_{\mathbb N[q]}\{e_\lambda \,|\, \lambda \vdash n \}$
when $P$ is a unit interval order labeled 
as in (\ref{eq:altrespect}), and proved~\cite[Thm.\,6.3]{SWachsChromQF}
that 
$X_{P,q}$ belongs to
$\spn_{\mathbb N[q]}\{s_\lambda \,|\, \lambda \vdash n \}$
for such posets.
These statements do not hold for
the more general
$(\mathbf 3 + \mathbf 1)$-free posets, since the functions $X_{P,q}$ are
not always symmetric in this case.
Nevertheless,  
the result of Guay-Paquet~\cite[Thm.\,5.1]{GPModRel} shows that 
the statements generalize those of Stanley, Stembridge, and Gasharov
mentioned after Corrolary~\ref{c:chromeq}.
By Theorem~\ref{t:thetavw} and Corollary~\ref{c:qchromeq}, the statements
are special cases (corresponding to $w$ avoiding the pattern $312$)
of Haiman's conjecture and result that 
for all $w \in \sn$, $\lambda \vdash n$
we have
$\phi_q^\lambda(\qew C'_w(q)) \in \mathbb N[q]$~\cite[Conj.\,2.1]{HaimanHecke} 
and
$\chi_q^\lambda(\qew C'_w(q)) \in \mathbb N[q]$~\cite[Lem.\,1.1]{HaimanHecke}.
Shareshian and Wachs also conjectured~\cite[Sec.\,7]{SWachsChromQF},
and Athanasiadis proved~\cite[Thm.\,4]{AthanPSE}
that 
$X_{P,q}$ belongs to 
$\spn_{\mathbb N[q]}
\{(-1)^{n - \ell(\lambda)} z_\lambda^{-1} p_\lambda \,|\, \lambda \vdash n \}$
when $P$ is a unit interval order labeled as in (\ref{eq:altrespect}).
By Theorem~\ref{t:thetavw} and Corollary~\ref{c:qchromeq}
this is equivalent to the assertion that
we have $\psi_q^\lambda(\qew C'_w(q)) \in \mathbb N[q]$ for $w$ avoiding
the pattern $312$.  
Thus this result is a special case of the (unpublished) conjecture
that $\psi_q^\lambda(\qew C'_w(q)) \in \mathbb N[q]$ for all $w \in \sn$,
$\lambda \vdash n$, 
which is a weakening of Haiman's conjecture~\cite[Conj.\,2.1]{HaimanHecke}
since $\psi_q^\lambda$ is a nonnegative linear combination of monomial traces
(\ref{eq:moncfdef}).



Perhaps it is simpler to see relationships between the above statements 
once they are reformulated in terms of a common framework of 
$\hnq$-traces and symmetric functions in $\mathbb Z[q] \otimes \Lambda_n$.
For $g \in \hnq$, 
define $X_g \defeq \sum_{\lambda \vdash n} \epsilon_q^\lambda(g) m_\lambda$.
Haiman proved that $X_{\qew C'_w(q)}$ belongs to 
$\spn_{\mathbb N[q]} \{ s_\lambda \,|\,\lambda \vdash n \}$ for all $w$ 
and conjectured that $X_{\qew C'_w(q)}$ belongs to
$\spn_{\mathbb N[q]} \{ e_\lambda \,|\,\lambda \vdash n \}$ for all $w$
(and therefore to
$\spn_{\mathbb N[q]} \{ (-1)^{n-\ell(\lambda)} z_\lambda^{-1} p_\lambda \,|\,\lambda \vdash n \}$ for all $w$). 
Shareshian and Wachs proved the Schur nonnegativity result for $w$ 
avoiding the pattern $312$ 
and conjectured the elementary and power sum statements in this case.
Athanasiadis proved the power sum statement in this case.

\section{Interpretation of $\chi_q^\lambda(\qew C'_w(q))$}\label{s:hnqinterps}
Combining results in Sections~\ref{s:pathposet}, \ref{s:hnqinterph}, 
\ref{s:chromsf}
with those of
Shareshian and Wachs
now leads to the following $q$-analog of Theorem~\ref{t:sncfinterp} (iii).

\begin{thm}\label{t:qchi}
Let $w \in \sn$ \avoidp.
For $\lambda \vdash n$ we have
\begin{equation}\label{eq:chiqinterp}
\chi_q^\lambda(\qew C'_w(q)) = \sum_U q^{\inv(U)},
\end{equation}
where the sum is over all standard $F_w$-tableaux 
of type $e$ and shape $\lambda$.
\end{thm}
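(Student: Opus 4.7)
My plan is to deduce the theorem by combining Corollary~\ref{c:qchromeq} with the Shareshian--Wachs $q$-analog of Gasharov's Schur-positivity theorem for $X_{P,q}$, paralleling the way Theorem~\ref{t:sncfinterp}(iii) was deduced from Gasharov's classical result.

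First I would argue that both sides of~(\ref{eq:chiqinterp}) depend only on the isomorphism class of the poset $P(F_w)$, so that Theorem~\ref{t:pfvpfu} reduces the problem to the case that $w$ avoids $312$. On the right-hand side, standardness of an $F_w$-tableau of type $e$ is a poset-theoretic condition, and the statistic $\inv$ depends only on which pairs of paths $\pi_i,\pi_j$ intersect (equivalently, on incomparability in $P(F_w)$) and on their column positions within the tableau. On the left-hand side, Theorem~\ref{t:qthetavw} directly asserts that $\chi_q^\lambda(\qew C'_w(q))$ depends only on $P(F_w)$.

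Assume now that $w$ avoids $312$, and set $P:=P(F_w)$. By Theorems~\ref{t:dsn312} and~\ref{t:dsnuio}, $P$ is a naturally labeled unit interval order whose labeling satisfies~(\ref{eq:altrespect}). Corollary~\ref{c:qchromeq} then yields
\begin{equation*}
X_{P,q}=\sum_{\mu\vdash n}\chi_q^{\mu^\tr}(\qew C'_w(q))\,s_\mu,
\end{equation*}
identifying $\chi_q^\lambda(\qew C'_w(q))$ as the coefficient of $s_{\lambda^\tr}$ in $X_{P,q}$. I would then invoke the Shareshian--Wachs Schur expansion of $X_{P,q}$ from~\cite{SWachsChromQF}, which expresses each Schur coefficient as an $\inv$-generating function over Gasharov-type $P$-tableaux. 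After translating shape conventions exactly as in the passage from Gasharov's theorem to Theorem~\ref{t:sncfinterp}(iii), the coefficient of $s_{\lambda^\tr}$ becomes $\sum_T q^{\inv(T)}$ over standard $P$-tableaux of shape $\lambda$. Identifying each $i\in P$ with the unique path in $F_w$ from source $i$ to sink $i$ then converts standard $P$-tableaux of shape $\lambda$ into standard $F_w$-tableaux of type $e$ and shape $\lambda$, and preserves $\inv$ because two such paths intersect precisely when the corresponding elements are incomparable in $P$.

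The main obstacle will be the bookkeeping in the middle step: I must verify that the Shareshian--Wachs $\inv$-statistic on abstract $P$-tableaux coincides with the intersection-based $\inv$-statistic on $F_w$-tableaux defined in Section~\ref{s:pathposet}, and that the shape $\lambda$ versus $\lambda^\tr$ convention is compatible with the appearance of $\chi_q^{\mu^\tr}$ in Corollary~\ref{c:qchromeq}. Once this dictionary is pinned down, the theorem follows immediately.
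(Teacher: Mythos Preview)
Your proposal is correct and follows essentially the same route as the paper: identify $\chi_q^\lambda(\qew C'_w(q))$ as the coefficient of $s_{\lambda^\tr}$ in $X_{P,q}$ via Corollary~\ref{c:qchromeq}, invoke the Shareshian--Wachs Schur expansion \cite[Thm.\,6.3]{SWachsChromQF}, and translate $P$-tableaux to $F_w$-tableaux using Theorems~\ref{t:dsnuio} and~\ref{t:qthetavw}. The only difference is organizational---you reduce to the $312$-avoiding case before invoking Corollary~\ref{c:qchromeq}, whereas the paper folds that reduction into the citation of Theorems~\ref{t:dsnuio} and~\ref{t:qthetavw}---and the bookkeeping concern you flag (matching the two $\inv$ statistics and the $\lambda$ versus $\lambda^\tr$ convention) is real but routine and resolves exactly as you anticipate.
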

\begin{proof}
Let $P = P(F_w)$.
By Corollary~\ref{c:qchromeq}, $\chi_q^\lambda(\qew C'_w(q))$ is equal 
to the coefficient of $s_{\lambda^\tr}$ in $X_{P,q}$.
By \cite[Thm.\,6.3]{SWachsChromQF}
and Theorems~\ref{t:dsnuio}, \ref{t:qthetavw}, 
this is precisely the claimed sum.
\end{proof}


For example, consider again the descending star network $F_{3421}$ in
(\ref{eq:F3421}).
It is easy to verify that there are exactly two standard
$F_{3421}$-tableaux of type $e$ and shape $31$:
the two column-strict $F_{3421}$-tableaux of type $e$ and shape $31$
in (\ref{eq:epsilontabs31}) are also row-semistrict.
Thus we have
$\chi_q^{31}(q_{e,3421} C'_{3421}(q)) 
= \epsilon_q^{211}(q_{e,3421} C'_{3421}(q)) = q^2 + q^3$.
On the other hand, not all twenty-four $F_{3421}$-tableaux of type $e$ and 
shape $4$ are row-semistrict: the six tableaux with $\pi_4$ immediately
preceding $\pi_1$ are not.
It is easy to verify that the eighteen remaining tableaux
give $\chi_q^{4}(q_{e,3421} C'_{3421}(q)) = 1 + 3q + 5q^2 + 5q^3 + 3q^4 + q^5$.
Since there are no column-strict $F_{3421}$-tableaux of shapes $22$, $211$, or
$1111$, there are no standard $F_{3421}$-tableaux of these shapes either,
and we have
$\chi_q^{22}(q_{e,3421} C'_{3421}(q)) = 
\chi_q^{211}(q_{e,3421} C'_{3421}(q)) = 
\chi_q^{1111}(q_{e,3421} C'_{3421}(q)) = 0$.

Combining Theorems~\ref{t:kstar} and \ref{t:qchi}, we have the following
analog of Corollary~\ref{c:edeck}. 
\begin{cor}\label{c:sdeck}
Fix $\lambda = (\lambda_1, \dotsc, \lambda_r) \vdash n$
and $w \in \sn$ \avoidingp.  If $w_1 \cdots w_n$ has 
a decreasing subsequence of length greater than $\lambda_1$, then
we have $\chi_q^\lambda(\qew C'_w(q)) = 0$.
\end{cor}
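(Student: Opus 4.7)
The plan is to deduce this as a direct consequence of Theorems~\ref{t:kstar} and \ref{t:qchi}, following the same strategy used to prove Corollary~\ref{c:edeck}. By Theorem~\ref{t:qchi}, it suffices to show that when $w_1 \cdots w_n$ contains a decreasing subsequence of length $k > \lambda_1$, no standard $F_w$-tableau of type $e$ and shape $\lambda$ exists, so that the sum in (\ref{eq:chiqinterp}) is empty.

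First I would invoke Theorem~\ref{t:kstar}: the existence of a decreasing subsequence of length $k$ in $w$ guarantees that in the concatenation (\ref{eq:concat}) corresponding to $F_w$, some interval $[c_i, d_i]$ has cardinality at least $k$. Letting $(\pi_1, \dotsc, \pi_n)$ be the unique path family of type $e$ covering $F_w$, the $k$ paths $\{ \pi_j \,|\, j \in [c_i, d_i]\}$ all pass through the central vertex of the star component $G_{[c_i, d_i]}$, so they pairwise intersect. By the definition of $<_Q$ (and hence of $P(F_w)$), these $k$ paths form an antichain of size $k$ in $P(F_w)$.

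Next I would invoke the definition of a standard tableau, in particular column-strictness: in any column-strict $F_w$-tableau the entries read from bottom to top in a single column are strictly increasing in $P(F_w)$ and hence form a chain. Consequently, no column can contain two elements of an antichain, so a column-strict $F_w$-tableau of type $e$ (which must contain all $n$ paths) has shape whose first part is at least the maximum antichain size in $P(F_w)$. In particular, the first part must be at least $k > \lambda_1$, contradicting the assumption that the shape is $\lambda$. Therefore no standard (indeed no column-strict) $F_w$-tableau of type $e$ and shape $\lambda$ exists, and (\ref{eq:chiqinterp}) gives $\chi_q^\lambda(\qew C'_w(q)) = 0$.

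The argument is essentially routine once Theorems~\ref{t:kstar} and \ref{t:qchi} are in hand; the only conceptual point is the observation that the $k$ paths meeting at a common star vertex form an antichain in $P(F_w)$, which is immediate from the definition of the path poset. No real obstacle arises beyond what was already handled in Corollary~\ref{c:edeck}.
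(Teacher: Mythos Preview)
Your proposal is correct and follows exactly the approach the paper intends: the paper's proof consists only of the sentence ``Combining Theorems~\ref{t:kstar} and \ref{t:qchi}, we have the following analog of Corollary~\ref{c:edeck},'' and your argument is precisely the expected unpacking of that sentence, mirroring the proof of Corollary~\ref{c:edeck} with shape $\lambda$ in place of $\lambda^\tr$. The one minor imprecision is that $[c_i,d_i]$ may have cardinality strictly greater than $k$, so you should speak of ``at least $k$'' paths through the central vertex rather than ``the $k$ paths,'' but this only strengthens the conclusion.
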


More generally, 
it is known that we have $\chi_q^\lambda(\qew C'_w(q)) = 0$
unless $\lambda \geq \sh(w)^\tr$ in the majorization order.
(See the comment following Corollary~\ref{c:edeck}.)
This implies Corollary~\ref{c:sdeck}, since a decreasing subsequence
of length greater than $\lambda_1$
in $w_1 \dotsc w_n$ implies that
we have $\lambda \ngeq \sh(w)^\tr$.


\section{Interpretation of $\psi_q^\lambda(\qew C'_w(q))$}\label{s:hnqinterpp}

Combining results in Sections~\ref{s:pathposet}, \ref{s:hnqinterph}, 
\ref{s:chromsf}
with those of
Shareshian, Wachs, and Athanasiadis
now leads to 
$q$-analogs of Theorem~\ref{t:sncfinterp} (iv-c)--(iv-d).
We will sometimes find it useful to 
reflect path tableaux in a vertical line,
and will write $U^R$ for the {\em reverse} of tableau $U$.
For instance, we have
\begin{equation*}
U = \raisebox{-6mm}{\includegraphics[height=12mm]{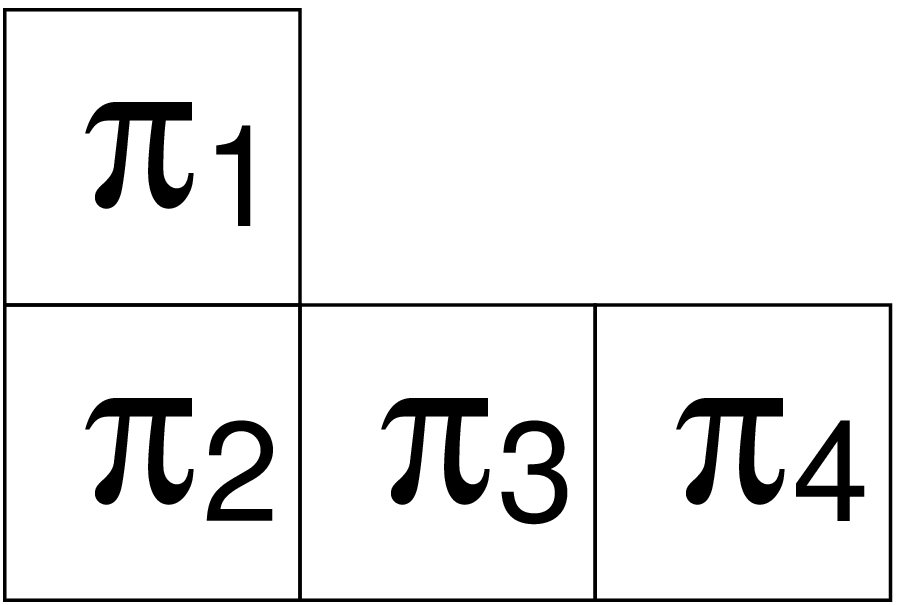}}\ , \qquad
U^R = \raisebox{-6mm}{\includegraphics[height=12mm]{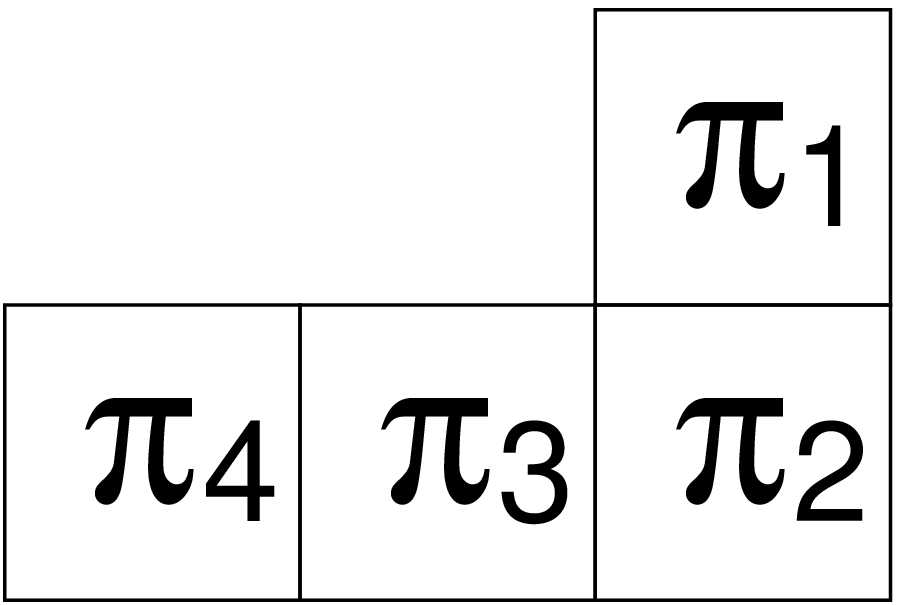}}\ ,
\end{equation*}
where each path $\pi_i$ retains its original source, sink, and orientation.
Thus $U_i^R$ will denote the reverse of the $i$th row of $U$.
Note that while $U^R$ may not be a tableau, 
because its cells are right-justified rather than left justified,
the functions $\inv$ and $\rinv$ may still be applied to $U^R$ as
at the end of Section~\ref{s:pathposet}.
We will also use standard notation for the $q$-analogs of the nonnegative
integers and factorial function.  For $a \in \mathbb N$ we define
$[a]_q = 1 + q + \cdots + q^{a-1}$
for $a \geq 1$, and $[0]_q = 0$.
We also define
$[a]_q! = [a]_q [a-1]_q \cdots [1]_q$ for $a \geq 1$, and $[0]_q! = 1$.

\begin{thm}\label{t:swqpsi}
Let $w \in \sn$ \avoidp. 
For 
$\lambda = (\lambda_1, \dotsc, \lambda_r) \vdash n$, we have
\begin{equation}\label{eq:swqpsi1}
\psi_q^\lambda(\qew C'_w(q)) = \sum_U q^{\inv(U_1 \circ \cdots \circ U_r)},
\end{equation}
where the sum is over all record-free, row-semistrict
$F_w$-tableaux 
of type $e$ and shape $\lambda$, and
\begin{equation}\label{eq:swqpsi2}
\psi_q^\lambda(\qew C'_w(q)) = [\lambda_1]_q \cdots [\lambda_r]_q
\sum_U q^{\inv(U_1^R \circ \cdots \circ U_r^R)},
\end{equation}
where the sum is over all right-anchored, row-semistrict
$F_w$-tableaux 
of type $e$ and shape $\lambda$, 
\end{thm}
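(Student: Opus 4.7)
The plan is to derive both formulas from the chromatic quasisymmetric function framework developed in Section~\ref{s:chromsf}, combined with Athanasiadis's power-sum expansion and the transfer tools of Theorems~\ref{t:dsnuio} and \ref{t:qthetavw}. By Corollary~\ref{c:qchromeq}, the trace $\psi_q^\lambda(\qew C'_w(q))$ appears (up to the factor $(-1)^{n-\ell(\lambda)}/z_\lambda$) as the coefficient of $p_\lambda$ in $X_{P,q}$, where $P = P(F_w)$. The result of Athanasiadis \cite[Thm.\,4]{AthanPSE} gives a positive expansion of $X_{P,q}$ into power-sum symmetric functions indexed by certain tableaux on the unit interval order $P$, with a $q$-weight given by a natural inversion statistic. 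Extracting the coefficient of $p_\lambda$ from this expansion produces a signed sum that collapses (via the $z_\lambda$ denominator) to a signless sum over tableaux of shape $\lambda$.

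The first step is to interpret the Athanasiadis tableaux directly as $F_w$-tableaux. Since the property ``record-free, row-semistrict of type $e$'' and the statistic $\inv(U_1 \circ \cdots \circ U_r)$ depend only on the poset $P(F_w)$ (via the incomparability relations used to detect inversions), Theorem~\ref{t:qthetavw} lets us first verify the formula for the canonical $312$-avoiding representative $v$ of the $\sim$-class of $w$, and then transfer it to $w$. For $v$ (so that $F_v$ is a descending star network and $P(F_v)$ is the unit interval order in question), Athanasiadis's statistic on record-free row-semistrict fillings translates, via Lemma~\ref{l:dropparens}-style bookkeeping, into precisely $\inv(U_1 \circ \cdots \circ U_r)$ because within each row a non-record transposition with its left neighbor creates exactly one path intersection. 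This establishes (\ref{eq:swqpsi1}).

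To deduce (\ref{eq:swqpsi2}) from (\ref{eq:swqpsi1}), the plan is to exhibit a weight-preserving grouping of row-semistrict tableaux into classes of size $\prod_i \lambda_i$ indexed by right-anchored row-semistrict tableaux. Fix a right-anchored row-semistrict $F_w$-tableau $V$ of type $e$ and shape $\lambda$. For each row $V_i$, the $\lambda_i$ cyclic rotations produce row-semistrict tableaux (since row-semistrictness of type $e$ depends only on $P(F_w)$ and is preserved under cyclic shifts of a ``cyclically row-semistrict'' sequence, in light of Theorem~\ref{t:sncfinterp}~(iv-b)--(iv-d)). The key computation is that as the rotation of row $i$ cycles through its $\lambda_i$ positions, the quantity $\inv(U_i)$ increases by one each time the rightmost entry is moved to the leftmost position (because the anchor is the source-minimal element, so its image under a cyclic shift creates exactly one new crossing with each entry it passes). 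This yields the identity
\begin{equation*}
\sum_{\text{rotations } U \text{ of } V} q^{\inv(U_1 \circ \cdots \circ U_r)}
= [\lambda_1]_q \cdots [\lambda_r]_q \cdot q^{\inv(V_1^R \circ \cdots \circ V_r^R)},
\end{equation*}
where passing from $V_i$ to $V_i^R$ absorbs the offset between the original and right-anchored orderings. Summing over all right-anchored $V$ and using (\ref{eq:swqpsi1}) gives (\ref{eq:swqpsi2}).

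The main obstacle is the careful verification that the rotation of a single row contributes exactly the $q$-factorial $[\lambda_i]_q$: this requires showing that each cyclic rotation step changes the inversion count by precisely one, which in turn depends on the row-semistrictness of type $e$ forcing the anchor path to form intersections (not merely unordered relations) with every other path in its row. Lemma~\ref{l:dsnintersect} and the characterization of intersecting paths in zig-zag networks via $P(F_w)$ will be essential here, as will the bookkeeping that relates $\inv(U_1 \circ \cdots \circ U_r)$ to $\inv(V_1^R \circ \cdots \circ V_r^R)$ via (\ref{eq:invdecomp}). Once this core combinatorial identity is pinned down, the rest of the proof is assembly of the ingredients already laid out in Sections~\ref{s:pathposet}--\ref{s:chromsf}.
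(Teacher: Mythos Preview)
Your treatment of (\ref{eq:swqpsi1}) matches the paper's: both invoke Corollary~\ref{c:qchromeq} to identify $\psi_q^\lambda(\qew C'_w(q))$ with $(-1)^{n-r}z_\lambda$ times the $p_\lambda$-coefficient of $X_{P,q}$, then cite Athanasiadis's expansion and transfer via Theorems~\ref{t:dsnuio} and \ref{t:qthetavw}.

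For (\ref{eq:swqpsi2}) the paper does something different and much shorter: it cites \cite[Lem.\,7.7]{SWachsChromQF} directly for the right-anchored formula on the unit interval order $P$, then transfers to $F_w$ exactly as before. It does \emph{not} attempt to derive (\ref{eq:swqpsi2}) from (\ref{eq:swqpsi1}).

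Your proposed derivation of (\ref{eq:swqpsi2}) from (\ref{eq:swqpsi1}) has a genuine gap. The cyclic rotations of a right-anchored row-semistrict row of type $e$ produce the \emph{cyclically row-semistrict} tableaux in its rotation class, not the \emph{record-free} ones; these two families have the same cardinality (Theorem~\ref{t:sncfinterp} (iv-b)--(iv-c)) but are not the same set, and no $q$-weighted bijection between them is available in the paper---indeed, the problem posed at the end of Section~\ref{s:hnqinterpp} is precisely to find a statistic making the cyclically row-semistrict interpretation work. So the grouping you describe does not partition the tableaux summed in (\ref{eq:swqpsi1}). Separately, the assertion that ``each cyclic rotation step changes the inversion count by precisely one'' is not correct as stated: moving the rightmost entry of a row to the front changes its position relative to all $\lambda_i-1$ other entries simultaneously, so the change in $\inv(U_i)$ depends on how many of those entries intersect the moved path, not just one. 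Pinning down the exact contribution and reconciling it with the reversal $V_i \mapsto V_i^R$ is essentially the content of \cite[Lem.\,7.7]{SWachsChromQF}, which is why the paper outsources it.
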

\begin{proof}
Let $P = P(F_w)$.  By Corollary~\ref{c:qchromeq}, 
$\psi_q^\lambda(\qew C'_w(q))$ is equal to 
$(-1)^{n-r}z_\lambda$ times 
the coefficient of $p_\lambda$ in $X_{P,q}$.
By \cite[Thm.\,3.1]{AthanPSE} and Theorems~\ref{t:dsnuio}, \ref{t:qthetavw}, 
this is equal to the right-hand side of 
(\ref{eq:swqpsi1}).
By \cite[Lem.\,7.7]{SWachsChromQF} and 
Theorems~\ref{t:dsnuio}, \ref{t:qthetavw}, 
it is also equal to the right-hand side of
(\ref{eq:swqpsi2}). 
\end{proof}

For example, consider the descending star network 
$F_{3421}$ in (\ref{eq:F3421}) 
and the sum in (\ref{eq:swqpsi1}).
It is easy to verify that there are 
eighteen record-free, row-semistrict $F_{3421}$-tableaux 
of type $e$ and shape $31$.
Four of these are
\begin{equation}\label{eq:rfrsstabs31}
\raisebox{-6mm}{\includegraphics[height=12mm]{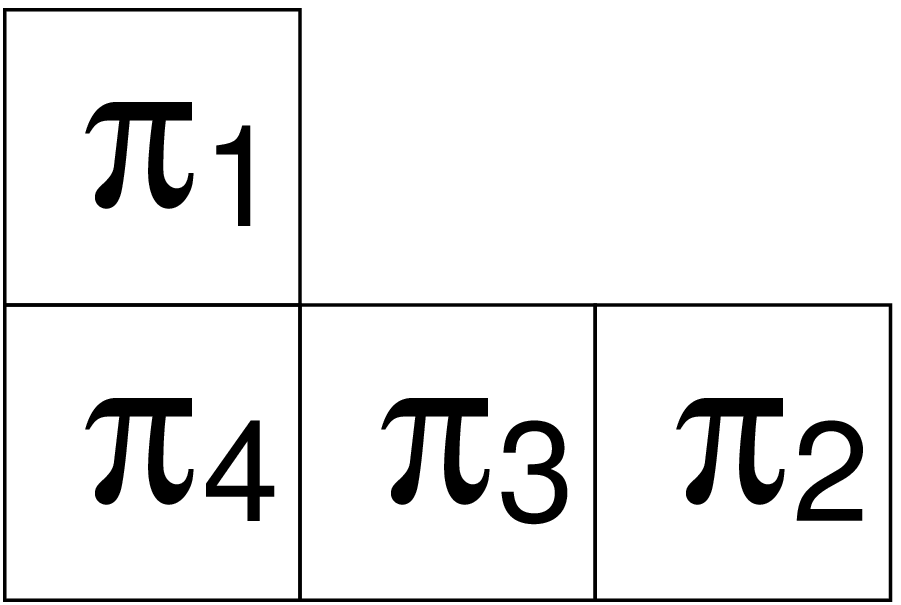}}\ ,\quad
\raisebox{-6mm}{\includegraphics[height=12mm]{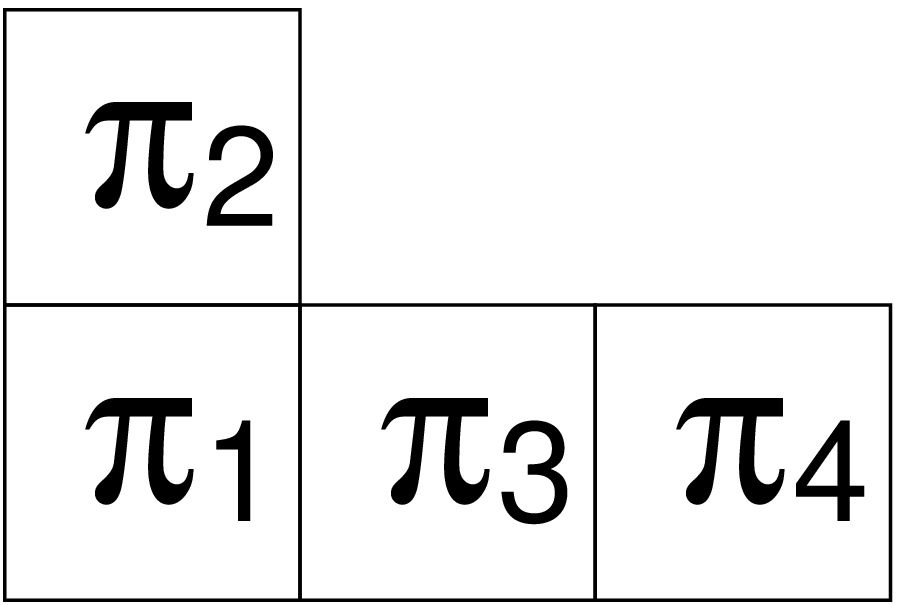}}\ ,\quad
\raisebox{-6mm}{\includegraphics[height=12mm]{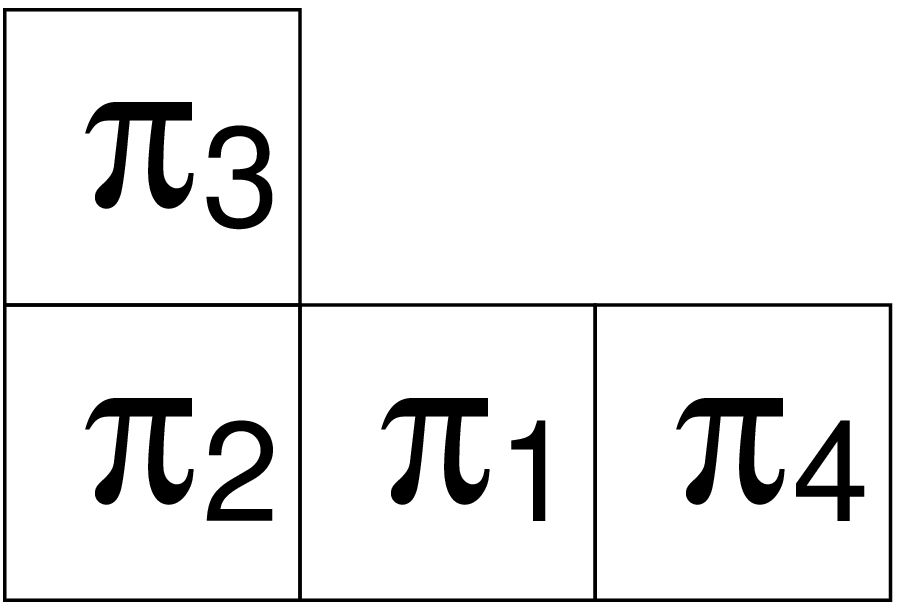}}\ ,\quad
\raisebox{-6mm}{\includegraphics[height=12mm]{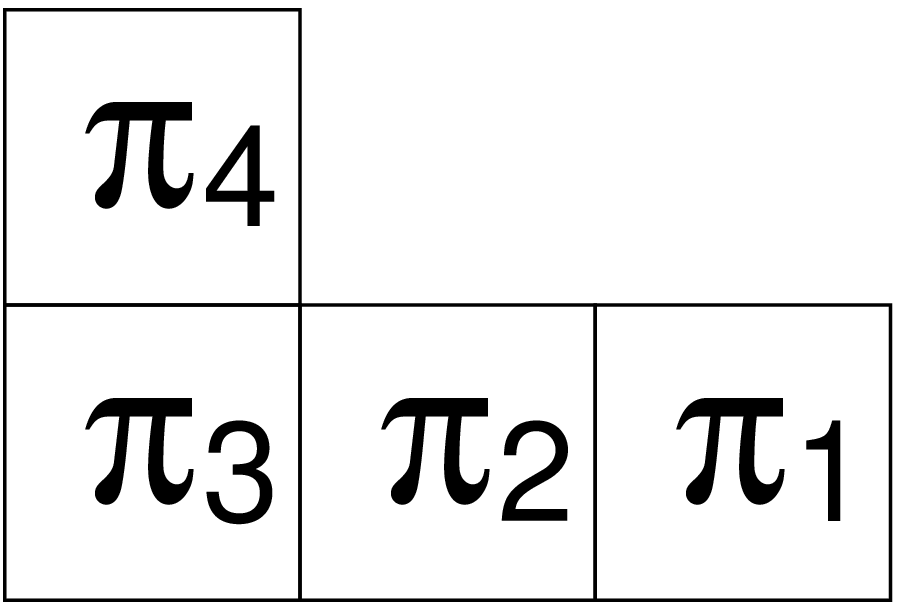}}\ ,
\end{equation}
where $\pi_i$ represents the unique path from source $i$ to sink $i$.
These tableaux $U$ of shape $31$
yield tableaux $U_1 \circ U_2$ of shape $4$,
\begin{equation*}
\raisebox{-3mm}{\includegraphics[height=6mm]{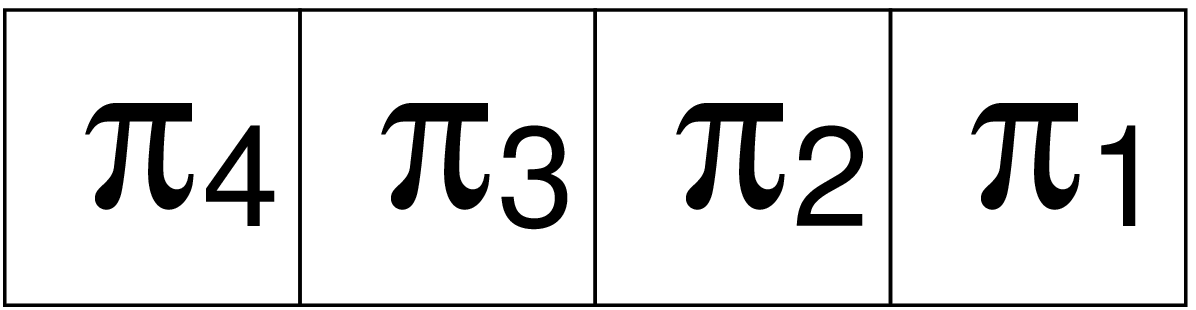}}\ ,\quad
\raisebox{-3mm}{\includegraphics[height=6mm]{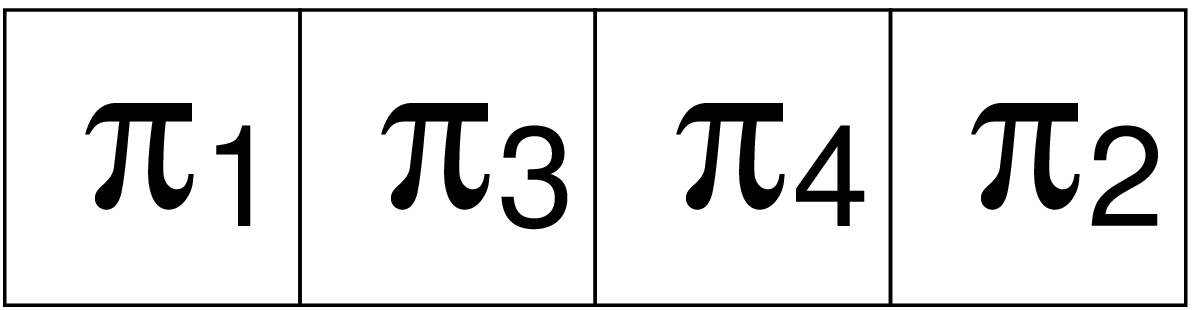}}\ ,\quad
\raisebox{-3mm}{\includegraphics[height=6mm]{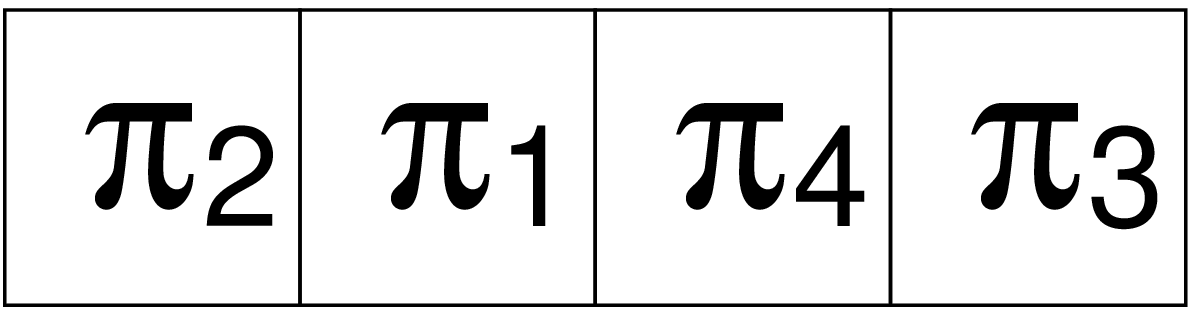}}\ ,\quad
\raisebox{-3mm}{\includegraphics[height=6mm]{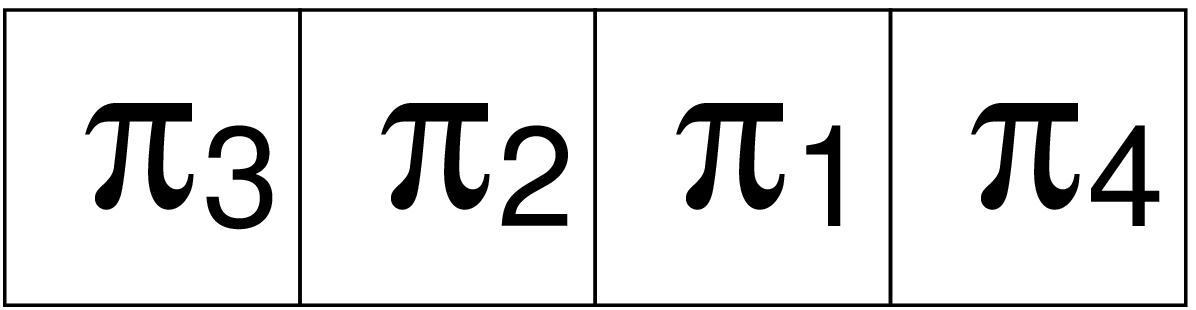}}\ ,
\end{equation*}
which have $5$, $2$, $2$, and $3$ inversions, respectively.
Together, 
they
contribute $2q^2 + q^3 + q^5$ 
to $\psi_q^{31}(q_{e,3421} C'_{3421}(q)) = 1 + 3q + 5q^2 + 5q^3 + 3q^4 + q^5$.
Now consider the sum in (\ref{eq:swqpsi2}).
It is easy to verify that there are six right-anchored 
row-semistrict $F_{3421}$-tableaux of type $e$ and shape $31$: 
the first and fourth tableaux
in (\ref{eq:rfrsstabs31}) and the four tableaux
\begin{equation*}
\raisebox{-6mm}{\includegraphics[height=12mm]{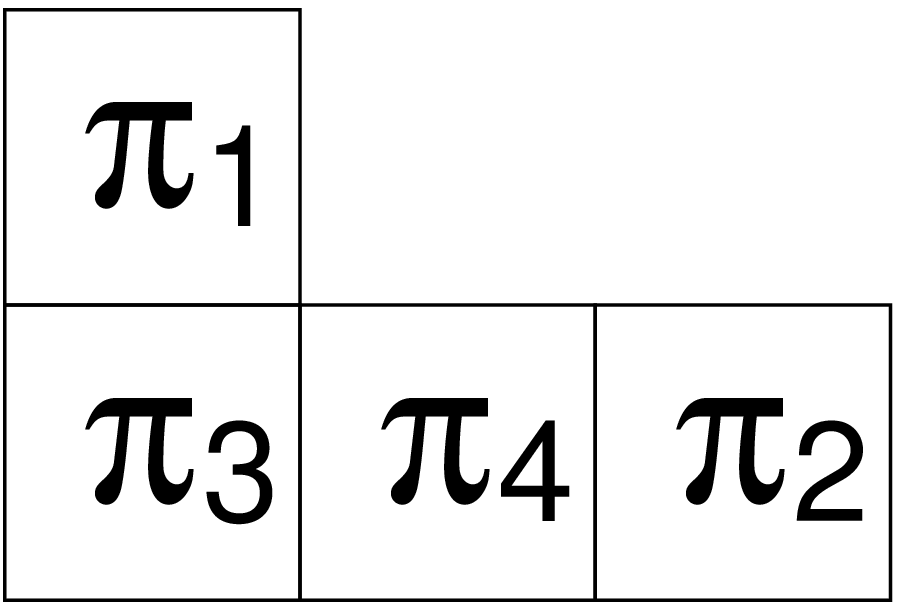}}\ ,\quad
\raisebox{-6mm}{\includegraphics[height=12mm]{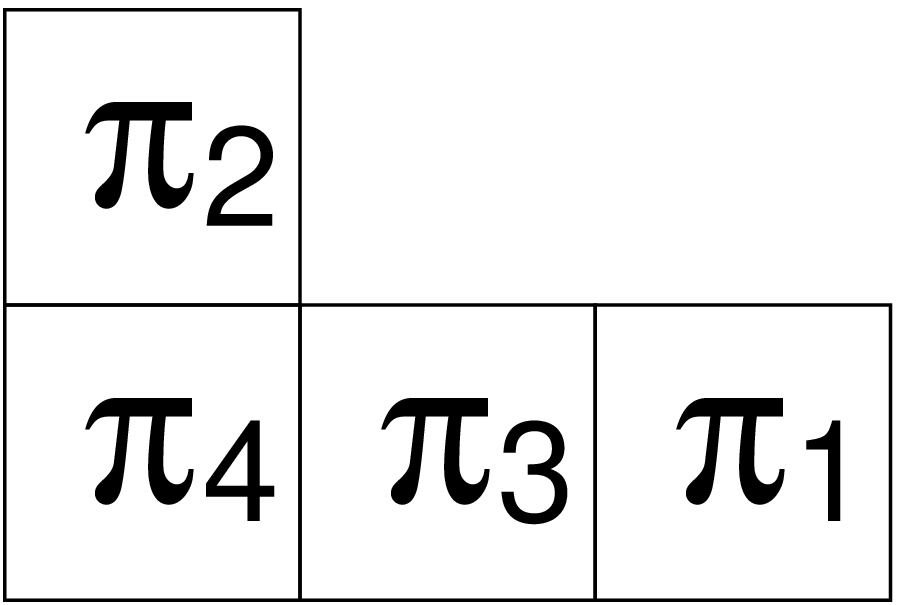}}\ ,\quad
\raisebox{-6mm}{\includegraphics[height=12mm]{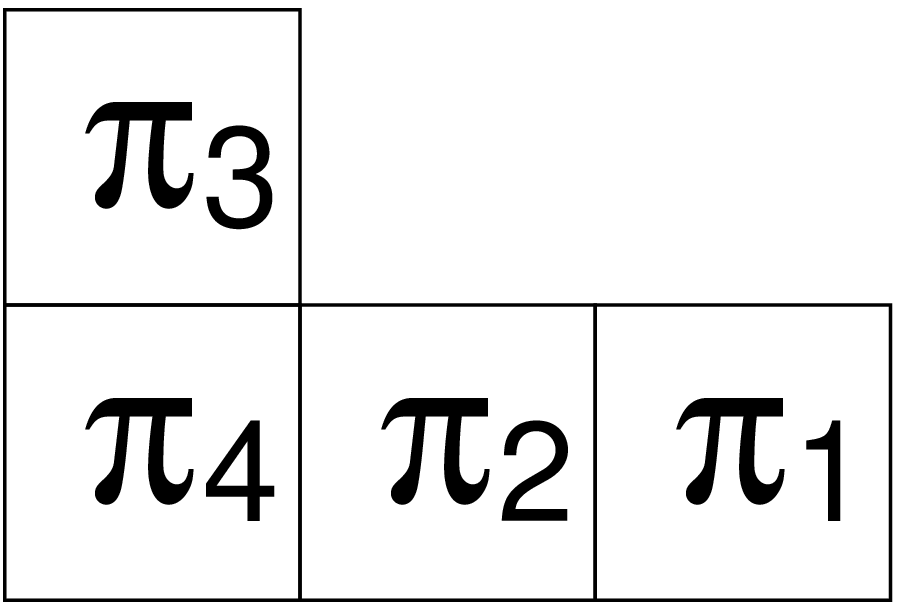}}\ ,\quad
\raisebox{-6mm}{\includegraphics[height=12mm]{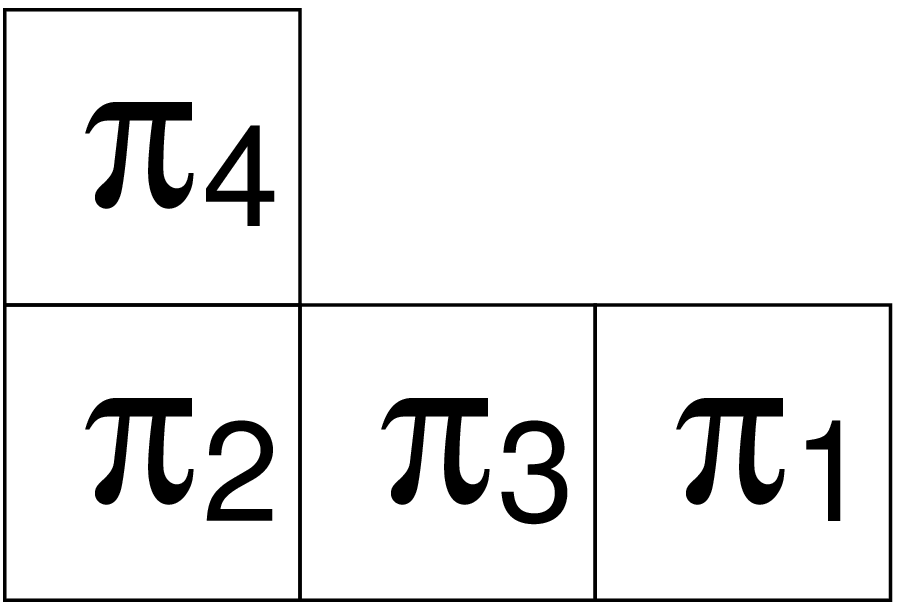}}\ .
\end{equation*}
These tableaux $U$ of shape $31$
yield six tableaux $U_1^R \circ U_2^R$ of shape $4$, 
\begin{equation*}
\begin{gathered}
\raisebox{-3mm}{\includegraphics[height=6mm]{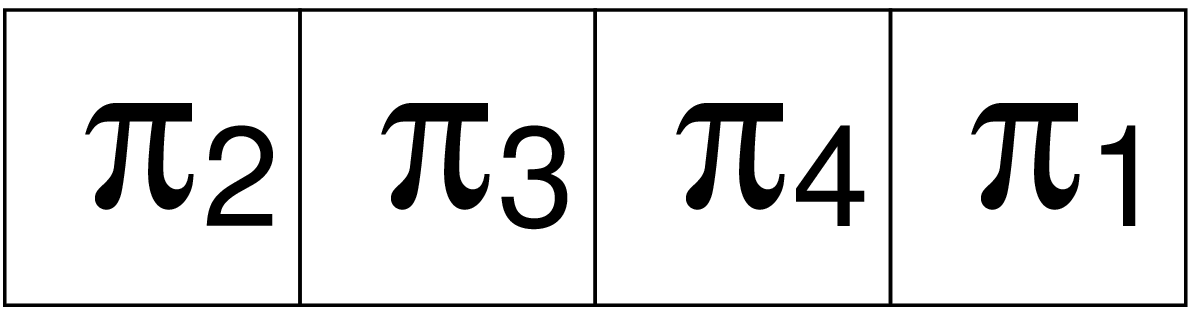}}\ ,\quad
\raisebox{-3mm}{\includegraphics[height=6mm]{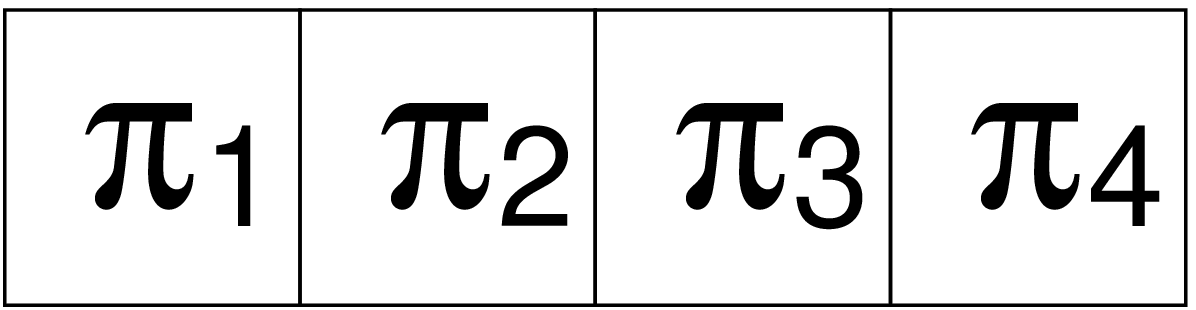}}\ ,\\
\raisebox{-3mm}{\includegraphics[height=6mm]{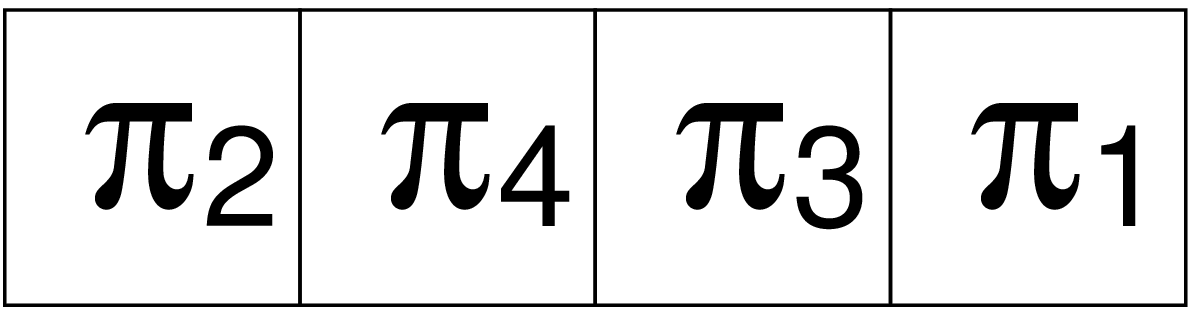}}\ ,\quad
\raisebox{-3mm}{\includegraphics[height=6mm]{tableaux/4_1342}}\ ,\quad
\raisebox{-3mm}{\includegraphics[height=6mm]{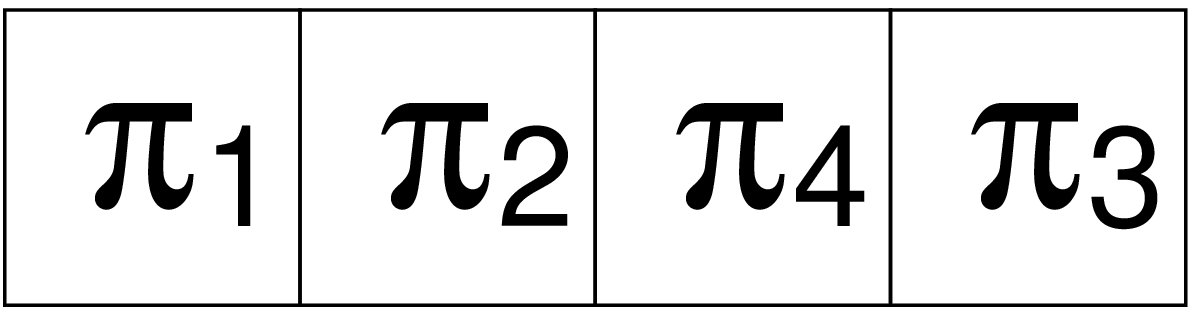}}\ ,\quad
\raisebox{-3mm}{\includegraphics[height=6mm]{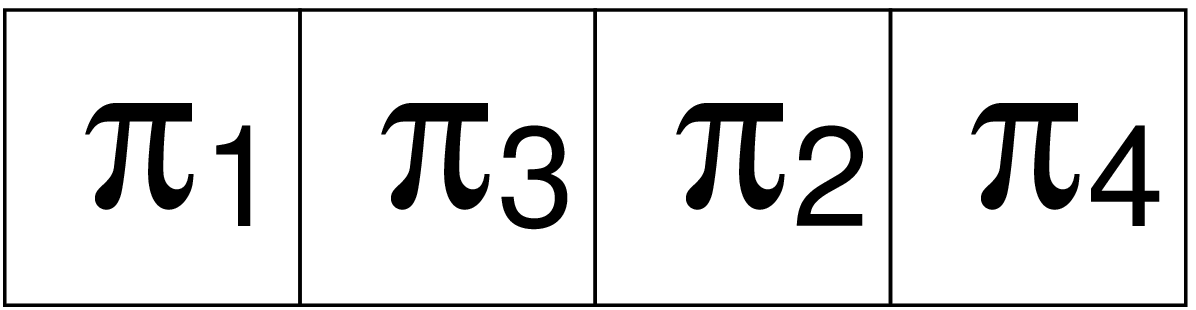}}\ ,
\end{gathered}
\end{equation*}
which have $2$, $0$, $3$, $2$, $1$, and $1$ inversions, respectively.
Together, 
the six tableaux 
contribute $1 + 2q + 2q^2 + q^3$
to 
$[3]_q [1]_q (1 + 2q + 2q^2 + q^3) 
= \psi_q^{31}(q_{e,3421} C'_{3421}(q))$. 

We will state three more combinatorial formulas for 
$\psi_q^\lambda(\qew C'_w(q))$
in Theorems~\ref{t:qpsiO} and \ref{t:qpsi}.
To justify these,
we 
associate a 
polynomial 
$O(F) \in \mathbb N[q]$ 
to a descending star network $F$,
and a 
path tableau $V(F,I)$ 
to the pair $(F,I)$, where $I$ is an ordered set partition 
of $[n]$ of type $\lambda$.

\begin{defn}\label{d:OFdef}
Let $F$ be a descending star network, and let
\begin{equation}\label{eq:concat2}
G_{[c_1,d_1]} \circ \cdots \circ G_{[c_t,d_t]}
\end{equation}
be the concatenation of star networks which corresponds to $F$
as in Section~\ref{s:dsn}.
Define the polynomial
$O(F) \in \mathbb N[q]$ by
\begin{equation}\label{eq:OFdef}
O(F) = \begin{cases}
\frac{\displaystyle{\prod_{i=1}^t}[d_i - c_i]_q!}
{\displaystyle
{
\prodsb{[c_i,d_i] \prec \ntnsp \cdot [c_j,d_j]\\ c_i < c_j}
\nTksp \nTksp \ntksp \ntnsp
[d_i - c_j]_q!
\prodsb{[c_i,d_i] \prec \ntnsp \cdot [c_j,d_j]\\ c_j < c_i}
\nTksp \nTksp \ntksp \ntnsp
[d_j - c_i]_q!
}}
&\text{if $F$ is connected},\\
0 &\text{if $F$ is disconnected}.
\end{cases}
\end{equation}
\end{defn}

For example, the connected descending star network $F_{3421}$ in (\ref{eq:F3421})
corresponds to the concatenation $G = G_{[2,4]} \circ G_{[1,3]}$
and two-element poset of intervals $[2,4] \precdot [1,3]$.  
Thus we have
\begin{equation*}
O(F_{3421}) 
= \frac{[4-2]_q! [3-1]_q!}{[3-2]_q!} 
= \frac{[2]_q! [2]_q!}{[1]_q!} 
= (1+q)^2.
\end{equation*}
Note that for the identity element $e \in \sn$ we have
$O(F_e) = 1$ if $n=1$ and $O(F_e) = 0$ otherwise.
Letting $\pi = (\pi_1, \dotsc, \pi_n)$ 
be the unique path family 
of type $e$ covering $F$, define
$V(F,I)$ to be the unique (row-semistrict) $\pi$-tableau of shape $\lambda$
for which $L(V(F,I))$ is a row-strict Young tableau 
containing indices $I_j$ in row $j$.
For $S$ a $k$-element subset of $[n]$, 
let $F|_S$ denote the zigzag network of order $k$ 
isomorphic to the subnetwork 
of $F$ 
covered by paths $\{\pi_i \,|\, i \in S \}$.

Also essential to our proofs of Theorems~\ref{t:qpsiO} and \ref{t:qpsi}
is a map 
\begin{multline*}
\iota: 
\{ w \in \sn \,|\, w \text{ \avoidsp, $F_w$ connected}\,\} \\
\rightarrow
\{ w \in \mfs{n-1} \,|\, w \text{ \avoidsp, $F_w$ connected}\,\}.
\end{multline*}
Let $F_w$ be a connected zig-zag network
of order $n \geq 2$
corresponding to the concatenation (\ref{eq:concat2}).
By Observation~\ref{o:firstorlast} we may assume that $t=1$,
or $d_1 = n$ and $[c_1, d_1] \precdot [c_2, d_2]$, 
or $d_t = n$ and $[c_{t-1}, d_{t-1}] \precdot [c_t, d_t]$.
We declare
$\iota(w)$ to be the permutation whose descending star network 
$F_{\iota(w)}$ of order $n-1$
is obtained from $F_w$
by deleting the path from source $n$ to sink $n$,
and, in the case that 
$d_1 = n$ and $d_2 = n-1$
($d_t = n$ and $d_{t-2} = n-1$),
by contracting one edge whose vertices correspond to the central vertices of 
$G_{[c_1,n]}$ and $G_{[c_2, n-1]}$
($G_{[c_{t-1},n-1]}$ and $G_{[c_t, n]}$).
Equivalently, $F_{\iota(w)}$ is the zig-zag network 
corresponding to the concatenation
\begin{equation}\label{eq:iotadef}
\begin{cases} 
G_{[c_1, n-1]} &\text{if $t = 1$},\\
G_{[c_2,d_2]} \circ \cdots \circ G_{[c_t,d_t]} 
&\text{if $d_1 = n$ and $d_2 = n-1$},\\
G_{[c_1,d_1]} \circ \cdots \circ G_{[c_{t-1},d_{t-1}]} 
&\text{if $d_t = n$ and $d_{t-1} = n-1$},\\
G_{[c_1,n-1]} \circ G_{[c_2,d_2]} \circ \cdots \circ G_{[c_t,d_t]} 
&\text{if $d_1 = n$ and $d_2 < n-1$},\\
G_{[c_1,d_1]} \circ \cdots \circ G_{[c_{t-1},d_{t-1}]} \circ G_{[c_t, n-1]} 
&\text{if $d_t = n$ and $d_{t-1} < n-1$}.
\end{cases}
\end{equation}


For example, let $n=6$. 
$F_{256431}$ corresponds to the concatenation
$G_{[3,6]} \circ G_{[2,5]} \circ G_{[1,2]}$, with $d_2 = 5 = n-1$.
Removing the path from source $6$ to sink $6$,
and contracting the edge whose endpoints correspond to the central vertices
of $G_{[3,6]}$ and $G_{[2,5]}$, 
we obtain $F_{\iota(256431)}$, which can be shown 
(as in the example preceding Theorem~\ref{t:lem53nndcb})
to be $F_{25431}$.  
\begin{equation}\label{eq:F256431}
F_{256431} = \raisebox{-10mm}{
\includegraphics[height=22mm]{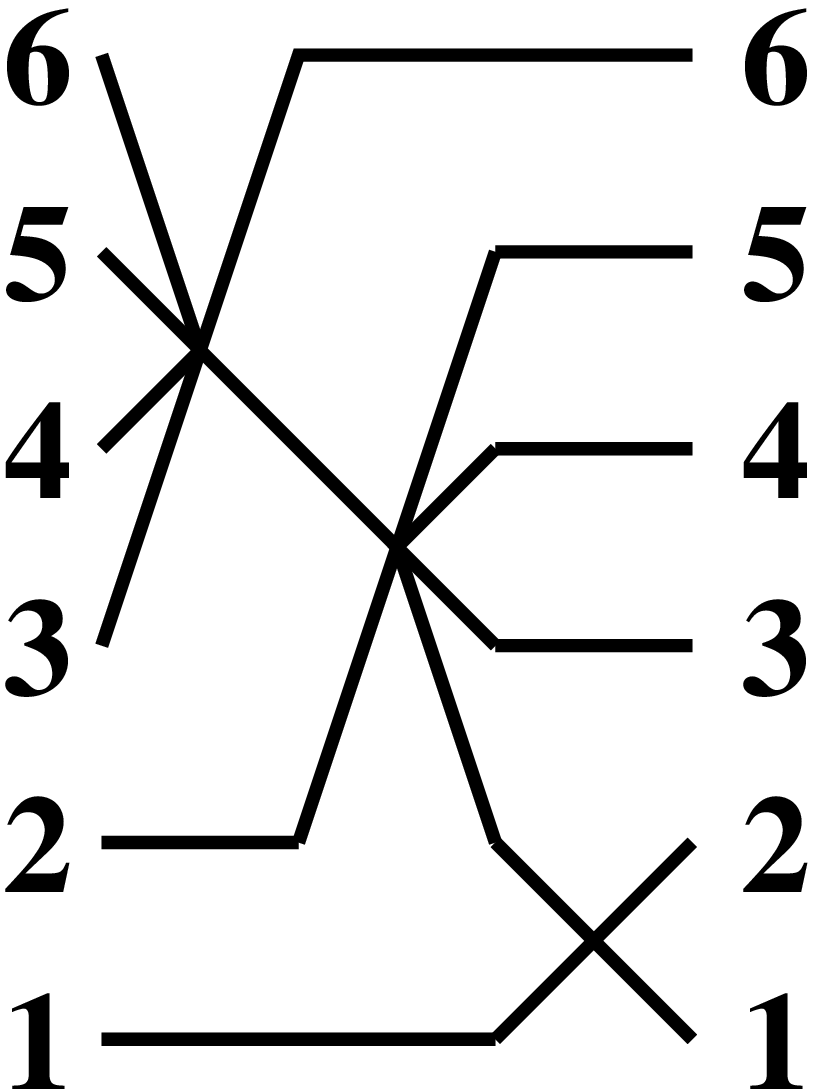}}\ \mapsto
\raisebox{-10mm}{
\includegraphics[height=18mm]{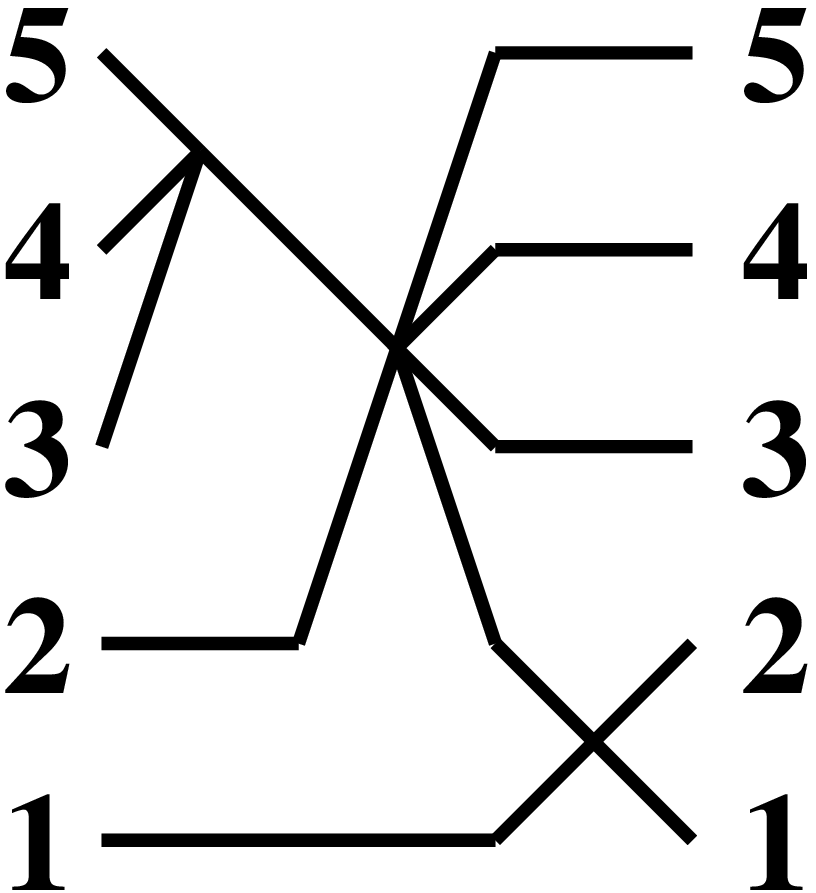}}\ \cong
\raisebox{-10mm}{
\includegraphics[height=18mm]{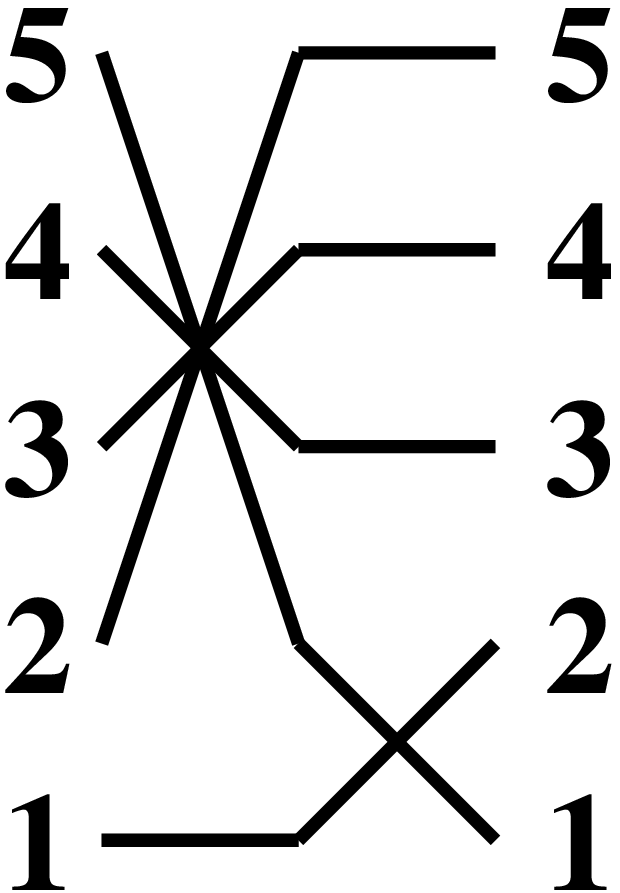}}\
= F_{25431}.
\end{equation}
Similarly,
$F_{246531}$ corresponds to the concatenation
$G_{[3,6]} \circ G_{[2,4]} \circ G_{[1,2]}$, with $d_2 = 4 < n-1$.
Removing the path from source $6$ to sink $6$,
we obtain $F_{\iota(246531)}$, which can be shown 
to be $F_{24531}$.  
\begin{equation}\label{eq:F246531}
F_{246531} = \raisebox{-10mm}{
\includegraphics[height=22mm]{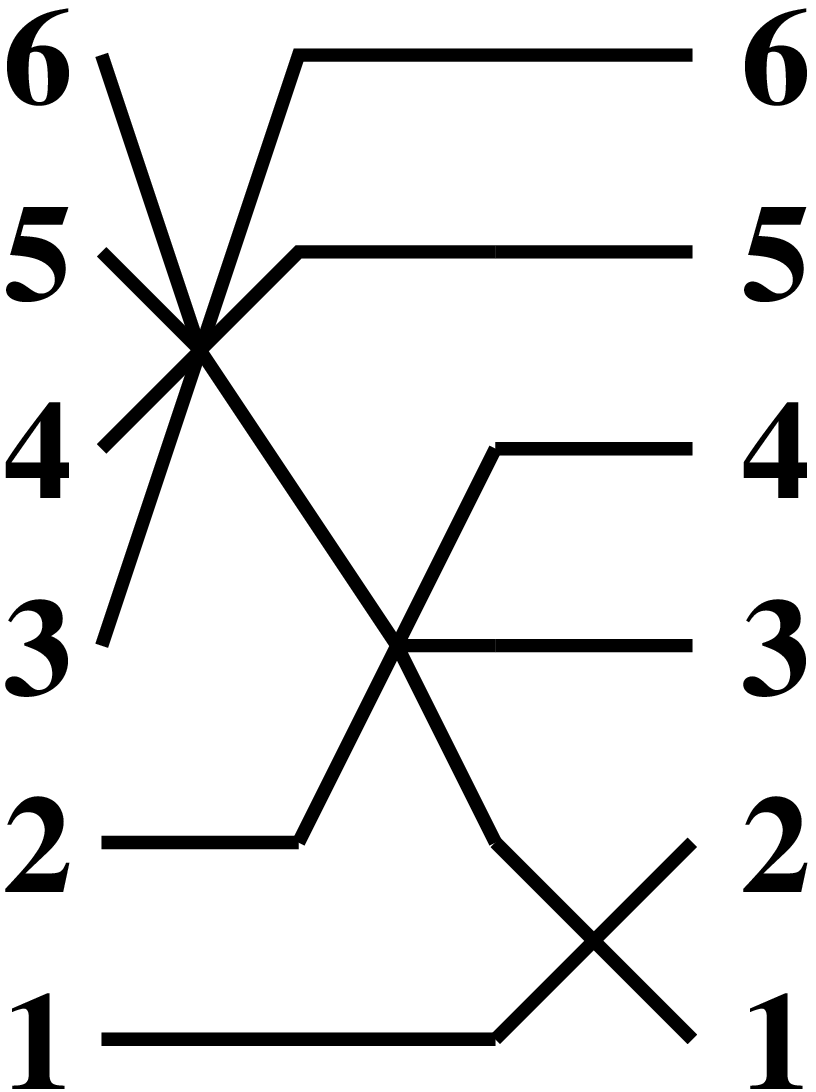}}\ \mapsto
\raisebox{-10mm}{
\includegraphics[height=18mm]{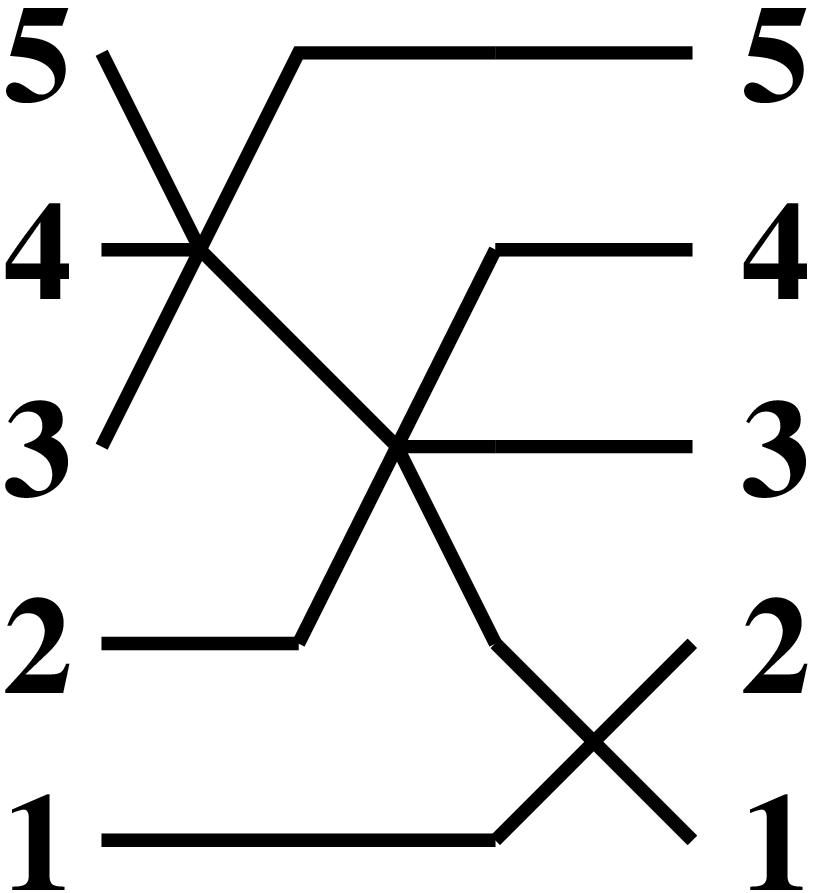}}\ 
= F_{24531}.
\end{equation}

\ssec{Right-anchored, row-semistrict $F$-tableaux and $O(F)$}

Inversions in right-anchored, row-semistrict 
path tableaux are closely related to the polynomials
defined in Definition~\ref{d:OFdef}.
In order to state this relationship precisely (Lemma~\ref{l:ranchO})
and state a third combinatorial formula for $\psi_q^\lambda(\qew C'_w(q))$
(Theorem~\ref{t:qpsiO}), 
we define a family of sets 
$\{Z(F) \,|\, F \text{ a zig-zag network } \}$ of tableaux
and maps between these.
For $F$ a zig-zag network of order $m$, let $Z(F)$ be the set of
right-anchored, row-semistrict $F$-tableaux of type $e$ and shape $(m)$.
Note that if $F$ is not connected, then $Z(F) = \emptyset$, since a 
right-anchored $F$-tableau cannot be row-semistrict when $F$ is disconnected.

Now let $F_w$ be a connected zig-zag network of order $m$ which corresponds
to the concatenation (\ref{eq:concat2}), and let $[b,m]$ be the unique interval
in (\ref{eq:concat2})
to contain $m$.
Define a map
\begin{equation*}
\begin{aligned}
\gamma: Z(F_w) &\rightarrow Z(F_{\iota(w)}) \times \{ 0, 1, \dotsc, m-b-1 \}\\
U &\mapsto (U',k),
\end{aligned}
\end{equation*}
by declaring $U'$ to be the tableau obtained from $U$ by deleting $\pi_n$,
and by declaring $k$ to be the number of indices in the interval $[b,m-1]$
appearing to the left of $m$ in $L(U)$.

For example, consider the 
network $F_{256431}$ in (\ref{eq:F256431})
and let $U$ be the tableau
\begin{equation*}
\raisebox{-3mm}{\includegraphics[height=6mm]{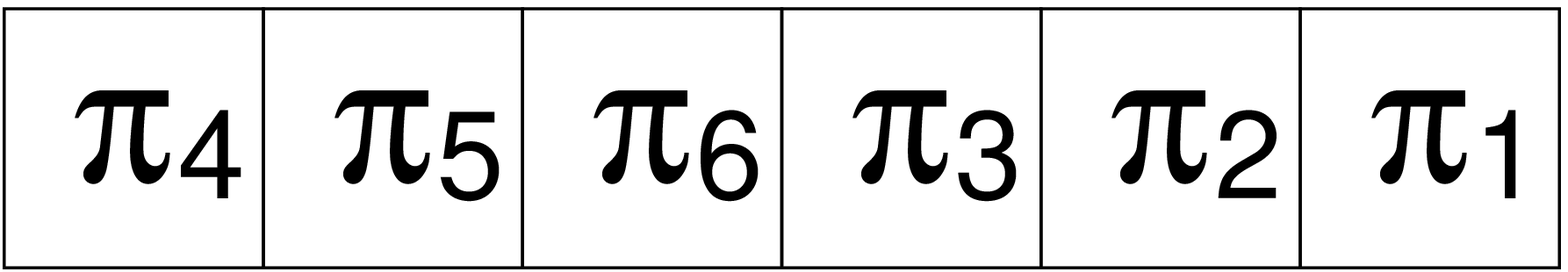}}\ .
\end{equation*}
Then the unique interval in $G$ containing $m = 6$ is $[3,6]$, and there 
are two indices in this interval 
appearing to the left of $6$ in $L(U)$.
Thus $\gamma(U) = (U', 2)$ where $U'$ is the tableau
\begin{equation*}
\raisebox{-3mm}{\includegraphics[height=6mm]{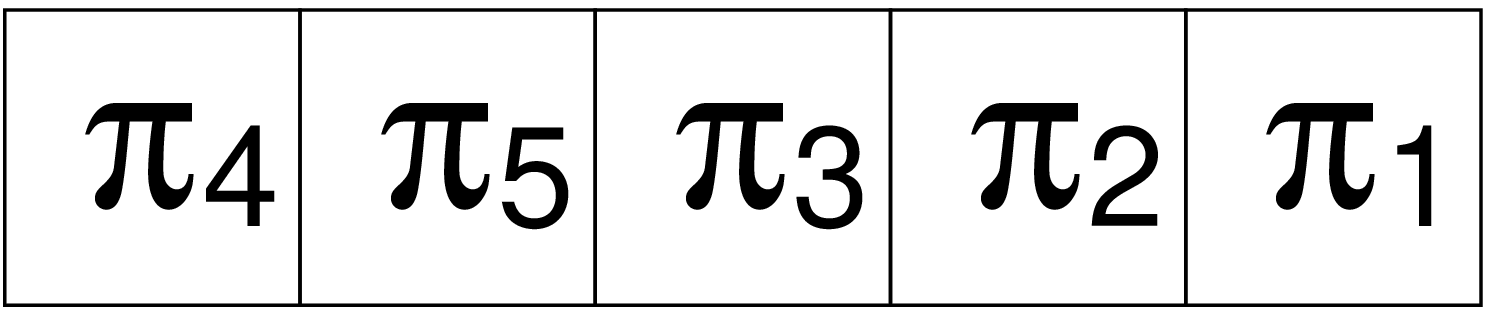}}\ .
\end{equation*}

\begin{lem}\label{l:gamma}
For each connected zig-zag network $F_w$ of order $m$, the map $\gamma$
is a bijection.  Furthermore, if $\gamma(U) = (U', k)$ then 
$\inv(U^R) = \inv(U'^R) + k$. 
\end{lem}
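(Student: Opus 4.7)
The plan is to establish both directions of the bijection and then track how the inversion count changes when $\pi_m$ is deleted. Two preliminary observations underlie the argument. First, since $[b,m]$ is the unique interval of the concatenation (\ref{eq:concat2}) containing $m$, the path $\pi_m$ in $F_w$ passes through exactly one central vertex $x$, and hence $\pi_m$ intersects $\pi_j$ in $F_w$ if and only if $j \in [b,m-1]$. Second, the intersection relation on type-$e$ paths with source index in $[m-1]$ is unchanged on passing from $F_w$ to $F_{\iota(w)}$: this follows by a short case check on the five cases in (\ref{eq:iotadef}). The only subtle cases are the contracted ones, say $d_1 = m$ and $d_2 = m-1$; here one uses that the nonnesting condition forces $c_2 < c_1$, so $[c_1,m-1] \subseteq [c_2,m-1]$, and hence any intersection realized in $F_w$ at the central vertex of $[c_1,m]$ (restricted to indices $\le m-1$) is also realized at the merged vertex corresponding to $[c_2,m-1]$ in $F_{\iota(w)}$, and conversely. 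Consequently $P(F_w)$ and $P(F_{\iota(w)})$ agree on paths with source index less than $m$.

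Well-definedness of $\gamma$ then proceeds as follows. For $U \in Z(F_w)$, right-anchoredness forces the rightmost entry to be $\pi_1$; since $1 \neq m$, removing $\pi_m$ preserves this property in $U'$. Row-semistrictness of $U'$ in $P(F_{\iota(w)})$ transfers from that of $U$ in $P(F_w)$ via the poset-agreement observation. Moreover, because $\pi_m$ has the largest source index, any pair $(\pi_m,\pi_j)$ appearing consecutively in $U$ (with $\pi_m$ on the left) must have $\pi_m,\pi_j$ incomparable, i.e., intersecting, forcing $j \in [b,m-1]$. This shows $\pi_m$ is never rightmost and is immediately followed by some entry with source in $[b,m-1]$; hence at most $m-b-1$ of the $m-b$ indices of $[b,m-1]$ lie to the left of $m$ in $L(U)$, so $k \in \{0,\dotsc,m-b-1\}$. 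The inverse of $\gamma$ is defined by insertion: given $(U',k)$, insert $\pi_m$ immediately before the $(k+1)$st entry of $U'$ (reading left to right) whose source index lies in $[b,m-1]$. This position exists since $|[b,m-1]| = m-b > k$, and the resulting $U$ is right-anchored and row-semistrict in $P(F_w)$---the newly adjacent pair $(\pi_m,\pi_j)$ is incomparable because $\pi_m$ and $\pi_j$ meet at $x$, and the entry immediately before $\pi_m$ (if any) imposes no constraint since $m$ is maximal. By construction $\gamma(U) = (U',k)$.

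For the inversion identity, the tableaux $U^R$ and $U'^R$ agree after deletion of $\pi_m$, so $\inv(U^R) - \inv(U'^R)$ counts only left inversions in $U^R$ involving $\pi_m$. Such an inversion is a pair $(\pi_m,\pi_j)$ with $\pi_m$ appearing to the left of $\pi_j$ in $U^R$---equivalently, $\pi_j$ appearing to the left of $\pi_m$ in $U$---with $m > j$ (automatic) and $\pi_m, \pi_j$ intersecting in $F_w$ (equivalently $j \in [b,m-1]$). The number of such $j$ is precisely $k$ by definition. I expect the main obstacle to be the case check verifying that the intersection relations on $[m-1]$ agree in $F_w$ and $F_{\iota(w)}$; once that is in hand, the remainder reduces to tracking the position of $\pi_m$ and reading off inversions.
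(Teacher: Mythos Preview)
Your approach is essentially the same as the paper's, and the inversion count and the description of $\gamma^{-1}$ are handled correctly. There is, however, one small gap in your well-definedness argument for the forward map.

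You write that row-semistrictness of $U'$ ``transfers from that of $U$ via the poset-agreement observation.'' The poset-agreement observation handles only those consecutive pairs in $U'$ that were already consecutive in $U$. Deleting $\pi_m$ creates a \emph{new} adjacency between the entries $\pi_{i_{j-1}}$ and $\pi_{i_{j+1}}$ that flanked $\pi_m$ in $U$, and you must verify separately that $\pi_{i_{j-1}} \not>_{P(F_{\iota(w)})} \pi_{i_{j+1}}$. This does not follow from poset agreement alone.

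You already have the ingredients to close this gap. You established that $i_{j+1} \in [b,m-1]$ (since $\pi_m$ must be incomparable to its right neighbor). Now consider $i_{j-1}$. If $i_{j-1} < i_{j+1}$, the relation $\pi_{i_{j-1}} > \pi_{i_{j+1}}$ is impossible by definition of the path poset. If $i_{j-1} > i_{j+1}$, then row-semistrictness of $U$ at the pair $(\pi_{i_{j-1}}, \pi_m)$ gives $\pi_{i_{j-1}} \not>_{P(F_w)} \pi_m$, which forces either $i_{j-1} < b$ (contradicting $i_{j-1} > i_{j+1} \geq b$) or $i_{j-1} \in [b,m-1]$; in the latter case both $\pi_{i_{j-1}}$ and $\pi_{i_{j+1}}$ pass through the central vertex $x$, hence intersect, hence are incomparable. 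The paper makes exactly this check (in one terse sentence), so once you insert it your argument matches the paper's.
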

\begin{proof}
To see that $\gamma$ is well-defined, fix $U \in Z(F_w)$ and let
$L(U) = (i_1, \dotsc, i_m = 1)$, where $i_j = m$ and $j < m$.
Clearly $U'$ is right-anchored. If $j = 1$ then 
$U' = (\pi_{i_2}, \dotsc, \pi_{i_m})$
is row-semistrict.  If $j \geq 1$, then 
$U' = (\pi_{i_1}, \dotsc, \pi_{i_{j-1}}, \pi_{i_{j+1}}, \dotsc, \pi_{i_m})$ 
is also row-semistrict,
since $\pi_m \not >_{P(F_w)} \pi_{i_{j+1}}$ implies that 
$\pi_{i_{j-1}} \not >_{P(F_{\iota(w)})} \pi_{i_{j+1}}$.

To invert $\gamma$, find an entry $i_d$ of $L(U')$ which belongs to
$[b,m-1]$ and has exactly $k$ 
indices from the interval $[b,m-1]$ to its left.
(This is possible, since $0 \leq k \leq m - b - 1$.)
Now create a new $F_w$-tableau by inserting $\pi_m$
into $U'$
immediately before $\pi_{i_d}$.
This map is well-defined, because $\pi_{i_d}$ intersects $\pi_m$.
It is clear that the map inverts $\gamma$.  
Since $U$ has type $e$, it is clear that the number of inversions in $U^R$
involving $\pi_m$ is equal to the number of indices in the interval $[b, m-1]$
appearing to the left of $m$ in $L(U)$.  
It follows that $\inv(U^R) = \inv(U'^R) + k$.
\end{proof}

By the above lemma, we can interpret $O(F)$ as a generating function
for inversions in tableaux belonging to $Z(F)$.
\begin{lem}\label{l:ranchO}
For each $w \in \mfs m$ \avoidingp, we have
\begin{equation}\label{eq:ranchO}
\sum_{U \in Z(F_w)} q^{\inv(U^R)} = O(F_w).
\end{equation}
\end{lem}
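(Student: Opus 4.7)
The plan is to prove (\ref{eq:ranchO}) by induction on the order $m$ of $F_w$. The base case $m = 1$ is immediate: $Z(F_e)$ contains a single tableau with no inversions, and Definition~\ref{d:OFdef} gives $O(F_e) = 1$.

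For the inductive step, first observe that if $F_w$ is disconnected then $Z(F_w) = \emptyset$, so both sides of (\ref{eq:ranchO}) vanish. Indeed, for a right-anchored row-semistrict tableau of type $e$ and shape $(m)$ the rightmost entry must be $\pi_1$, and each consecutive pair of entries must be comparable in the correct direction or else incomparable in $P(F_w)$; but paths lying in distinct connected components of $F_w$ are always comparable in $P(F_w)$ via the source-index order, forcing all entries to share the component of $\pi_1$, which is impossible when $F_w$ is disconnected.

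Now assume $F_w$ is connected. The bijection $\gamma$ of Lemma~\ref{l:gamma} yields
\begin{equation*}
\sum_{U \in Z(F_w)} q^{\inv(U^R)}
= \sum_{U' \in Z(F_{\iota(w)})} \sum_{k=0}^{m-b-1} q^{\inv(U'^R) + k}
= [m-b]_q \sum_{U' \in Z(F_{\iota(w)})} q^{\inv(U'^R)}.
\end{equation*}
Since $\iota(w) \in \mfs{m-1}$ is \pavoiding and $F_{\iota(w)}$ is connected by construction, the inductive hypothesis rewrites the last sum as $O(F_{\iota(w)})$, so it remains to verify the polynomial identity $O(F_w) = [m-b]_q \cdot O(F_{\iota(w)})$ in each of the five sub-cases of (\ref{eq:iotadef}).

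To establish this identity I would compare the defining products of Definition~\ref{d:OFdef} for $F_w$ and $F_{\iota(w)}$, exploiting the fact that $\precdot$ is defined locally in the sequence of intervals, so every cover not involving $[b, m]$ survives unchanged. In cases 1, 4, and 5, where $[b, m]$ is shrunk to $[b, m-1]$, every cover involving it also survives with unchanged overlap size (since no other interval of the concatenation contains $m$ by Observation~\ref{o:firstorlast}), so only the numerator factor $[m-b]_q!$ changes to $[m-b-1]_q!$, yielding ratio $[m-b]_q$. In cases 2 and 3, where $[b, m]$ is deleted, the hard part will be to show that $[b, m]$ participates in exactly one cover. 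Any additional cover $[b, m] \precdot [c_j, d_j]$ with $j \geq 3$ in case 2 (or its symmetric statement in case 3) would require $[c_j, d_j]$ to intersect $[b, m]$ but be disjoint from $[c_2, m-1]$ by the intermediate-intersection clause in the definition of $\precdot$; the nonnesting condition~(1) following (\ref{eq:concat}) then forces $[c_j, d_j]$ either into a region disjoint from $[b, m]$ or into a nested position, a contradiction. The unique remaining cover, by nonnesting again, must have $c_2 < b$ in case 2 (resp.\ $c_{t-1} < b$ in case 3), contributing $[m-b-1]_q!$ to the denominator; combined with the lost numerator factor $[m-b]_q!$ this produces the ratio $[m-b]_q$ once more, completing the induction.
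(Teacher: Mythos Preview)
Your proof is correct and follows essentially the same inductive argument as the paper: reduce via the bijection $\gamma$ of Lemma~\ref{l:gamma} to $[m-b]_q \cdot O(F_{\iota(w)})$, then verify the identity $O(F_w) = [m-b]_q \cdot O(F_{\iota(w)})$ by comparing the products in Definition~\ref{d:OFdef} across the cases of (\ref{eq:iotadef}), exactly as the paper does in (\ref{eq:Oinduction})--(\ref{eq:OandO}). One small imprecision in your case~2 argument: the intermediate-intersection clause only forces $[c_j,d_j]$ to be disjoint from $[b,m]\cap[c_2,m-1]=[b,m-1]$, not from all of $[c_2,m-1]$; but since $m\notin[c_j,d_j]$ one has $[c_j,d_j]\cap[b,m]\subseteq[b,m-1]$, so this weaker disjointness already contradicts $[c_j,d_j]\cap[b,m]\neq\emptyset$, and your conclusion stands.
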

\begin{proof}
When $m = 1$, both sides of (\ref{eq:ranchO}) are $1$.
Now assume that (\ref{eq:ranchO}) holds for all zig-zag networks
corresponding to \pavoiding permutations in $\mfs 1, \dotsc, \mfs{m-1}$,
and consider $w \in \mfs m$ \avoidingp.  Let $F_w$ correspond to the 
concatenation (\ref{eq:concat2}).
If $F_w$ is disconnected, then $Z(F_w) = \emptyset$ and both sides of 
(\ref{eq:ranchO}) are $0$.
If $F_w$ is connected, let $[b,m]$ be the unique interval in (\ref{eq:concat2})
to contain $m$.  
By induction and Lemma~\ref{l:gamma} we have
\begin{equation}\label{eq:Oinduction}
\sum_{U \in Z(F_w)} q^{\inv(U^R)} = 
\sum_{k=0}^{m-b-1} q^k \sum_{U' \in Z(F_{\iota(w)})} q^{\inv(U'^R)}
= [m-b]_q O(F_{\iota(w)}).
\end{equation}
By Observation~\ref{o:firstorlast} we may assume that we have 
$t=1$, or
$[b,m] = [c_1,d_1] \precdot [c_2,d_2]$, 
or $[c_{t-1},d_{t-1}] \precdot [c_t,d_t] = [b,m]$.
In the first case, the expression (\ref{eq:Oinduction}) is
$[m-b]_q!$.
In the second case,
by Definition~\ref{d:OFdef}
and (\ref{eq:iotadef}), it is
\begin{equation}\label{eq:OandO}
\begin{cases}
[m-c_1]_q 
\frac{
{\displaystyle[m-1-c_1]_q! [d_2-c_2]_q! \cdots [d_t-c_t]_q!}}
{\displaystyle
{\prodsb{[c_i,d_i] \prec \ntnsp \cdot [c_j,d_j]\\ c_i < c_j}
\nTksp \nTksp \ntksp \ntnsp
[d_i - c_j]_q!
\prodsb{[c_i,d_i] \prec \ntnsp \cdot [c_j,d_j]\\ c_j < c_i}
\nTksp \nTksp \ntksp \ntnsp
[d_j - c_i]_q!}}
&\text{if $d_1 = m$, $d_2 < m-1$},\\
\displaystyle{
\frac{[m-c_1]_q!}{[d_2 - c_1]_q!} 
}
\frac{
[d_2-c_2]_q! \cdots [d_t-c_t]_q!}
{\displaystyle{
\prodsb{[c_i,d_i] \prec \ntnsp \cdot [c_j,d_j]\\ c_i < c_j}
\nTksp \nTksp \ntksp \ntnsp
[d_i - c_j]_q!
\prodsb{[c_i,d_i] \prec \ntnsp \cdot [c_j,d_j]\\ c_j < c_i \\ i \geq 2}
\nTksp \nTksp \ntksp \ntnsp
[d_j - c_i]_q!}}
&\text{if $d_1 = m$, $d_2 = m-1$}.
\end{cases}
\end{equation}
In the third case, we obtain an expression similar to (\ref{eq:OandO}).
In all cases, the expression 
is equal to $O(F_w)$.
\end{proof}

Now we can state the precise relationship between the polynomials
$O(F)$ and inversions in right-anchored, row-semistrict path tableaux.

\begin{thm}\label{t:qpsiO}
Let $w \in \sn$ \avoidp.
For $\lambda = (\lambda_1, \dotsc, \lambda_r) \vdash n$, we have
\begin{equation}\label{eq:qpsi3}
\psi_q^\lambda(\qew C'_w(q)) = 
[\lambda_1]_q \cdots [\lambda_r]_q
\sumsb{I \vdash [n]\\ \type(I) = \lambda}
q^{\inv(V(F_w,I)_1 \circ \cdots \circ V(F_w,I)_r)} O(F_w|_{I_1}) \cdots O(F_w|_{I_r}).
\end{equation}
\end{thm}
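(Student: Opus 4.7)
The plan is to start from formula (\ref{eq:swqpsi2}) in Theorem~\ref{t:swqpsi}, which expresses $\psi_q^\lambda(\qew C'_w(q))$ as $[\lambda_1]_q \cdots [\lambda_r]_q$ times a sum of $q^{\inv(U_1^R \circ \cdots \circ U_r^R)}$ over right-anchored, row-semistrict $F_w$-tableaux of type $e$ and shape $\lambda$, and to group this sum according to the ordered set partition $I = (I_1, \dotsc, I_r)$ of $[n]$ recording which path indices appear in each row of $U$.

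First, I would establish a factorization of the inversion statistic. Given an ordered set partition $I$ of type $\lambda$, fix the sub-sum over those $U$ whose rows have index sets $I_1, \dotsc, I_r$. Using identity (\ref{eq:invdecomp}) applied to $U^R$, one obtains
\begin{equation*}
\inv(U_1^R \circ \cdots \circ U_r^R) = \inv(U_1^R) + \cdots + \inv(U_r^R) + \inv((U^R)^{\tr}).
\end{equation*}
The key observation is that since $U$ has type $e$, whether two paths $\pi_i, \pi_{i'}$ with $i \in I_a$ and $i' \in I_b$ ($a \ne b$) form an inversion depends only on whether they intersect in $F_w$ and on the integer comparison $i$ vs.\ $i'$; it is independent of the arrangement within rows. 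Hence the cross-row term $\inv((U^R)^{\tr})$ depends only on $I$, and may be computed using any fixed representative. Choosing $V(F_w, I)$ --- whose left tableau is row-strict so that each row $V(F_w,I)_j$ contributes zero within-row inversions --- yields
\begin{equation*}
\inv((U^R)^{\tr}) = \inv(V(F_w,I)_1 \circ \cdots \circ V(F_w,I)_r).
\end{equation*}

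Next I would extract the within-row contribution. Since $U$ is right-anchored and row-semistrict, each row $U_j$ is a right-anchored, row-semistrict $F_w|_{I_j}$-tableau of type $e$ and shape $(\lambda_j)$, i.e., an element of $Z(F_w|_{I_j})$, and conversely each $r$-tuple $(U_1,\dotsc,U_r) \in Z(F_w|_{I_1}) \times \cdots \times Z(F_w|_{I_r})$ assembles into an admissible $U$. Thus
\begin{equation*}
\sumsb{U\\ L(U)_j = I_j \text{ as sets}} q^{\inv(U_1^R \circ \cdots \circ U_r^R)}
= q^{\inv(V(F_w,I)_1 \circ \cdots \circ V(F_w,I)_r)} \prod_{j=1}^r \sumsb{U_j \in Z(F_w|_{I_j})} q^{\inv(U_j^R)},
\end{equation*}
and by Lemma~\ref{l:ranchO} each inner factor equals $O(F_w|_{I_j})$. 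Summing over ordered set partitions $I$ of type $\lambda$ and multiplying by $[\lambda_1]_q \cdots [\lambda_r]_q$ produces (\ref{eq:qpsi3}).

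The main obstacle is the cross-row factorization of inversions: I need to verify carefully that intersection of $\pi_i$ and $\pi_{i'}$ (both of type $e$) lying in distinct rows suffices to make their relative contribution to $\inv((U^R)^{\tr})$ a function solely of $I$, and that $V(F_w,I)$ is indeed the right canonical representative --- in particular, that its row-strict left tableau forces zero within-row inversions, and that its transpose inversion count equals the same cross-row count. Once this bookkeeping is in place, the remainder is a direct combination of (\ref{eq:swqpsi2}), (\ref{eq:invdecomp}), and Lemma~\ref{l:ranchO}.
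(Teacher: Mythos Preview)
Your proposal is correct and follows essentially the same route as the paper: start from (\ref{eq:swqpsi2}), group by the ordered set partition $I$ of row index sets, split $\inv(U_1^R\circ\cdots\circ U_r^R)$ via (\ref{eq:invdecomp}) into within-row and cross-row pieces, identify the cross-row piece with $\inv(V(F_w,I)_1\circ\cdots\circ V(F_w,I)_r)$, and evaluate each within-row sum as $O(F_w|_{I_j})$ using Lemma~\ref{l:ranchO}. The only cosmetic difference is that the paper writes the cross-row term as $\inv(U^{\tr})$ rather than $\inv((U^R)^{\tr})$; these coincide because reversing each row does not change which paths lie in which row, hence does not change the set of cross-row inversions.
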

\begin{proof}
By Theorem~\ref{t:swqpsi}, $\psi_q^\lambda(\qew C'_w(q))$
is equal to 
the right-hand side of (\ref{eq:swqpsi2}).
Grouping terms in the sum and using (\ref{eq:invdecomp}),
we may rewrite this expression as
\begin{equation}\label{eq:qpsi4}
[\lambda_1]_q \cdots [\lambda_r]_q
\nTksp \ntksp \sumsb{I \vdash [n]\\ \type(I) = \lambda} \nTksp
\sum_U q^{\inv(U_1^R \circ \cdots \circ U_r^R)} = 
[\lambda_1]_q \cdots [\lambda_r]_q
\nTksp \ntksp \sumsb{I \vdash [n]\\ \type(I) = \lambda} \nTksp
\sum_U q^{\inv(U_1^R) + \cdots + \inv(U_r^R) + \inv(U^\tr)},
\end{equation}
where 
$U$ now varies over the subset of right-anchored, row-semistrict
tableaux of type $e$ and shape $\lambda$ satisfying
$U_j = I_j$ for 
each component of the appropriate ordered set partition
$I = (I_1,\dotsc,I_r)$.  
For fixed $I$, this inner sum can be rewritten as
\begin{equation*}
\sum_{W^{(1)}}q^{\inv((W^{(1)})^R)} \cdots \sum_{W^{(r)}}q^{\inv((W^{(r)})^R)}
\sum_U q^{\inv(U^\tr)},
\end{equation*}
where $W^{(j)}$ varies over right-anchored, row-semistrict $F_w|_{I_j}$-tableaux
of shape $\lambda_j$ and type $e \in \mfs{\lambda_j}$,
and $U$ again varies as in (\ref{eq:qpsi4}).
By Lemma~\ref{l:ranchO}, the first $r$ sums are equal to
$O(F_w|_{I_1}), \dotsc, O(F_w|_{I_r})$, and it
is easy to see that for any tableau $U$ in the last sum, we have
$\inv(U^\tr) = \inv(V(F_w,I)_1 \circ \cdots \circ V(F_w,I)_r)$.
Thus we obtain
the right-hand side of (\ref{eq:qpsi3}).
\end{proof}


For example, consider again the descending star network 
$F_{3421}$ in (\ref{eq:F3421}) and the expression in (\ref{eq:qpsi3}).
The ordered set partitions of $[4]$
of type $31$ are $123|4$, $124|3$, $134|2$, and $234|1$.
Corresponding to the set partitions $I$ are the tableaux $V(F_{3421},I)$
of shape 31
\begin{equation*}\label{eq:lacyltabs31}
\raisebox{-6mm}{\includegraphics[height=12mm]{tableaux/31_4123}}\ ,\quad
\raisebox{-6mm}{\includegraphics[height=12mm]{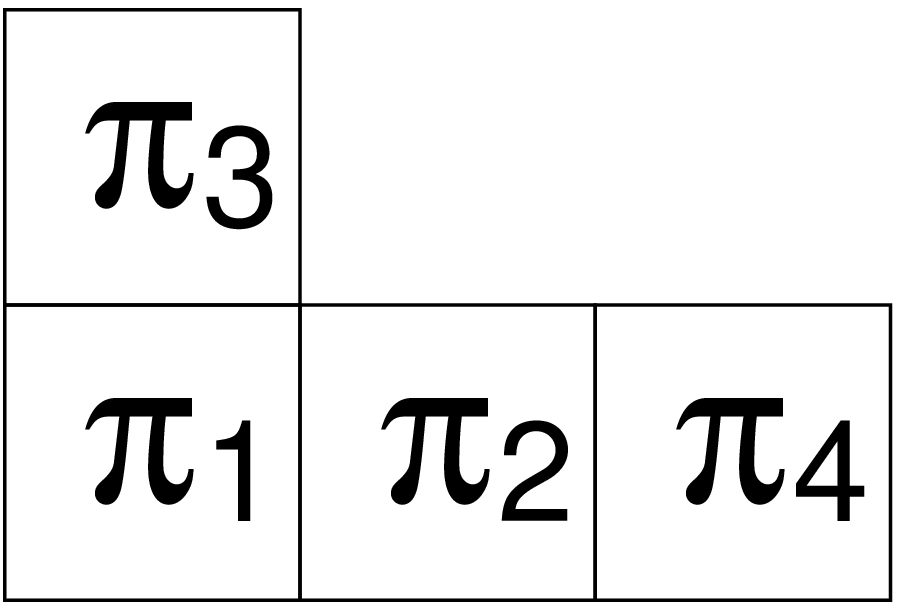}}\ ,\quad
\raisebox{-6mm}{\includegraphics[height=12mm]{tableaux/31_2134}}\ ,\quad
\raisebox{-6mm}{\includegraphics[height=12mm]{tableaux/31_1234}}\ ,
\end{equation*}
respectively,
where $(\pi_1, \pi_2, \pi_3, \pi_4)$ is the unique path family of type $e$
covering $F_{3421}$.  
These in turn yield tableaux $V(F_{3421},I)_1 \circ V(F_{3421},I)_2$ of shape 4
\begin{equation*}
\raisebox{-3mm}{\includegraphics[height=6mm]{tableaux/4_1234}}\ ,\quad
\raisebox{-3mm}{\includegraphics[height=6mm]{tableaux/4_1243}}\ ,\quad
\raisebox{-3mm}{\includegraphics[height=6mm]{tableaux/4_1342}}\ ,\quad
\raisebox{-3mm}{\includegraphics[height=6mm]{tableaux/4_2341}}\ ,
\end{equation*}
having $0$, $1$, $2$, and $2$ inversions, respectively.
The subnetworks 
$F_{3421}|_{123}$,
$F_{3421}|_{124}$,
$F_{3421}|_{134}$, 
$F_{3421}|_{234}$ and polynomials
$O(F_{3421}|_{123})$,
$O(F_{3421}|_{124})$,
$O(F_{3421}|_{134})$, 
$O(F_{3421}|_{234})$ 
are
\begin{equation*}
\begin{alignedat}{4}
\raisebox{-8mm}{\includegraphics[height=16mm]{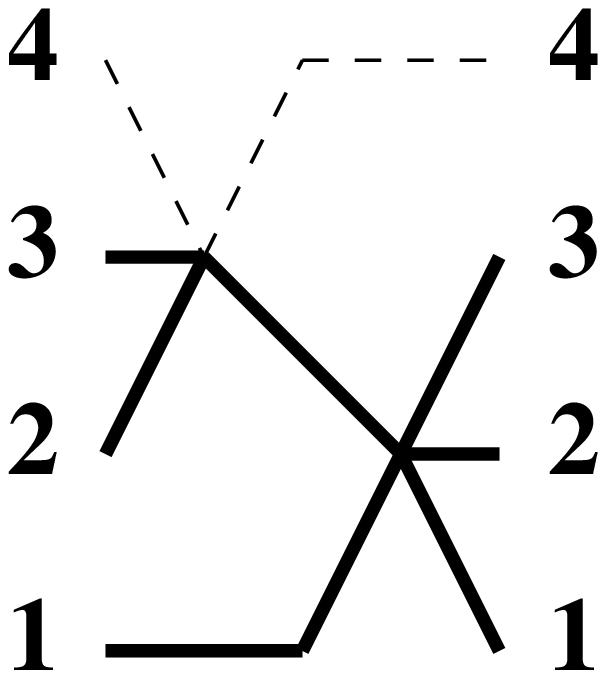}}\ 
\ntksp &\cong \ntksp
\raisebox{-6mm}{\includegraphics[height=12mm]{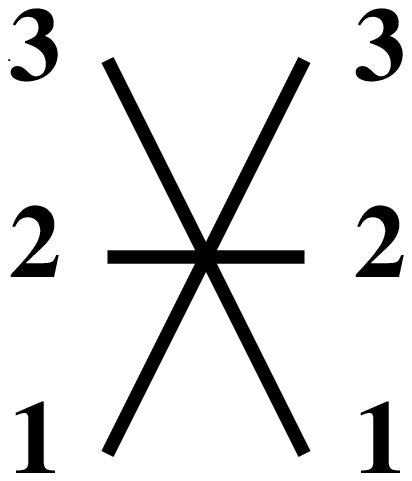}},&\qquad
\raisebox{-8mm}{\includegraphics[height=16mm]{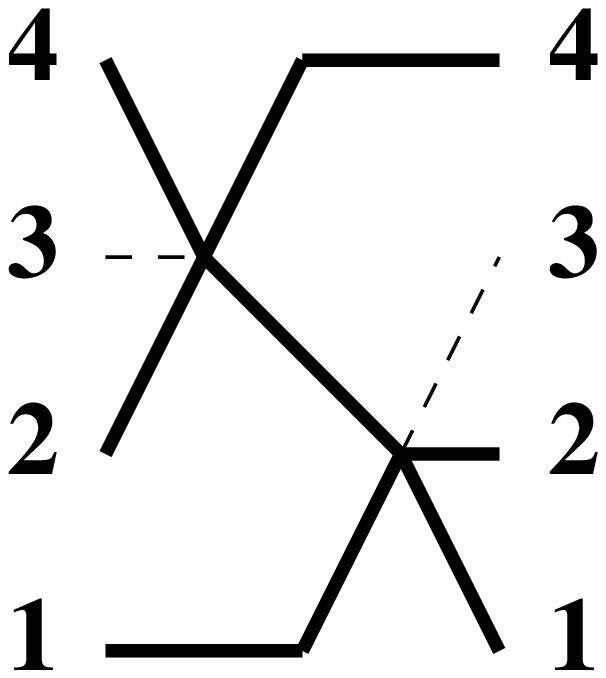}}\ 
\ntksp \cong \ntksp
\raisebox{-6mm}{\includegraphics[height=12mm]{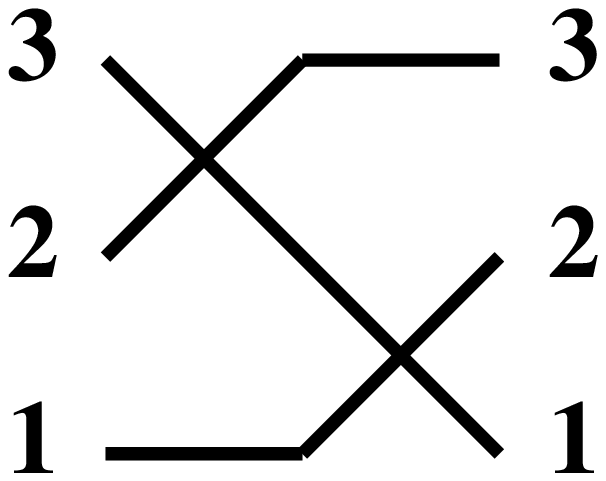}}&,&\qquad
\raisebox{-8mm}{\includegraphics[height=16mm]{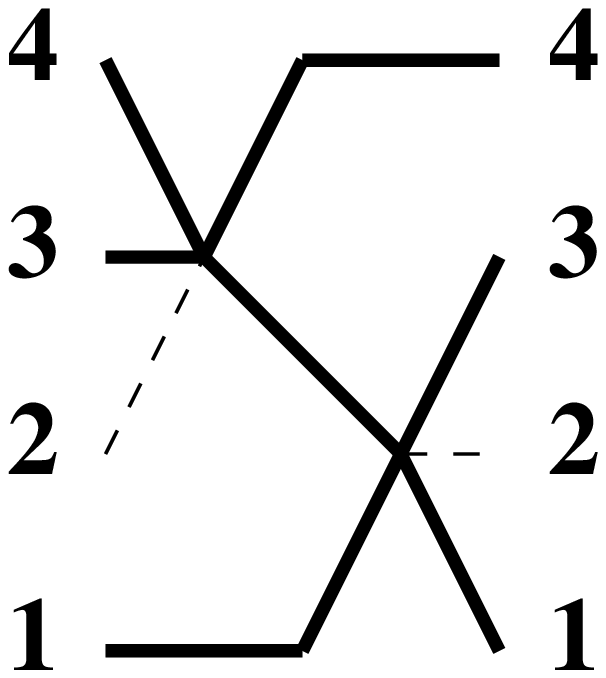}}\ 
\ntksp \cong \ntksp
\raisebox{-6mm}{\includegraphics[height=12mm]{xfigures/xstars1nwo3s}}&,&\qquad
\raisebox{-8mm}{\includegraphics[height=16mm]{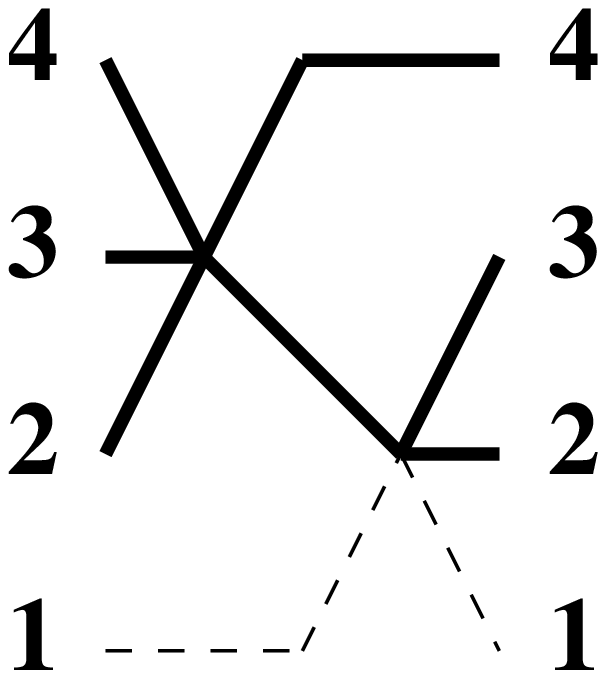}}\ 
\ntksp &\cong \ntksp
\raisebox{-6mm}{\includegraphics[height=12mm]{xfigures/xstars1nwo4s}}\,,\\
[3-1]_q! &= 1 + q, &
\frac{[3-2]_q! [2-1]_q!}{[2-2]_q!} = 1&, &
\frac{[3-2]_q! [2-1]_q!}{[2-2]_q!} = 1&, &
[3-1]_q! &= 1 + q.
\end{alignedat}
\end{equation*}
On the other hand, each subnetwork $F_{3421}|_{i}$ is simply a path and satisfies
$O(F_{3421}|_{i}) = 1$.  
Thus the four set partitions contribute
$q^0(1+q)(1)$, $q^1(1)(1)$,  $q^2(1)(1)$, $q^2(1+q)(1)$, or a total of
$1 + 2q + 2q^2 + q^3$ to
$[3]_q[1]_q ( 1 + 2q + 2q^2 + q^3 ) = \psi_q^{31}(q_{e,3421} C'_{3421}(q))$.

The special case $\lambda = (n)$ of Theorem~\ref{t:qpsiO} confirms a 
conjecture of Haiman~\cite[Conj.\,4.1]{HaimanHecke} concerning 
evaluations of the form $\phi_q^{(n)}(\qew C'_w(q)) = \psi_q^{(n)}(\qew C'_w(q))$.
\begin{prop}
Let $w = w_1 \cdots w_n \in \sn$ avoid the pattern $312$ 
and define the sequence $(f(1), \dotsc, f(n))$ 
by $f(j) = \max \{ w_1, \dotsc, w_j \}$.
Then we have
\begin{equation}\label{eq:qpsiH}
\psi_q^{(n)}(\qew C'_w(q)) = 
[n]_q [f(1) - 1]_q [f(2) - 2]_q \cdots [f(n-1) - (n-1)]_q.
\end{equation}
\end{prop}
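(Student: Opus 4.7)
The plan is to apply Theorem~\ref{t:qpsiO} with $\lambda = (n)$. For this partition there is only one ordered set partition $I = ([n])$, and the associated tableau $V(F_w, I)$ places the paths $\pi_1, \ldots, \pi_n$ (all of type $e$) from left to right in a single row. A right inversion requires a pair $(\pi_i, \pi_j)$ with $\pi_i$ to the left of $\pi_j$ and $i > j$, and no such pair exists here, so $\inv(V(F_w, I)) = 0$. Theorem~\ref{t:qpsiO} thus collapses to
\begin{equation*}
\psi_q^{(n)}(\qew C'_w(q)) = [n]_q \cdot O(F_w),
\end{equation*}
reducing the proposition to showing $O(F_w) = \prod_{j=1}^{n-1}[f(j)-j]_q$. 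When $F_w$ is disconnected, the concatenation (\ref{eq:concat2}) has a gap, which forces some $j \in [1, n-1]$ with $\{w_1, \ldots, w_j\} = \{1, \ldots, j\}$; hence $f(j) = j$, the right side is zero, and the left side is zero by Definition~\ref{d:OFdef}.

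For connected $F_w$ I would induct on $n$, the case $n = 1$ being immediate. Write $[b, n]$ for the unique interval of (\ref{eq:concat2}) containing $n$. Lemmas~\ref{l:gamma} and~\ref{l:ranchO} together yield the recursion $O(F_w) = [n - b]_q \, O(F_{\iota(w)})$. Two structural identifications then drive the induction. First, for a $312$-avoiding permutation with $F_w$ connected, I will verify that $b = c_1 = p$, where $p$ is the position of $n$ in $w$ (i.e., $w_p = n$); the idea is that source $c_1$ can reach sink $n$ via the first star $G_{[c_1, n]}$ followed by horizontal edges, forcing the Bruhat-maximum permutation $w$ to satisfy $w_{c_1} = n$. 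Second, by analyzing (\ref{eq:iotadef}), I will check that $\iota(w)$, viewed as a permutation of $[n-1]$, is obtained by deleting the entry $n$ from the one-line notation of $w$; in particular $\iota(w)$ remains \pavoiding (and $312$-avoiding), so the inductive hypothesis applies.

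With $b = p$ and this description of $\iota(w)$, the induction reduces to the combinatorial identity
\begin{equation*}
\prod_{j=1}^{n-1}[f_w(j)-j]_q = [n-p]_q \prod_{j=1}^{n-2}[f_{\iota(w)}(j)-j]_q.
\end{equation*}
The key input is that $w$ avoiding $312$ with $w_p = n$ forces a strictly decreasing tail $w_{p+1} > w_{p+2} > \cdots > w_n$: any ascent $w_i < w_j$ in the tail would, together with $w_p = n$, form a $312$-pattern. From this, the value $n-1$ lies in $\{w_1, \ldots, w_{p-1}\}$ or equals $w_{p+1}$, so in either case $f_{\iota(w)}(j) = n-1$ for all $j \geq p$, while $f_{\iota(w)}(j) = f_w(j)$ for $j < p$. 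The identity then simplifies on the tail indices to $[n-p]_q!/[n-p-1]_q! = [n-p]_q$, which is obvious.

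The main obstacle will be the structural claim $b = p$ (equivalently $c_1 = p$). It is transparent in examples, but a clean proof requires careful bookkeeping through the bijection of Section~\ref{s:dsn} between $312$-avoiding permutations and descending star networks, specifically tracing how the Bruhat-maximum path family routes source $c_1$ to sink $n$ in the connected case. Once this (and the analogous check that $\iota(w)$ is ``delete the value $n$'') is settled, the remaining induction is a short computation, and the tail-decreasing observation handles the rest.
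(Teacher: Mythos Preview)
Your reduction to $\psi_q^{(n)}(\qew C'_w(q)) = [n]_q\,O(F_w)$ via Theorem~\ref{t:qpsiO} and your treatment of the disconnected case match the paper exactly (minor slip: the statistic in Theorem~\ref{t:qpsiO} is $\inv$, not $\rinv$, but your conclusion $\inv(V)=0$ is correct since the sources appear in increasing order). From there, however, you and the paper diverge. The paper does not induct: it expands $O(F_w)$ directly from Definition~\ref{d:OFdef}, using that for $312$-avoiding $w$ the intervals form a chain $[c_1,n]\precdot\cdots\precdot[1,d_t]$, regroups the resulting telescoping product of $q$-factorials as $\prod_{j=1}^{n-1}[d_{g(j)}-j]_q$ where $g(j)=\min\{i: j\in[c_i,d_i]\}$, and then argues in two lines that $d_{g(j)}=f(j)$ (one inequality from Observation~\ref{o:sourcetosink}, the other from the fact that $w_{c_{g(j)}}=d_{g(j)}$).

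Your inductive route via the recursion $O(F_w)=[n-b]_q\,O(F_{\iota(w)})$ (which is exactly equation~(\ref{eq:Oinduction}) in the proof of Lemma~\ref{l:ranchO}) is also correct, and the tail-decreasing argument cleanly handles the combinatorial identity. What you flag as the ``main obstacle'' --- that $b=c_1$ equals the position $p$ of $n$ in $w$ --- is precisely the fact $w_{c_1}=d_1=n$ that the paper also invokes (at the end of its proof) but does not spell out; it follows from the description of $w$ from $G'$ in Section~\ref{s:dsn}. Your second structural claim, that $\iota(w)$ is $w$ with the letter $n$ deleted, is true (and visible in the examples (\ref{eq:F256431})--(\ref{eq:F246531})) but is extra work your approach requires and the paper's does not. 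In short: both arguments are valid; the paper's is shorter because it reads $f(j)$ off the interval data in one step rather than peeling off $n$ recursively.
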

\begin{proof}
Setting $\lambda = (n)$ in Theorem~\ref{t:qpsiO}, we have
\begin{equation}\label{eq:qpsiOn}
\psi_q^{(n)}(\qew C'_w(q)) = [n]_q q^0 O(F_w).
\end{equation}
If $F_w$ is not connected, then both sides of (\ref{eq:qpsiOn}) are $0$,
and for some index $k$ the prefix $w_1 \cdots w_k$ of $w$ belongs to $\mfs{k}$.
Thus $f(k) = k$ and the right-hand side of (\ref{eq:qpsiH}) is $0$ as well.

Assume therefore that $F_w$ is connected. 
Since $w$ avoids the pattern $312$, $F_w$ is a descending star network
and the intervals in the corresponding concatenation (\ref{eq:concat2})
form the chain
$[c_1, d_1] \precdot \cdots \precdot [c_t, d_t]$
with $d_1 = n$, $c_t = 1$.
Thus the formula (\ref{eq:OFdef}) for $O(F_w)$ becomes
\begin{multline*}\label{eq:DSNOFw}
[d_1-c_1]_q!
\frac{[d_2-c_2]_q!}{[d_2-c_1]_q!} \cdots \frac{[d_t-c_t]_q!}{[d_t-c_{t-1}]_q!}
= 
\Big([d_1-c_1]_q[d_1-(c_1+1)]_q \cdots [d_1-(n-1)]_q \Big) \cdot \\
\Big([d_2-c_2]_q[d_2-(c_2+1)]_q \cdots [d_2-(c_1-1)]_q \Big) 
\cdots
\Big([d_t-1]_q[d_t-2]_q \cdots [d_t-(c_{t-1}-1)]_q \Big).
\end{multline*}
Defining $g(j) = \min \{ i \,|\, j \in [c_i,d_i] \}$ for $j = 1,\dotsc,n$,
we may now rewrite (\ref{eq:qpsiOn}) as
\begin{equation}\label{eq:almostHaiman}
\psi_q^{(n)}(\qew C'_w(q)) = [n]_q 
[d_{g(1)} - 1]_q [d_{g(2)} - 2]_q \cdots [d_{g(n-1)} - (n-1)]_q.
\end{equation}

Finally we claim that $d_{g(j)} = f(j)$ for $j = 1,\dotsc,n-1$.
We have $f(j) \leq d_{g(j)}$ 
because there are no paths
in $F_w$ from source $j$ to sinks $d_{g(j)}+1, \dotsc, n$
and therefore by Observation~\ref{o:sourcetosink}, 
no paths from sources $1, \dotsc, j-1$ to these sinks either.
Similarly, we have $f(j) \geq d_{g(j)}$ because $j$ belongs to the interval
$[c_{g(j)}, d_{g(j)}]$ and $w_{c_{g(j)}} = d_{g(j)}$.
\end{proof}
It is straightforward to show that the right-hand side of 
(\ref{eq:almostHaiman}) coincides with the expression in 
\cite[Thm.\,7.1]{SWachsChromQ} for $(-1)^{n-1}n = (-1)^{n-\ell(n)} z_{(n)}$ 
times the coefficient of $p_n$ in $X_{P,q}$.

\ssec{Cylindrical $F$-tableaux and $O(F)$}

In Theorem~\ref{t:qpsi},
we will prove an analog of Theorem~\ref{t:swqpsi}
in which sums are taken over (left-anchored) cylindrical $F$-tableaux.
To do so, we partition the set of cylindrical $F$-tableaux into
equivalence classes as follows.
Fix a permutation $w \in \sn$ \avoidingp, 
an integer partition $\lambda \vdash n$, 
and an ordered set partition
$I = (I_1, \dotsc, I_r)$ of $[n]$ of type $\lambda$.
Let $\mathcal C(I,F_w)$ be the set of cylindrical
$F_w$-tableaux $U$ such that for $j = 1,\dotsc,r$,
the set of entries of $L(U_j)$ is equal to $I_j$.
Let $\mathcal C_L(I,F_w)$ be the subset of these tableaux which
are left-anchored.
Now the cylindrical analogs of the sums in (\ref{eq:swqpsi1}) 
and (\ref{eq:swqpsi2}) are
\begin{equation}\label{eq:sumxu}
\begin{gathered}
\sum_U q^{\inv(U_1 \circ \cdots \circ U_r)} = 
\sumsb{I \vdash [n]\\ \type(I) = \lambda} \sum_{U \in \mathcal C(I,F_w)}
 q^{\inv(U_1 \circ \cdots \circ U_r)},\\
\sum_U q^{\inv(U_1 \circ \cdots \circ U_r)} = 
\sumsb{I \vdash [n]\\ \type(I) = \lambda} \sum_{U \in \mathcal C_L(I,F_w)}
 q^{\inv(U_1 \circ \cdots \circ U_r)},
\end{gathered}
\end{equation}
where the left-hand sums are over cylindrical $F_w$-tableaux of shape $\lambda$
and left-anchored cylindrical $F_w$-tableaux of shape $\lambda$,
respectively.
In both cases,
it is easy to show that the 
inner right-hand sum factors as in Theorem~\ref{t:qpsiO}.
To state this factorization explicitly, 
we relate $\inv(U_1 \circ \cdots \circ U_r)$ to intervals
in the concatenation (\ref{eq:concat2}).
\begin{lem}\label{l:indexpairs}
Let $w \in \sn$ \avoidp, and let $[c_1,d_1], \dotsc, [c_t,d_t]$ be
the intervals appearing in the concatenation (\ref{eq:concat2}) 
of star networks that corresponds to $F_w$.  Let $U$ be a cylindrical 
$F_w$-tableau having $r$ rows, and fix indices $p_1 < p_2$ in $[r]$.
Then we have
\begin{multline}\label{eq:indexpairs}
\#\{ (\pi_a,\pi_b) \in U_{p_2} \times U_{p_1} \,|\, 
(\pi_b, \pi_a) \text{ an inversion in } U_1 \circ \cdots \circ U_r \}\\
=
\#\{ (a,b) \in L(U)_{p_2} \times L(U)_{p_1} \,|\, 
c_j \leq a < b \leq d_j \text{ for some } j\}.
\end{multline}
\end{lem}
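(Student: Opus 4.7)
The plan is to reinterpret both sides of (\ref{eq:indexpairs}) as cardinalities of explicit sets of pairs in $L(U)_{p_2} \times L(U)_{p_1}$, and to match their sizes via a counting argument driven by the zig-zag structure.

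First I would unpack the LHS. Since $p_1 < p_2$, each path in $U_{p_1}$ lies in a column of $U_1 \circ \cdots \circ U_r$ strictly to the left of each path in $U_{p_2}$, so $(\pi_b, \pi_a)$ with $\pi_a \in U_{p_2}$ and $\pi_b \in U_{p_1}$ is a left inversion exactly when $a < b$ and $\pi_a, \pi_b$ share a vertex of $F_w$. Writing
\[
\mathcal L = \{(a,b) \in L(U)_{p_2} \times L(U)_{p_1} \,:\, a < b,\ \pi_a \cap \pi_b \neq \emptyset\}
\]
and $\mathcal R$ for the set on the right of (\ref{eq:indexpairs}), the lemma becomes the assertion $|\mathcal L| = |\mathcal R|$.

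Next, using Observation \ref{o:sourcetosink} and the zig-zag construction, I would characterize which central star vertices a given path visits: with $f(i) = \min\{k : i \in [c_k, d_k]\}$ and $g(i) = \max\{\ell : i \in [c_\ell, d_\ell]\}$, the unique path from source $a$ to its cylindrical sink $s_a \in L(U)_{p_2}$ visits $x_j$ iff $j$ lies on the $\preceq$-chain from $[c_{f(a)}, d_{f(a)}]$ to $[c_{g(s_a)}, d_{g(s_a)}]$; this chain is well-defined because of conditions (1) and (2) on the concatenation (\ref{eq:concat}), which are guaranteed by $w$ avoiding $3412$ and $4231$. Decomposing $\mathcal L = \bigcup_j \mathcal L_j$ and $\mathcal R = \bigcup_j \mathcal R_j$, with $\mathcal L_j = \{(a,b) \in \mathcal L : x_j \in \pi_a \cap \pi_b\}$ and $\mathcal R_j = \{(a,b) \in \mathcal R : a,b \in [c_j, d_j]\}$, the main technical step is a per-$j$ count: in any row $U_p$, the number of paths visiting $x_j$ equals $|L(U)_p \cap [c_j, d_j]|$. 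I would obtain this from a cyclic counting argument using the key structural fact that no edge of the cycle in $U_p$ can run from a source $> d_j$ to a sink $< c_j$, since by Observation \ref{o:sourcetosink} no such path exists in $F_w$.

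To finish, I would deduce $|\mathcal L_j| = |\mathcal R_j|$ for each $j$ from the per-row count together with the inequality $a < b$, and then match the inclusion-exclusion expansions of $|\bigcup_j \mathcal L_j|$ and $|\bigcup_j \mathcal R_j|$ using the $\preceq$-chain structure to identify pairs counted at multiple $j$'s on each side. The hardest part will be this last matching, since a pair may appear in $\mathcal L_{j_1} \cap \mathcal L_{j_2}$ for reasons quite different from appearing in $\mathcal R_{j_1} \cap \mathcal R_{j_2}$; I expect to need a careful structural argument showing that these discrepancies cancel telescopically along maximal $\preceq$-chains of intervals, leveraging the pairwise nonnesting and monotonicity enforced by pattern avoidance.
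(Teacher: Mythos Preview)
Your decomposition has a genuine gap: the per-$j$ equality $|\mathcal L_j|=|\mathcal R_j|$ is simply false, so the argument breaks before you ever reach the inclusion--exclusion stage.  Take $w=3421$, so $F_w$ comes from $G_{[2,4]}\circ G_{[1,3]}$, and let $U$ have $L(U)_{p_1}=\{2,4\}$ (cyclically $(2,4)$) and $L(U)_{p_2}=\{1,3\}$ (cyclically $(1,3)$).  For $j=2$ (interval $[1,3]$) one finds
\[
\mathcal R_2=\{(1,2)\},\qquad
\mathcal L_2=\{(1,4),(3,4)\},
\]
since the path $\pi_4$ (source $4$, sink $2$) is the only path in $U_{p_1}$ that visits $x_2$, and both $\pi_1,\pi_3$ in $U_{p_2}$ visit $x_2$.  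Thus $|\mathcal L_2|=2\neq 1=|\mathcal R_2|$.  Your per-row count is correct---row $U_{p_1}$ has exactly one path through $x_2$ and exactly one source index in $[1,3]$---but the \emph{sets} of indices involved differ ($\{4\}$ versus $\{2\}$), and once you impose the inequality $a<b$ this discrepancy propagates.  So the difficulty is not in matching higher-order inclusion--exclusion terms; it is already present at first order, and the vague ``telescopic cancellation along $\preceq$-chains'' would have to absorb a mismatch you have not identified.

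The paper bypasses this entirely by constructing an explicit bijection $\varphi$ from the left set to the right set that crucially exploits the \emph{cyclic} structure of the row $U_{p_1}$.  Given an inversion $(\pi_a,\pi_b)$ with $a,b$ not in a common interval, one reads $U_{p_1}$ cyclically from $\pi_b$ until reaching the first path $\pi_c$ lying entirely above $\pi_a$, and sets $\varphi(\pi_a,\pi_b)=(a,c)$; one checks that $a,c$ \emph{do} share an interval.  The cyclicity is what makes this well-defined and invertible, and it is exactly what your static decomposition by star vertex $j$ loses.
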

\begin{proof}
Let $A$ and $B$ denote the sets on the left- and right-hand sides of
(\ref{eq:indexpairs}), respectively.  Define a map $\varphi: A \rightarrow B$
as follows, assuming $(\pi_a, \pi_b) \in A$.
If $a$ and $b$ belong to a common interval $[c_i,d_i]$, then set
$\varphi((\pi_a, \pi_b)) = (a,b)$.  Otherwise, read $U_{p_1}$ cyclically from
left to right starting at $\pi_b$, and let $\pi_c$ be the first path which
lies entirely above $\pi_a$.  Then set $\varphi((\pi_a,\pi_b)) = (a,c)$.
We claim that $\varphi$ is a bijection.

To see that $\varphi$ is well defined, suppose that $a$ and $b$ belong to
no common interval $[c_i, d_i]$,
and let $\pi_f$ be the path in 
$U_{p_1}$ terminating at sink $b$.  If $\pi_f$ intersects $\pi_a$, then
there exists a path in $F_w$ from source $a$ to sink $b$.
Since $a < b$, Observation~\ref{o:sourcetosink} and the comment following it
imply that $a$ and $b$ belong to a common interval $[c_i, d_i]$,
a contradiction.  Thus the set of paths lying strictly above $\pi_a$ in $U_{p_1}$
is nonempty, and the path $\pi_c$ is well defined.
Suppose $a$ and $c$ belong to no common interval $[c_i,d_i]$,
and let $\pi_g$ be the path in $U_{p_1}$ cyclically preceding $\pi_c$.
Then $\pi_g$ terminates at sink $c > a$.
By 
our choice of $\pi_c$, the path $\pi_g$ must intersect $\pi_a$.
Thus there is a path in $F_w$ from source $a$ to sink $c$.  
But this contradicts Observation~\ref{o:sourcetosink}.

The inverse $\xi$ of $\varphi$ may be described as follows, 
assuming $(a,b) \in B$.
If $\pi_a$ intersects $\pi_b$,
then set 
$\xi((a,b)) = (\pi_a, \pi_b)$.  Otherwise, read $U_{p_1}$ cyclically
from right to left starting at $\pi_b$, and let $\pi_c$ be the first path
such that $a$ and $c$ belong to no common interval $[c_i, d_i]$, and $c > a$.
Then set $\xi((a,b)) = (\pi_a, \pi_c)$.

To see that $\xi$ is well defined, suppose that $\pi_a$ does not intersect
$\pi_b$ and that $\xi((a,b)) = (\pi_a, \pi_c)$.  
Let $d$ be the index of the sink of $\pi_c$, so that $\pi_d$ 
immediately follows $\pi_c$.  By our choice of $\pi_c$, 
we have that $a$ and $d$ belong to a common interval $[c_i, d_i]$ 
or that $d < a$.  Thus by Observation~\ref{o:sourcetosink},
there is a path in $F_w$ from source $a$ to sink $d$,
and by Lemma~\ref{l:dsnintersect} paths $\pi_a$ and $\pi_c$ intersect.

Now we claim that $\varphi$ and $\xi$ are in fact inverse to one another.
This is clear when we restrict to pairs $(a,b)$ 
belonging to a common interval in (\ref{eq:concat2})
such that the paths $\pi_a$, $\pi_b$ intersect.
Suppose therefore that $\pi_a$ intersects $\pi_b$, 
and that $a$, $b$ belong to no common interval in (\ref{eq:concat2}). 
Let $\varphi((\pi_a, \pi_b)) = (a,c)$
and let $\xi((a,c)) = (\pi_a, \pi_{b'})$.
Since $\pi_c$ lies entirely above $\pi_a$, we have $b' \neq c$.
Suppose $b' \neq b$ and let $\pi_f$ be the path in $U_{p_1}$ terminating at
sink $b'$.  
By the definitions of $\varphi$ and $\xi$, and since $a$, $b$ belong to
no common interval in (\ref{eq:concat2}), the row
$U_{p_1}$ (up to cyclic rotation) has the form
\begin{equation*}
\cdots\
\raisebox{-3mm}{\includegraphics[height=6mm]{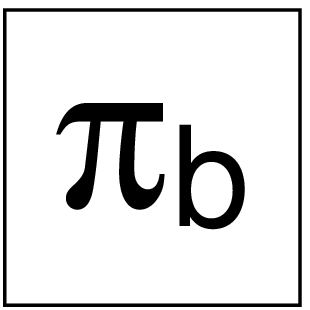}}\ 
\cdots\
\raisebox{-3mm}{\includegraphics[height=6mm]{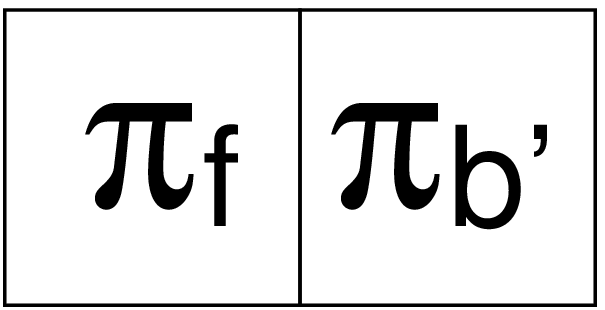}}\ 
\cdots\
\raisebox{-3mm}{\includegraphics[height=6mm]{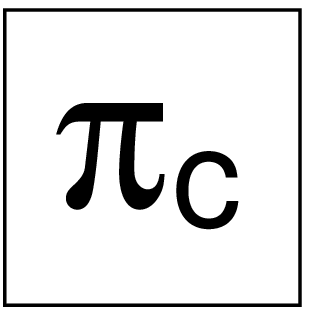}}\ . 
\end{equation*}
Since $a$, $b'$ belong to no common interval and $a < b'$ by the definition
of $\xi$, we have by Observation~\ref{o:sourcetosink} that there is
no path in $F_w$ from source $a$ to sink $b'$.  Thus $\pi_a$ and $\pi_f$
do not intersect, and  
$\pi_f$ must lie entirely above $\pi_a$. 
It follows that $\varphi((\pi_a,\pi_b))=(a,f) \neq (a,c)$, contradiction.
Now suppose that $\pi_a$ does not intersect $\pi_b$, 
and that $a$, $b$ belong to some common interval $[c_i, d_i]$ 
in (\ref{eq:concat2}). 
Let 
$\xi((a,b)) = (\pi_a, \pi_c)$
and let 
$\varphi((\pi_a, \pi_c)) = (a,b')$.
Since $a$ and $c$ belong to no common interval in (\ref{eq:concat2}),
we have $c \neq b'$. 
Suppose $b' \neq b$ and let $g$ be the sink index of $\pi_{b'}$.
By the definitions of $\varphi$ and $\xi$, and since 
$\pi_b$ lies entirely above $\pi_a$,
the row $U_{p_1}$ (up to cyclic rotation) has the form
\begin{equation*}
\cdots\
\raisebox{-3mm}{\includegraphics[height=6mm]{tableaux/1_c}}\ 
\cdots\
\raisebox{-3mm}{\includegraphics[height=6mm]{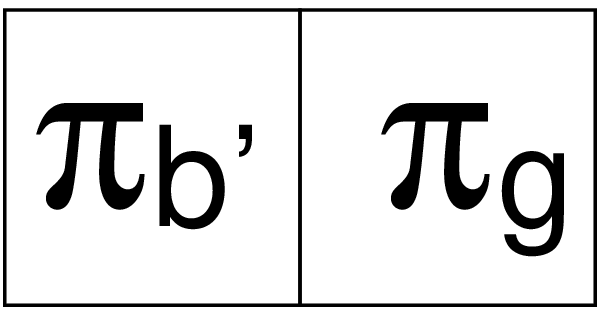}}\ 
\cdots\
\raisebox{-3mm}{\includegraphics[height=6mm]{tableaux/1_b}}\ . 
\end{equation*}
By our choice of $\pi_{b'}$ we have $g > a$, and there is no path in $F_w$
from source $a$ to sink $g$. Thus by Observation~\ref{o:sourcetosink} 
we have that $a$ and $g$ belong to no common interval in (\ref{eq:concat2}).
But this implies that $\xi((a,b)) = (\pi_a, \pi_g) \neq (\pi_a, \pi_c)$, 
a contradiction.
\end{proof}
Now we can factor the expressions in (\ref{eq:sumxu}) as follows.

\begin{prop}\label{p:multisum}
Fix $w \in \sn$ \avoidingp, 
$\lambda  = (\lambda_1,\dotsc,\lambda_r) \vdash n$,
and a set partition $I = (I_1,\dotsc,I_r) \vdash [n]$ of type $\lambda$.
Let $V = V(F_w,I)$.
Then we have
\begin{equation*}
\sum_U 
q^{\inv(U_1 \circ \cdots \circ U_r)} = 
q^{\inv(V_1 \circ \cdots \circ V_r))}
\Big( \sum_{W^{(1)}} q^{\inv(W^{(1)})} \Big) \cdots
\Big( \sum_{W^{(r)}} q^{\inv(W^{(r)})} \Big),
\end{equation*}
where the sums are over
$U \in \mathcal C(I,F_w)$, 
$W^{(1)} \in \mathcal C(I_1, F_w|_{I_1}), \dotsc,
W^{(r)} \in \mathcal C(I_r, F_w|_{I_r})$,
or over
$U \in \mathcal C_L(I,F_w)$, 
$W^{(1)} \in \mathcal C_L(I_1, F_w|_{I_1}), \dotsc,
W^{(r)} \in \mathcal C_L(I_r, F_w|_{I_r})$.
\end{prop}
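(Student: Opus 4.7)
The plan is to construct a natural bijection
\[
\mathcal{C}(I, F_w) \;\longleftrightarrow\; \mathcal{C}(I_1, F_w|_{I_1}) \times \cdots \times \mathcal{C}(I_r, F_w|_{I_r})
\]
by sending $U$ to its sequence of rows $(U_1,\dotsc,U_r)$, and then to verify that the statistic $\inv(U_1\circ\cdots\circ U_r)$ decomposes additively as $\inv(V_1\circ\cdots\circ V_r) + \sum_j \inv(W^{(j)})$, after which summing over $U$ yields the claimed product formula. Given $U\in\mathcal{C}(I,F_w)$, its $j$th row consists of paths indexed by $I_j$, and the defining cylindrical condition ($R(U_j)$ being the cyclic shift of $L(U_j)$ by one) restricts to give a cylindrical tableau $W^{(j)}$ on the subnetwork $F_w|_{I_j}$, since the requisite source-to-sink paths in $F_w|_{I_j}$ exist by the very definition of that restriction. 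The inverse stacks $W^{(1)},\dotsc,W^{(r)}$ as rows, and the left-anchored property is preserved row-by-row, yielding the analogous bijection between $\mathcal{C}_L(I,F_w)$ and $\prod_j \mathcal{C}_L(I_j, F_w|_{I_j})$.

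Applying identity (\ref{eq:invdecomp}) to $U$ and to $V = V(F_w,I)$ gives
\begin{equation*}
\inv(U_1\circ\cdots\circ U_r) - \inv(V_1\circ\cdots\circ V_r) = \sum_{j=1}^r \bigl(\inv(U_j)-\inv(V_j)\bigr) + \bigl(\inv(U^\tr)-\inv(V^\tr)\bigr).
\end{equation*}
Three things must be checked. First, two paths indexed by elements of $I_j$ intersect in $F_w$ if and only if they intersect in the subnetwork $F_w|_{I_j}$ they cover, so $\inv(U_j) = \inv(W^{(j)})$. Second, $V_j$ lists the paths $\pi_i$ ($i\in I_j$) in strictly increasing order of source index (since $L(V)$ is row-strict), hence no pair of entries in $V_j$ has the larger-indexed path on the left, giving $\inv(V_j) = 0$. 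Third, $\inv(U^\tr)$ counts intersecting pairs $(\pi_a,\pi_b)$ with $a>b$ and $\pi_a$ in an earlier row of $U$ than $\pi_b$; this depends only on the partition of path indices among rows, and since $U$ and $V$ share this partition $I$, we have $\inv(U^\tr) = \inv(V^\tr)$. The three facts together reduce the displayed difference to $\sum_j \inv(W^{(j)})$, and summing $q$ to this exponent over $U\in\mathcal{C}(I,F_w)$ factors as the desired product over $j$.

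The main obstacle is the first of the three checks, namely verifying that the cylindrical row $U_j$ actually defines a valid element of $\mathcal{C}(I_j, F_w|_{I_j})$ with the same intersection pattern as in $F_w$. The bookkeeping required is that each path $\pi_{i_\ell}$ appearing in $U_j$ from source $i_\ell$ to sink $i_{\ell+1}$ (cyclically) of $F_w$ corresponds, under the isomorphism defining $F_w|_{I_j}$, to a genuine path in the restricted network, and that two such paths share a vertex in $F_w|_{I_j}$ exactly when they do in $F_w$. These are direct consequences of the definition of $F|_S$ as (isomorphic to) the subnetwork of $F$ covered by $\{\pi_i : i\in S\}$, but the argument must be made explicitly to ensure that the bijection above is genuinely a bijection onto the stated product of sets.
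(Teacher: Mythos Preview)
Your overall strategy matches the paper's: set up the row-by-row bijection, decompose $\inv(U_1\circ\cdots\circ U_r)$ via (\ref{eq:invdecomp}), and argue that the cross-row contribution $\inv(U^\tr)$ equals $\inv(V_1\circ\cdots\circ V_r)$. The first two of your three checks are fine, and your ``main obstacle'' (the first check) is in fact routine, as you yourself say.

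The genuine gap is your third check. You assert that $\inv(U^\tr)$ ``depends only on the partition of path indices among rows,'' but this is precisely the nontrivial content of the proposition, and your one-sentence justification does not establish it. The point is that the path $\pi_a$ appearing in a cylindrical tableau $U$ goes from source $a$ to the \emph{cyclically next} element of its row, which depends on the ordering of that row; in $V$ the path with source $a$ is the type-$e$ path to sink $a$. These are different paths, and by Lemma~\ref{l:dsnintersect} whether $\pi_a$ intersects $\pi_b$ (for $b$ in another row) depends on the sink of the higher-indexed path, not just on $a$ and $b$. So for two different cylindrical tableaux $U,U'\in\mathcal C(I,F_w)$, the \emph{set} of intersecting cross-row pairs can genuinely differ; it is only the \emph{cardinality} that is invariant, and that invariance needs proof.

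The paper supplies this via Lemma~\ref{l:indexpairs}, which shows that for any cylindrical $U$ and any pair of rows $p_1<p_2$, the number of cross-row inversions equals $\#\{(a,b)\in I_{p_2}\times I_{p_1}: a<b \text{ and } a,b \text{ lie in a common interval } [c_j,d_j]\}$. The proof is a carefully constructed bijection $\varphi$ that does \emph{not} send $(\pi_a,\pi_b)$ to $(a,b)$ in general: when $a,b$ share no interval one must scan $U_{p_1}$ cyclically to locate a replacement index $c$. That this bijection is well-defined and invertible uses Observation~\ref{o:sourcetosink} and Lemma~\ref{l:dsnintersect} in an essential way. Once Lemma~\ref{l:indexpairs} is in hand, one checks separately (and more easily, since $V$ has type $e$) that $\inv(V_1\circ\cdots\circ V_r)$ equals the same index-pair count. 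You have correctly identified the equation to be proved but not the argument; the missing ingredient is exactly Lemma~\ref{l:indexpairs}.
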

\begin{proof}
First observe that we have a bijection
$\mathcal C(I_1, F_w|_{I_1}) \times \cdots \times \mathcal C(I_r, F_w|_{I_r})
\rightarrow \mathcal C(I,F_w)$ defined by joining $W^{(1)}, \dotsc, W^{(r)}$
into a single tableau $U$ with $U_j = W^{(j)}$.
Clearly, the bijection restricts to the 
corresponding subsets of left-anchored tableaux, and satisfies
\begin{equation}\label{eq:multisum}
\inv(U_1 \circ \cdots \circ U_r) = 
\inv(W^{(1)}) + \cdots + \inv(W^{(r)})
+ \inv(U^\tr).
\end{equation}
Now observe that the number $\inv(U^\tr)$
is equal to the left-hand side of (\ref{eq:indexpairs}), 
summed over pairs $(p_1, p_2)$ with $p_1 < p_2$.
Furthermore, by Observation~\ref{o:sourcetosink}, Lemma~\ref{l:dsnintersect}, 
and the definition of $V(F_w,I)$,
we have that $\inv(V_1 \circ \cdots \circ V_r)$ 
is equal to the right-hand side of (\ref{eq:indexpairs}),
summed over pairs $(p_1, p_2)$ with $p_1 < p_2$.
Thus we may rewrite (\ref{eq:multisum}) as
\begin{equation*}
\inv(U_1 \circ \cdots \circ U_r) = 
\inv(W^{(1)}) + \cdots + \inv(W^{(r)})
+ \inv(V_1 \circ \cdots \circ V_r),
\end{equation*}
as desired.
\end{proof}



It is clear that a cylindrical tableau $U$ is completely determined
by the Young tableau $L(U)$.  Under some conditions the insertion
or deletion of a greatest letter in the left tableau of a cylindrical
tableau yields a valid left tableau of another cylindrical tableau.
In these cases, intersecting paths in the two cylindrical tableaux
are closely related.


\begin{lem}\label{l:ndelins}
Let $F_w$ be a connected 
zig-zag
network of order $n$
corresponding to the concatenation (\ref{eq:concat2}) of star networks
(\ref{eq:concat2}),
and let $[b,n]$ be the unique interval containing $n$.
\begin{enumerate}
\item For any cylindrical $F_w$-tableau $U$ of shape $(n)$,
if $T'$ is the Young tableau obtained from $L(U)$ by deleting
the entry $n$, then there exists a unique 
cylindrical $F_{\iota(w)}$-tableau $U'$
such that $T' = L(U')$.
\item For any cylindrical $F_{\iota(w)}$-tableau $V'$ of shape $(n-1)$,
let $T$ be the Young tableau obtained from $L(V')$ 
by inserting the entry $n$ cyclically before any element in $[b, n-1]$ 
if the interval $[b,n]$ is maximal in $\preceq$, 
and cyclically after any element in $[b,n-1]$ otherwise.
Then there exists a unique
cylindrical $F_w$-tableau $V$ such that $L(V) = T$.
\item Let tableaux $U$ and $U'$ in (1) contain the path families
$(\pi_1, \dotsc, \pi_n)$ and $(\pi'_1, \dotsc, \pi'_{n-1})$, respectively.
For all pairs $(i,j)$, if $\pi_i$, $\pi'_i$ have the same sink
index, and $\pi_j$, $\pi'_j$ have the same sink index,
then $\pi_i$ and $\pi_j$ intersect 
if and only if $\pi'_i$ and $\pi'_j$ intersect.
\end{enumerate}
\end{lem}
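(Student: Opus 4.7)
The plan is to verify the three parts by tracing carefully through the cylindrical condition on $L(U)$, the at-most-one-path property of zig-zag networks (as noted after Theorem~\ref{t:lem53nndcb}), Observation~\ref{o:sourcetosink} characterizing source-to-sink paths, and the five-case definition of $\iota$ in (\ref{eq:iotadef}).

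For (1), I would let $p$ and $q$ denote the cyclic predecessor and successor of $n$ in $L(U)$; the cylindrical condition then forces the two consecutive paths at positions around $n$ to be $\pi_p \colon p \to n$ and $\pi_n \colon n \to q$. Deleting $n$ from $L(U)$ produces a cyclic word on $[n-1]$ in which the only new adjacency is the pair $(p,q)$; every other adjacency is inherited from $L(U)$, and the corresponding path retains its identity in the subnetwork $F_{\iota(w)}$. I would then verify that the required new path $p \to q$ exists in $F_{\iota(w)}$ by applying Observation~\ref{o:sourcetosink} to the intervals of $F_{\iota(w)}$ listed in (\ref{eq:iotadef}): the intervals of $F_w$ that justified the paths $p \to n$ and $n \to q$, chained through the position of $[b,n]$ in $\preceq$, produce the $\preceq$-relation needed in $F_{\iota(w)}$. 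Uniqueness of $U'$ is then immediate from the at-most-one-path property.

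For (2), the construction is reversed: starting from a cylindrical $V'$ on $[n-1]$, inserting $n$ introduces two new adjacencies involving $n$, replacing one old adjacency. I would check in each case of (\ref{eq:iotadef}) that the two new adjacencies correspond to paths existing in $F_w$, again by Observation~\ref{o:sourcetosink}. The asymmetry in the insertion rule --- before an element of $[b,n-1]$ when $[b,n]$ is maximal and after when $[b,n]$ is minimal --- arises because $n$ can reach a sink $r \neq n$ in $F_w$ only when $r \in [b,n]$ or when $[b,n] \prec$ some interval containing $r$; in the maximal case the second option is vacuous, so $n$ can only reach sinks in $[b,n-1]$, forcing the insertion-before rule, and symmetrically paths $r \to n$ force the insertion-after rule in the minimal case.

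For (3), the plan is to observe that for every $i \in [n-1] \setminus \{p\}$ the cyclic successor of $i$ in $L(U')$ agrees with its successor in $L(U)$, so $\pi_i$ and $\pi'_i$ share source and sink. Two paths in a zig-zag network intersect if and only if they pass through a common central vertex of one of the star networks in the concatenation (\ref{eq:concat2}); consulting (\ref{eq:iotadef}), the only central vertices affected by the passage from $F_w$ to $F_{\iota(w)}$ are those of star networks touching $n$ (either dropped or merged by the edge-contraction), and no path from source $i \neq n$ to sink $j \neq n$ with both indices appearing in $L(U') = T'$ can rely on the dropped vertex. Hence $\pi_i \cap \pi_j \neq \emptyset$ if and only if $\pi'_i \cap \pi'_j \neq \emptyset$. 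The main obstacle throughout will be the bookkeeping across the five cases of (\ref{eq:iotadef}); in particular one must match the maximal/minimal dichotomy of $[b,n]$ in $\preceq$ with its first-versus-last placement in the concatenation (per Observation~\ref{o:firstorlast}) so that the insertion direction in (2) produces exactly the adjacencies sanctioned by Observation~\ref{o:sourcetosink}, and must handle the edge-contraction subcases (where $d_2 = n-1$ or $d_{t-1} = n-1$) by tracking how two central vertices of $F_w$ merge into one vertex of $F_{\iota(w)}$.
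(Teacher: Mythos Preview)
Your plan is correct and follows essentially the same approach as the paper: for (1) and (2) you check that the single new cyclic adjacency corresponds to an existing path using Observation~\ref{o:sourcetosink} and the interval data of (\ref{eq:iotadef}), with uniqueness from the at-most-one-path property; for (3) you reduce to the fact that paths avoiding source and sink $n$ are preserved (up to the edge contraction) under $\iota$. One small caution on wording: in part (3) the central vertex of $G_{[b,n]}$ is never actually ``dropped'' --- it persists as the central vertex of $G_{[b,n-1]}$ in the non-contraction cases and is merged with an adjacent central vertex in the contraction cases --- so the paper's case split (no contraction: $\pi_i = \pi'_i$ literally; contraction: intersections at the edge $(x,y)$ become intersections at the merged vertex $z$) is the clean way to finish.
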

\begin{proof}
(1) Let $(i_1, i_2)$ be a pair of cyclically consecutive entries in $T'$.
If these entries are also cyclically consecutive in $L(U)$, then there exists
a (unique) 
path $\pi_{i_1}$ in $F_w$ from source $i_1$ to sink $i_2$.  
Since $F_{\iota(w)}$ differs from $F_w$ by the removal of the 
unique path from source $n$ to sink $n$
and possibly the contraction of an edge to a single vertex, 
the image of $\pi_{i_1}$ is again the unique path in $F_{\iota(w)}$ from
source $i_1$ to sink $i_2$.
If $(i_1, i_2)$ are not cyclically consecutive in $L(U)$, then $i_1$
cyclically precedes $n$ 
and $i_2$ cyclically follows $n$ 
in $L(U)$.
Thus either $i_1$ or $i_2$ belongs to $[b,n-1]$.
Thus in $G$ there is a path from source $i_1$ to the central vertex
of $G_{[b,n]}$ and a path from this vertex to sink $i_2$.
It follows that there is a path in $F_w$ from source $i_1$ to sink $i_2$.
Uniqueness of $U'$ follows from uniqueness of source-to-sink paths
in zig-zag networks.  (See comment following Theorem~\ref{t:lem53nndcb}.)

(2)
Let $(i_1, i_2)$ be a pair of cyclically consecutive entries in $T$.
If these entries are also cyclically consecutive in $L(V')$,
then there exists a (unique) 
path from source $i_1$ to sink $i_2$ 
in $F_{\iota(w)}$.  Since each interval in the concatenation 
corresponding to $F_{\iota(w)}$ is equal to or is contained in an interval
in the concatenation corresponding to $F_w$, we use
Observation~\ref{o:sourcetosink} and the fact that both $F_w$ and $F_{\iota(w)}$
are connected to infer that there is a path in $F_w$ from source $i_1$ to 
sink $i_2$. 
Again, uniqueness of $V$ follows from uniqueness of 
source-to-sink paths in zig-zag networks.  

(3)
Let $\pi_i$, $\pi'_i$, $\pi_j$, $\pi'_j$ satisfy the stated conditions.
Then the source and sink indices of these paths are not equal to $n$.
Suppose first that in the concatenation (\ref{eq:concat2})
corresponding to $F_w$ we have $t=1$, or $d_1 = n$ and $d_2 < n-1$, 
or $d_t = n$ and $d_{t-1} < n-1$.
Then $F_{\iota(w)}$ is the subgraph of $F_w$ obtained by 
deleting the unique path from source $n$ to sink $n$.
By the uniqueness of paths in descending star networks, we have
$\pi_i = \pi'_i$ and $\pi_j = \pi'_j$.
Now suppose that in (\ref{eq:concat2}) we have 
$d_1 = n$ and $d_2 = n-1$, 
or $d_t = n$ and $d_{t-1} = n-1$, 
and let $x$, $y$ be the vertices in $F_w$ corresponding to
the central vertices of the star networks 
$G_{[b,n]}$ and $G_{[c_2,d_2]}$, respectively,
or $G_{[c_{t-1},d_{t-1}]}$ and $G_{[b,n]}$, respectively.
Then $F_{\iota(w)}$ is obtained from $F_w$ by
deleting the unique path from source $n$ to sink $n$,
and by contracting the edge $(x,y)$ to a single vertex $z$.
Thus if $\pi_i \cap \pi_j$ contains the edge $(x,y)$
then $\pi'_i \cap \pi'_j$ contains the vertex $z$;
if $\pi_i \cap \pi_j$ does not contain the edge $(x,y)$,
then $\pi'_i \cap \pi'_j = \pi_i \cap \pi_j$. 
\end{proof}



Given a zig-zag network $F$ of order $n$, define $Y_n(F,i)$ 
to be the set of 
cylindrical $F$-tableaux $U$ of shape $(n)$ 
in which the first entry of $L(U)$ is $i$.
In terms of our earlier notation, we have
\begin{equation}\label{eq:CY}
\mathcal C([n], F_w) = \bigcup_{1 \leq i \leq n} Y_n(F_w, i),
\qquad
\mathcal C_L([n], F_w) = Y_n(F_w, 1),
\end{equation}
where we interpret $[n]$ as the ordered set partition having one block.

Let us examine the map 
$U \mapsto U'$ and the numbers $\inv(U)$, $\inv(U')$
defined by Lemma~\ref{l:ndelins} (1) in the case that
$U \in Y(F_w, i)$ and $i \leq n-1$.
To be precise,
for each pair $(F_w, i)$ where $F_w$ 
is a connected zig-zag network of order at least 2
with corresponding concatenation (\ref{eq:concat2}),
poset $\preceq$ of intervals,
and $[b,n]$ the unique interval containing $n$,
and $1 \leq i \leq n-1$, we define a map 
\begin{equation*}
\begin{aligned}
\delta_1: Y_n(F_w, i) &\rightarrow
Y_{n-1}(F_{\iota(w)}, i) \times \{ 0, 1, \dotsc, n - b - 1 \}\\
U &\mapsto (U',k)
\end{aligned}
\end{equation*}
by declaring $U'$ to be the cylindrical tableau 
whose left tableau is obtained from $L(U)$
by deleting $n$,
and by declaring 
$k$ to be the number of paths following $\pi_n$ in $U$ whose
\begin{equation*}
\begin{cases}
\text{sink index belongs to $[b,n-1]$} 
&\text{if $[b,n]$ maximal in $\preceq$},\\
\text{source index belongs to $[b,n-1]$} 
&\text{otherwise}.
\end{cases}
\end{equation*}

For example, let $n = 6$, recall 
(\ref{eq:F256431}), and consider the $F_{256431}$-tableau and its left 
Young tableau
\begin{equation*}
U = 
\raisebox{-1.6mm}{\includegraphics[height=6mm]{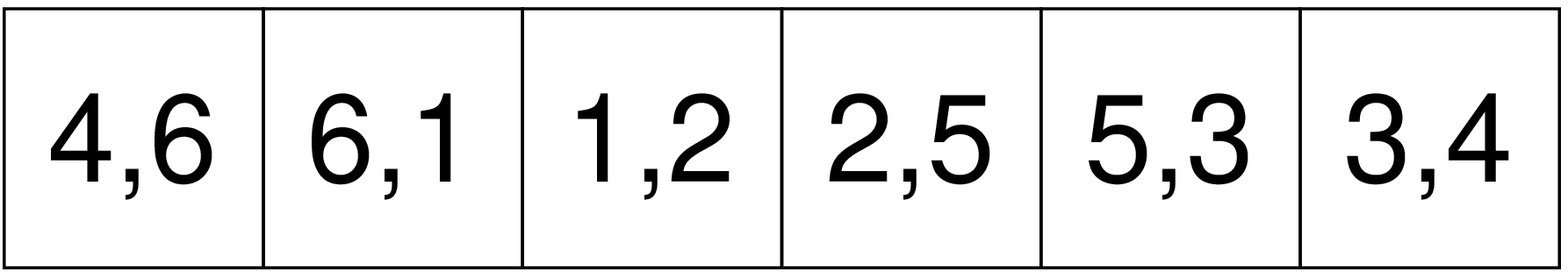}}\
, \quad
T =
\raisebox{-1.6mm}{\includegraphics[height=6mm]{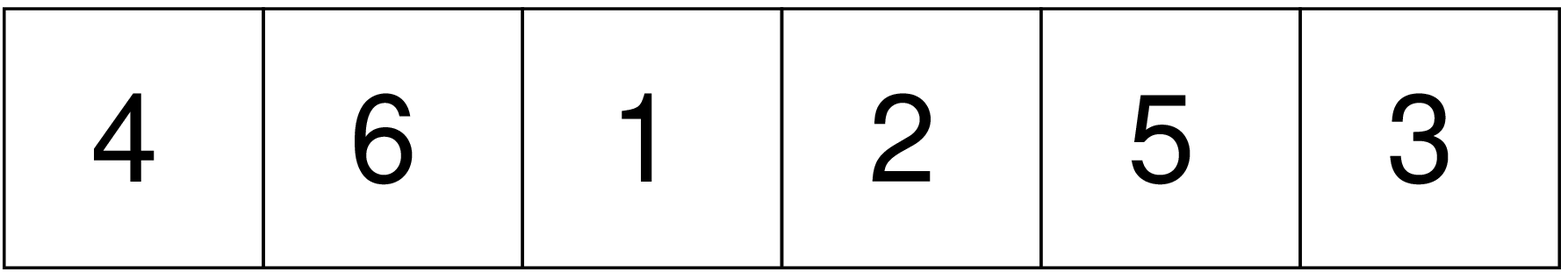}}\ ,
\end{equation*}
with $U \in Y_6(256431,4)$, and
$[3,6]$ not maximal in the poset 
$[3,6] \prec [2,4] \prec [1,2]$.
Removing $6$ from $T$ we have the tableaux
\begin{equation*}
T' =
\raisebox{-1.6mm}{\includegraphics[height=6mm]{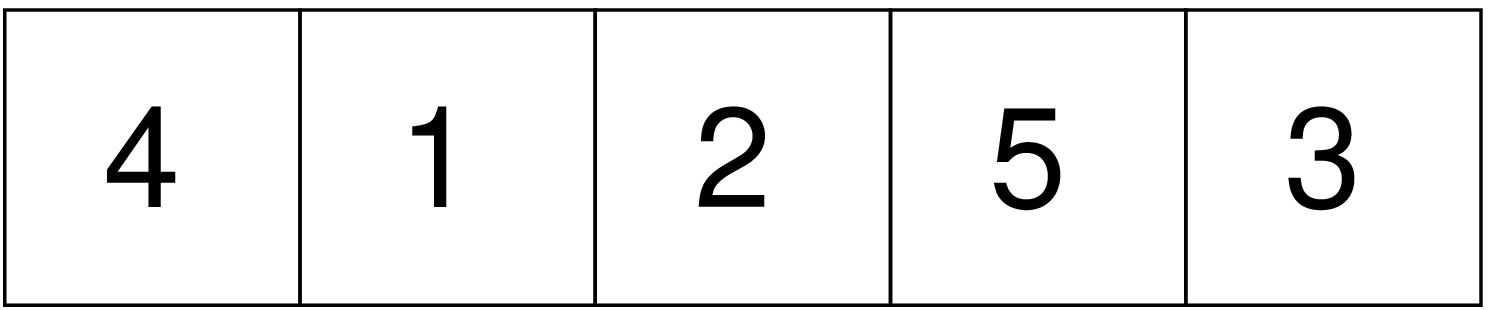}}\ , \quad
U' = 
\raisebox{-1.6mm}{\includegraphics[height=6mm]{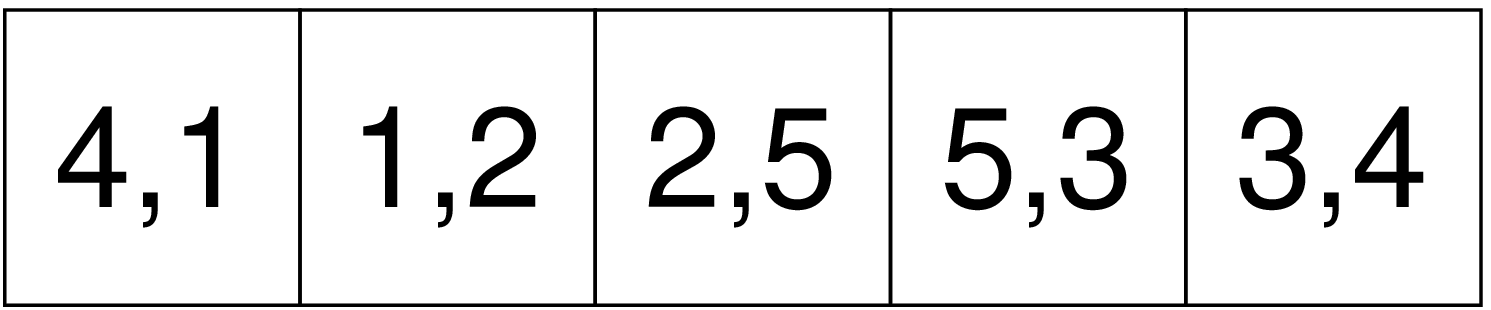}}\ ,
\end{equation*}
with
$T' = L(U')$ and
$U' \in Y_5(F_{25431},4)$.
Since 
the only paths in $U$ which follow $\pi_6$ and have source indices
in $[3,5]$ are $\pi_5$, $\pi_3$, 
we have $\delta_1(U) = (U', 2)$.

\begin{lem}\label{l:delta1}
For each connected zig-zag 
network $F_w$ of order $n$, 
the map $\delta_1$ 
is a bijection.  
Furthermore, if $\delta_1(U) = (U',k)$ then we have $\inv(U) = \inv(U') + k$.
\end{lem}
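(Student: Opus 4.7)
My plan is to verify $\delta_1$ is well-defined, construct its explicit inverse, and then establish the inversion identity $\inv(U) = \inv(U') + k$ through a careful analysis of how inversions are affected by the removal of $\pi_n$. Well-definedness follows from Lemma~\ref{l:ndelins}(1) (giving $U' \in Y_{n-1}(F_{\iota(w)}, i)$, since $i \leq n-1$ ensures that the first entry $i$ is preserved under deletion of $n$) together with the bound $k \leq n-b-1$: writing $L(U) = (i, a_2, \dots, a_{j-1}, n, a_j, \dots, a_{n-1})$, either $\pi_n$ sits at the end of $L(U)$ (giving $k = 0$) or its sink $a_j$ lies in $[b,n-1]$ (case A) and hence is excluded from the sinks of paths following $\pi_n$, forcing $k \leq |[b,n-1]| - 1 = n-b-1$. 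Case B is analogous, replacing sinks by sources.

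For bijectivity I construct the inverse explicitly. Given $(U', k)$, let $p_1 < \dots < p_{n-b}$ be the positions occupied by the elements of $[b, n-1]$ in $L(U')$. By Lemma~\ref{l:ndelins}(2), the valid insertion positions for $n$ in case A are those cyclically before each $a_{p_\ell}$, and a direct calculation (tracking which elements of $[b,n-1]$ land to the right of $n$ after insertion) shows that as $\ell$ ranges over $\{1,\dots,n-b\}$, the resulting values of $k$ hit each element of $\{0,1,\dots,n-b-1\}$ exactly once: moving the insertion point ``earlier'' by one valid slot generically decreases $k$ by $1$, with the single exception of inserting cyclically before $i$ (when $i \in [b,n-1]$), which places $n$ at the end of the linear reading and hence yields $k = 0$. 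This recovers the unique preimage $\delta_1^{-1}(U', k)$, and case B is treated in parallel.

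For the inversion identity, let $\pi^\flat$ be the path of $U$ at position $j-1$ (source $a_{j-1}$, sink $n$) and $\pi^\sharp$ the corresponding path of $U'$ (source $a_{j-1}$, sink $a_j$). Every other path of $U$ at a position $\neq j-1, j$ shares both its source and sink with the corresponding path of $U'$, so Lemma~\ref{l:ndelins}(3) matches their pairwise intersections. Hence
\begin{equation*}
\inv(U) - \inv(U') = N_{\pi_n} + N_{\pi^\flat} - N_{\pi^\sharp},
\end{equation*}
where $N_\pi$ counts inversions involving $\pi$ in the relevant tableau. The geometric core is that $\pi^\flat \cup \pi_n$ traces in $F_w$ the same sequence of star centers that $\pi^\sharp$ traces in $F_{\iota(w)}$, modulo the possibly contracted edge, so for any $\gamma \notin \{a_{j-1}, n\}$ we obtain the equivalence
\begin{equation*}
\pi'_\gamma \cap \pi^\sharp \neq \emptyset \iff \pi_\gamma \cap \pi^\flat \neq \emptyset \text{ or } \pi_\gamma \cap \pi_n \neq \emptyset.
\end{equation*}
A case analysis based on whether $\pi_\gamma$ lies to the left of $\pi^\flat$, between $\pi^\flat$ and $\pi_n$, or to the right of $\pi_n$, and on the sign of $\gamma$ versus $a_{j-1}$, rearranges $N_{\pi_n} + N_{\pi^\flat} - N_{\pi^\sharp}$ so that the surviving contribution is exactly the count of paths following $\pi_n$ whose sink (case A) or source (case B) lies in $[b,n-1]$, which is $k$ by definition.

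The principal obstacle is in the third step. Since $\pi^\flat$ and $\pi^\sharp$ have different sinks, Lemma~\ref{l:ndelins}(3) does not directly match their inversions, and a path $\pi_\gamma$ may meet $\pi_n$ at a star center distinct from that of $G_{[b,n]}$ without also meeting $\pi^\flat$. The combined-path equivalence absorbs this phenomenon, but extracting the count $k$ requires verifying—via a position-by-position check using Observation~\ref{o:sourcetosink} and Lemma~\ref{l:dsnintersect}—that after cancellation the remaining inversions are in bijection with the paths following $\pi_n$ satisfying the defining condition of $k$.
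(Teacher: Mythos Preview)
Your approach is essentially the same as the paper's: both arguments use Lemma~\ref{l:ndelins}(1)--(2) for well-definedness and the inverse, invoke Lemma~\ref{l:ndelins}(3) to match inversions among the paths whose source and sink are unchanged, and then isolate the contribution of the three exceptional paths $\pi^\flat=\pi_{i_{\ell-1}}$, $\pi_n$, and $\pi^\sharp=\pi'_{i_{\ell-1}}$.

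The difference is one of packaging and completeness. Your ``combined-path equivalence'' $\pi'_\gamma\cap\pi^\sharp\neq\emptyset \iff \pi_\gamma\cap(\pi^\flat\cup\pi_n)\neq\emptyset$ is a clean way to summarize the geometry, but you do not prove it, and its verification requires exactly the case split the paper performs explicitly. The paper separates the two cases (``$[b,n]$ maximal'' versus ``$[b,n]$ minimal'') and in each case identifies which of $\pi^\flat$ or $\pi_n$ is ``short'' (confined to the single center of $G_{[b,n]}$): in the maximal case $\pi_n$ passes only through that center, so $\pi^\flat$ and $\pi^\sharp$ coincide up to it and have identical intersection patterns, giving $N_{\pi^\flat}=N_{\pi^\sharp}$ and hence $\inv(U)-\inv(U')=N_{\pi_n}=k$; in the minimal case the roles reverse and one must also handle the sign of $\gamma-i_{\ell-1}$, which the paper does by observing $i_{\ell-1}\ge b$. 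Your outline defers precisely this work (and note that your three-way positional split is really a two-way split, since $\pi^\flat$ and $\pi_n$ are adjacent). The filled-in version of your argument would be the paper's proof.
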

\begin{proof}
Assume that $F_w$ 
corresponds to the concatenation (\ref{eq:concat2})
in which the unique interval containing $n$ is $[b,n]$,  
and let $(U',k)$ be a pair in 
$Y_{n-1}(F_{\iota(w)}, i) \times \{ 0, 1, \dotsc, n - b - 1 \}$.
To invert $\delta_1$, find an entry $j \in [b,n-1]$ in the tableau $L(U')$
with exactly $k-1$ entries in $[b,n-1]$ to its right.  
(This is possible, since $k \leq n-1-b$.)
Now create a new Young tableau $T$ by inserting the letter $n$ into $L(U')$ 
immediately to the left of $j$ if $[b,n]$ is maximal in $\preceq$, 
or immediately to the right of $j$ otherwise.
By Lemma~\ref{l:ndelins} (2)
there is a unique $F_w$-tableau $U$ with $L(U) = T$.


To compare inversions in $U$ 
and $U'$, write
\begin{equation*}
\begin{aligned}
U &= (\pi_{i_1}, \dotsc, \pi_{i_n}),\\
U' &= (\pi'_{i_1}, \dotsc, \pi'_{i_{\ell-1}}, \pi'_{i_{\ell+1}}, \dotsc, \pi'_{i_n}),
\end{aligned}
\end{equation*}
where $\pi_{i_\ell} = \pi_n$.
Observe that all paths except $\pi_{i_{\ell-1}}$, $\pi_{i_\ell}$
in $U$ have the same sources and sinks as the corresponding paths in $U'$.
Thus by Lemma~\ref{l:ndelins} (3), two such paths form an inversion
in $U$ if and only if the corresponding paths form an inversion in $U'$.
Consider therefore inversions in $U$ which involve one of the paths
$\pi_{i_{\ell-1}}$, $\pi_{i_\ell}$,
and inversions in $U'$ which involve the path $\pi'_{i_{\ell-1}}$.

Suppose first that $[b,n]$ is maximal in $\preceq$.
By definition, there are $k$ inversions in $U$ of the form 
$(\pi_{i_\ell}, \pi_c)$, where $\pi_c$ terminates at a sink 
having index in $[b,n-1]$.  These in fact are the only inversions in $U$
involving $\pi_{i_\ell}$: since there are no paths in $F_w$ from source $n$
to sinks $1, \dotsc, b-1$, Observation~\ref{o:pnetintersect}
implies that $\pi_{i_\ell}$ cannot intersect any path terminating at one
of these sinks.
Now observe that path $\pi_{i_{\ell-1}}$ terminates at sink $n$,
while paths $\pi'_{i_{\ell-1}}$ and $\pi_n$ terminate at sink $i_{\ell+1}$.
By the maximality of $[b,n]$, we have that $i_{\ell+1}$ 
belongs to the interval $[b,n]$. 
Thus the paths $\pi_{i_{\ell-1}}$ and $\pi'_{i_{\ell-1}}$ are identical
from their sources 
up to the vertex of $F_w$ ($F_{\iota(w)}$) 
corresponding to the central vertex of $G_{[b,n]}$.
It follows that any path in $U$ intersects $\pi_{i_{\ell-1}}$ if and only if the
corresponding path in $U$ intersects $\pi'_{i_{\ell-1}}$.
Therefore we have $\inv(U) = \inv(U') + k$.

Now suppose that $[b,n]$ is not maximal in $\preceq$. 
Then it must be minimal, 
and since $\pi_{i_{\ell-1}}$ terminates at sink $n$,
we have $i_{\ell-1} \geq b$.
Consider paths $\pi_c$ with $c \geq b$.
By the minimality of $[b,n]$ in $\preceq$,
we have that $\pi_c$ intersects $\pi_{i_{\ell-1}}$ and $\pi_{i_\ell}$
at the vertex of $F_w$ ($F_{\iota(w)}$) corresponding to the
central vertex of $G_{[b,n]}$.
Thus
$(\pi_c, \pi_{i_{\ell-1}})$ 
or
$(\pi_{i_{\ell-1}}, \pi_c)$ 
is an inversion in $U$ if and only if the corresponding pair
is an inversion in $U'$.
By definition, there are $k$ inversions in $U$ 
of the form $(\pi_{i_\ell}, \pi_c)$ with $c \geq b$.
Now consider paths $\pi_c$ with $c < b$.
By Observation~\ref{o:pnetintersect},
no pair $(\pi_{i_{\ell-1}}, \pi_c)$ 
is an inversion in $U$,
since there is no path from source $c$ to sink $n$ in $F_w$
On the other hand, the paths $\pi_{i_\ell}$ and $\pi'_{i_{\ell-1}}$
are identical from the vertex of $F_w$ (or $F_{\iota(w)}$) corresonding
to the central vertex of $G_{[b,n]}$ to sink $i_{\ell+1}$.
Thus each pair $(\pi_{i_\ell}, \pi_c)$ is an inversion in $U$
if and only if $(\pi'_{i_{\ell-1}}, \pi'_c)$ is an inversion in $U'$.
It follows again that $\inv(U) = \inv(U') + k$.
\end{proof}

Note that in the example preceding Lemma~\ref{l:delta1}
we have $\inv(U) = 6 = \inv(U') + 2$, and
$\delta_1(U) = (U',2)$.

Now let us examine a map 
$U \mapsto U'$ and the numbers $\inv(U)$, $\inv(U')$
closely related to those defined by Lemma~\ref{l:ndelins} (1) 
in the case that
$U \in Y(F_w, n)$.
To be precise,
for each connected zig-zag
network $F_w$ of order at least $2$
with corresponding concatenation (\ref{eq:concat2}), poset $\preceq$
of intervals, 
and $[b,n]$ the unique interval containing $n$,
we define a map 
\begin{equation}\label{eq:delta2}
\begin{aligned}
\delta_2: Y_n(F_w, n) &\rightarrow
\bigcup_{j=b}^{n-1} Y_{n-1}(F_{\iota(w)}, j),\\
U &\mapsto U',
\end{aligned}
\end{equation}
by declaring $U'$ to be the cylindrical tableau 
whose left tableau 
is obtained from $L(U) = (i_1, \dotsc, i_n)$
by 
\begin{equation*}
L(U') = 
\begin{cases}
(i_2, \dotsc, i_n) &\text{if $[b,n]$ maximal in $\preceq$},\\
(i_n, i_2, \dotsc, i_{n-1}) &\text{otherwise}.
\end{cases}
\end{equation*}
The tableau $U'$ exists and is unique by Lemma~\ref{l:ndelins} (1).
If $[b,n]$ is maximal in $\preceq$ then any path beginning at source $n$
must terminate at a sink in this interval.
Thus we have $i_2 \in [b,n-1]$.
If $[b,n]$ is not maximal in $\preceq$ it must be minimal, 
and any path terminating at sink $n$
must begin at a source in this interval.
Thus we have $i_n \in [b,n-1]$.
It follows that $U'$ belongs to the union in (\ref{eq:delta2}).

For example, let $n = 6$, recall 
(\ref{eq:F246531}), and consider the $F_{246531}$-tableau and its left 
Young tableau
\begin{equation*}
U = 
\raisebox{-1.6mm}{\includegraphics[height=6mm]{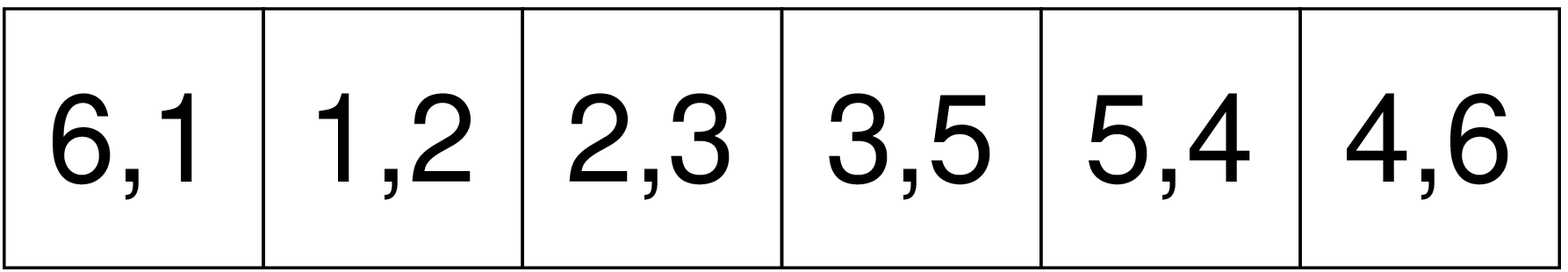}}\
, \quad
T =
\raisebox{-1.6mm}{\includegraphics[height=6mm]{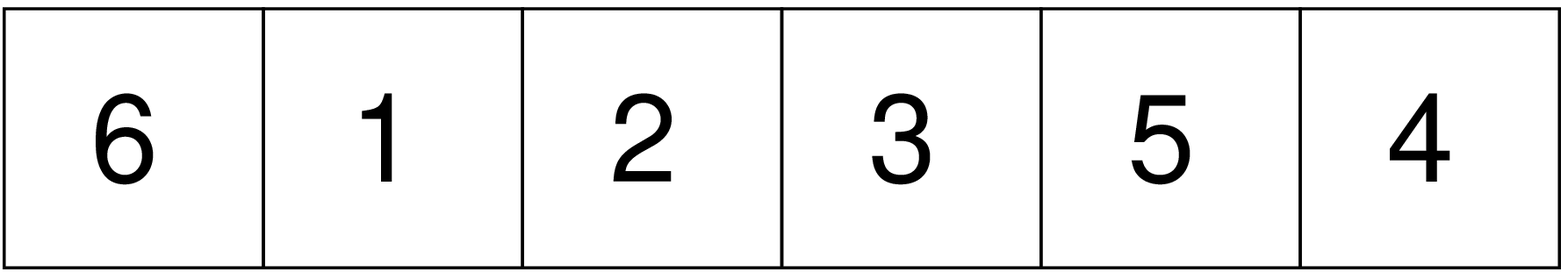}}\ ,
\end{equation*}
with $U \in Y_6(246531,6)$, and $T$ having rightmost entry $4$.
Removing $6$ from $T$ and moving $4$ to the leftmost position,
we have the tableaux
\begin{equation*}
T' =
\raisebox{-1.6mm}{\includegraphics[height=6mm]{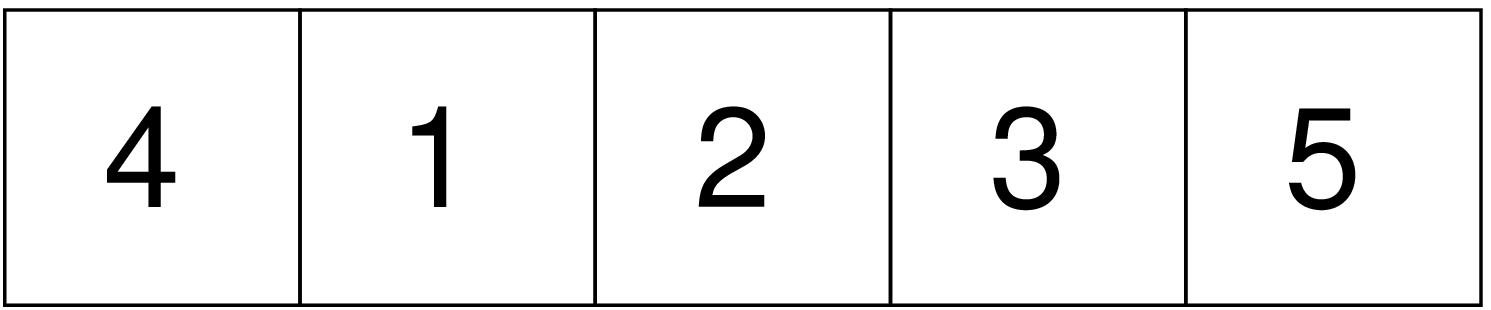}}\ , \quad
U' = 
\raisebox{-1.6mm}{\includegraphics[height=6mm]{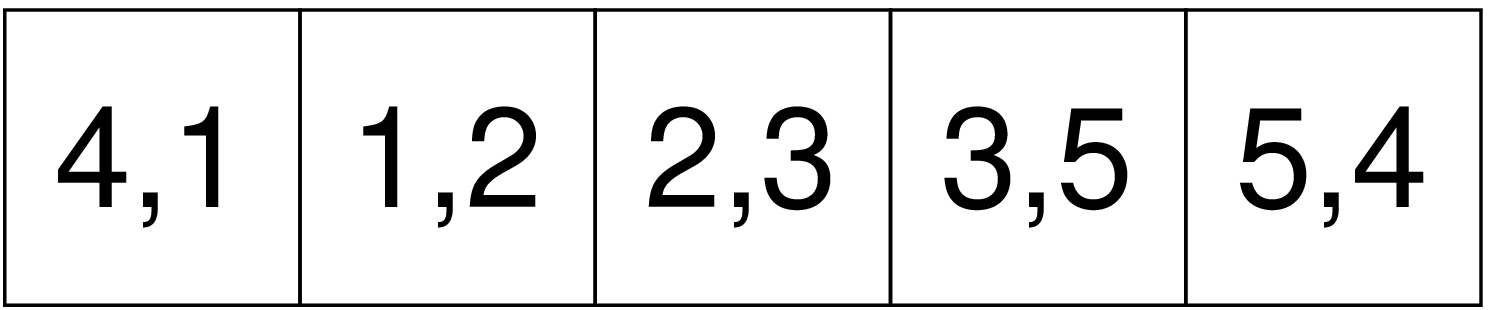}}\ ,
\end{equation*}
with
$T' = L(U')$ and
$U' \in Y_5(F_{24531},4)$.

\begin{lem}\label{l:delta2}
For each connected zig-zag
network $F_w$ of order $n$, 
the map $\delta_2$ 
is a bijection.  Furthermore, if $\delta_2(U) = U' \in Y_{n-1}(F_{\iota(w)},j)$ 
and $b$ is as in (\ref{eq:delta2})
then we have 
\begin{equation*}
\inv(U) = 
\begin{cases}
\inv(U') + n - b &\text{if $[b,n]$ maximal in $\preceq$},\\
\inv(U') + 2n - 2j - 1 &\text{otherwise}.
\end{cases}
\end{equation*}
\end{lem}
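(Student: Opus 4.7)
The plan is to verify that $\delta_2$ is a bijection and then establish the inversion comparison by tracking how paths in $U$ change (or not) in passing to $U'$. First I would check well-definedness: write $L(U) = (n, i_2, \ldots, i_n)$, so that $\pi_n$ has source $n$ and sink $i_2$. In the maximal subcase, the maximality of $[b,n]$ together with Observation~\ref{o:sourcetosink} forces $i_2 \in [b,n-1]$; in the non-maximal (hence minimal) subcase, applying the same observation to $\pi_{i_n}$ (source $i_n$, sink $n$) forces $i_n \in [b,n-1]$. In both subcases Lemma~\ref{l:ndelins} (1) guarantees that the prescribed left tableau $L(U')$ determines a unique cylindrical $F_{\iota(w)}$-tableau $U'$, which lies in $Y_{n-1}(F_{\iota(w)}, j)$ for $j = i_2$ or $j = i_n$ respectively. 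The inverse map reinserts $n$: in the maximal subcase by prepending $n$ to $L(U')$; in the non-maximal subcase by first cyclically moving $j$ from the front to the back of $L(U')$, then prepending $n$. Lemma~\ref{l:ndelins} (2) furnishes the unique preimage in $Y_n(F_w, n)$, yielding bijectivity.

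For the inversion count, I would invoke the equivalence $\sim$ together with Theorem~\ref{t:qthetavw} to reduce to the case where $F_w$ is a descending star network, thereby unlocking Lemma~\ref{l:dsnintersect}. Common to both subcases: for each $k \in \{2,\ldots,n-1\}$, the path $\pi_{i_k}$ in $U$ has the same source and sink as $\pi'_{i_k}$ in $U'$, so by Lemma~\ref{l:ndelins} (3) all intersections among this collection of $n-2$ paths are preserved between $U$ and $U'$.

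In the maximal subcase, the cyclic shift arranges that $\pi_{i_n}$ (sink $n$ in $U$) and $\pi'_{i_{n-1}}$'s cyclic successor in $U'$ involve the same source-sink data as the unchanged paths, so inversions not involving $\pi_n$ coincide. The only new inversions arise from pairs $(\pi_n, \pi_{i_k})$ in $U$, each automatically an inversion since $n$ is the largest source. By Lemma~\ref{l:dsnintersect} together with Observation~\ref{o:sourcetosink}, $\pi_n$ intersects $\pi_{i_k}$ iff $i_2$ and $i_k$ lie in a common interval of the concatenation for $F_w$, which by the maximality of $[b,n]$ must be $[b,n]$. The count is $|\{i_2,\ldots,i_n\} \cap [b,n-1]| = n-b$, yielding $\inv(U) = \inv(U') + (n-b)$. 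The non-maximal subcase is more delicate because $\pi_{i_n}$ in $U$ (sink $n$, rightmost position $n$) and $\pi'_{i_n}$ in $U'$ (sink $i_2$, leftmost position $1$) differ in both sink and relative position. Setting $j = i_n$, I would decompose $\inv(U) - \inv(U')$ into the inversions in $U$ involving $\pi_n$, plus the inversions in $U$ of the form $(\pi_{i_k}, \pi_{i_n})$ with $i_k > j$, minus the inversions in $U'$ of the form $(\pi'_{i_n}, \pi'_{i_k})$ with $i_k < j$. Using the minimality of $[b,n]$ and the covering $[b,n] \precdot [c_2, d_2]$, applications of Observation~\ref{o:sourcetosink} and Lemma~\ref{l:dsnintersect} determine exactly which $\pi_{i_k}$ intersect $\pi_n$, $\pi_{i_n}$, and $\pi'_{i_n}$ in terms of whether $i_k$ lies in $[b,n-1]$ or in $[1,b-1]$; careful enumeration, keyed off the position of $j$ within $[b,n-1]$, yields the closed form $2n - 2j - 1$.

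\textbf{Main obstacle.} The principal difficulty lies in the non-maximal subcase: the cyclic reshuffling moves the same underlying path from the rightmost to the leftmost position while changing its sink, so some intersections that were inversions in $U$ become non-inversions in $U'$ and vice versa. Arriving at the compact expression $2n - 2j - 1$ requires a double-counting argument that carefully separates indices $i_k$ in $[b,n-1]$ (which meet $\pi_n$, $\pi_{i_n}$, and $\pi'_{i_n}$ at the central vertex of $G_{[b,n]}$) from indices in $[1,b-1]$ (whose interaction is governed by the covering interval $[c_2,d_2]$ in $\preceq$), and then exploits the key fact $j \in [b,n-1]$ to balance the positive and negative contributions.
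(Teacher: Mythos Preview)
Your bijectivity argument matches the paper's: Lemma~\ref{l:ndelins}(1) and (2) supply the forward and inverse maps, and the range $j \in [b,n-1]$ follows from Observation~\ref{o:sourcetosink} exactly as you describe.

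The gap is in the inversion comparison. Your reduction to descending star networks via $\sim$ and Theorem~\ref{t:qthetavw} does not go through: that theorem only transfers properties and statistics that depend solely on the path poset $P(F)$, whereas the cylindrical condition, the sets $Y_n(F_w,n)$, the maps $\delta_2$ and $\iota$, and $\inv$ on tableaux of non-identity type all depend on the full network structure, not just on $P(F)$. Concretely, when $v \sim w$ correspond to the same multiset of intervals in different orders, the interval $[b,n]$ may be maximal in one poset $\preceq$ and minimal in the other, so the two branches of the formula you are trying to establish do not even match up across the equivalence. Worse, for a connected descending star network with $t>1$ the interval $[b,n]=[c_1,d_1]$ is always \emph{minimal} in $\preceq$, so your reduction would render the maximal branch vacuous while the lemma still asserts it for genuinely non-descending $F_w$.

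Even granting a DSN hypothesis, your intersection criterion in the maximal case is not what Lemma~\ref{l:dsnintersect} and Observation~\ref{o:sourcetosink} actually give. They yield that $\pi_n$ meets $\pi_{i_k}$ iff there is a path from source $i_k$ to sink $i_2$, and Observation~\ref{o:sourcetosink} allows this either when $i_k,i_2$ share an interval \emph{or} when an interval containing $i_k$ precedes one containing $i_2$ in $\preceq$; your criterion drops the second alternative. The paper bypasses all of this by arguing directly for arbitrary zig-zag networks: since $n$ lies only in the interval $[b,n]$, every path through source~$n$ or sink~$n$ passes through the central vertex of $G_{[b,n]}$, and that single bottleneck controls all the relevant intersections. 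In the maximal case the paper counts the $n-b$ paths whose \emph{sinks} lie in $[b,n]\setminus\{i_2\}$ (a sink condition, not your source condition), and handles the one changed path by noting that $\pi_{i_n}$ and $\pi'_{i_n}$ coincide up to that central vertex, so their intersection patterns with the remaining $\pi_{i_k}$ agree. The minimal case is treated symmetrically, matching inversions involving $\pi_n$ in $U$ with those involving $\pi'_{i_n}$ in $U'$ and leaving the residual $2n-2j-1$.
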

\begin{proof}
To invert $\delta_2$, let $U'$ be an element of 
$Y(F_{\iota(w)},j)$ for some $1 \leq j \leq n-1$.
Create a Young tableau $T$
from $L(U')$ by inserting $n$ into the leftmost position
and if $[b,n]$ is not maximal in $\preceq$ 
by moving $j$ to the rightmost position.
By Lemma~\ref{l:ndelins} (2) there is a unique $F_w$-tableau 
$U = (\pi_{i_1}, \dotsc, \pi_{i_n})$ satisfying $L(U) = T$.

To compare inversions in $U$ and $U'$, 
observe first that the paths
$\pi_{i_2}, \dotsc, \pi_{i_{n-1}}$ in $U$ have the same 
sources and sinks as the corresponding paths in 
\begin{equation*}
U' = 
\begin{cases}
(\pi'_{i_2}, \dotsc, \pi'_{i_n}) &\text{if $[b,n]$ is maximal in $\preceq$},\\
(\pi'_{i_n} = \pi'_j, \pi'_{i_2}, \dotsc, \pi'_{i_{n-1}}) &\text{otherwise}.
\end{cases}
\end{equation*}
Thus by Lemma~\ref{l:ndelins} (3), two such paths form an inversion
in $U$ if and only if the corresponding paths form an inversion in $U'$.
Consider therefore inversions in $U$ which involve the paths $\pi_{i_1} = \pi_n$
or $\pi_{i_n}$, and inversions in $U'$ which involve the path $\pi'_{i_n}$.


If $[b,n]$ is maximal in $\preceq$, the path $\pi_{i_1} = \pi_n$
in $U$ terminates at sink $i_2 \in [b,n-1]$.
Thus the path, which precedes all others in $U$, intersects only those
$n - b$ other paths in $U$ which terminate at sinks $[b,n] \ssm \{ i_2 \}$,
and thus also pass through the vertex of $F_w$
corresponding to the central vertex of $G_{[b,n]}$.
Now observe that $\pi_{i_n}$ terminates at sink $n$ of $F_w$, while
$\pi'_{i_n}$ terminates at sink $i_2$ of $F_{\iota(w)}$.  Since 
$i_2 \geq b$,
the paths $\pi_{i_n}$ and $\pi'_{i_n}$ are identical
up to the vertex of $F_w$ ($F_{\iota(w)}$) which corresponds to the central
vertex of $G_{[b,n]}$.  Thus any path $\pi_{i_k}$ in $U$ intersects $\pi_{i_n}$
if and only if the corresponding path $\pi'_{i_k}$ in $U'$ intersects
$\pi'_{i_n}$.  It follows that $\inv(U) = \inv(U') + b - n$ in this case.

If $[b,n]$ is not maximal in $\preceq$, then it is minimal.
Since $\pi_{i_n}$ terminates at sink $n$,
we have that $i_n \geq b$.  Thus $\pi_{i_n}$ intersects
and follows the $n - i_n + 1$ paths $\pi_{i_n + 1}, \dotsc, \pi_n$ which all 
intersect at least at the vertex of $F_w$ (or $F_{\iota(w)}$) corresponding 
to the central vertex of $G_{[b,n]}$. Now observe that the paths $\pi_n$
and $\pi'_{i_n}$ both terminate at sink $i_2$.  Thus, since 
$i_n \geq b$,
the paths are identical from the vertex of $F_w$ ($F_{\iota(w)}$) which 
corresponds to the central vertex of $G_{[b,n]}$ until sink $i_2$.  Thus
any path $\pi_k$ in $U$ intersects $\pi_n$ if and only if the corresponding
path $\pi'_k$ in $U'$ intersects $\pi'_{i_n}$.
It follows that $\inv(U) = \inv(U') + n - j + 1$ in this case.
\end{proof}


Note that in the example preceding Lemma~\ref{l:delta2},
the network $F_{246531}$ (\ref{eq:F246531}) of order $m = 6$ 
corresponds to the concatenation $G_{[3,6]} \circ G_{[2,4]} \circ G_{[1,2]}$,
and the interval $[3,6]$ is not maximal in the poset 
$[3,6] \prec [2,4] \prec [1,2]$.
The tableaux $U$, $U'$ satisfy
$j = 4$, $\inv(U) = 6$, $\inv(U') = 3$, and
$\inv(U) = \inv(U') + 2m - 2j - 1$.

Now we return to the problem of factoring the inner sums in (\ref{eq:sumxu}).

\begin{prop}
Fix $w \in \mfs m$ \avoidingp{} 
and an index $j \in [m]$.
Then we have
\begin{equation}\label{eq:qjm1}
\sum_{U \in Y_m(F_w,j)}q^{\inv(U)} = q^{j-1}O(F_w).
\end{equation}
\end{prop}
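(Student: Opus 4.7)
The plan is to proceed by induction on $m$, the order of $F_w$. The base case $m=1$ forces $j=1$, and both sides of (\ref{eq:qjm1}) equal $1$: the set $Y_1(F_w,1)$ contains a single path tableau with zero inversions, and $O(F_w)=1$ since the denominator and numerator in (\ref{eq:OFdef}) are both empty products. Before carrying out the inductive step for $m \geq 2$, I would dispose of the disconnected case: if $F_w$ is disconnected, then a cylindrical tableau of shape $(m)$ would require a cyclic sequence of source-to-sink paths visiting every index of $[m]$, which by Observation~\ref{o:sourcetosink} forces all of $[m]$ into one connected component of $F_w$. Hence $Y_m(F_w,j)=\emptyset$ and the left-hand side vanishes, while $O(F_w)=0$ by definition.

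For the inductive step with $F_w$ connected and $m \geq 2$, let $[b,m]$ denote the unique interval in the concatenation (\ref{eq:concat2}) containing $m$, and split into two cases according to whether $j \leq m-1$ or $j = m$. In Case A ($j \leq m-1$), I would apply the bijection $\delta_1$ from Lemma~\ref{l:delta1} to write
\begin{equation*}
\sum_{U \in Y_m(F_w,j)} q^{\inv(U)} =
\sum_{U' \in Y_{m-1}(F_{\iota(w)},j)} \sum_{k=0}^{m-b-1} q^{\inv(U')+k}
= [m-b]_q \sum_{U' \in Y_{m-1}(F_{\iota(w)},j)} q^{\inv(U')},
\end{equation*}
and apply the induction hypothesis to the inner sum, obtaining $q^{j-1}[m-b]_q O(F_{\iota(w)})$.

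In Case B ($j = m$), I would apply the bijection $\delta_2$ from Lemma~\ref{l:delta2}. When $[b,m]$ is maximal in $\preceq$ the inversion offset is the constant $m-b$, and summing
\begin{equation*}
\sum_{U \in Y_m(F_w,m)} q^{\inv(U)} = q^{m-b} \sum_{j=b}^{m-1} q^{j-1} O(F_{\iota(w)}) = q^{m-1}[m-b]_q O(F_{\iota(w)}),
\end{equation*}
by the inductive hypothesis. When $[b,m]$ is not maximal, the offset is $2m-2j-1$, but the telescoping exponent $(2m-2j-1)+(j-1) = 2m-j-2$ ranges over $\{m-1, m, \dotsc, 2m-b-2\}$ as $j$ ranges over $\{b,\dotsc,m-1\}$, producing the same answer $q^{m-1}[m-b]_q O(F_{\iota(w)})$.

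To finish both cases, the key identity needed is
\begin{equation*}
O(F_w) = [m-b]_q\, O(F_{\iota(w)}).
\end{equation*}
The hard part is verifying this combinatorial identity from the explicit formula (\ref{eq:OFdef}), because the transition from $F_w$ to $F_{\iota(w)}$ is governed by the five cases of (\ref{eq:iotadef}) and each affects the numerator and the covering-pair products in the denominator differently. However, precisely this identity is implicitly carried out in the derivation of (\ref{eq:OandO}) inside the proof of Lemma~\ref{l:ranchO}, so I would either cite that computation directly or repackage it as a short separate lemma comparing the numerator $\prod_i [d_i-c_i]_q!$ and the denominator covering-pair factors of $F_w$ and $F_{\iota(w)}$ case by case. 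Combining Case A in Case B with this identity yields $q^{j-1} O(F_w)$, completing the induction.
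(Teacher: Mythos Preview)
Your proposal is correct and follows essentially the same approach as the paper: induction on $m$, the disconnected case handled separately, then the two cases $j<m$ and $j=m$ treated via the bijections $\delta_1$ and $\delta_2$ from Lemmas~\ref{l:delta1} and~\ref{l:delta2}, and the finishing identity $O(F_w)=[m-b]_q\,O(F_{\iota(w)})$ drawn from the computation (\ref{eq:OandO}) in the proof of Lemma~\ref{l:ranchO}. One cosmetic remark: in Case~B you reuse the letter $j$ as a summation index after already fixing $j=m$; rename the dummy variable to avoid confusion.
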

\begin{proof}
We prove (\ref{eq:qjm1}) by induction on $m$.
The only zig-zag 
network of order $m = 1$ is $F_e$, $e \in \mfs 1$.
Thus the set $Y_1(F_e, 1)$ 
consists of one tableau of shape $(1)$ having no inversions.
The left- and right-hand sides of (\ref{eq:qjm1}) are therefore 
$q^0 = 1$
and
$q^0O(F_e) = [0]_q! = 1$, respectively.

Now assume (\ref{eq:qjm1}) to hold for zig-zag
networks corresponding to \pavoiding
permutations in $\mfs {m-1}$
and consider $F_w$ with $w \in \mfs m$ \avoidingp.
Let $F_w$ correspond to the concatenation (\ref{eq:concat2}) 
of star networks with $[b,m]$ the unique interval containing $m$,
let $u = \iota(w)$, and fix an integer $j \in [m]$.
If $F_w$ is disconnected, then
the set $Y_m(F_w,j)$ is empty and both sides of (\ref{eq:qjm1}) are $0$.
Assume therefore that $F_w$ is connected.
If $j < m$, then 
by Lemma {\ref{l:delta1}} 
and induction 
we have
\begin{equation}\label{eq:mminusc11}
\sum_{U \in Y_m(F_w,j)}q^{\inv(U)} 
= \sum_{k=0}^{m-b-1} q^k \sum_{U' \in Y_{m-1}(F_u,j)}q^{\inv(U')}\\
= [m-b]_q q^{j-1}O(F_u).
\end{equation}
Similarly, if $j = m$, then 
by Lemma {\ref{l:delta2}} 
and induction 
we have
\begin{equation}\label{eq:mminusc12}
\begin{aligned}
\sum_{U \in Y_m(F_w,m)}q^{\inv(U)} 
&= \begin{cases}
\displaystyle{\sum_{k=b}^{m-1}} q^{m-b} 
q^{k-1} O(F_u) 
&\text{if $[b,m]$ maximal in $\preceq$},\\
\displaystyle{\sum_{k=b}^{m-1}} q^{2m-2k-1} 
q^{k-1} O(F_u) 
&\text{otherwise},
\end{cases}\\
&= [m-b]_q q^{m-1} O(F_u).
\end{aligned}
\end{equation}
Now 
applying 
(\ref{eq:OandO})  
to (\ref{eq:mminusc11}), (\ref{eq:mminusc12}),
we have for $j = 1, \dotsc, m$ that 
\begin{equation*}
\sum_{U \in Y_m(F_w,j)}q^{\inv(U)} = 
q^{j-1}[m-b]_q O(F_u) = q^{j-1} O(F_w).
\end{equation*}
\end{proof}


Applying (\ref{eq:CY}) 
to the previous result, we have the following.
\begin{cor}\label{c:SOF}
Let $w \in \mfs m$ \avoidp.
Then we have
\begin{equation*}
\sum_{U \in \mathcal C([m],F_w)} q^{\inv(U)} = [m]_q O(F_w),
\qquad
\sum_{U \in \mathcal C_L([m],F_w)} q^{\inv(U)} = O(F_w).
\end{equation*}
\end{cor}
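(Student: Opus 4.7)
The plan is to derive both identities as immediate consequences of the previous proposition, which gives
$$\sum_{U \in Y_m(F_w,j)} q^{\inv(U)} = q^{j-1} O(F_w)$$
for each $j \in [m]$, combined with the decomposition of $\mathcal{C}([m], F_w)$ and $\mathcal{C}_L([m], F_w)$ recorded in (\ref{eq:CY}).

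For the first identity, I would observe that (\ref{eq:CY}) expresses $\mathcal{C}([m], F_w)$ as the disjoint union $\bigcup_{j=1}^{m} Y_m(F_w, j)$, where the disjointness follows because the sets $Y_m(F_w, j)$ are indexed by the first entry of $L(U)$. Hence
$$\sum_{U \in \mathcal{C}([m], F_w)} q^{\inv(U)} = \sum_{j=1}^{m} \sum_{U \in Y_m(F_w, j)} q^{\inv(U)} = \sum_{j=1}^{m} q^{j-1} O(F_w) = [m]_q \, O(F_w),$$
using the definition $[m]_q = 1 + q + \cdots + q^{m-1}$ in the last step.

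For the second identity, I would simply note that (\ref{eq:CY}) gives $\mathcal{C}_L([m], F_w) = Y_m(F_w, 1)$, so the previous proposition specialized to $j = 1$ yields
$$\sum_{U \in \mathcal{C}_L([m], F_w)} q^{\inv(U)} = \sum_{U \in Y_m(F_w, 1)} q^{\inv(U)} = q^0 \, O(F_w) = O(F_w).$$

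There is essentially no obstacle here: all of the substantive combinatorial work (the bijections $\delta_1$, $\delta_2$, the inductive argument on $m$, and the two-case analysis depending on whether $[b,m]$ is maximal or minimal in $\preceq$) has already been carried out in establishing the proposition (\ref{eq:qjm1}). The corollary is a one-line bookkeeping consequence of that proposition together with the set-theoretic decomposition (\ref{eq:CY}), and the proof should be presented as such.
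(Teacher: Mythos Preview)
Your proposal is correct and matches the paper's own approach exactly: the paper simply states that the corollary follows by applying the decomposition (\ref{eq:CY}) to the proposition establishing (\ref{eq:qjm1}), which is precisely the argument you give.
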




Now we have the following $q$-analogs of 
Theorem~\ref{t:sncfinterp} (iv-a).
\begin{thm}\label{t:qpsi}
Let $w \in \sn$ \avoidp.
For $\lambda = (\lambda_1, \dotsc, \lambda_r) \vdash n$, we have
\begin{equation}\label{eq:qpsi1}
\psi_q^\lambda(\qew C'_w(q)) 
= \sum_U q^{\inv(U_1 \circ \cdots \circ U_r)},
\end{equation}
where the sum is over all cylindrical $F_w$-tableaux of shape $\lambda$, and
\begin{equation}\label{eq:qpsi2}
\psi_q^\lambda(\qew C'_w(q)) 
= [\lambda_1]_q \cdots [\lambda_r]_q \sum_U q^{\inv(U_1 \circ \cdots \circ U_r)},
\end{equation}
where the sum is over all left-anchored cylindrical $F_w$-tableaux 
of shape $\lambda$.
\end{thm}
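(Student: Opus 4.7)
The plan is to derive both identities directly from the machinery already in place, namely Theorem~\ref{t:qpsiO}, Proposition~\ref{p:multisum}, and Corollary~\ref{c:SOF}. The strategy for both (\ref{eq:qpsi1}) and (\ref{eq:qpsi2}) is to take the sum from Theorem~\ref{t:qpsiO}, replace each factor $O(F_w|_{I_j})$ by a generating function over one-row cylindrical tableaux on the block $I_j$, and then reassemble these one-row tableaux into a single multi-row cylindrical $F_w$-tableau of shape $\lambda$ using Proposition~\ref{p:multisum}.

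I would first prove (\ref{eq:qpsi2}). Starting from Theorem~\ref{t:qpsiO}, I would apply the second identity of Corollary~\ref{c:SOF} to each block:
\begin{equation*}
O(F_w|_{I_j}) = \sum_{W^{(j)} \in \mathcal C_L(I_j, F_w|_{I_j})} q^{\inv(W^{(j)})}.
\end{equation*}
Substituting this into the sum and expanding the product over $j$, the factor $q^{\inv(V(F_w,I)_1 \circ \cdots \circ V(F_w,I)_r)}\prod_j \sum_{W^{(j)}} q^{\inv(W^{(j)})}$ becomes, by the left-anchored case of Proposition~\ref{p:multisum}, exactly $\sum_{U \in \mathcal C_L(I,F_w)} q^{\inv(U_1 \circ \cdots \circ U_r)}$. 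Summing over all ordered set partitions $I \vdash [n]$ of type $\lambda$ gives $[\lambda_1]_q \cdots [\lambda_r]_q \sum_U q^{\inv(U_1 \circ \cdots \circ U_r)}$ where $U$ ranges over all left-anchored cylindrical $F_w$-tableaux of shape $\lambda$, as required.

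For (\ref{eq:qpsi1}) I would proceed identically, except I would use the first identity of Corollary~\ref{c:SOF} to rewrite
\begin{equation*}
O(F_w|_{I_j}) = \frac{1}{[\lambda_j]_q} \sum_{W^{(j)} \in \mathcal C(I_j, F_w|_{I_j})} q^{\inv(W^{(j)})},
\end{equation*}
so that the product $[\lambda_1]_q \cdots [\lambda_r]_q \prod_j O(F_w|_{I_j})$ telescopes cleanly into $\prod_j \sum_{W^{(j)}} q^{\inv(W^{(j)})}$. Applying the non-left-anchored case of Proposition~\ref{p:multisum} and summing over $I$ then produces the generating function over all cylindrical $F_w$-tableaux of shape $\lambda$, yielding (\ref{eq:qpsi1}).

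The main obstacle has already been handled in the build-up to this theorem: Corollary~\ref{c:SOF} (which rests on the bijections $\delta_1$ and $\delta_2$ of Lemmas~\ref{l:delta1} and \ref{l:delta2}) is the nontrivial step, since it identifies $O(F)$ with the inversion generating function for cylindrical tableaux and provides the precise multiplicative factor $[m]_q$ distinguishing the anchored and unanchored versions. Once that identification is available, the remaining work of the theorem is purely organizational, and the $[\lambda_j]_q$ factors balance exactly between Theorem~\ref{t:qpsiO} and Corollary~\ref{c:SOF}, so no further cancellation arguments or sign-reversing involutions are needed.
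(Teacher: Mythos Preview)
Your proposal is correct and is essentially the paper's own proof run in the opposite direction: the paper starts from the cylindrical sums (\ref{eq:qpsi1}), (\ref{eq:qpsi2}), decomposes them via (\ref{eq:sumxu}) and Proposition~\ref{p:multisum}, applies Corollary~\ref{c:SOF}, and recognizes the result as the right-hand side of Theorem~\ref{t:qpsiO}, whereas you begin at Theorem~\ref{t:qpsiO} and work back. The ingredients and logic are identical.
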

\begin{proof}
Rewrite the 
sums 
above 
as in 
(\ref{eq:sumxu}) and factor the resulting inner sums
as in
Proposition~\ref{p:multisum}.
By Corollary~\ref{c:SOF}, 
the right-hand sides of 
(\ref{eq:qpsi1}), (\ref{eq:qpsi2}) are both equal to
\begin{equation*}
[\lambda_1]_q \cdots [\lambda_r]_q
\sumsb{I \vdash [n]\\ \type(I) = \lambda} 
q^{\inv(V(F_w,I)_1 \circ \cdots \circ V(F_w,I)_r)}
O(F_w|_{I_1}) \cdots O(F_w|_{I_r}).
\end{equation*}
By Theorem~\ref{t:qpsiO}, this is $\psi_q^\lambda(\qew C'_w(q))$.
\end{proof}

For example, consider the descending star network 
$F_{3421}$ in (\ref{eq:F3421}) 
and the sum in (\ref{eq:qpsi1}).
It is easy to verify that there are eighteen 
cylindrical $F_{3421}$-tableaux of shape $31$.
Four of these are
\begin{equation}\label{eq:cyltabs31}
\raisebox{-6mm}{\includegraphics[height=12mm]{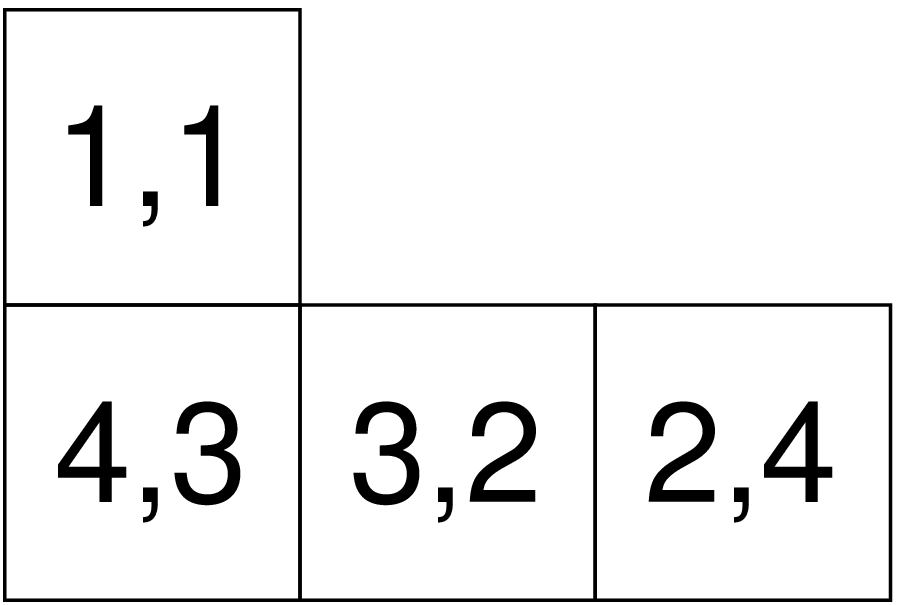}}\ ,\quad
\raisebox{-6mm}{\includegraphics[height=12mm]{tableaux/31_22133441}}\ ,\quad
\raisebox{-6mm}{\includegraphics[height=12mm]{tableaux/31_33122441}}\ ,\quad
\raisebox{-6mm}{\includegraphics[height=12mm]{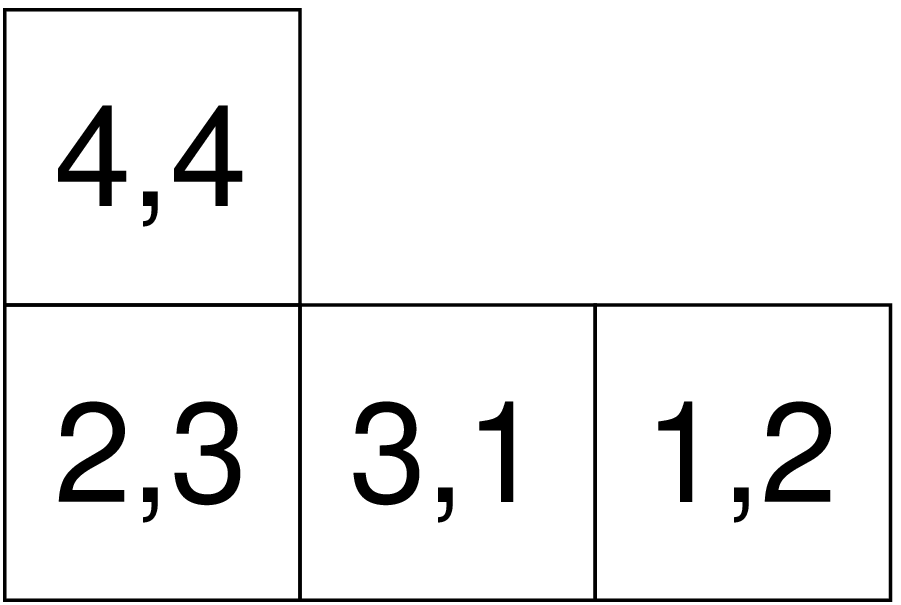}}\ ,
\end{equation}
where $i,j$ represents the unique path from source $i$ to sink $j$.
These tableaux $U$ of shape $31$
yield tableaux $U_1 \circ U_2$ of shape $4$,
\begin{equation}\label{eq:cyltabs4}
\raisebox{-3mm}{\includegraphics[height=6mm]{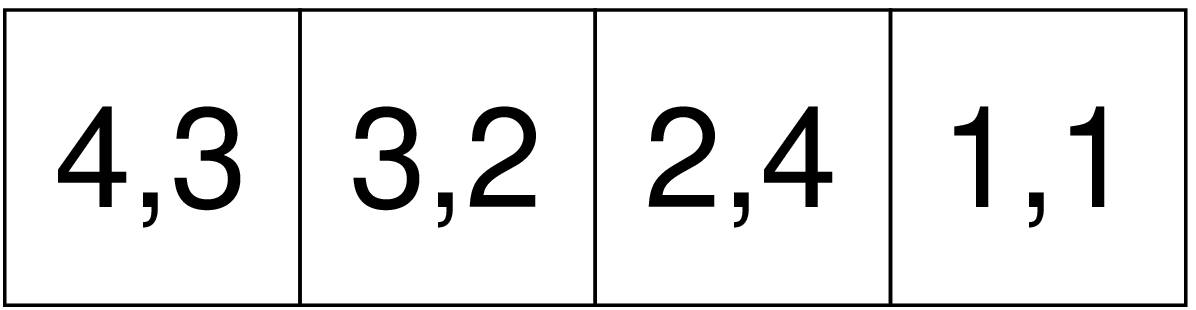}}\ ,\quad
\raisebox{-3mm}{\includegraphics[height=6mm]{tableaux/4_13344122}}\ ,\quad
\raisebox{-3mm}{\includegraphics[height=6mm]{tableaux/4_12244133}}\ ,\quad
\raisebox{-3mm}{\includegraphics[height=6mm]{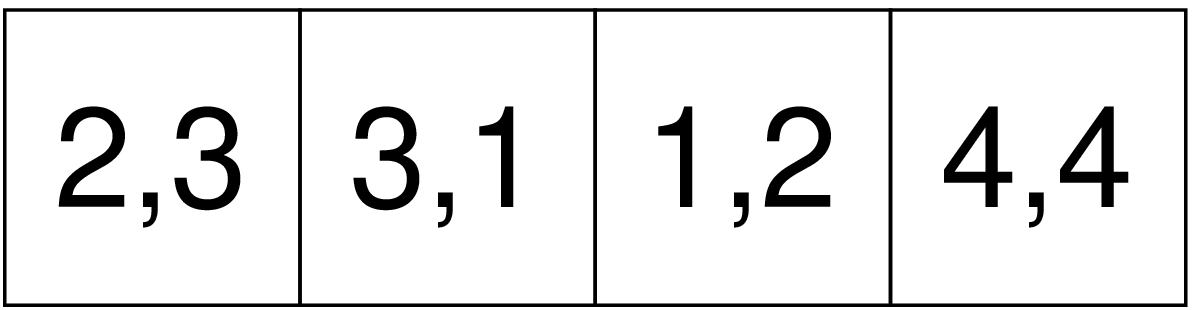}}\ ,
\end{equation}
which have $5$, $2$, $1$, and $2$ inversions, respectively.
Together, 
they
contribute $q + 2q^2 + q^5$ 
to $\psi_q^{31}(q_{e,3421} C'_{3421}(q)) = 1 + 3q + 5q^2 + 5q^3 + 3q^4 + q^5$.
Now consider the sum in (\ref{eq:qpsi2}).
It is easy to verify that there are six left-anchored 
cylindrical $F_{3421}$-tableaux of shape $31$: the second and third tableaux
in (\ref{eq:cyltabs31}) and the four tableaux
\begin{equation*}
\raisebox{-6mm}{\includegraphics[height=12mm]{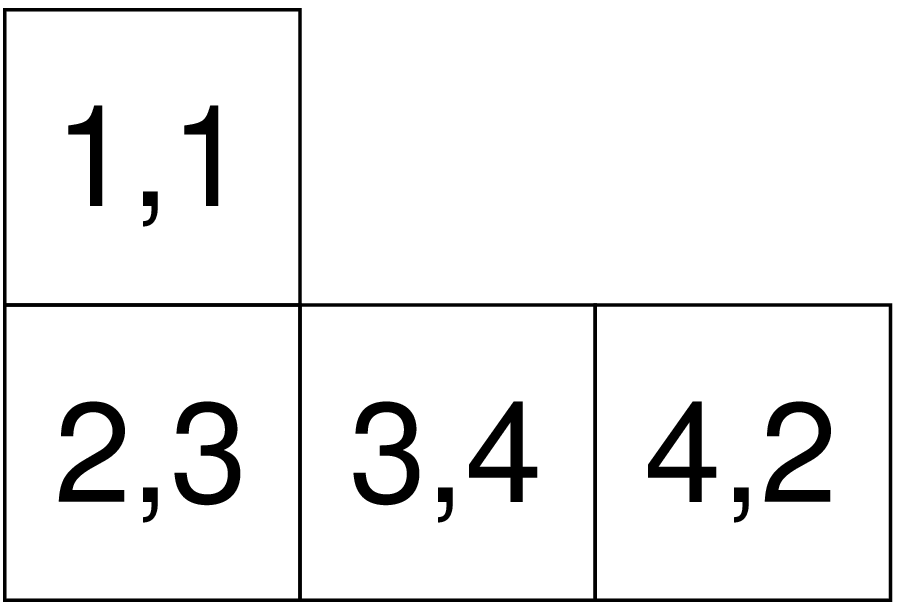}}\ ,\quad
\raisebox{-6mm}{\includegraphics[height=12mm]{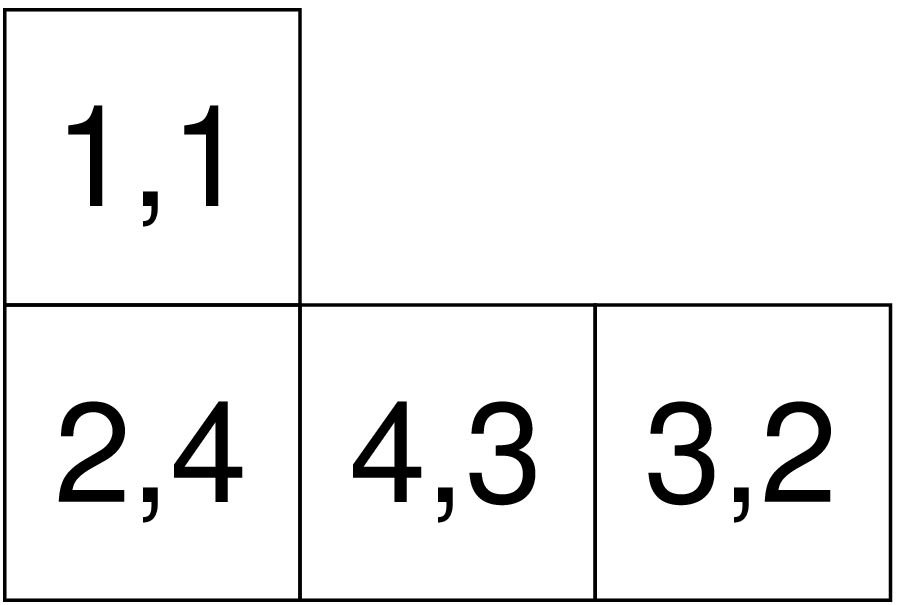}}\ ,\quad
\raisebox{-6mm}{\includegraphics[height=12mm]{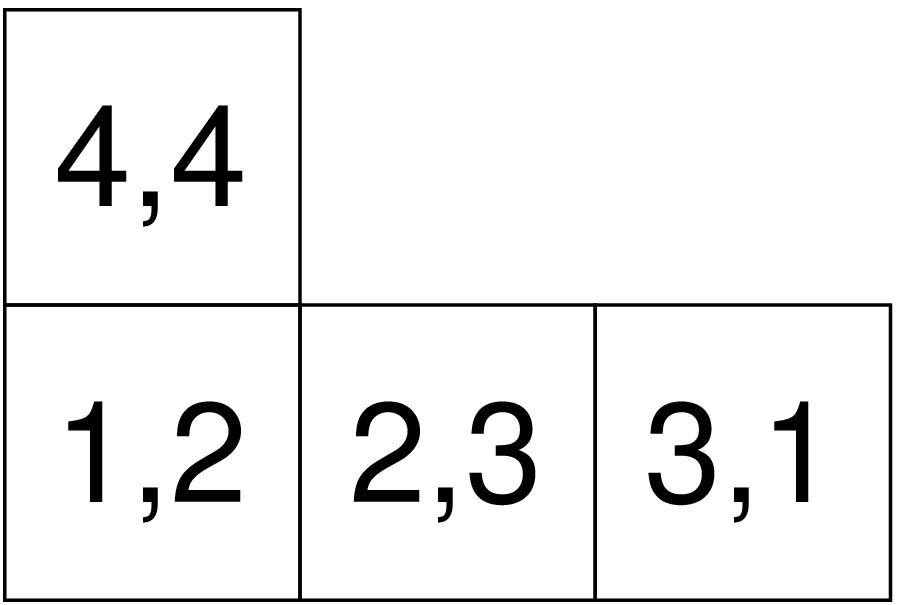}}\ ,\quad
\raisebox{-6mm}{\includegraphics[height=12mm]{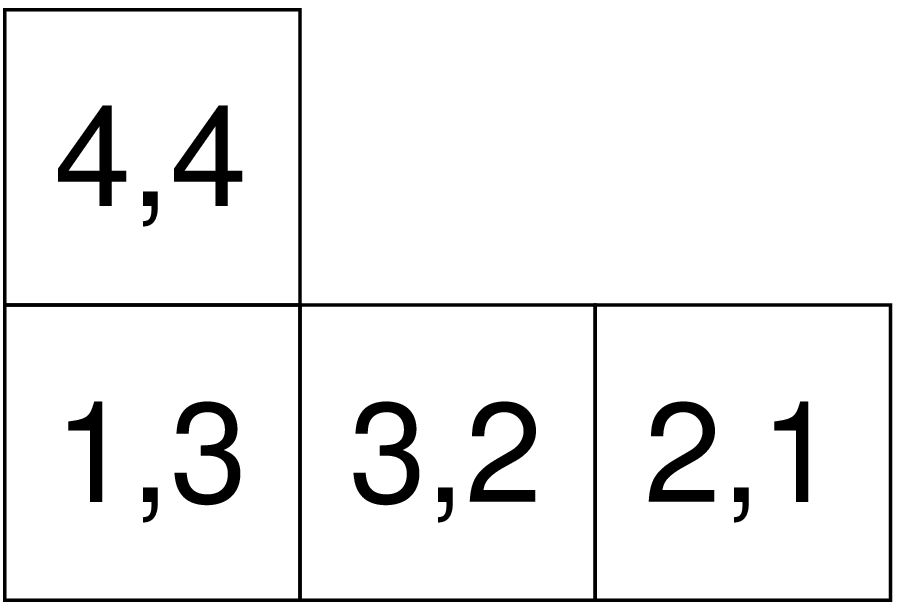}}\ .
\end{equation*}
These tableaux $U$ of shape $31$
yield six tableaux $U_1 \circ U_2$ of shape $4$: 
the second and third tableaux in (\ref{eq:cyltabs4}) and the four tableaux
\begin{equation*}
\raisebox{-3mm}{\includegraphics[height=6mm]{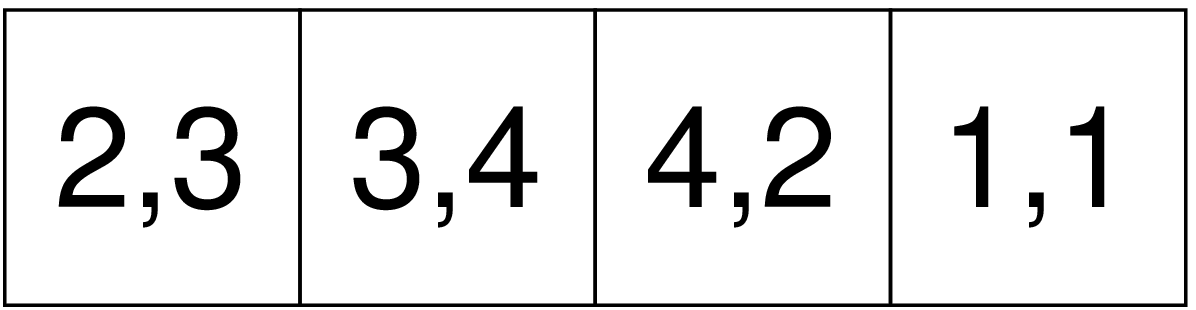}}\ ,\quad
\raisebox{-3mm}{\includegraphics[height=6mm]{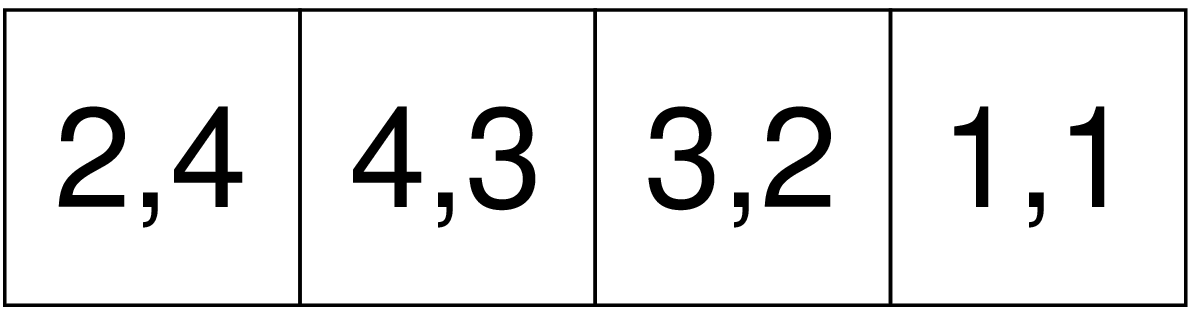}}\ ,\quad
\raisebox{-3mm}{\includegraphics[height=6mm]{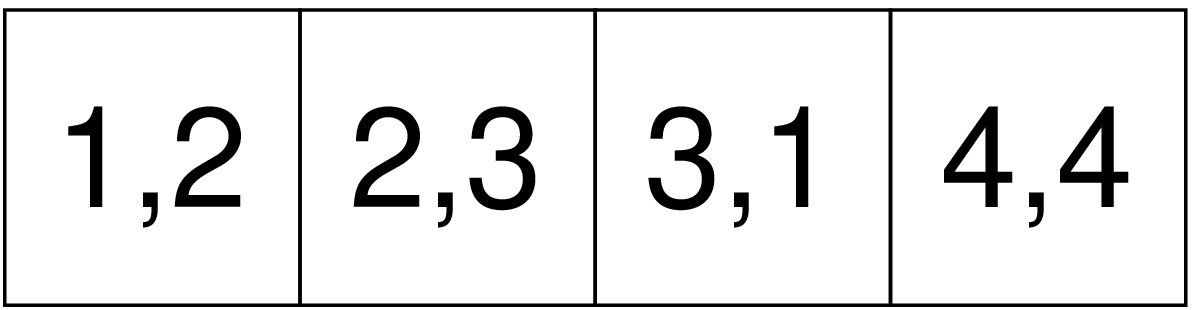}}\ ,\quad
\raisebox{-3mm}{\includegraphics[height=6mm]{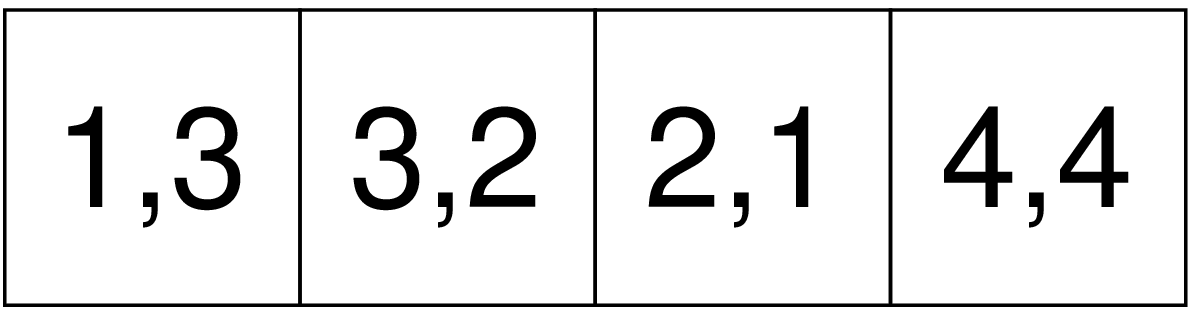}}\ ,
\end{equation*}
which have $2$, $1$, $2$, $3$, $0$, and $1$ inversions, respectively.
Together, 
the six tableaux 
contribute $1 + 2q + 2q^2 + q^3$
to 
$[3]_q [1]_q (1 + 2q + 2q^2 + q^3) 
= \psi_q^{31}(q_{e,3421} C'_{3421}(q))$. 

As a consequence of Theorem~\ref{t:qpsi}, we now have the following analog
of Corollary~\ref{c:edeck}.
Say that a partition $\lambda$ of $n$ is a {\em refinement}
of another partition (or composition) $\mu$ of $n$ if 
$\lambda$ can be obtained from $\mu$ by replacing each part $\mu_i$ 
by an integer partition of $\mu_i$
and sorting the results into weakly decreasing order.

\begin{cor}\label{c:prefine}
Let $w \in \sn$ \avoidp and let the component sizes of $F_w$ 
be $\mu = (\mu_1, \dotsc, \mu_s)$. 
Then we have
\begin{equation*}
\psi_q^\lambda (\qew C'_w(q)) = 0.
\end{equation*}
if and only if $\lambda$ is not a refinement of $\mu$.
\end{cor}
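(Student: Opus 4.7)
The plan is to combine Theorem~\ref{t:qpsiO} with the observation that $O(F) = 0$ precisely when $F$ is disconnected, in order to translate the vanishing of $\psi_q^\lambda(\qew C'_w(q))$ into a combinatorial condition on ordered set partitions of $[n]$. In the formula of Theorem~\ref{t:qpsiO} every summand lies in $\mathbb{N}[q]$ and the prefactors $[\lambda_i]_q$ are positive, so the whole sum vanishes iff every ordered set partition $I=(I_1,\dotsc,I_r)$ of $[n]$ of type $\lambda$ has some $F_w|_{I_j}$ disconnected. Since $F_w|_{I_j}$ being connected depends only on the intersection pattern of the paths $\{\pi_i \,|\, i \in I_j\}$, which is recorded by $P(F_w)$, Theorems~\ref{t:pfvpfu} and \ref{t:qthetavw} allow me to replace $w$ by an equivalent $312$-avoiding permutation, after which $F_w$ is a descending star network whose components are themselves descending star networks of sizes $\mu_1,\dotsc,\mu_s$.

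For the ``only if'' direction, a good partition $I$ must have each $I_j$ lying inside a single component of $F_w$; grouping the parts of $\lambda$ by the containing component writes each $\mu_k$ as a sum of some of the $\lambda_j$, so $\lambda$ refines $\mu$.

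For the ``if'' direction, given a refinement I would allocate parts of $\lambda$ to components according to the refinement and, within each component with support $[a,b]$, partition $[a,b]$ into consecutive subintervals of the prescribed sizes. What remains is the key lemma: in a connected descending star network arising from $G_{[c_1,d_1]} \circ \cdots \circ G_{[c_t,d_t]}$, any two consecutive integers $i,i+1 \in [a,b]=[c_t,d_1]$ share a common interval $[c_k,d_k]$. Once this is established, each consecutive subinterval is a path in the shared-interval graph on $[a,b]$, so each corresponding $F_w|_{I_j}$ is connected, supplying the required ordered set partition.

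The main obstacle is this lemma, which I would prove by contradiction. Set $K_i = \{k \,|\, c_k \leq i \leq d_k\}$ and define $K_{i+1}$ analogously; both are nonempty since $i,i+1 \in [c_t,d_1]$. If $K_i \cap K_{i+1} = \emptyset$, let $k^* = \max K_i$ and $k^{**} = \min K_{i+1}$, so $k^* < k^{**}$. Strict monotonicity of $c_\bullet$ and $d_\bullet$ gives $c_{k^{**}} < c_{k^*} \leq i$ and $d_{k^{**}} \geq i+1 > i$, which together yield $i \in [c_{k^{**}}, d_{k^{**}}]$, i.e., $k^{**} \in K_i \cap K_{i+1}$, a contradiction. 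This settles the lemma and completes the proof.
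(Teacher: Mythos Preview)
Your strategy is sound and close in spirit to the paper's: both hinge on producing, when $\lambda$ refines $\mu$, an ordered set partition whose blocks are consecutive subintervals inside the components of $F_w$. The paper reaches the two directions via Theorems~\ref{t:qpsi} and~\ref{t:swqpsi} separately (no cylindrical tableau when $\lambda \nleq_R \mu$; an explicit record-free, row-semistrict tableau when $\lambda \leq_R \mu$), while you route everything uniformly through Theorem~\ref{t:qpsiO} and the vanishing criterion $O(F)=0 \Leftrightarrow F$ disconnected. Either route works, and your reduction to the $312$-avoiding case via Theorems~\ref{t:pfvpfu} and~\ref{t:qthetavw} is legitimate.

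However, your proof of the key lemma is flawed. In a descending star network the sequences $c_\bullet$ and $d_\bullet$ are \emph{strictly decreasing}, so the interval $[c_k,d_k]$ moves left as $k$ increases. Hence $K_i$ and $K_{i+1}$ are intervals of indices with $K_{i+1}$ lying \emph{below} $K_i$, not above: from $K_i\cap K_{i+1}=\emptyset$ one actually gets $\max K_{i+1} < \min K_i$, so your assertion ``$k^* < k^{**}$'' is backwards, and the subsequent chain of inequalities does not produce a contradiction. A correct argument must explicitly use that consecutive intervals in the concatenation overlap (connectedness): with $k^{**}<k^*$, one first checks that disjointness forces $d_{k^*}=i$ and $c_{k^{**}}=i+1$; then $k^{**}+1\le k^*$ exists, and overlap of $[c_{k^{**}},d_{k^{**}}]$ with $[c_{k^{**}+1},d_{k^{**}+1}]$ gives $d_{k^{**}+1}\ge c_{k^{**}}=i+1$, while monotonicity gives $c_{k^{**}+1}<c_{k^{**}}=i+1$, hence $c_{k^{**}+1}\le i$. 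Thus $k^{**}+1\in K_i\cap K_{i+1}$, the desired contradiction.

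A minor point: your ``only if'' and ``if'' labels are swapped relative to the corollary as stated (you prove $\psi_q^\lambda\neq 0 \Leftrightarrow \lambda\leq_R\mu$, which is the contrapositive form).
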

\begin{proof}
It is clear that if $\lambda$ is not a refinement of $\mu$,
then there is no cylindrical $F_w$-tableau of shape $\lambda$.
Therefore by Theorem~\ref{t:qpsi} we have that $\psi_q^\lambda(\qew C'_w(q)) = 0$.
Suppose on the other hand that $\lambda = (\lambda_1, \dotsc, \lambda_r)$ 
is a refinement of $\mu$, and let $J_1, \dotsc, J_s$ be the
subintervals of $[n]$ corresponding to the connected components of $F_w$.
Then there exists an ordered set partition
$I = (I_1, \dotsc, I_r) \vdash [n]$,
whose type is a rearrangement of $\lambda$,
such that each block of $J$ is a union of several consecutive blocks of $I$.
Let $\pi = (\pi_1, \dotsc, \pi_n)$ be the unique path family of type $e$
covering $F_w$.
It is clear now that we can construct at least one record-free, row-semistrict
$F_w$ tableau of type $e$ and shape $\lambda$, by creating a row
containing the paths $\pi_a, \dotsc, \pi_b$ (in order)
for each block $[a,b]$ of $I$.
By Theorem~\ref{t:swqpsi}, we therefore have
$\psi_q^\lambda (\qew C'_w(q)) \neq 0$.
\end{proof}

It would be interesting to extend Theorem~\ref{t:qpsi}
to include a $q$-analog of Theorem~\ref{t:sncfinterp} (iv-b).
In particular the fourth identity in Equation~(\ref{eq:moncfdef})
suggests that an answer to Problem~\ref{p:monevalinterp} and its $q$-analog
are related to a set partition of tableaux counted by $\psi^\lambda$.
It is not clear whether such a partition is more easily expressed
in terms of 
record-free, row-semistrict $F_w$-tableaux of type $e$,
right-anchored, row-semistrict $F_w$-tableaux of type $e$,
cylindrical $F_w$-tableaux, 
left-anchored cylindrical $F_w$-tableaux, or
cyclically row-semistrict $F_w$-tableaux of type $e$.

\begin{prob}
Find a statistic $\STAT$ on $F$-tableaux such that we have
\begin{equation*}
\psi_q^\lambda(\qew C'_w(q)) = \sum_U q^{\STAT(U)},
\end{equation*}
where the sum is over all cyclically row-semistrict $F_w$-tableaux 
of type $e$ and shape $\lambda$.
\end{prob}

It would also be interesting to show that all of the stated
interpretations
of $\psi_q^\lambda(\qew C'_w(q))$ remain valid if we reverse the order of 
concatenating rows of tableaux: this may turn out to be the easiest way
to link the $\hnq$-traces $\psi_q^\lambda$ and $\phi_q^\mu$.

\begin{prob}
Show that Theorems~\ref{t:swqpsi}, \ref{t:qpsiO}, \ref{t:qpsi}
remain valid if one replaces the numbers
$\inv(U_1 \circ \cdots \circ U_r)$,
$\inv(U_1^R \circ \cdots \circ U_r^R)$,
$\inv(V(F_w,I)_1 \circ \cdots \circ V(F_w,I)_r)$,
with
$\inv(U_r \circ \cdots \circ U_1)$, 
$\inv(U_r^R \circ \cdots \circ U_1^R)$,
$\inv(V(F_w,I)_r \circ \cdots \circ V(F_w,I)_1)$,
respectively.
\end{prob}

\section{Results concerning $\phi_q^\lambda(\qew C'_w(q))$}\label{s:hnqinterpm}


Recall that the component statements of Theorem~\ref{t:sncfinterp} pertaining
to monomial traces are weaker 
than those pertaining to other traces.
To state a $q$-analog of Theorem~\ref{t:sncfinterp} (v-a), we will use
several partial orders, including majorization and refinement of integer
partitions.  We will use the symbol $\unlhd$ to denote majorization and 
$\leq_R$ to denote refinement, as defined before Corollary~\ref{c:prefine}.
We begin by stating an analog of Corollaries \ref{c:edeck}, \ref{c:sdeck}.


\begin{prop}\label{p:mnodec}
Fix $\lambda = (\lambda_1, \dotsc, \lambda_r) \vdash n$
and $w \in \sn$ \avoidingp.  If $w_1 \cdots w_n$ has 
a decreasing subsequence of length greater than $\lambda_1$, then 
we have $\phi_q^\lambda(\qew C'_w(q)) = 0$.
\end{prop}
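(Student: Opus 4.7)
The plan is to reduce Proposition~\ref{p:mnodec} to Corollary~\ref{c:sdeck} via the expansion of the monomial trace in the irreducible character basis. By the $\hnq$-analog of the first identity in (\ref{eq:moncfdef}), we have
\begin{equation*}
\phi_q^\lambda(\qew C'_w(q)) = \sum_\mu K_{\lambda,\mu}^{-1} \chi_q^\mu(\qew C'_w(q)),
\end{equation*}
so it suffices to show that every $\mu$ contributing a nonzero summand satisfies $\mu_1 \geq k$, where $k > \lambda_1$ is the length of a decreasing subsequence of $w_1 \cdots w_n$.

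The key input is the well-known unitriangularity of the Kostka matrix with respect to the majorization (dominance) order: since $s_\mu = \sum_\lambda K_{\mu,\lambda} m_\lambda$ with $K_{\mu,\lambda} \neq 0$ only when $\lambda \unlhd \mu$, the inverse matrix is likewise unitriangular in dominance order. Thus $K_{\lambda,\mu}^{-1} \neq 0$ forces $\mu \unlhd \lambda$, which in turn forces $\mu_1 \leq \lambda_1$. (See, e.g., \cite[Ch.\,7]{StanEC2} for the triangularity of $K$.)

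Now I would combine the two observations: for every $\mu$ appearing with nonzero coefficient in the expansion above, $\mu_1 \leq \lambda_1 < k$, so $w_1 \cdots w_n$ has a decreasing subsequence of length strictly greater than $\mu_1$. By Corollary~\ref{c:sdeck}, $\chi_q^\mu(\qew C'_w(q)) = 0$ for each such $\mu$. Consequently every term in the sum vanishes, giving $\phi_q^\lambda(\qew C'_w(q)) = 0$ as claimed.

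No step presents a real obstacle: the argument is a clean two-line reduction once one invokes both the dominance-triangularity of Kostka numbers and Corollary~\ref{c:sdeck}. The only point worth stating carefully is the direction of the triangularity for the inverse matrix, but this is standard and could be cited rather than reproved.
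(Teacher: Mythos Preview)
Your argument is correct, apart from a wording slip in the first paragraph: you write ``it suffices to show that every $\mu$ contributing a nonzero summand satisfies $\mu_1 \geq k$,'' but what you actually need (and what you in fact establish in the subsequent paragraphs) is $\mu_1 \leq \lambda_1 < k$, so that Corollary~\ref{c:sdeck} forces $\chi_q^\mu(\qew C'_w(q))=0$. With that sentence corrected, the proof goes through cleanly.

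Your route differs from the paper's only in the choice of basis. The paper expands $\phi_q^\lambda$ in the induced sign character basis, $\phi_q^\lambda = \sum_{\mu \unrhd \lambda^\tr} a_{\lambda,\mu}\,\epsilon_q^\mu$, and then invokes Corollary~\ref{c:edeck}: each $\mu$ in the support has at most $\lambda_1$ parts, so a decreasing subsequence of length exceeding $\lambda_1$ kills every $\epsilon_q^\mu(\qew C'_w(q))$. You instead expand in irreducible characters using the defining relation~(\ref{eq:moncfdef}) and the dominance triangularity of the inverse Kostka matrix, then apply Corollary~\ref{c:sdeck}. The two arguments are formally parallel. One mild advantage of the paper's version is that Corollary~\ref{c:edeck} rests only on the elementary Theorem~\ref{t:qepsilon}, whereas Corollary~\ref{c:sdeck} depends on Theorem~\ref{t:qchi} and hence on the Shareshian--Wachs machinery of Section~\ref{s:chromsf}; your version, on the other hand, quotes the triangularity already built into the definition~(\ref{eq:moncfdef}) rather than needing the $m$-to-$e$ transition.
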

\begin{proof}
Let $w \in \sn$ have a decreasing subsequence of 
length greater than $\lambda_1$ and recall that there exist integers 
$\{a_{\lambda,\mu} \,|\, \lambda,\mu \vdash n\}$ such that
\begin{equation*}
\phi_q^\lambda = \sum_{\mu \unrhd \lambda^\tr} a_{\lambda,\mu} \epsilon_q^\mu.
\end{equation*}
If $\phi_q^\lambda(\qew C'_w(q)) \neq 0$ then some partition $\mu$
in the above sum satisfies $\epsilon_q^\mu(\qew C'_w(q)) \neq 0$.
But the number of parts of $\mu$ is necessarily less than or equal to 
$\lambda_1$.  This contradicts
Corollary~\ref{c:edeck}.
\end{proof}






Similarly, we have a partial analog of Corollary~\ref{c:prefine}.
\begin{prop}\label{p:mdeck}
Let $w \in \sn$ \avoidp{} and let the component sizes of $F_w$ 
(in weakly decreasing order) be
$\mu = (\mu_1, \dotsc, \mu_r)$. 
Then for each partition $\lambda \vdash n$ not refining $\mu$ we have
\begin{equation*}
\phi_q^\lambda (\qew C'_w(q)) = 0.
\end{equation*}
\end{prop}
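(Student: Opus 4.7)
The plan is to reduce Proposition~\ref{p:mdeck} to Corollary~\ref{c:prefine} by expanding $\phi_q^\lambda$ in the power-sum trace basis $\{\psi_q^\nu \,|\, \nu \vdash n\}$ and showing that only partitions $\nu$ with $\lambda \leq_R \nu$ can occur. Under the correspondence between $\hnq$-trace bases and the classical bases of $\Lambda_n$ set up after Equation~(\ref{eq:moncfdef}), this expansion of $\phi_q^\lambda$ mirrors the expansion of the monomial symmetric function $m_\lambda$ in the power sums $p_\nu$, so it suffices to understand the latter.

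First I would verify the following triangularity. Since $L_{\nu,\mu}$ counts row-constant Young tableaux of shape $\nu$ and content $\mu$, each such tableau groups the parts of $\nu$ by the common entry in each row, and the sums of parts within each group yield $\mu$; hence $L_{\nu,\mu} = 0$ unless $\mu$ is a coarsening of $\nu$, i.e., $\nu \leq_R \mu$, while $L_{\nu,\nu} \neq 0$ (take, e.g., the tableau whose rows are filled with distinct values). Equivalently, $p_\nu = \sum_{\mu : \nu \leq_R \mu} L_{\nu,\mu}\, m_\mu$, and inverting this triangular system with nonzero diagonal gives $m_\lambda = \sum_{\nu : \lambda \leq_R \nu} c_{\lambda,\nu}\, p_\nu$ for appropriate scalars $c_{\lambda,\nu}$. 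Translating back via the transition matrices of (\ref{eq:moncfdef}) yields
\begin{equation*}
\phi_q^\lambda \;=\; \sum_{\nu : \lambda \leq_R \nu} c_{\lambda,\nu}\, \psi_q^\nu.
\end{equation*}

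Finally, suppose $\lambda$ does not refine $\mu = (\mu_1, \dotsc, \mu_r)$, and fix any $\nu$ occurring in the expansion above, so $\lambda \leq_R \nu$. If $\nu \leq_R \mu$ held, then transitivity of the refinement order would force $\lambda \leq_R \mu$, contradicting our hypothesis. Hence no $\nu$ in the sum refines $\mu$, and Corollary~\ref{c:prefine} gives $\psi_q^\nu(\qew C'_w(q)) = 0$ for each such $\nu$. Evaluating the displayed expansion at $\qew C'_w(q)$ therefore produces $\phi_q^\lambda(\qew C'_w(q)) = 0$, as required. The only point demanding care is the direction of triangularity of the $m$-to-$p$ transition; this is immediate from the combinatorial description of $L_{\nu,\mu}$, so I do not foresee any serious obstacle.
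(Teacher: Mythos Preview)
Your proposal is correct and follows essentially the same route as the paper: both arguments use the refinement-triangularity of the matrix $(L_{\lambda,\nu})$ to invert $\psi_q^\lambda = \sum_{\nu \geq_R \lambda} L_{\lambda,\nu}\,\phi_q^\nu$, expressing $\phi_q^\lambda$ as a combination of $\psi_q^\nu$ with $\nu \geq_R \lambda$, and then apply transitivity of refinement together with Corollary~\ref{c:prefine}. The only cosmetic difference is that you phrase the triangularity via the $m$-to-$p$ transition in $\Lambda_n$, whereas the paper works directly with the trace bases.
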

\begin{proof}
Observe that we may rewrite the last equation in (\ref{eq:moncfdef}) as
\begin{equation*}
\psi_q^\lambda = \sum_{\nu \geq_R \lambda} L_{\lambda,\nu} \phi_q^\nu,
\end{equation*}
since no row-constant Young tableau of shape $\lambda$ 
has content $\nu$ unless $\lambda$ refines $\nu$.  
Inverting the matrix $(L_{\lambda,\nu})_{\lambda,\nu \vdash n}$
and evaluating traces at $\qew C'_w(q)$ we have
\begin{equation}\label{eq:moncfinv}
\phi_q^\lambda(\qew C'_w(q)) = 
\sum_{\nu \geq_R \lambda} L_{\lambda,\nu}^{-1} \psi_q^\lambda(\qew C'_w(q)).
\end{equation}
Now suppose that we have $\lambda \not \leq_R \mu$.
Then each partition $\nu$ in (\ref{eq:moncfinv}) 
satisfies $\nu \not \leq_R \mu$.
By Corollary~\ref{c:prefine}, each term on the 
right-hand side of (\ref{eq:moncfinv}) is zero.
\end{proof}

We remark that Propositions~\ref{p:mnodec}, \ref{p:mdeck} are not new;
they follow from 
\cite[Prop.\,4.1\,(3),\,(2)]{HaimanHecke}, respectively.
For more facts about these evaluations,
see
\cite[Sec\,4]{HaimanHecke}.

Now we complete the proof of
Theorem~\ref{t:sncfinterp} (v-a)
and provide a $q$-analog of this result.
Let $\mathcal T_C(F_w,\mu)$ denote the set of column-strict $F_w$-tableaux of
shape $\mu$ and type $e$.
\begin{thm}\label{t:qphipartial}
Let $w \in \sn$ \avoidp.  For $\lambda_1 \leq 2$ we have
\begin{equation}\label{eq:qphipartial}
\phi_q^\lambda(\qew C'_w(q)) = 
\begin{cases}
\displaystyle{\sum_{U \in \mathcal T_C(F_w,\lambda)}} q^{\inv(U)} 
&\text{if for all $\mu \triangleleft \lambda$ we have
$\mathcal T_C(F_w,\mu) = \emptyset$},\\  
0 &\text{otherwise}.
\end{cases}
\end{equation}
\end{thm}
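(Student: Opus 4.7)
The plan is to prove Theorem~\ref{t:qphipartial} by expanding the monomial trace $\phi_q^\lambda$ in the basis $\{\epsilon_q^\mu\}$ and applying the column-strict tableau interpretation furnished by Theorem~\ref{t:qepsilon}. Matching coefficients of $\gamma^p \delta^{n-p}$ in the identity $\prod_i(1 + vx_i + ux_i^2) = \prod_i(1+\gamma x_i)(1+\delta x_i)$ with $\gamma+\delta=v$ and $\gamma\delta=u$ gives the unitriangular relation $e_p e_{n-p} = \sum_a \binom{n-2a}{p-a}\, m_{(2^a, 1^{n-2a})}$, and inverting it yields an expansion
\begin{equation*}
m_{(2^a,1^b)} = \sum_{k=0}^{a} (-1)^k c_k \, e_{(a+b+k,\, a-k)}
\end{equation*}
with nonnegative integers $c_k$ and $c_0 = 1$ (small cases: $c_0,c_1 = 1,2$ for $\lambda=(2)$; $1,3$ for $(2,1)$; $1,2,2$ for $(2^2)$; $1,3,5$ for $(2^2,1)$). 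Passing through the characteristic map and using $(a+b+k, a-k)^\tr = (2^{a-k}, 1^{b+2k})$, Theorem~\ref{t:qepsilon} converts this into
\begin{equation*}
\phi_q^\lambda(\qew C'_w(q)) = \sum_{k=0}^{a} (-1)^k c_k \sum_{U \in \mathcal{T}_C(F_w,\mu_k)} q^{\inv(U)},\qquad \mu_k := (2^{a-k}, 1^{b+2k}).
\end{equation*}

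Since $\mu_0 = \lambda$ and $\mu_k \triangleleft \lambda$ for $k \geq 1$ exhaust the partitions strictly majorized by $\lambda$ (all of which satisfy $\mu_1 \leq 2$ because $\lambda_1 \leq 2$), the first case of the theorem is immediate: when $\mathcal{T}_C(F_w,\mu) = \emptyset$ for every $\mu \triangleleft \lambda$, only the $k=0$ term survives and delivers the desired generating function with coefficient $c_0 = 1$. The substance of the proof is the second case, where some $\mathcal{T}_C(F_w,\mu_k)$ with $k \geq 1$ is nonempty and the alternating sum must be shown to vanish. I would attempt this via a sign-reversing, $\inv$-preserving involution on the disjoint union $\bigsqcup_k \mathcal{T}_C(F_w, \mu_k) \times D_k$, where $D_k$ is a decoration set of cardinality $c_k$. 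The local rule would merge two singleton rows of a shape-$\mu_k$ tableau whose entries are comparable in $P(F_w)$ into a 2-row of a shape-$\mu_{k-1}$ tableau, or perform the reverse split, flipping the parity of $k$. The hypothesis of Case~2 guarantees that at least one eligible merge or split exists in every configuration, ensuring fixed-point-freeness.

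The main obstacle will be the construction of the decoration set $D_k$. The coefficients $c_k$ are not standard enumerative numbers---for instance, $c_2 = 2$ when $\lambda = (2^2)$ while $c_2 = 5$ when $\lambda = (2^2,1)$---so $D_k$ must encode a structured choice, most plausibly a $k$-tuple of ``merge sites'' among the $b+2k$ singleton rows of the tableau subject to compatibility conditions coming from column-strictness and the path poset $P(F_w)$. Showing that such a $D_k$ has the right cardinality, that the merge/split rule is well-defined (independent of the site chosen once the decoration is fixed), and that it cancels all contributions exactly under the Case~2 hypothesis is the principal technical step of the proof; in the degenerate cases where $a \leq 1$ (so $\lambda = (1^n)$ or $\lambda = (2, 1^{n-2})$) the expansion reduces to at most two terms and the involution specializes to a simple matching between a 2-column and a paired singleton, serving as a base check for the general construction.
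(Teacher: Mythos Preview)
Your setup is correct: the expansion $m_{(2^a,1^b)} = \sum_{k=0}^a (-1)^k c_k\, e_{(a+b+k,\,a-k)}$ with $c_0=1$ is valid, and combined with Theorem~\ref{t:qepsilon} it disposes of Case~1 immediately. The gap is Case~2. You have not constructed the involution; you describe a hoped-for merge/split rule and acknowledge that the decoration sets $D_k$ of sizes $c_k$ are not understood. Without a concrete $D_k$ and a proof that the rule is fixed-point-free precisely under the Case~2 hypothesis, this is a plan rather than a proof.

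The paper's argument avoids the involution entirely by exploiting a structural constraint you have not used. If $w$ has a decreasing subsequence of length~3, then Proposition~\ref{p:mnodec} (together with Corollary~\ref{c:edeck}) forces both sides of (\ref{eq:qphipartial}) to vanish. Otherwise $w$ avoids $321$, and by Theorem~\ref{t:kstar} every connected component of $F_w$ is a simple path-like network $H_m$ whose path poset is a fence. In such a component of size $2m{+}1$, a column-strict tableau with two columns must place the odd-indexed paths in one column and the even-indexed paths in the other; swapping these two blocks is an $\inv$-preserving operation. Letting $b$ be the number of odd components and $j=(n-b)/2$, one finds that $\mathcal T_C(F_w,2^i1^{n-2i})=\emptyset$ for $i<j$, and for $k\ge j$ every element of $\mathcal T_C(F_w,2^k1^{n-2k})$ arises from a unique element of $\mathcal T_C(F_w,2^j1^{n-2j})$ by choosing $k-j$ of the $b$ odd components to swap. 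This yields
\[
\epsilon_q^{(n-k,k)}(\qew C'_w(q)) \;=\; \binom{n-2j}{\,k-j\,}\sum_{U\in\mathcal T_C(F_w,2^j1^{n-2j})} q^{\inv(U)}.
\]
With this closed form in hand, the paper runs induction on $k$ in the \emph{forward} direction $\epsilon_q^{(n-k,k)}=\sum_{i\le k}\binom{n-2i}{k-i}\phi_q^{2^i1^{n-2i}}$: for $k>j$ the inductive hypothesis kills all $i\ne j$, and the remaining binomial identity forces $\phi_q^{2^k1^{n-2k}}(\qew C'_w(q))=0$; for $k=j$ only the $i=j$ term survives with coefficient~$1$. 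Thus the $321$-avoidance of $w$ reduces the alternating sum you faced to a single binomial-coefficient inversion, and no decorated involution is needed.
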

\begin{proof}
Clearly the claim is true for $\lambda = 1^n$, since 
$\phi_q^{1^n} = \epsilon_q^{(n)}$
and the claimed formula coincides with that in Theorem~\ref{t:qepsilon}.
Suppose 
that the claim holds for 
$\lambda = 21^{n-2}, \dotsc, 2^{k-1}1^{n-2k+2}$
and consider the case $\lambda = 2^k 1^{n-2k}$ 
($k \leq \lfloor \tfrac n2 \rfloor$).
Then we have
\begin{equation}\label{eq:em12}
\epsilon_q^{(n-k,k)}(\qew C'_w(q)) 
= \sum_{i = 0}^{k} M_{2^k 1^{n-2k}, 2^i 1^{n-2i}} \phi_q^{2^i 1^{n-2i}} (\qew C'_w(q)),
\end{equation}
where $M_{\lambda,\mu}$ is the number of column-strict Young tableaux
of shape $\lambda$ and content $\mu$.
It is easy to see that
$M_{2^k 1^{n-2k}, 2^i 1^{n-2i}}$ is equal to 
$\tbinom{n-2i}{k-i}$.
By Theorem~\ref{t:qepsilon}, the left-hand side of (\ref{eq:em12}) is
the sum of $q^{\inv(U)}$ over $U \in \mathcal T_C(F_w,2^k 1^{n-2k})$.

If $w$ has a decreasing subsequence of length three,
then by Proposition~\ref{p:mnodec}, 
the left-hand side of (\ref{eq:qphipartial}) is $0$.
By 
the proof of Corollary~\ref{c:edeck},
we have $\mathcal T_C(F_w, \mu) = \emptyset$ for all 
$\mu \trianglelefteq \lambda$, and 
the right-hand side of (\ref{eq:qphipartial}) is $0$ as well.

Assume therefore that $w$ avoids the pattern $321$.
By Theorem~\ref{t:kstar}, every connected component of $F_w$ induces
a subposet of $P(F_w)$ which is isomorphic to $P(H_k)$ where 
\begin{equation*}
H_k = \raisebox{-12mm}{
\includegraphics[width=40mm]{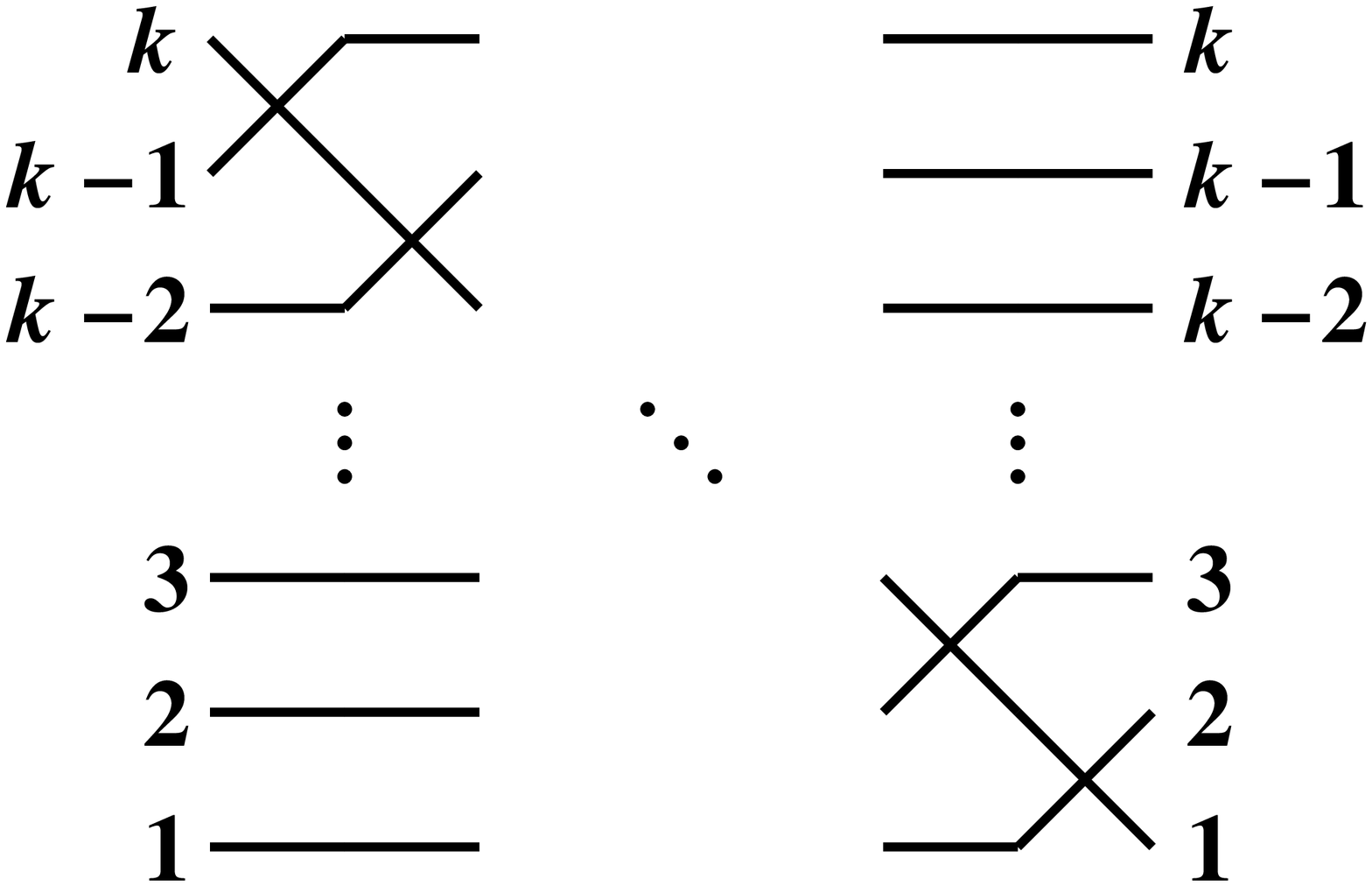}}.
\end{equation*}
Let $b = b(w)$ be the number of odd components of $F_w$.  
Then it is possible to construct an $F_w$-tableau which has
$b$ more paths in column $1$ than it has in column $2$, 
but it is not possible to construct an $F_w$-tableau 
for which this difference is greater than $b$.
That is, for $j = \frac{n-b}2$
we have $\mathcal T_C(F_w, 2^j 1^{n-2j}) \neq \emptyset$
while
\begin{equation}\label{eq:emptysets}
\mathcal T_C(F_w, 1^{n}) = 
\mathcal T_C(F_w, 2 1^{n-2}) = \cdots =
\mathcal T_C(F_w, 2^{j-1} 1^{n-2j+2}) = \emptyset.
\end{equation} 
Fix $U \in \mathcal T_C(F_w, 2^j 1^{n-2j})$,
and let the interval $[p_1, p_{2m+1}]$
of sources and sinks define an odd component of $F_w$.
Then paths indexed by $p_1, p_3, \dotsc, p_{2m+1}$ belong
to the first column of $U$ while those indexed by $p_2,\dotsc,p_{2m}$
belong to the second column.  
Note that swapping the columns of the two sets of paths creates a 
valid column-strict $F_w$-tableau of shape $2^{j+1}1^{n-2j-2}$ 
which has the same number of inversions as $U$.
Thus for each such tableau $U$, we may create a column-strict 
$F_w$-tableau $U'$ of shape $2^k 1^{n-2k}$ 
by choosing $k - j$ of the odd components and 
swapping the columns of the even and odd indexed paths within these components.
There are $\tbinom{n-2j}{k-j}$ ways to do this.
Conversely, every tableau $U' \in \mathcal T_C(F_w, 2^k 1^{n-2k})$ 
arises in this way.  
Thus we have
\begin{equation}\label{eq:emconnect}
\epsilon_q^{(n-k,k)}(\qew C'_w(q)) = 
\tbinom{n-2j}{k-j}
\sum_{U \in \mathcal T_C(F_w,2^j1^{n-2j})} q^{\inv(U)}.
\end{equation}

If $j < k$, then we have by induction and (\ref{eq:emptysets})
that
$\phi_q^{2^{i} 1^{n-2i}}(\qew C'_w(q)) = 0$ for $i < k$, $i \neq j$.
Now 
(\ref{eq:emconnect}) implies that
$\phi_q^{2^k 1^{n-2k}}(\qew C'_w(q)) = 0$, and the claim is true.
If $j = k$, then we have by (\ref{eq:em12}) -- (\ref{eq:emptysets}) 
that
$\epsilon_q^{(n-k,k)}(\qew C'_w(q)) = \phi_q^{2^k 1^{n-2k}}(\qew C'_w(q))$
and by (\ref{eq:emconnect}) the claim again is true.
\end{proof}

We remark that the obvious $q$-analogs of Theorem~\ref{t:sncfinterp} (v-b) are
false.  
Consider the permutation $w = 3142$
and the evaluation $\phi_q^{22}(\qew C'_w(q)) = q + q^2$.
$F_w$ is the penultimate zig-zag network 
in (\ref{eq:xfigureszz}), and 
there are two column-strict cylindrical $F_w$-tableaux of shape $22$,
\begin{equation*}\label{eq:cscyltabs22}
\raisebox{-6mm}{\includegraphics[height=12mm]{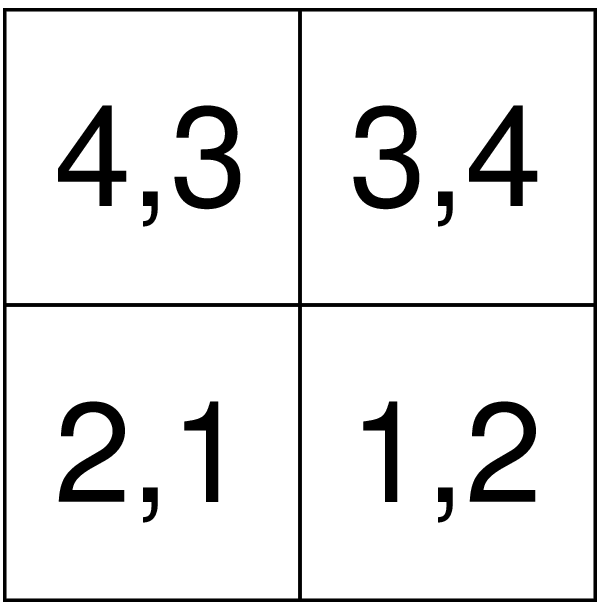}}\ ,\quad
\raisebox{-6mm}{\includegraphics[height=12mm]{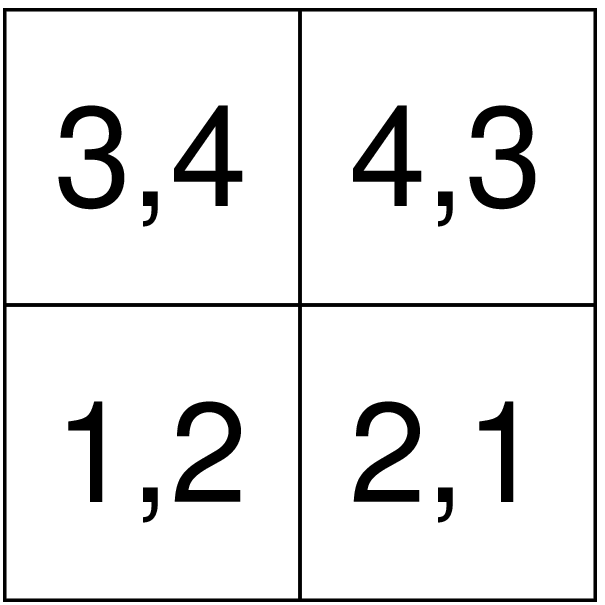}}\ .
\end{equation*}
Unfortunately, as $U$ varies over these tableaux we have
\begin{equation*}
\sum_U q^{\inv(U)} = 1 + q^3,
\qquad
\sum_U q^{\inv(U_1 \circ U_2)} = 1 + q^2,
\qquad
\sum_U q^{\inv(U_2 \circ U_1)} = q + q^3.
\end{equation*}
Perhaps a correct $q$-analog of Theorem~\ref{t:sncfinterp} (v-b)
would help with the formulation of an interpretation of
$\phi_q^\lambda(\qew C'_w(q))$ when $w$ \avoidsp.
Given Theorems \ref{t:qeta}, \ref{t:qepsilon}, \ref{t:qchi}, \ref{t:swqpsi}, 
\ref{t:qpsiO}, and \ref{t:qpsi},
it seems reasonable to hope that $F_w$-tableaux can 
play an important role in such an interpretation.
\begin{prob}\label{p:qphi}
Find a property $X$ of $F_w$-tableaux and a statistic $\STAT$ 
such that for $\lambda \vdash n$
and $w$ \avoidingp\ we have
\begin{equation*}
\phi_q^\lambda(\qew C'_w(q)) = \sum_U q^{\STAT(U)},
\end{equation*}
where the sum is over all $F_w$-tableaux $U$ of shape $\lambda$
having property $X$.
\end{prob}

\section{Acknowledgements}

The authors are grateful to Kaitlyn Peterson, Daniel Studenmund, and Michelle
Wachs for helpful conversations, to Lehigh University, 
the University of Miami, and Universidad de los Andes for financial support
and hospitality.  
The authors are also grateful to an anonymous referee for helpful suggestions.


\end{document}